\newcommand{\C}{\mathbb{C}}
\newcommand{\z}{\mathbb{Z}}
\renewcommand{\r}{\mathbb{R}}
\newcommand{\N}{\mathscr{N}}
\newcommand{\q}{\mathbb{Q}}
\newcommand{\g}{\mathscr{G}}
\newcommand{\topo}[1]{\left| #1 \right|}
\newcommand{\B}{\mathscr{B}}
\newcommand{\DD}{\mathscr{D}}
\newcommand{\F}{\mathscr{F}}
\newcommand{\W}{\mathscr{W}}
\newcommand{\cell}{\mathscr{C}}
\newcommand{\p}{\mathbb{P}^1}
\newcommand{\D}{\mathbb{D}}
\newcommand{\DO}{\dot{\mathbb{D}}}
\newcommand{\pminus}{\p \smallsetminus \{ 0, 1, \infty \}}
\newcommand{\sets}{\mathfrak{Sets}}
\newcommand{\covs}{\mathfrak{Cov}(\p)}
\newcommand{\belyi}{\mathfrak{Belyi}}
\newcommand{\et}{\mathfrak{Etale}(\C(x))}
\newcommand{\qb}{ {\overline{\q}} }
\newcommand{\etq}{\mathfrak{Etale}(\qb (x))}
\newcommand{\cat}{\mathfrak{Dessins}}
\newcommand{\ucat}{\mathfrak{UDessins}}
\newcommand{\m}{\mathscr{M}}
\newcommand{\act}[2]{{}^{#1} #2}
\newcommand{\gal}{\operatorname{Gal}(\qb / \q)}
\newcommand{\Gal}{\operatorname{Gal}}
\newcommand{\gt}{\Gamma}
\newcommand{\GT}{\mathcal{GT}}
\newcommand{\rGT}{\widehat{\GT}}
\newcommand{\farout}{{Out}}
\newcommand{\bs}{\backslash}
\newcommand{\image}[1]{\smallskip\begin{center} #1 \end{center}\smallskip}
\newcommand{\figurehere}[2]{\smallskip\begin{center} 
\includegraphics[width=#1\textwidth]{#2}\end{center}\smallskip}
\newtheoremstyle{pedro}{}{}{\itshape}{}{\sc}{~--}{ }{\thmname{#1}\thmnumber{ #2}\thmnote{ (#3)}}
\newtheoremstyle{pedrodef}{}{}{}{}{\sc}{~--}{ }{\thmname{#1}\thmnumber{ #2}\thmnote{ (#3)}}
\theoremstyle{pedro}
\newtheorem{lem}{Lemma}[section]
\newtheorem{thm}[lem]{Theorem}
\newtheorem{prop}[lem]{Proposition}
\newtheorem{coro}[lem]{Corollary}
\theoremstyle{remark}
\newtheorem{rmk}[lem]{Remark}
\theoremstyle{pedrodef}
\newtheorem{ex}[lem]{Example}
\title[Dessins d'enfants]{An elementary approach to dessins d'enfants and the
  Grothendieck-Teichmüller group}
\author{Pierre Guillot}
\address{
Universit\'{e} de Strasbourg \& CNRS\\
Institut de Recherche Math\'{e}matique Avanc\'{e}e\\
7~Rue Ren\'{e} Descartes\\
67084 Strasbourg, France}
\email{guillot@math.unistra.fr}
\let\oldtocsection=\tocsection
\let\oldtocsubsection=\tocsubsection
\let\oldtocsubsubsection=\tocsubsubsection
\renewcommand{\tocsection}[2]{\hspace{0em}\oldtocsection{#1}{#2}}
\renewcommand{\tocsubsection}[2]{\hspace{2em}\oldtocsubsection{#1}{#2}}
\renewcommand{\tocsubsubsection}[2]{\hspace{2em}\oldtocsubsubsection{#1}{#2}}
\numberwithin{equation}{section}
\begin{document}

\maketitle

\begin{abstract}

We give an account of the theory of dessins d'enfants which is both
elementary and self-contained. We describe the equivalence of many
categories (graphs embedded nicely on surfaces, finite sets with
certain permutations, certain field extensions, and some classes of
algebraic curves), some of which are naturally endowed with an action
of the absolute Galois group of the rational field. We prove that the
action is faithful. Eventually we prove that~$\gal$ embeds into the
Grothendieck-Teichmüller group~$\rGT_0$ introduced by Drinfeld. There
are explicit approximations of~$\rGT_0$ by finite groups, and we hope
to encourage computations in this area.

Our treatment includes a result which has not appeared in the
literature yet: the action of~$\gal$ on the subset of {\em regular}
dessins -- that is, those exhibiting maximal symmetry -- is also
faithful.

\end{abstract}


{\footnotesize
\noindent{\bfseries Status.} This paper should be identical, or almost identical, to that which is to appear in {\em L'enseignement mathématique}. If you do not intend to print this document, then you may want to \href{http://www-irma.u-strasbg.fr/~guillot/research/dessins.pdf}{click here} and try a PDF file which is optimized for on-screen reading.
}

\section*{Introduction}

The story of {\em dessins d'enfants} (children's drawings) is best told
in two episodes.

The first side of the story is a surprising unification of
different-looking theories: graphs embedded nicely on surfaces, finite
sets with certain permutations, certain field extensions, and some
classes of algebraic curves (some over~$\C$, some over~$\qb$), all
turn out to define equivalent categories. This result follows from
powerful and yet very classical theorems, mostly from the 19th
century, such as the correspondence between Riemann surfaces and their
fields of meromorphic functions (of course known to Riemann himself),
or the basic properties of the fundamental group (dating back to
Poincaré).

One of our goals with the present paper is to give an account of this
theory that sticks to elementary methods, as we believe it
should. (For example we shall never need to appeal to ``Weil's
rigidity criterion'', as is most often done in the literature on the
subject; note that it is also possible, in fact, to read most of this
paper without any knowledge of algebraic curves.) Our development is
moreover as self-contained as is reasonable: that is, while this paper
is not the place to develop the theory of Riemann surfaces, Galois
extensions or covering spaces from scratch -- we shall refer to basic
textbooks for these -- we give complete arguments from there. Also, we
have striven to state the results in terms of actual equivalences of
categories, a slick language which unfortunately is not always
employed in the usual sources.

The term {\em dessins d'enfants} was coined by Grothendieck in
\cite{esquisse}, in which a vast programme was laid out, giving the
theory a new thrust which is the second side of the story we wish to
tell. In a nutshell, some of the categories mentioned above naturally
carry an action of~$\gal$, the absolute Galois group of the rational
field. This group therefore acts on the set of isomorphism classes of
objects in any of the equivalent categories; in particular one can
define an action of the absolute Galois group on graphs embedded on
surfaces. In this situation however, the nature of the Galois action
is really very mysterious - it is hoped that, by studying it, light
may be shed on the structure of~$\gal$. It is the opportunity to bring
some kind of basic, visual geometry to bear in the study of the
absolute Galois group that makes {\it dessins d'enfants} -- embedded
graphs -- so attractive.


In this paper we explain carefully, again relying only on elementary
methods, how one defines the action, and how one proves that it is
{\it faithful}. This last property is clearly crucial if we are to
have any hope of studying~$\gal$ by considering graphs. We devote some
space to the search for invariants of dessins belonging to the same
Galois orbit, a major objective in the field.


When a group acts faithfully on something, we can usually obtain an
embedding of it in some automorphism group. In our case, this leads to
the {\em Grothendieck-Teichmüller group}~$\rGT$, first introduced by
Drinfeld in~\cite{drinfeld}, and proved to contain~$\gal$ by Ihara
in~\cite{ihara}. While trying to describe Ihara's proof in any detail
would carry us beyond the scope of this paper, we present a complete,
elementary argument establishing that~$\gal$ embeds into the slightly
larger group~$\rGT_0$ also defined by Drinfeld. In fact we work with
a group~$\GT$ isomorphic to~$\rGT_0$, and which is an inverse limit
\[ \GT = \lim_n \GT(n) \, ;   \]
here~$\GT(n)$ is a certain subgroup of~$Out(H_n)$ for an explicitly
defined finite group~$H_n$. So describing~$H_n$ and~$\GT(n)$ for
some~$n$ large enough gives rough information about~$\gal$ -- and it
is possible to do so in finite time.

In turn, we shall see that understanding~$H_n$ amounts, in a sense, to
understanding all finite groups generated by two elements, whose order
is less than~$n$. We land back on our feet: from the first part of
this paper, those groups are in one to one correspondence with some
embedded graphs, called regular, exhibiting maximal symmetry. The
classification of ``regular maps'', as they are sometimes called, is a
classical topic which is still alive today. 
\[ \star \star \star  \]

Let us add a few informal comments of historical nature, not written
by an expert in the history of mathematics.

The origin of the subjet is the study of ``maps'', a word meaning
graphs embedded on surfaces in a certain way, the complement of the
graph being a disjoint union of topological discs which may be
reminiscent of countries on a map of the world. Attention has focused
quickly on ``regular maps'', that is, those for which the automorphism
group is as large as possible. For example, ``maps'' are mentioned in
the 1957 book~\cite{coxeter} by Coxeter and Moser, and older
references can certainly be found. The 1978 paper~\cite{jns} by Jones
and Singerman has gained a lot of popularity; it gave the field
stronger foundations, and already established bijections between
``maps'' and combinatorial objects such as permutations on the one
hand, and also with compact Riemann surfaces, and thus
complex algebraic curves, on the other hand. For a recent survey on
the classification of ``maps'', see~\cite{jozef}. 

Then came the {\em Esquisse d'un programme}~\cite{esquisse}, written
by Grothendieck between 1972 and 1984. Dessins can be seen as
algebraic curves over~$\C$ with some extra structure (a morphism
to~$\p$ with ramification above~$0, 1$ or~$\infty$ only), and
Grothendieck knew that such a curve must be defined over~$\qb$. Since
then, this remark has been known as ``the obvious part of Belyi's
theorem'' by people working in the field, even though it is not
universally recognized as obvious, and has little to do with Belyi
(one of the first complete and rigorous proofs is probably that by
Wolfart~\cite{wolfart}). However, Grothendieck was very impressed by
the simplicity and strength of a result by Belyi~\cite{belyi} stating
that, conversely, {\em any} algebraic curve defined over~$\qb$ can be
equipped with a morphism as above (which is nowadays called a Belyi
map, while it has become common to speak of Belyi's theorem to mean
the equivalence of definability of~$\qb$ on the one hand, and the
possibility of finding a Belyi map on the other hand). Thus the theory
of dessins encompasses all curves over~$\qb$, and Grothendieck pointed
out that this simple fact implied that the action of~$\gal$ on dessins
must be faithful. The {\em esquisse} included many more ideas which
will not be discussed here. For a playful exposition of many examples
of the Galois action on dessins, see~\cite{lando}. 

Later, in 1990, Drinfeld defined~$\rGT$ in~\cite{drinfeld} and
studied its action on braided categories, but did not relate it
explicitly to~$\gal$ although the motivation for the definition came
from the {\em esquisse}. It was Ihara in 1994~\cite{ihara} who proved
the existence of an embedding of~$\gal$ into~$\rGT$; it is interesting
to note that, if dessins d'enfants were the original idea for Ihara's
proof, they are a little hidden behind the technicalities.

The Grothendieck-Teichmüller group has since been the object of much
research, quite often using the tools of quantum algebra in the spirit
of Drinfeld's original approach. See also~\cite{fresse} by Fresse,
which establishes an interpretation of~$\rGT$ in terms of operads.
\[ \star \star \star  \]

Here is an outline of the paper. In section~\ref{sec-dessins}, we
introduce cell complexes, that is, spaces obtained by glueing discs to
bipartite graphs; when the result is a topological surface, we have a
{\em dessin}. In the same section we explain that dessins are entirely
determined by two permutations. In section~\ref{sec-eq-cats}, we quote
celebrated, classical results that establish a number of equivalences
of categories between that of dessins and many others, mentioned
above. In section~\ref{sec-regularity} we study the regularity
condition in detail. The Galois action is introduced in
section~\ref{sec-galois-action}, where we also present some concrete
calculations. We show that the action is faithful. Finally in
section~\ref{sec-GT} we prove that~$\gal$ embeds into the group~$\GT$
described above. 

In the course of this final proof, we obtain seemingly for free the
following refinement: the action of~$\gal$ on {\em regular} dessins is
also faithful. This fact follows mostly from a 1980 result by
Jarden~\cite{jarden} (together with known material on dessins), and it
is surprising that it has not been mentioned in the literature
yet. While this work was in its last stages, I have learned from
Gareth Jones that the preprint~\cite{gabi} by Andrei Jaikin-Zapirain
and Gabino Gonzalez-Diez contains generalizations of Jarden's theorem
while the faithfulness of the Galois action on regular dessins is
explicitly mentioned as a consequence (together with more precise
statements). Also in~\cite{bauer}, a preprint by Ingrid Bauer,
Fabrizio Catanese and Fritz Grunewald, one finds the result stated.

\noindent{\em Acknowledgements.} Nick Gill and Ian Short have followed the development of this paper from the very early stages, and I have benefited greatly from their advice. I also want to thank Gareth Jones for kind words about this work as it was reaching completion. Further corrections have been made based on comments by Olivier Guichard and Pierre de la Harpe, for which I am grateful.

\tableofcontents

\section{Dessins} \label{sec-dessins}

In this section we describe the first category of interest to us,
which is that of graphs embedded on surfaces in a particularly nice
way. These have been called sometimes ``maps'' in the literature, a
term which one should avoid if possible given the other meaning of the
word ``map'' in mathematics. We call them {\em dessins}.

The reader may be surprised by the number of pages devoted to this
first topic, and the level of details that we go into. Would it not
suffice to say that the objects we study are graphs embedded on
surfaces, whose complement is a union of open discs, perhaps with just
a couple of technical conditions? (A topologist would say ``a
CW-complex structure on a surface''.) 

This would not be appropriate, for several reasons. First and
foremost, we aim at proving certain equivalences of categories,
eventually (see next section). With the above definition, whether one
takes as morphisms all continuous maps between surfaces, or restricts
attention to the ``cellular'' ones, in any case there are simply too
many morphisms taken into account (see for example~\cite{jns}). Below,
we get things just right.

Another reason is that we already present {\em two} categories in this
section, not just one: dessins are intimately related to finite sets
endowed with certain permutations. The two categories are equivalent
and indeed so close that we encourage the reader to always think of
these two simultaneously; we take the time to build the intuition for
this. 

Note also that our treatment is very general, including non-orientable
dessins as well as dessins on surfaces with boundary.

Finally, the material below is so elementary that it was possible to
describe it with absolutely no reference to textbooks, an opportunity
we took. We think of the objects defined in this section as the most
down-to-earth of the paper, while the other categories to be
introduced later are here to shed light on dessins.

\subsection{Bipartite graphs}

We start with the definition of {\em bipartite graphs}, or {\em
  bigraphs} for short, which are essentially graphs made of black and
white vertices, such that the edges only connect vertices of different
colours. More formally, a bigraph consists of \begin{itemize}
\item a set~$B$, the elements of which we call the black vertices,
\item a set~$W$, the elements of which we call the white vertices,
\item a set~$D$, the elements of which we call the {\em darts},
\item two maps~$\B \colon D \longrightarrow B$ and~$\W \colon D
  \longrightarrow W$.
\end{itemize}

In most examples all of the above sets will be finite, but in general
we only specify a local finiteness condition, as follows. The degree
of~$w \in W$ is the number of darts~$d$ such that~$\W(d) = w$; the
degree of~$b \in B$ is the number of darts~$d$ such that~$\B(d) =
b$. We require that all degrees be finite.

For example, the following picture will help us describe a bigraph.
\figurehere{0.3}{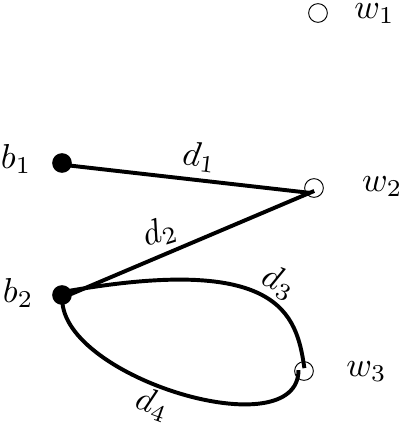}

Here~$B= \{ b_1, b_2 \}$, while~$W= \{ w_1, w_2, w_3 \}$ and~$D= \{
d_1, d_2, d_3, d_4 \}$. The maps~$\B$ and~$\W$ satisfy, for example,
$\B(d_1)= b_1$ and~$\W(d_1) = w_2$. Note that bigraphs according to
this definition are naturally labeled, even though we will often
suppress the names of the vertices and darts in the pictures.

The notion of morphism of bigraphs is the obvious one: a morphism
between~$\g = (B, W, D, \B, \W)$ and~$\g' = (B', W', D', \B', \W')$ is
given by three maps~$B \to B'$, $W \to W'$ and $\Delta \colon D \to
D'$ which are compatible with the maps~$\B, \W, \B', \W'$.
Isomorphisms are invertible morphisms, unsurprisingly. (Pedantically,
one could define an unlabeled bigraph to be an isomorphism class of
bigraphs.) 

To a bigraph~$\g$ we may associate a topological space~$\topo{\g}$, by
attaching intervals to discrete points according to the maps~$\B$
and~$\W$; in the above example, and in all others, it will look just
like the picture. To this end, take for each~$d \in D$ a copy~$I_d$ of
the unit interval~$[0,1]$ with its usual topology. Then consider
\[ Y = \coprod_{d \in D} \, I_d  \]
with the disjoint union topology, and 
\[ X = Y \coprod B \coprod W \, .  \]
(Here~$B$ and~$W$ are given the discrete topology.) On~$X$ there is an equivalence relation corresponding to the identifications imposed by the maps~$\B$ and~$\W$. In other words, the equivalence class~$[b]$ of~$b \in B$ is such that~$[b] \cap I_d = \{ 0 \}$ if~$\B(d) = b$ and~$[b] \cap I_d = \emptyset$ otherwise, while~$[b] \cap B = \{ b \}$ and~$[b] \cap W = \emptyset$; the description of the equivalence class~$[w]$ when~$w \in W$ is analogous, with~$[w] \cap I_d = \{ 1 \}$ precisely when~$\W(d) =w$. All the other equivalence classes are singletons. The space~$\topo{\g}$ is the set of equivalence classes, with the quotient topology. Clearly, an isomorphism of graphs induces a homeomorphism between their topological realizations.

Finally, we point out that usual graphs (the reader
may pick their favorite definition) can be seen as bigraphs by
``inserting a white vertex inside each edge''. We will not formalize
this here, although it is very easy. In what follows we officially
define a graph to be a bigraph in which all white vertices have degree
precisely~$2$; a pair of darts with a common white vertex form an {\em
  edge}.  The next picture, on which you see four edges, summarizes
this.

\figurehere{0.5}{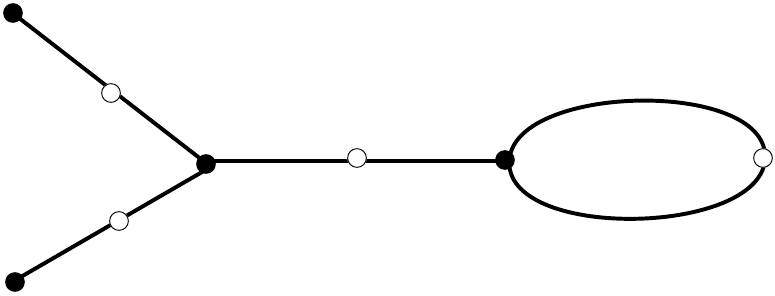}

\subsection{Cell complexes}

Suppose a bigraph~$\g$ is given. A {\em loop} on~$\g$ is a sequence of
darts describing a closed path on~$\g$ alternating between black and
white vertices. More precisely, a loop is a tuple 
\[ (d_1, d_2, \ldots, d_{2n}) \in D^{2n} \]
such that~$\W(d_{2i+1}) = \W(d_{2i+2})$ and~$\B(d_{2i+2}) =
\B(d_{2i+3})$, for~$0 \le i \le n-1$, where~$d_{2n+1}$ is to be
understood as~$d_1$. We think of this loop as starting and ending
with the black vertex~$\B(d_1)$, and visiting along the way the points
$\W(d_2)$, $\B(d_3)$, $\W(d_4)$, $\B(d_5)$, $\W(d_6), \ldots $ (It is
a little surprising to adopt such a convention, that loops always
start at a black vertex, but it does simplify what follows.)

For example, consider the following square:

\figurehere{0.5}{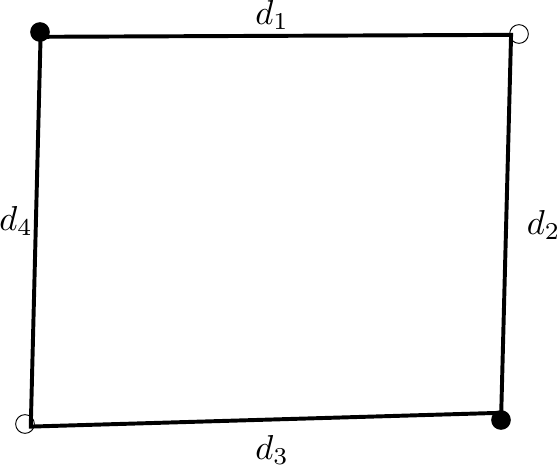}
On this bigraph we have a loop~$(d_1, d_2, d_3, d_4)$ for
example. Note that~$(d_1, d_2, d_2, d_1)$ is also a loop, as well
as~$(d_1, d_1)$.

Loops on~$\g$ form a set~$L(\g)$.
%
%
%
We have reached the definition of a {\em cell complex} (or~$2$-cell
complex, for emphasis). This consists of \begin{itemize}
\item a bigraph~$\g$,
\item a set~$F$, the elements of which we call the {\em faces},
\item a map~$\partial \colon F \to L(\g)$, called the boundary map.
\end{itemize}



The definition of morphisms between cell complexes will wait a little.

A cell complex~$\cell$ also has a topological
realization~$\topo{\cell}$: briefly, one attaches closed discs to the
space~$\topo{\g}$ using the specified boundary maps. In more details,
for each~$f \in F$ we pick a copy~$\D_f$ of the unit disc
\[ \D = \{ z \in \C : |z| \le 1 \} \, .   \]
Consider then 
\[ Z_0 = \coprod_{f \in F} \D_f  \]
and 
\[ Z = \topo{\g} ~\coprod  ~ Z_0 \, .   \]
We define~$\topo{\cell}$ to be the following identification space
of~$Z$, with the quotient topology. Fix~$f \in F$ and let~$\partial f
= (d_1, d_2, \ldots, d_{2n})$. We put~$\omega = e^{\frac{2 i \pi}
  {2n}} \in \D_f$. The discussion will be easier to understand with a
picture: 

\figurehere{0.5}{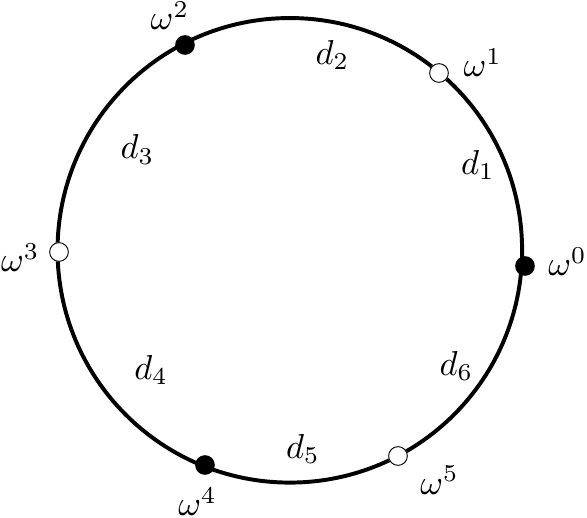}
%
The letters~$d_1, \ldots, d_6$ are simply here to indicate the
intended glueing. Let~$I= [0, 1]$ and consider the homeomorphism 
\[ h_i \colon I \longrightarrow [\omega^{2i}, \omega^{2i+1}] \, ,   \]
where $[\omega^{2i}, \omega^{2i+1}]$ denotes the circular arc
from~$\omega^{2i}$ to~$\omega^{2i+1}$, defined by~$h_i(t) = \omega^{2i
+ t}$. We shall combine it with the continuous map 
\[ g_i \colon I \longrightarrow \topo{\g}  \]
which is obtained as the identification~$I = I_{d_{2i+1}}$ followed by
the canonical map~$I_{d_{2i+1}} \to \topo{\g}$ (see the definition
of~$\topo{\g}$). We can now request, for all~$t \in I$, the
identification of~$g_i(t)$ and~$h_i(t)$, these being both points
of~$Z$.

Similarly there is an identification of the arc~$[\omega^{2i},
  \omega^{2i-1}]$ with the image of~$I_{d_{2i}}$. We prescribe no more
identifications, and this completes the definition of~$\topo{\cell}$.



\begin{ex} \label{ex-carre}
Let us return to the square as above. We add one face~$f$,
with~$\partial f = (d_1, d_2, d_3, d_4)$. We obtain a complex~$\cell$
such that~$\topo{\cell}$ is homeomorphic to the square~$[0, 1] \times
[0, 1]$, and which we represent as follows:

\figurehere{0.5}{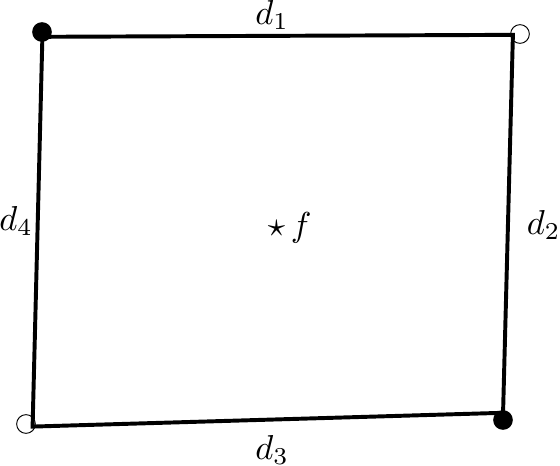}
We shall often place a~$\star$ inside the faces, even when they are
not labeled, to remind the reader to mentally fill in a disc.  The
reader is invited to contemplate how the complex obtained by taking,
say, $\partial f = (d_2, d_1, d_4, d_3)$ instead, produces indeed a
homeomorphic realization. These two complexes ought to be isomorphic,
when we have defined what isomorphisms are.
\end{ex}

\begin{ex} \label{ex-fundamental}
This example will be of more importance later than is immediately
apparent. Let~$B, W, D$ and~$F$ all have one element, say~$b, w, d$
and~$f$ respectively; and let~$\partial f = (d,
d)$. Then~$\topo{\cell}$ is homeomorphic to the sphere~$S^2$.

\image{\includegraphics[width=0.5\textwidth]{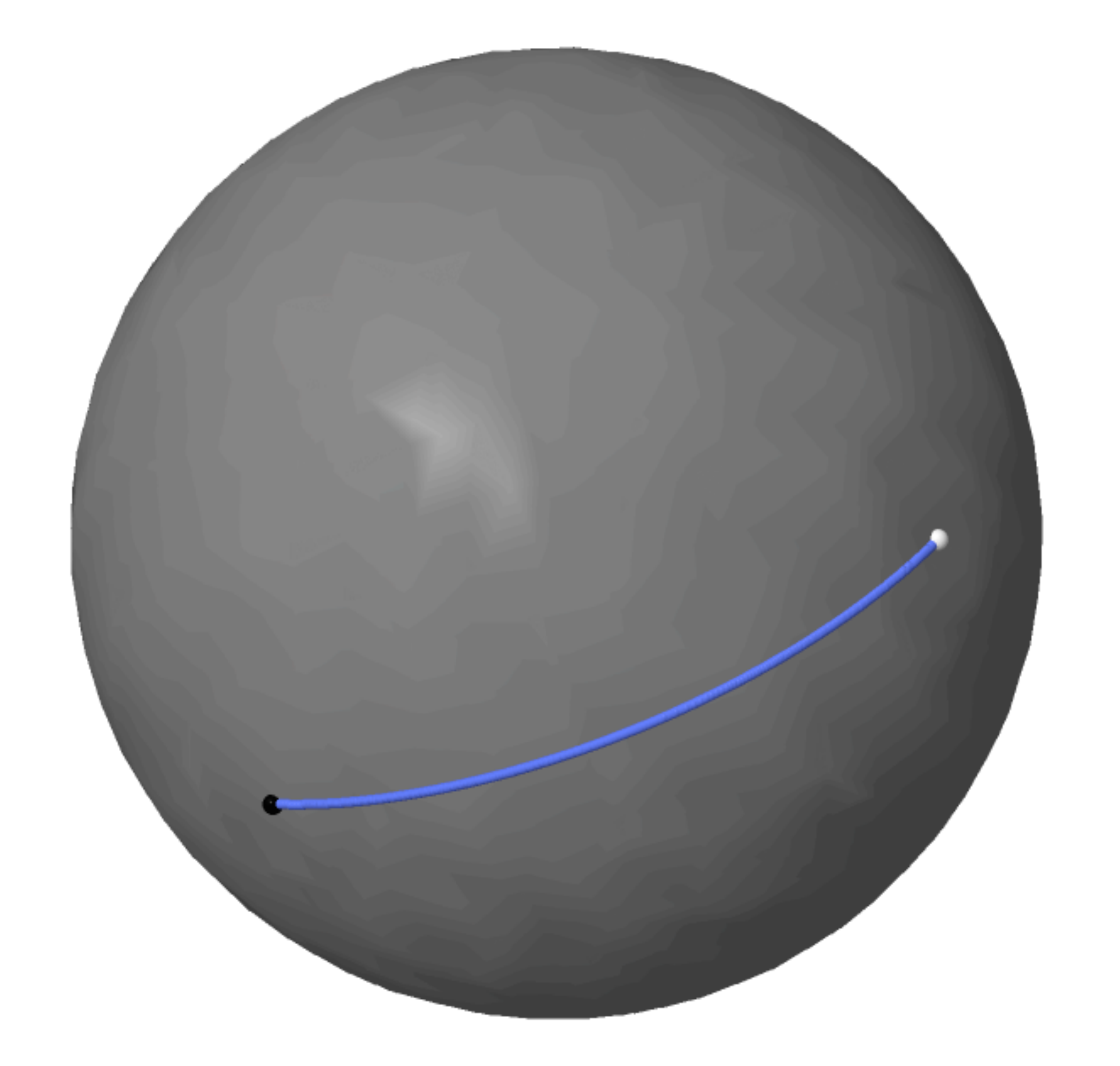}}

This example shows why we used discs rather than polygons: we may very
well have to deal with digons.
\end{ex}

\begin{ex} \label{ex-complexes-by-pictures}
It is possible to convey a great deal of information by pictures
alone, and with this example we explore such shorthands. Consider for
example:

\figurehere{0.5}{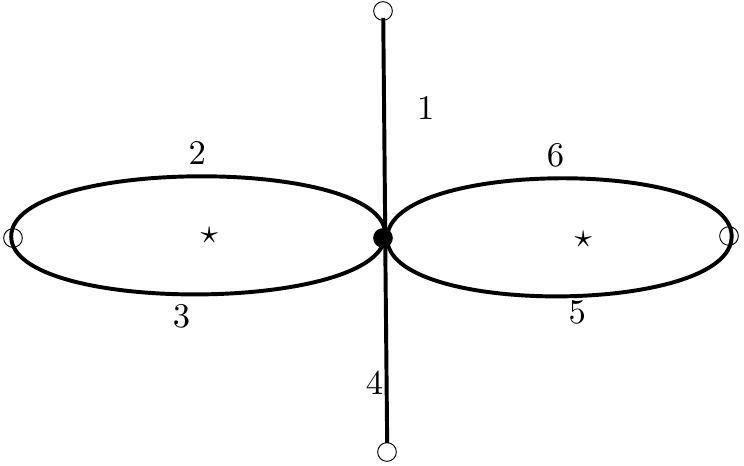}
Here we use integers to label the darts. We can see this picture
as depicting a cell complex with two faces, having boundary $(2, 3)$
and~$(5, 6)$ respectively. Should we choose to do so, there would be
little ambiguity in informing the reader that we mean for there to be
a third face ``on the outside'', hoping that the boundary~$(1, 1, 2,
3, 4, 4, 5, 6)$ (or equivalent) will be understood. The centre of that
face is placed ``at infinity'', that is, we think of the plane as the
sphere~$S^2$ with a point removed via stereographic projection, and
that point is the missing~$\star$. Of course with these three faces,
one has~$\topo{\cell}$ homeomorphic to~$S^2$.

Suppose we were to draw the following picture, and specify that there
is a third face ``at infinity'' (or ``on the outside''):

\figurehere{0.5}{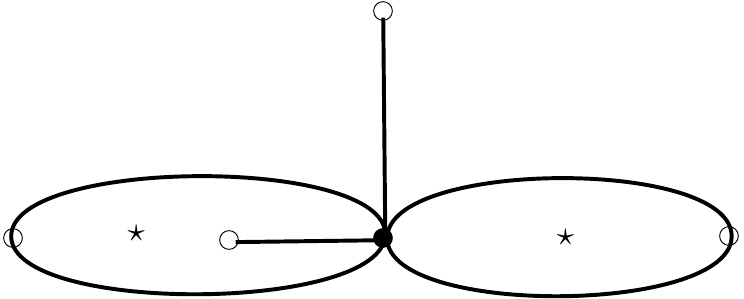}
This is probably enough information for the reader to understand
which cell complex we mean. (It has the same underlying bigraph as the
previous one, but the cell complexes are not isomorphic). The
topological realization, again a sphere, is represented below.

\image{\includegraphics[width=0.5\textwidth]{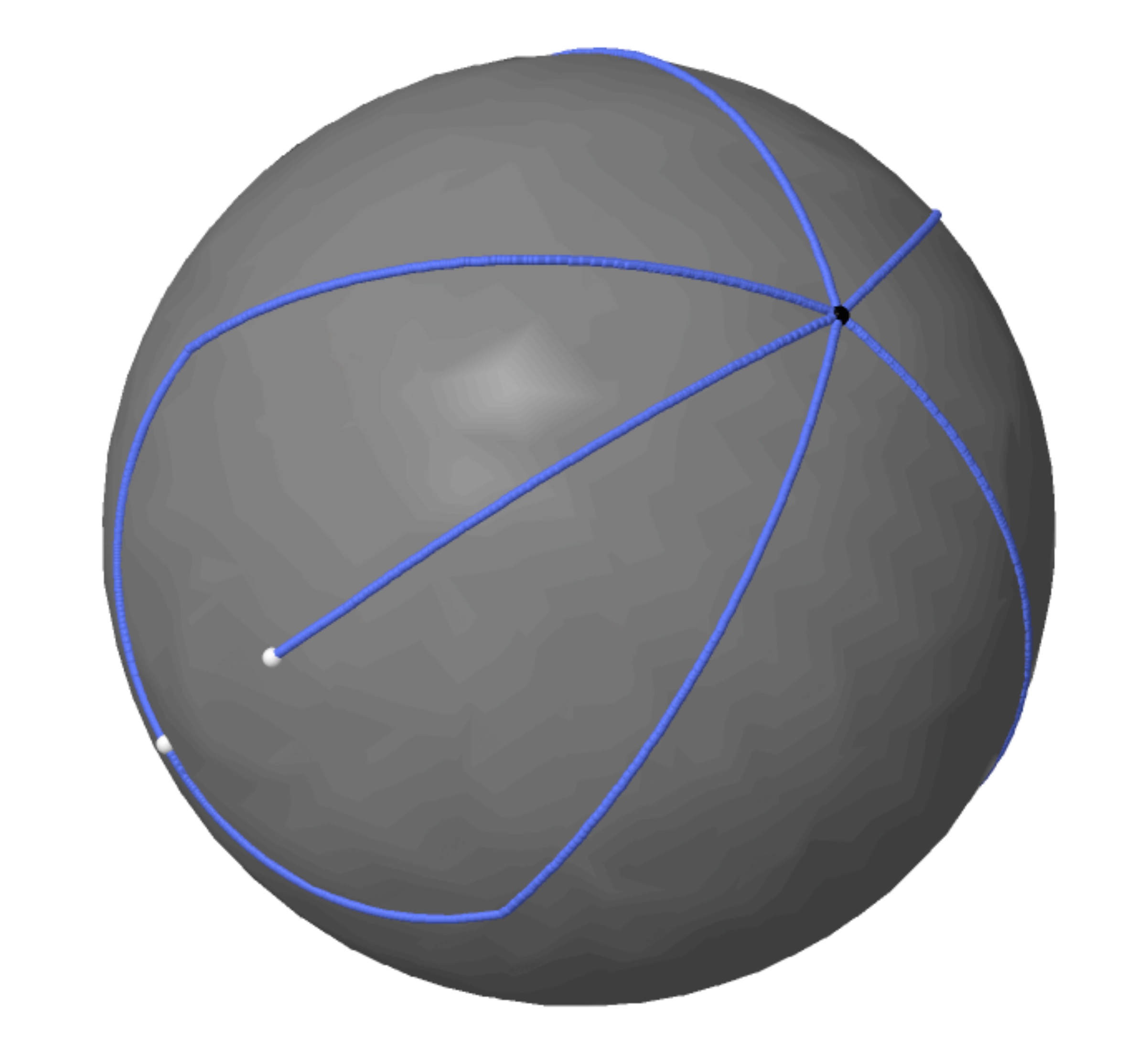}}
\end{ex}

\begin{ex}
It is harder to draw pictures in the following case. Take~$B= \{ b_1,
b_2, b_3 \}$ and~$W= \{ w_1, w_2, w_3 \}$, and add darts so that~$\g$
is ``the complete bipartite graph on~$3+3$ vertices'' : that is, place
a dart between each~$b_i$ and each~$w_j$, for~$1 \le i, j \le
3$. Since there are no multiple darts between any two vertices in this
bigraph, we can designate a dart by its endpoints; we may also
describe a loop by simply giving the list of vertices that it
visits. With this convention, we add four faces:
\begin{align*}
f_1 ~\textnormal{through}~ &  b_1, w_2, b_3, w_3, b_2, w_1 , \\
f_2 ~\textnormal{through}~ &  b_1, w_2, b_2, w_3 , \\
f_3 ~\textnormal{through}~ &  b_2, w_2, b_3, w_1 ,\\
f_4 ~\textnormal{through}~ &  b_1, w_3, b_3, w_1 \, . 
\end{align*}
(Each of these returns to its starting point in the end.) The
topological realization~$\topo{\cell}$ is homeomorphic to the
projective plane~$\r P^2$. We will show this with a picture:

\figurehere{0.5}{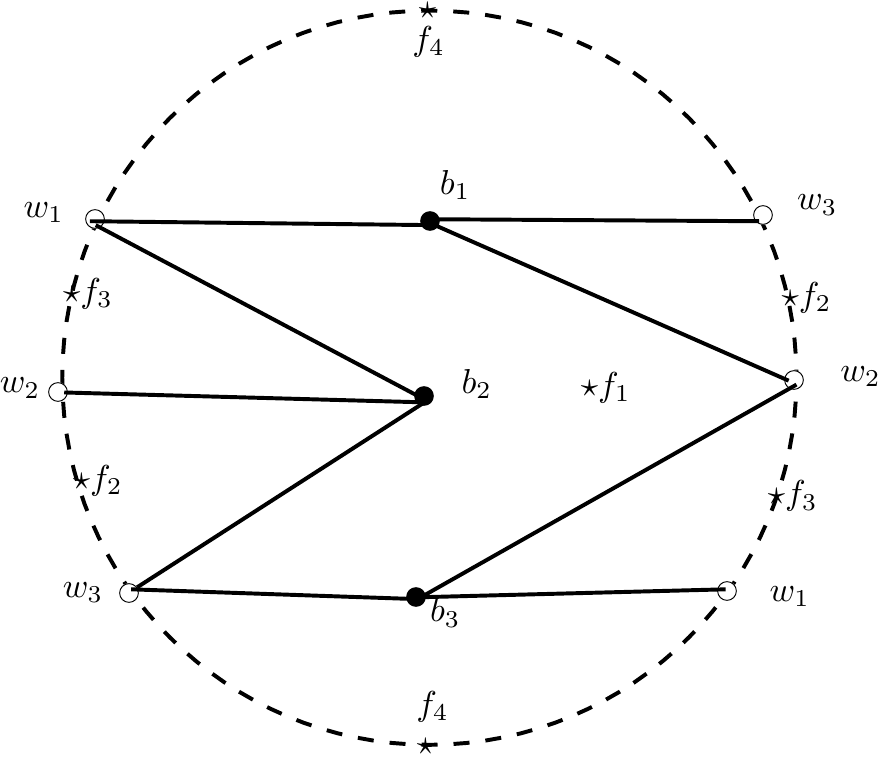}
Here we see~$\r P^2$ as the unit disc~$D$ with~$z$ identified
with~$-z$ whenever~$|z|= 1$; we caution that the dotted arcs,
indicating the boundary of the unit circle, are not darts.

\end{ex}

Here are some very basic properties of the geometric realization.

\begin{prop}
\begin{enumerate}
\item The space~$\topo{\cell}$ is connected if and only if~$\topo{\g}$ is.
\item The space~$\topo{\cell}$ is compact if and only if the complex
  is finite (ie $B$, $W$, $D$ and~$F$ are all finite).
\end{enumerate}
\end{prop}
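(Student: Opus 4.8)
The plan is to prove both statements by analyzing the quotient map $Z \to \topo{\cell}$ defined just above, exploiting the fact that $\topo{\cell}$ is built from $\topo{\g}$ by attaching closed discs along the boundary loops. The key observation throughout is that $\topo{\g}$ is a closed subspace of $\topo{\cell}$ (it is the image of the first summand of $Z = \topo{\g} \coprod Z_0$), and that each disc $\D_f$ maps into $\topo{\cell}$ so that its interior lands in the complement of $\topo{\g}$ while its boundary circle is glued onto $\topo{\g}$ via the identifications $g_i(t) \sim h_i(t)$. I would set up this picture cleanly first, since both parts reduce to standard facts about quotient maps and attaching spaces once the topology is pinned down.

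For part (1), I would argue as follows. If $\topo{\cell}$ is connected, then so is $\topo{\g}$: the key point is that $\topo{\g}$ is a deformation retract of $\topo{\cell}$ (each attached disc retracts onto its boundary loop, which already lies in $\topo{\g}$), so the two spaces have the same connected components; in particular one is connected iff the other is. Alternatively, and more elementarily, one can observe that every point of $\topo{\cell}$ lies in the same component as some point of $\topo{\g}$, because the image of each disc $\D_f$ is connected and meets $\topo{\g}$ (along its boundary). Conversely, if $\topo{\g}$ is connected, then since every added disc is attached along a loop that touches $\topo{\g}$, adjoining the (connected) images of the discs to the connected set $\topo{\g}$ keeps the union connected. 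I expect this direction to be routine once the attaching description is in hand.

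For part (2), I would first dispose of the easy implication: if the complex is finite then $Z$ is a finite disjoint union of compact pieces (finitely many unit intervals $I_d$, finitely many points of $B \coprod W$, and finitely many closed discs $\D_f$), hence compact, and $\topo{\cell}$ is a continuous image of $Z$ under the quotient map, hence compact. The substantive direction is the converse: assuming $\topo{\cell}$ compact, I must show $B$, $W$, $D$ and $F$ are all finite. Here the strategy is to exhibit, for each of the four sets, a discrete closed subspace of $\topo{\cell}$ in bijection with it, so that compactness forces finiteness. For the vertices this is clear since $B$ and $W$ embed as closed discrete subsets; for $D$ one can pick the midpoint of each edge $I_d$, obtaining a closed discrete family indexed by $D$; for $F$ one picks the center of each disc $\D_f$, which maps to a point not in $\topo{\g}$ and not in any other disc's image, again giving a closed discrete family.

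The main obstacle, and the place I would spend the most care, is verifying that these chosen subsets really are \emph{closed} and \emph{discrete} in the quotient topology — this is exactly where one must use the local finiteness hypothesis on degrees, together with the precise form of the identifications, to rule out accumulation. The danger is that infinitely many darts or faces could pile up near a single vertex; the degree-finiteness condition built into the definition of a bigraph is what prevents local accumulation at each vertex, and one must check that the quotient topology near each point of $\topo{\g}$ is as expected (for instance, a small neighborhood of a vertex of degree $k$ meets only the $k$ incident darts and the finitely many faces whose boundary loop passes through that vertex). Once this local control is established, a standard argument shows that any infinite subset of the chosen discrete family would have to accumulate somewhere, contradicting either discreteness or the finiteness of local data, and compactness then yields the finiteness of all four sets.
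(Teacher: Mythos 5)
Part (1) of your proposal contains a genuine error in the direction ``$\topo{\cell}$ connected $\Rightarrow$ $\topo{\g}$ connected''. Your ``key point'' --- that $\topo{\g}$ is a deformation retract of $\topo{\cell}$ --- is false: a disc attached along its boundary does not retract onto that boundary inside the resulting space. The paper's Example~\ref{ex-fundamental} (one dart, one digon face) is a counterexample: there $\topo{\cell}\cong S^2$ while $\topo{\g}$ is a contractible arc, so no deformation retraction can exist. Your fallback observation --- every point of $\topo{\cell}$ lies in the component of some point of $\topo{\g}$ --- only shows that $\pi_0(\topo{\g})\to\pi_0(\topo{\cell})$ is surjective, which is what you need for the \emph{other} direction; it says nothing about two components of $\topo{\g}$ possibly merging in $\topo{\cell}$. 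What is missing is an injectivity statement, and the ingredient that supplies it is that each boundary loop $\partial f$ traces out a \emph{connected} subset of $\topo{\g}$: given a separation $\topo{\g}=A\sqcup B$, each disc attaches entirely to $A$ or entirely to $B$, and sorting the (pairwise disjoint) open discs accordingly extends the separation to $\topo{\cell}$ (both pieces have closed preimage in $Z$, hence are clopen). The paper instead pushes every path of $\topo{\cell}$ into the $1$-skeleton (cellular approximation); either fix works, but your proposal as written establishes only one of the two implications.

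Part (2) is correct and takes a genuinely different route from the paper. For the nontrivial implication the paper exhibits an explicit open cover of $\topo{\cell}$ by the sets $q(U_f)$, each omitting a closed disc from every face but one, and extracts a finite subcover to bound $F$ (leaving $B$, $W$, $D$ to the reader); you instead exhibit closed discrete subsets in bijection with each of $B$, $W$, $D$, $F$ (vertices, dart midpoints, disc centres), which must be finite inside a compact space. Your approach is more uniform, treating all four sets by the same mechanism. One remark: you anticipate that the local degree-finiteness hypothesis is needed to verify closedness and discreteness, but it is not --- a subset of $\topo{\cell}$ is closed as soon as its preimage meets each summand $I_d$, $\D_f$, $B$, $W$ of $Z$ in a closed set, and for your chosen families these intersections are finite regardless of degrees. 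So the verification you flag as the main obstacle is in fact immediate from the definition of the disjoint-union and quotient topologies.
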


\begin{proof}
(1) It is quite easy to prove this directly, after showing that each
  path on~$\topo{\cell}$ is homotopic to one lying on~$\topo{\g}$. The
  reader who has recognized that the space $\topo{\cell}$ is, by
  definition, the realization of a CW-complex, whose~$1$-skeleton
  is~$\topo{\g}$, will see the result as a consequence of the cellular
  approximation theorem (\cite{bredon}, Theorem 11.4).

(2) By construction there is a quotient map 
\[ q \colon K = Y \coprod B \coprod W \coprod Z_0 \longrightarrow
\topo{\cell} \, ,   \]
where the notation is as above. Clearly $K$ is compact
if the complex is finite, so~$q(K) = \topo{\cell}$ must be compact,
too, and we have proved that the condition is sufficient.

To see that it is necessary as well, one can argue that the map~$q$ is
proper, or else use elementary arguments as follows. We show that the
faces must be finite in number when~$\topo{\cell}$ is compact, and the
reader will do similarly with the vertices and darts.

For each~$f \in F$, consider the open set~$U_f \subset K$ whose
complement is the union of the closed discs of radius~$\frac{1} {2}$
in all the discs~$D_{f'}$ for~$f' \ne f$ (this complement is closed by
definition of the disjoint union topology). By definition of the
quotient topology $q(U_f)$ is open in~$\topo{\cell}$, and the various
open sets~$q(U_f)$ form a cover of~$\topo{\cell}$ (each~$q(U_f)$ is
obtained by removing a closed disc from each face of~$\topo{\cell}$
but one). By compactness, finitely many of them will cover the space,
and so finitely many of the open sets~$U_f$ will cover~$K$. It follows
that~$F$ is finite.
\end{proof}

\subsection{Morphisms between cell complexes; triangulations}

Let us start with a provisional definition: a {\em naive morphism}
between~$\cell= (\g, F, \partial)$ and~$\cell' = (\g', F', \partial')$
is given by a morphism~$\g \to \g'$ together with a map~$\Phi
\colon F \to F'$ such that $\partial' \Phi(f) = \Delta (\partial f)$
for~$f \in F$; here the map~$\Delta \colon D \to D'$ has been extended
to the set~$L(\g)$ in the obvious way. With this definition, it is
clear that naive morphisms induce continuous maps between the
topological realizations.

However this definition does not allow enough morphisms. Let us
examine this.

\begin{ex}
We return to example \ref{ex-carre}, so we consider the cell
complex~$\cell$ depicted below:

\figurehere{0.5}{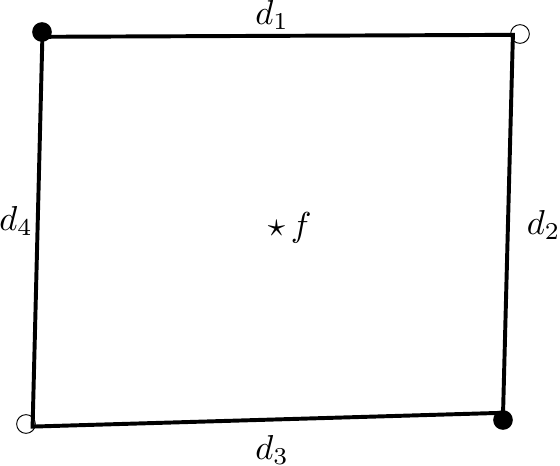}

Here $\partial f = (d_1, d_2, d_3, d_4)$. Now form a complex~$\cell'$
by changing only~$\partial$ to~$\partial'$, with $\partial' f = (d_2,
d_1, d_4, d_3)$. There is indeed a naive isomorphism between~$\cell$
and~$\cell'$, given by ``the reflection in the line joining the white
vertices''.

However, suppose now that we equip~$\cell$ with two faces~$f_1$
and~$f_2$ (leaving the bigraph unchanged) with~$\partial f_1= (d_1,
d_2, d_3, d_4) = \partial f_2$; then~$\topo{\cell}$ is the
sphere~$S^2$. On the other hand consider~$\cell'$ having the same
bigraph, and two faces satisfying~$\partial f'_1 = (d_1, d_2, d_3,
d_4)$ and~$\partial f'_2 = (d_2, d_1, d_3, d_4)$. Then it is readily
checked that there is no naive isomorphism between~$\cell$
and~$\cell'$. 

This is disappointing, as we would like to see these two as
essentially ``the same'' complexes. More generally we would like to
think of the boundaries of the faces in a cell complex as not having
a distinguished (black) starting point, and not having a particular
direction.
\end{ex}

The following better definition will be sufficient in many
situations. A {\em lax morphism} between~$\cell= (\g, F, \partial)$
and~$\cell' = (\g', F', \partial')$ is given by a morphism~$\g \to
\g'$ together with a map~$\Phi \colon F \to F'$ with the
following property. If~$f \in F$ with~$\partial f= (d_1, \ldots,
d_{2n})$, and if~$\partial' \Phi(f) = (d_1', \ldots , d'_{2m}) $, then
\[ \Delta  \left( \{ d_1, \ldots , d_{2n} \} \right) = \{ d'_1, \ldots,
d'_{2m}  \} \, ,  \]
where~$\Delta $ is the map~$D \to D'$. So naive morphisms are lax
morphisms, but not conversely.

\begin{ex}
Resuming the notation of the last example, the identity on~$\g$ and
the bijection~$F \to F'$, $f_1 \mapsto f_1'$, $f_2 \mapsto f_2'$,
together define a lax isomorphism between~$\cell$ and~$\cell'$.
\end{ex}

It is not immediate how lax morphisms can be used to induce continuous
maps. Moreover, the following phenomenon must be observed.

\begin{ex} \label{ex-projective-space}
We build a bigraph~$\g$ with only one black vertex, one white vertex,
and two darts~$d_1$ and~$d_2$ between them; $\topo{\g}$ is a
circle. Turn this into a cell complex~$\cell$ by adding one face~$f$
with~$\partial f=(d_1, d_2, d_1, d_2)$. The topological
realization~$\topo{\cell}$ is obtained by taking a copy of the unit
disc~$\D$, and identifying~$z$ and~$-z$ when~$|z|=1$: in other words,
$\topo{\cell}$ is the real projective plane~$\r P^2$.

Now consider the map~$z \mapsto -z$, from~$\D$ to itself, and factor
it through~$\r P^2$; it gives a self-homeomorphism
of~$\topo{\cell}$. The latter cannot possibly be induced by a lax
morphism, for it is the identity on~$\topo{\g}$: to define a
corresponding lax isomorphism we would have to define the self maps
of~$B, W$ and~$F$ to be the identity. Assuming that we had chosen a
procedure to get a continuous map from a lax morphism, surely the
identity would induce the identity.

However the said self-homeomorphism of~$\r P^2$ is simple enough that
we would like to see it corresponding to an isomorphism of~$\cell$.
\end{ex}

Our troubles seem to arise when repeated darts show up in the boundary
of a single face. We solve the problem by subdividing the faces,
obtaining the canonical {\em triangulation} of our objects.

Let~$\cell$ be a cell complex. We may triangulate the faces
of~$\topo{\cell}$ by adding a point in the interior of each face
(think of the point marked~$\star$ in the pictures), and connecting it
to the vertices on the boundary. More precisely, for each face~$f$,
with~$\partial f = (d_1, \ldots, d_{2n})$, we identify $2n$ subspaces
of~$\topo{\cell}$, each homeomorphic to a triangle, as the images
under the canonical quotient map of the sectors obtained on the unit
disc in the fashion described on the picture below for~$n=3$. We
denote them~$t_i^f$ with~$1 \le i \le 2n$.

\figurehere{0.4}{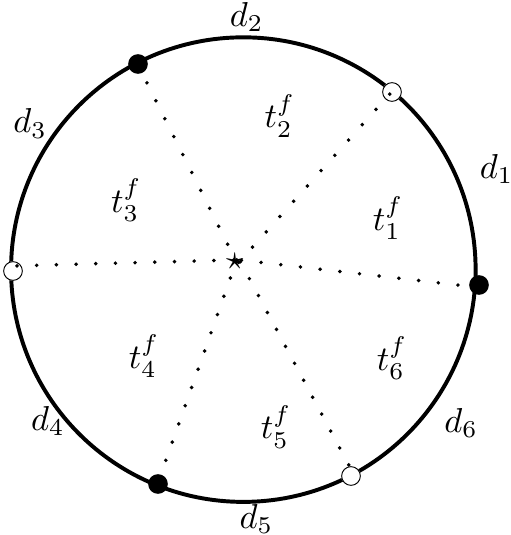}
(As before the labels~$d_i$ indicate the intended gluing, while
the sector bearing the name~$t_i^f$ will map to that subspace under
the quotient map.) The space~$\topo{\cell}$ is thus triangulated, yet
it is not necessarily (the realization of) a simplicial complex, as
distinct triangles may have the same set of vertices, as in
example~\ref{ex-fundamental}. This same example exhibits another
relevant pathology, namely that the disc corresponding to a face might
well map to something which is not homeomorphic to a disc anymore
(viz. the sphere), while the triangles actually cut the
space~$\topo{\cell}$ into ``easy'' pieces. It also has particularly
nice combinatorial properties.

We write~$T$ for the set of all triangles in the complex. We think
of~$T$ as an indexing set, much like~$B$, $W$, $D$ or~$F$. One can
choose to adopt a more combinatorial approach, letting~$t_1^f$,
$\ldots $, $t_{2n}^f$ be (distinct) symbols attached to the face~$f$
whose boundary is~$(d_1, \ldots, d_{2n})$, with~$T$ the set of all
symbols. There is a map~$\DD \colon T \to D$ which associates~$t_i^f$
with~$\DD( t_i^f) = d_i$, there is also a map~$\F \colon T \to F$
with~$\F (t_i^f) = f$. We will gradually use more and more geometric
terms when referring to the triangles, but it is always possible to
translate them into combinatorial relations.


Each~$t \in T$ has vertices which we may call~$\bullet$, $\circ$
and~$\star$ unambiguously. Its sides will be called~$\bullet - \circ$,
$\star - \bullet$ and~$\star - \circ$. Each~$t$ also has a
neighbouring triangle obtained by reflecting in the~$\star - \bullet$
side; call it~$a(t)$. Likewise, we may reflect in the~$\star - \circ$
side and obtain a neighbouring triangle, which we call~$c(t)$. In
other words, $T$ comes equipped with two permutations~$a$ and~$c$, of
order two and having no fixed points. (In particular if~$T$ is finite
it has even cardinality.) The notation~$a, c$ is standard, and there
is a third permutation~$b$ coming up soon. Later we will write~$t^a$
and~$t^c$ instead of~$a(t)$ and~$c(t)$, see
remark~\ref{rmk-convention-permutations}.

\begin{ex}
In example~\ref{ex-fundamental}, there are two triangles, say~$T=\{ 1,
2\}$, and~$a = c = $ the transposition~$(12)$. 
\end{ex}

\begin{ex} 
Let us consider the second complex from
example~\ref{ex-complexes-by-pictures}, that is let us have a look at 

\figurehere{0.5}{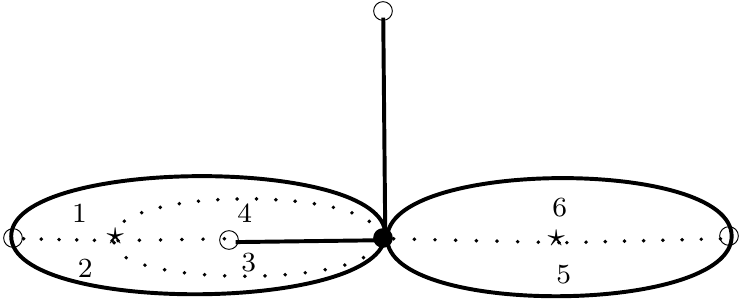}
Let us first assume that there is no ``outside face'', so let the
the triangles be numbered from~$1$ to~$6$. The permutation~$a$ is then 
\[ a = (14)(23)(56) \, ,   \]
while 
\[ c = (12)(34)(56) \, .   \]
If one adds a face at infinity, there are six new triangles, and the
permutations~$a$ and~$c$ change accordingly. We leave this as an exercise.

\end{ex}

We have at long last arrived at the official definition of a {\em
  morphism} between~$\cell= (\g, F, \partial)$ and~$\cell' = (\g', F',
\partial')$. We define this to be given by a morphism~$\g \to \g'$
(thus including a map~$\Delta \colon D \to D'$) and a map~$\Theta
\colon T \to T'$ which
\begin{enumerate}
\item verifies that for each triangle~$t$, one has~$\DD'( \Theta(t)) =
  \Delta (\DD(t))$,
\item is compatible with the permutations~$a$ and~$c$, that is~$\Theta
  (a(t)) = a(\Theta (t) ) $ and~$\Theta (c(t)) = c( \Theta (t) )$.
\end{enumerate} 
It is immediate that morphisms induce continuous maps between the
topological realizations. These continuous maps restrict to
homeomorphisms between the triangles.

Should this definition appear too complicated, we hasten to add:

\begin{lem}
Let~$\cell$ be a cell complex such that, for each face~$f$
with~$\partial f = (d_1, \ldots, d_{2n})$, the darts~$d_1, \ldots,
d_{2n}$ are distinct. Let~$\cell'$ be another cell complex with the
same property. Then any lax morphism between~$\cell$ and~$\cell'$
defines a unique morphism, characterized by the property
that~$\F(\Theta(t)) = \Phi(\F(t)) $ for every triangle~$t$.
\end{lem}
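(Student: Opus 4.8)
The plan is to build the map $\Theta \colon T \to T'$ by hand from the lax data $(\Delta, \Phi)$, using the distinctness hypotheses to see that there is no choice in the matter, and then to verify the two axioms (1) and (2) of a morphism. First I would construct $\Theta$ and prove uniqueness simultaneously. Let $t$ be a triangle lying in the face $f = \F(t)$. Any morphism extending the given bigraph morphism and satisfying the stated characterization must send $t$ into the face $\Phi(f)$ (by $\F(\Theta(t)) = \Phi(\F(t))$) and must satisfy $\DD'(\Theta(t)) = \Delta(\DD(t))$ (axiom (1)). By the lax condition the dart $\Delta(\DD(t))$ lies in the boundary of $\Phi(f)$, and by the distinctness hypothesis on $\cell'$ that boundary carries each of its darts exactly once; hence exactly one triangle of $\Phi(f)$ bears the dart $\Delta(\DD(t))$. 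This forces
\[ \Theta(t) = \text{the unique } t' \in T' \text{ with } \F(t') = \Phi(\F(t)) \text{ and } \DD'(t') = \Delta(\DD(t)), \]
which proves at once that $\Theta$ is uniquely determined and that axiom (1) and the characterization hold by construction.

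The real content is axiom (2), the compatibilities $\Theta a = a \Theta$ and $\Theta c = c \Theta$. The key observation is that, inside a single face, the involutions $a$ and $c$ are governed entirely by the local combinatorics of the boundary loop: if $\partial f = (d_1, \ldots, d_{2n})$, then $a$ exchanges the two triangles whose darts meet at a black corner (two consecutive darts of the loop sharing a black vertex), while $c$ does the same at the white corners. The distinctness hypothesis on $\cell$ enters here, so that each $t_i^f$ is unambiguously tagged by its dart $d_i$. Thus, writing $\bullet = \B(\DD(t))$, the triangle $a(t)$ is the unique other triangle of $f$ sharing the radial side $\star - \bullet$ with $t$, and its dart is the neighbour of $\DD(t)$ at that black corner.

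I would then argue that $\Delta$ transports this local picture faithfully into $\Phi(f)$. Since the bigraph morphism is compatible with $\B$ and $\W$, the darts $\Delta(\DD(t))$ and $\Delta(\DD(a(t)))$ are again incident to a common black vertex of $\Phi(f)$; what must be checked is that they are precisely the two darts meeting at the corresponding black corner of $\partial \Phi(f)$. Granting this, both $a(\Theta(t))$ and $\Theta(a(t))$ are the triangle of $\Phi(f)$ carrying that neighbouring dart, so they coincide, and the argument for $c$ is word-for-word the same with ``white'' replacing ``black''.

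The main obstacle is exactly this last verification: one must promote the lax condition — mere equality of the \emph{sets} of darts — to a statement about the \emph{cyclic} boundary structure, namely that under $\Delta$ the boundary loop of $f$ is carried onto the boundary loop of $\Phi(f)$ compatibly with the black- and white-corner adjacencies. The distinctness hypotheses on both $\cell$ and $\cell'$ are what make this correspondence rigid, pinning down which darts are consecutive around each vertex. I would carry it out by a local analysis at each black (respectively white) vertex of $\Phi(f)$, comparing the two darts adjacent to $\Delta(\DD(t))$ there with the $\Delta$-images of the two darts adjacent to $\DD(t)$ in $f$. Once corner-adjacency is matched, axiom (2) follows formally and the proof is complete.
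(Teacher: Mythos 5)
Your construction of $\Theta$ and the uniqueness argument coincide with the paper's: $\Theta(t)$ is forced to be the unique triangle $t'$ with $\F(t')=\Phi(\F(t))$ and $\DD'(t')=\Delta(\DD(t))$, existence coming from the lax condition and uniqueness from the distinctness of the darts in $\partial'\Phi(f)$. That part is fine.

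The gap is in the compatibility with $a$ and $c$, which you correctly single out as the real content and then do not prove. You reduce it to the claim that $\Delta$ carries the boundary loop of $f$ onto that of $\Phi(f)$ compatibly with the black- and white-corner adjacencies, and defer this to a ``local analysis'' that is never carried out; as written this is the whole lemma restated, not a proof. Worse, the route you propose is the wrong one: the lax condition is genuinely only an equality of \emph{sets} of darts, and a bigraph morphism (compatibility with $\B$ and $\W$) does not by itself control which of several darts meeting a given vertex are consecutive in the boundary tuple, so there is no way to recover the cyclic structure of $\partial'\Phi(f)$ from the lax data alone. The paper avoids cyclic order entirely: it characterizes $a(t)$ as the unique triangle of the face $\F(t)$, other than $t$, whose dart has the same black vertex as $\DD(t)$. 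Since $\Delta$ commutes with $\B$, both $\Theta(a(t))$ and $a(\Theta(t))$ satisfy this characterization inside $\Phi(f)$ relative to $\Theta(t)$, so they must be equal; the case of $c$ is identical with white vertices. If you want to repair your argument, prove and use this vertex-based description of $a$ and $c$ rather than trying to match up boundary loops. (Note that this description is itself only unambiguous when a face meets each vertex at a single corner -- a point both your sketch and the paper's proof leave implicit -- but it, and not the cyclic order, is what the morphism data actually preserves.)
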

(Recall that lax morphisms have a map~$\Phi$ between the sets of
faces, and morphisms have a map~$\Theta $ between the sets of
triangles.)

Many cell complexes in practice satisfy the property stated in the
lemma, and for these we specify morphisms by giving maps~$B \to B'$,
$W\to W'$, $D \to D'$, and~$F \to F'$.

\begin{proof}
Any triangle~$t$ in~$\cell$ is now entirely determined by the
face~$\F(t)$ and the dart~$\DD(t)$; the same can be said of triangles
in~$\cell'$. So~$\Theta (t)$ must be defined as the only triangle~$t'$
such that~$\F(t')$ and~$\DD(t')$ are appropriate (in symbols $\F(t') =
\Phi(\F(t))$ and $\DD(t') = \Delta (\DD(t))$). The definition of lax
morphisms guarantees the existence of~$t'$. 

That~$\Theta $ is compatible with~$a$ and~$c$ is automatic
here. Indeed $a(t)$ is the only triangle such that~$\F(a(t)) = \F(t)$
and such that~$\DD(a(t))$ has the same black vertex as~$\DD(t)$. An
analogous property is true of both~$\Theta (a(t))$ and~$a(\Theta
(t))$, which must be equal. Likewise for~$c$.
\end{proof}

\begin{ex}
We come back to example~\ref{ex-projective-space}. The face~$f$ is
divided into~$4$ triangles, say~$t_1, t_2, t_3, t_4$. We can define a
self-isomorphism of~$\cell$ by~$\Theta (t_i) = t_{i+2}$ (indices mod
4), and everything else the identity.  The induced continuous
map~$\topo{\cell} \to \topo{\cell}$ is the one we were after (once
some identification of~$\topo{\cell}$ with~$\r P^2$ is made and fixed).
\end{ex}

We are certainly {\em not} claiming that any continuous
map~$\topo{\cell} \to \topo{\cell'}$, or even any homeomorphism, will
be induced by a morphism~$\cell \to \cell'$. For a silly example,
think of the map~$z \mapsto |z| z$ from the unit disc~$\D$ to itself,
which moves points a little closer to the origin; it is easy to
imagine a cell complex~$\cell$ with~$\topo{\cell} \cong \D$ such that
no self-isomorphism can induce that homeomorphism. In fact, whenever a
self-homeomorphism of~$\topo{\cell}$ leaves the triangles stable, then
the best approximation of it which we can produce with an
automorphism of~$\cell$ is the identity. 

However, the equivalence of categories below will show that we have
``enough'' morphisms, in a sense.

\subsection{Surfaces}

Here we adress a natural question: under what conditions on~$\cell$
is~$\topo{\cell}$ a surface (topological manifold of dimension 2), or
a surface-with-boundary?

A condition springing to mind is that each dart should be on the
boundary of precisely two faces (one or two faces for
surfaces-with-boundary). However this will not suffice, as we may well
end up with ``two discs touching at their centres'', that is, a
portion of~$\topo{\cell}$ might look like this:

\figurehere{0.2}{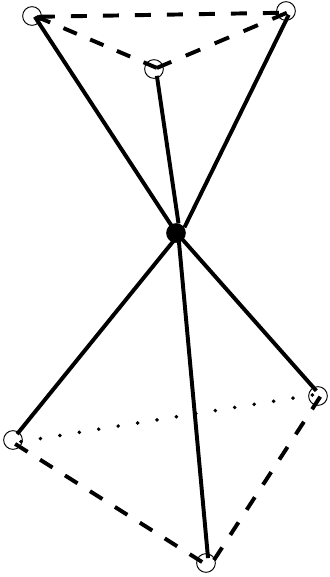}
(On this picture you are meant to see a little bit of six faces,
three at the top and three at the bottom, all touching at the black
vertex; each visible dart is on the boundary of precisely two faces,
yet~$\topo{\cell}$ is not a manifold near the black vertex.)

This is the only pathology that can really occur. To formulate the
condition on~$\cell$, here is some terminology. We say that a dart~$d$
is on the boundary of the face~$f$ if, of course, $d$ shows up in the
tuple~$\partial f$; since~$d$ may appear several times in~$\partial f$,
we define its {\em multiplicity} with respect to~$f$ accordingly. We
say that two darts~$d$ and~$d'$ appear consecutively in~$f$
if~$\partial f$ contains either the sequence~$d, d'$ or~$d', d$. In
this case~$d$ and~$d'$ have an endpoint in common; conversely if they
do have a common point, say a black one, then they appear
consecutively in~$f$ if and only if there are triangles~$t$ and~$t'$
with~$\F(t) = \F(t')= f$ such that~$d= \DD(t)$, $d'= \DD(t')$, and~$t,
t'$ are the image of one another under the permutation~$a$ (the
symmetry in the~$\star - \bullet$ side). Use~$c$ if the common point
is white.


Now let us fix a vertex, say a black one~$b \in B$. It may be
surprising at first that the condition that follows is in terms of
graphs; but it is the quickest way to phrase things. We
  take~$\B^{-1}(b)$, the set of darts whose black vertex is~$b$, as
  the set of vertices of a graph~$\cell_b$, and called the {\em
    connectivity graph} at~$b$. We place an edge between~$d$ and~$d'$
  whenever they appear consecutively in some face~$f$. Note that this
  may create loops in~$\cell_b$ as~$d=d'$ is not ruled out. 


We note that~$\cell_b$ has finitely many vertices. If we assume that
the darts in~$\cell$ are on the boundary of no more than two faces,
counting multiplicities, then it follows that each vertex in~$\cell_b$
is connected to at most two others (corresponding to the images
under~$a$ of the two triangles, at most, which may have the dart as a
side). Thus when~$\cell_b$ is connected, it is either a straight path
or a circle. 

There is a similar discussion involving a graph~$\cell_w$ for a white
vertex~$w \in W$.

Here is an example of complex with the connectivity graphs drawn in
red:

\figurehere{0.8}{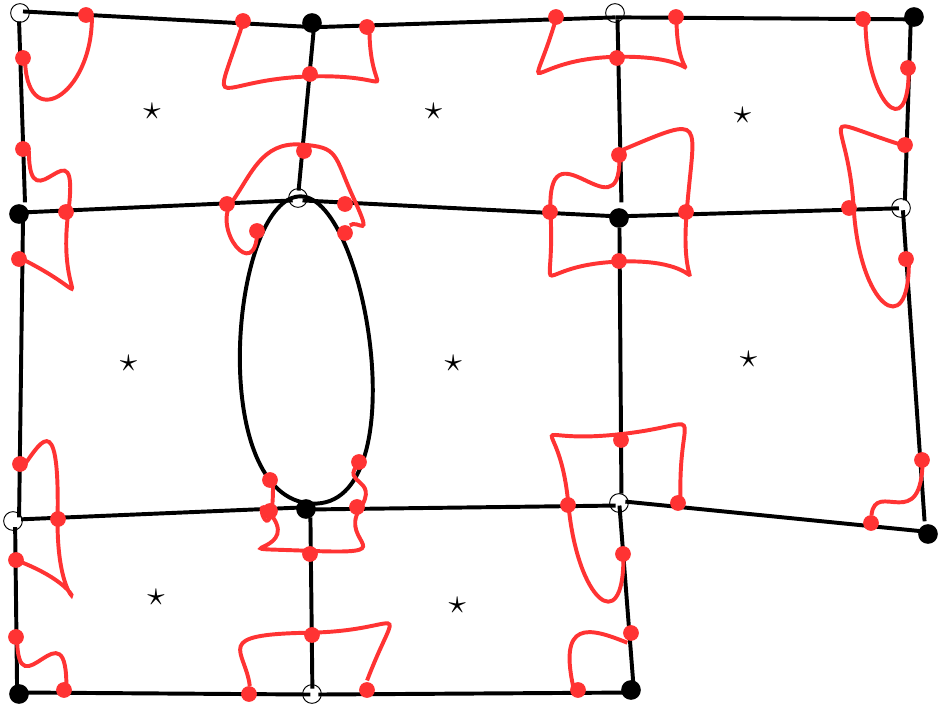}

\begin{prop} \label{prop-dessin-iff-conditions}
Let~$\cell$ be a complex. Then~$\topo{\cell}$ is a topological surface
if and only if the following conditions are met: \begin{enumerate}
\item each vertex has positive degree,
\item each dart is on the boundary of precisely two faces, counting
  multiplicities, 
\item all the connectivity graphs are connected.
\end{enumerate}

Necessary and sufficient conditions for~$\topo{\cell}$ to be a
surface-with-boundary are obtained by replacing (2) with the condition
that each dart is on the boundary of either one or two faces, counting
multiplicities.
\end{prop}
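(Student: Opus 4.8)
The plan is to verify directly that $\topo{\cell}$ is locally Euclidean (respectively, locally modelled on the plane or the half-plane) by inspecting a neighbourhood of every point, and to see that each of the three conditions governs exactly one class of points. I would first dispose of the generic points. A point interior to a triangle, a point interior to one of the two radial sides $\star-\bullet$ or $\star-\circ$ of a triangle, and the centre $\star$ of a face are all interior to some disc $\D_f$, and the quotient map identifies nothing there; each therefore has a neighbourhood which is an honest open disc. Such points never cause trouble and never call on any hypothesis, so the whole question is concentrated on the $\bullet-\circ$ sides and the two sorts of vertices.

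Next I would treat a point $p$ interior to a $\bullet-\circ$ side, that is, an interior point of the image of some dart interval $I_d$. A neighbourhood of $p$ is built by gluing, along a common segment, one half-disc for each triangle whose $\bullet-\circ$ side is attached to $I_d$; the number of these half-discs is exactly the multiplicity of $d$ summed over all faces. Gluing $k$ half-planes along their common boundary line produces a space that is locally Euclidean precisely when $k=2$, locally a half-plane precisely when $k=1$, one-dimensional when $k=0$, and neither a surface nor a surface-with-boundary when $k\ge 3$. This is exactly condition~(2) for the surface statement, and its relaxation for the surface-with-boundary statement.

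The heart of the matter is the analysis at a black vertex $b$, the white vertices being symmetric with $c$ and $\cell_w$ replacing $a$ and $\cell_b$. Condition~(1) is needed only to discard an isolated point. Assuming~(2), let $\Omega$ be the set of triangles having $b$ as their $\bullet$-corner; it carries the involution $a$, and a second fixed-point-free involution $s$ pairing the two triangles glued to a common dart at $b$. I would show that the link of $b$ is assembled from one small arc per triangle of $\Omega$, these arcs being joined in pairs at one end by $a$ and at the other by $s$; hence the link is a disjoint union of circles, one per orbit of $\langle a,s\rangle$ on $\Omega$, and $b$ is a manifold point if and only if there is a single circle, i.e.\ the action is transitive. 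The key identification is that the connectivity graph $\cell_b$ is precisely the quotient of $\Omega$ by $s$ with edges induced by $a$: its vertices, the darts at $b$ in $\B^{-1}(b)$, are the $s$-orbits, and its edges record $a$-adjacency. Thus $\cell_b$ is connected if and only if $\langle a,s\rangle$ is transitive, which is condition~(3). For the surface-with-boundary version, relaxing~(2) lets $s$ acquire fixed points, namely the darts of multiplicity one; the link then becomes a union of circles and arcs, and connectedness of $\cell_b$ again isolates a single component, yielding a half-disc neighbourhood at $b$.

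The step I expect to be the main obstacle is making the vertex computation fully rigorous: one must construct the link of $b$ as a genuine subspace of $\topo{\cell}$, check that the two modes of gluing at $b$ (across a radius by $a$, across a dart by $s$) account for all local identifications and no others, and verify that the alternating action of the two involutions really traces out one circle per orbit. Once this is in place, the half-plane count for~(2) and the symmetric treatment of white vertices are routine. I would close by remarking that the separation and countability axioms demanded of a surface follow readily from the construction together with the local finiteness of the degrees, so that checking the local model at every point indeed suffices.
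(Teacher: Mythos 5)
The paper offers no proof of this proposition --- it is explicitly ``left as an exercise'' --- so there is nothing to compare your argument against; judged on its own terms, your proof is correct and is the natural one. The case division is exhaustive: points interior to a disc $\D_f$ (including the radial sides and the centre $\star$, since the triangulation subdivides the disc without performing identifications there) are automatically manifold points; the count of half-discs glued along an interior point of a dart is exactly the total multiplicity in condition~(2); and at a vertex, identifying the link with the $1$-complex built from one arc per triangle of $\Omega$, glued at one end by $a$ and at the other by your involution $s$ (which is the restriction to $\Omega$ of the permutation the paper will later call $b$), correctly reduces condition~(3) to transitivity of $\langle a,s\rangle$ on $\Omega$, since $\cell_b$ is precisely the quotient of $\Omega$ by $s$ with edges given by $a$. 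The cone on one circle (resp.\ one arc) is a disc (resp.\ half-disc), while a disconnected link reproduces the ``two discs touching at their centres'' pathology the paper draws, so the dichotomy is exactly right, as is the relaxation for the boundary case where $s$ acquires fixed points. You correctly isolate the only step requiring real care, namely exhibiting a neighbourhood of a vertex as the cone on its link inside the quotient topology. One small caveat: your closing remark that second countability ``follows readily from the local finiteness of the degrees'' is too quick, since the complex is allowed to be infinite and local finiteness gives no global countability; but the paper's notion of surface is plainly the local one, so this does not affect the equivalence being asserted.
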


This should be obvious at this point, and is left as an exercise.

We have reached the most important definition in this section. A {\em
  dessin} is a complex~$\cell$ such that~$\topo{\cell}$ is a
surface (possibly with boundary). Whenever~$S$ is a topological
surface, a {\em dessin on~$S$} is a cell complex~$\cell$ together
with a specified homeomorphism~$h \colon \topo{\cell} \longrightarrow
S$. 
Several examples of dessins on the sphere have been
given.

Dessins have been called {\em hypermaps} and {\em dessins d'enfants}
in the literature. When all the white vertices have degree precisely
two, we call a dessin {\em clean}. Clean dessins are sometimes called
{\em maps} in the literature.

\subsection{More permutations}

Let~$\cell$ be a dessin. Each triangle~$t \in T$ determines a dart~$d=
\DD(t)$, and $d$ belongs to one or two triangles (exactly two
when~$\topo{\cell}$ has no boundary). We may thus define a
permutation~$b$ of~$T$ by requiring
\[ b(t) = \left\{ \begin{array}{l}
t ~\textnormal{if no other triangle has}~d ~\textnormal{as a
side} \, ,  \\
t' ~\textnormal{if}~ t' ~\textnormal{has}~d ~\textnormal{as a side
  and}~t' \ne t \, . 
\end{array}\right. \]

\begin{thm} \label{thm-complexes-same-as-permutations}
Let~$T$ be a finite set endowed with three permutations~$a$, $b$, $c$,
each of order two, such that~$a$ and~$c$ have no fixed points. Then
there exists a dessin~$\cell$, unique up to unique isomorphism,
such that~$T$ and~$a, b, c$ can be identified with the set of
triangles of~$\cell$ with the permutations described above.
\end{thm}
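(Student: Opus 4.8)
The plan is to invert the triangulation: I would recover all the combinatorial data of a dessin from the three permutations by reading its vertices, darts and faces as orbits of suitable subgroups of $\langle a,b,c\rangle$ acting on $T$. Concretely, set $B=T/\langle a,b\rangle$, $W=T/\langle b,c\rangle$, $D=T/\langle b\rangle$ and $F=T/\langle a,c\rangle$, with $\DD\colon T\to D$ and $\F\colon T\to F$ the quotient maps. The motivation is geometric: around a black vertex the incident triangles are swapped alternately by $a$ (the $\star$--$\bullet$ reflection) and $b$ (the dart reflection), so a black vertex is exactly an $\langle a,b\rangle$-orbit; likewise white vertices correspond to $\langle b,c\rangle$-orbits, faces to $\langle a,c\rangle$-orbits, and the triangles sharing a given dart to a $\langle b\rangle$-orbit. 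The maps $\B\colon D\to B$ and $\W\colon D\to W$ are well defined because a $\langle b\rangle$-orbit lies inside a single $\langle a,b\rangle$-orbit and inside a single $\langle b,c\rangle$-orbit. This produces a bigraph $\g=(B,W,D,\B,\W)$.

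Next I would build the boundary map. Since $a$ is a fixed-point-free involution and each $\langle a,c\rangle$-orbit $O$ is $a$-stable, $a$ restricts to a fixed-point-free involution on $O$, so $|O|=2n$ is even. For each face-orbit I pick a representative $t_1\in O$ and traverse $O$ by alternating the generators, $t_2=c(t_1)$, $t_3=a(t_2)$, $t_4=c(t_3)$, and so on; reading off $d_i=\DD(t_i)$ yields a tuple $(d_1,\dots,d_{2n})$ which, by the very definition of $B$, $W$, $D$, satisfies the incidence relations of a loop (consecutive darts alternately share a white and a black vertex). Setting $\partial$ of the face equal to this loop defines a cell complex $\cell$, and I would check it is a dessin using Proposition~\ref{prop-dessin-iff-conditions}: every orbit is nonempty, so all degrees are positive; a dart $\{t,b(t)\}$ lies on the boundary of the faces $\F(t)$ and $\F(b(t))$ with total multiplicity two when $b(t)\ne t$ and multiplicity one when $b(t)=t$ (a surface in the first case, a surface-with-boundary in the second); and each connectivity graph is connected because at a black vertex its vertices are the $\langle b\rangle$-orbits inside one $\langle a,b\rangle$-orbit $O$, its edges are the $a$-moves, and $\langle a,b\rangle$ acts transitively on $O$, so contracting the $b$-moves in the connected graph on $O$ given by all $a$- and $b$-moves leaves a connected graph. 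The white-vertex case is identical with $\langle b,c\rangle$.

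The heart of the matter, and the step I expect to fight with, is verifying that re-triangulating $\cell$ returns exactly $(T,a,b,c)$. The $2n$ triangles produced by the canonical triangulation of the face $O$ are in natural bijection with the $2n$ elements of $O$ via $t_i^f\leftrightarrow t_i$, so the whole triangle set of $\cell$ is identified with $\coprod_{O\in F}O=T$, and under this bijection $\DD$ and $\F$ agree with the quotient maps by construction. The delicate point is matching the \emph{geometric} reflections of $\cell$ with the given permutations: I must confirm that two sectors of a face sharing their $\star$--$\circ$ side are precisely the pairs $t_i,c(t_i)$ recorded in the traversal, and that the $\star$--$\bullet$ reflection reproduces $a$, so that the conventions fixing where a loop starts and in which order it alternates line up with the bookkeeping above. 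Once this alignment is pinned down, the $a$ and $c$ of $\cell$ coincide with $a,c$, while the $b$ of $\cell$ swaps the two triangles on a dart and hence acts as the given $b$, including its fixed points on boundary darts.

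Finally, for uniqueness I would observe that every piece of data is forced. In any dessin realizing $(T,a,b,c)$ the sets $B,W,D,F$ and the maps $\B,\W,\DD,\F$ are recovered from the triangles as exactly the orbit spaces and quotient maps above, and the face boundaries are determined up to the rotations and reflection already tolerated by the definition of a loop. Thus the identity on $T$ induces a bijection on each of $B,W,D,F$ compatible with all structure maps, which is the desired isomorphism; and it is the only one, since any isomorphism compatible with the two identifications of the triangle sets with $T$ must be the identity on $T$, and therefore on every associated orbit space.
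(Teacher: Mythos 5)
Your proposal is correct and follows essentially the same route as the paper: defining $B$, $W$, $D$, $F$ as the orbit spaces of $\langle a,b\rangle$, $\langle b,c\rangle$, $\langle b\rangle$, $\langle a,c\rangle$ on $T$, reading off each face boundary by alternating $c$ and $a$ along its orbit, verifying the conditions of Proposition~\ref{prop-dessin-iff-conditions} (with the same connectivity argument via words in $a$ and $b$), and deducing uniqueness from the fact that all the structure is recovered as orbit data, so an equivariant bijection of triangle sets forces the isomorphism. The only cosmetic difference is notation ($\langle a,b\rangle$ versus the paper's $G_{ab}$), and your explicit remark about the fixed points of $b$ producing boundary darts is consistent with the paper's conventions.
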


Later we will rephrase this as an equivalence of categories (with the
proof below containing all that is necessary).

\begin{rmk} \label{rmk-convention-permutations}
It is time for us to adopt a convention about groups of
permutations. If~$X$ is any set, and~$S(X)$ is the set of permutations
of~$X$, there are (at least) two naturals ways of turning~$S(X)$ into
a group. When~$\sigma, \tau \in S(X)$, we choose to define~$\sigma
\tau $ to be the permutation~$x \mapsto \tau (\sigma
(x))$. Accordingly, we will write~$x^\sigma$ instead of~$\sigma (x)$,
so as to obtain the formula~$x^{\sigma \tau } = (x^\sigma)^\tau$. 

With this convention the group~$S(X)$ acts on~$X$ {\em on the
  right}. This will simplify the discussion later when we bring in
covering spaces (personal preference is also involved here). 
\end{rmk}

\begin{proof}
Let~$G$ be the group of permutations of~$T$ generated by~$a$, $b$,
and~$c$, let~$G_{ab}$ be the subgroup generated by~$a$ and~$b$ alone,
and similarly define~$G_{bc}$, $G_{ac}$, $G_a$, $G_b$ and~$G_c$. Now
put 
\[ B= T/ G_{ab} \, , \qquad W= T/G_{bc} \, , \qquad D = T/G_b \, ,
\qquad F = T / G_{ac}  \, .  \]
The maps~$\B \colon D \to B$ and~$\W \colon D \to W$ are taken to be
the obvious ones, and we already have a bigraph~$\g$. It remains to
define the boundary map~$\partial \colon F \to L(\g)$ in order to define a
cell complex.

So let~$f \in F$, and let~$t \in T$ represent~$f$ (the different
choices we can make for~$t$ will all lead to isomorphic
complexes). Consider the elements~$t$, $t^c$, $t^{ca}$, $t^{cac}$,
$t^{caca}$, $\ldots$, alternating between~$a$ and~$c$.  Since~$T$ is
finite, there can be only finitely many distinct points created by
this process. Using the fact that~$a$ and~$c$ are of order two, and
without fixed points, it is a simple exercise to check that the
following list exhausts the orbit of~$t$ under~$G_{ac}$:
\[ t, t^c, t^{ca}, \ldots,  t^{cac \cdots acacac} \, .   \]
(There is an even number of elements, and the last one ends with
a~$c$.) We then let~$\partial f = (d_1, \ldots, d_{2n})$, where~$d_1$,
$d_2$, $\ldots $ is the~$G_b$-orbit of~$t$, $t^c$, $\ldots $

We have thus defined a cell complex~$\cell$ out of~$T$ together
with~$a$, $b$ and~$c$. It is a matter of checking the definitions to
verify that~$T$ can be identified with the set of triangles
of~$\cell$, in a way that is compatible with all the structure -- in
particular, the map~$T \to T/G_b$ is the map~$\DD$ which to a
triangle~$t$ associates the unique dart which is a side of~$t$, and
from the fact that~$b$ has order two we see that~$\cell$ satisfies
condition (2) of proposition~\ref{prop-dessin-iff-conditions}
(while (1) is obvious).

Let us examine condition (3). Any two darts in~$\cell$ having the same
black endpoint in~$B$ can be represented mod~$G_b$ respectively
by~$t$ and~$t^w$ where~$w$ is a word in~$a$ and~$b$. As we read the letters of~$w$ from left to right and think of the successive darts obtained from~$t$, each occurrence of~$a$ replaces a dart with a {\em consecutive} one, by
definition; occurrences of~$b$ do not change the dart. So~$\cell_b$ is
connected, and~$\cell$ is a dessin.

The uniqueness statement, to which we turn, is almost tautological
given our definition of morphisms. Suppose~$\cell$ and~$\cell'$ are
dessins with sets of triangles written~$T_\cell$
and~$T_{\cell'}$, such that there are equivariant bijections~$\iota
\colon T_\cell \to T$ and~$\iota ' \colon T_{\cell'} \to
T$. Then~$\Theta = (\iota ')^{-1} \circ \iota $ is an equivariant
bijection between~$T_\cell$ and~$T_{\cell'}$. Since~$B$, $W$ and~$D$
can be identified with certain orbits within~$T_{\cell}$, and
similarly with~$B'$, $W'$ and~$D'$, the maps~$B \to B'$, $W \to W'$
and~$D \to D'$ must and can be defined as being induced from~$\Theta
$. Hence there is a unique isomorphism between~$\cell$ and~$\cell'$. 
\end{proof}

We have learned something in the course of this proof:

\begin{coro}[of the proof of
    theorem~\ref{thm-complexes-same-as-permutations}] \label{coro-morphisms-same-equiv-maps}
Let~$\cell$ and~$\cell'$ be dessins. Then a morphism~$\cell \to
\cell'$ defines, and is uniquely defined by, a map~$\Theta \colon T
\to T'$ which is compatible with the permutations~$a, b$ and~$c$.
\end{coro}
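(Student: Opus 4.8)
The plan is to leverage the definition of morphism between dessins directly, since the corollary is essentially a distillation of what we proved in the theorem. Recall that a morphism $\cell \to \cell'$ is by definition a morphism of bigraphs $\g \to \g'$ (in particular a map $\Delta \colon D \to D'$) together with a map $\Theta \colon T \to T'$ satisfying conditions (1) and (2) of the official definition, namely $\DD'(\Theta(t)) = \Delta(\DD(t))$ for each triangle $t$, and compatibility with both $a$ and $c$. So a morphism \emph{does} supply such a $\Theta$; the content of the corollary is that $\Theta$ alone, provided it is compatible with $a$, $b$ \emph{and} $c$, recovers the whole morphism, and that no extra data is needed.

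First I would handle the easy direction: a morphism gives a $\Theta$ compatible with $a$ and $c$ by definition, so I only need to check that it is automatically compatible with $b$ as well. This is where condition (1) enters. Since $b(t)$ is characterized as the triangle (possibly $t$ itself) sharing the same dart $\DD(t)$, and since $\DD'(\Theta(t)) = \Delta(\DD(t)) = \DD'(\Theta(b(t)))$ forces $\Theta(t)$ and $\Theta(b(t))$ to share a dart in $\cell'$, compatibility $\Theta(b(t)) = b(\Theta(t))$ should follow from the fact that $b$ has order two and is determined by the dart. I would spell out the boundary case where a dart lies on only one face (so $b$ has a fixed point), checking that the characterization of $b$ still pins down $\Theta(b(t))$ uniquely.

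For the converse — that a $\Theta$ compatible with $a$, $b$, $c$ determines a genuine morphism — I would recover the underlying bigraph morphism from $\Theta$ exactly as in the uniqueness argument of the theorem. Since $B$, $W$ and $D$ are realized as the orbit sets $T/G_{ab}$, $T/G_{bc}$ and $T/G_b$, and $\Theta$ commutes with $a$, $b$, $c$, it descends to well-defined maps $B \to B'$, $W \to W'$, $D \to D'$ on these quotients; compatibility with $\B$, $\W$ is then automatic from the compatible quotient structure. One checks that condition (1) of the morphism definition, $\DD'(\Theta(t)) = \Delta(\DD(t))$, is precisely the statement that the induced map $D \to D'$ is $\Delta$, which holds by construction since $D = T/G_b$ and $b$-compatibility of $\Theta$ is what lets it descend. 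Conditions (1) and (2) of the official morphism definition are thereby satisfied.

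The main obstacle, modest as it is, will be the bookkeeping around compatibility with $b$ in the presence of boundary, where $b$ may fix a triangle. Away from the boundary $a$, $b$, $c$ are genuine fixed-point-free involutions and everything is symmetric; but a morphism might in principle send a triangle on the boundary to one in the interior, so I must verify that the dart-sharing characterization of $b$ forces the right behaviour of $\Theta$ regardless. Since morphisms are required to be compatible with the bigraph structure and hence cannot create or destroy the ``number of faces along a dart'' in an incompatible way, this should go through cleanly, but it is the one place where I would write the check out in full rather than appeal to the symmetry of the interior case.
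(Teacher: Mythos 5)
Your argument is essentially the paper's own proof: the forward direction rests on the observation that condition (1) of the morphism definition, together with the fact that $b$ is characterized by the dart $\DD(t)$, yields $b$-equivariance, and the converse reconstructs the bigraph morphism by identifying $B$, $W$ and $D$ with the orbit sets $T/G_{ab}$, $T/G_{bc}$ and $T/G_b$, exactly as in the proof of theorem~\ref{thm-complexes-same-as-permutations}. The delicate point you flag --- pinning down $\Theta(b(t))$ when a dart bounds only one face, and more generally ruling out $\Theta(b(t)) = \Theta(t)$ when $b(\Theta(t)) \ne \Theta(t)$ --- is passed over silently in the paper's two-line proof, so you are if anything slightly more careful than the source.
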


\begin{proof}
By definition a morphism furnishes a map~$\Theta \colon T \to T'$
which is compatible with~$a$ and~$c$, and satisfies an extra condition
of compatibility with~$\DD$; however given the definition of~$b$, this
condition is equivalent to the equivariance of~$T$ with respect
to~$b$. 

Conversely if we only have~$\Theta $, equivariant with respect to all
three of~$a$, $b$, $c$, we can complete it to a fully fledged
morphism~$\cell \to \cell'$ as in the last proof, identifying~$B$, $W$
and~$D$ with certain orbits in~$T$.
\end{proof}

The group~$G$ introduced in the proof will be called the {\em full
  cartographic group} of~$\cell$ (below we will define another group
called the cartographic group).

\begin{lem} \label{lem-connected-iff-transitive}
Let~$\cell$ be a compact dessin. Then~$\topo{\cell}$ is connected
if and only if the full cartographic group acts transitively on the
set of triangles.
\end{lem}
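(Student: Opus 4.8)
The plan is to identify the $G$-orbits on $T$ with the connected components of $\topo{\cell}$, the whole point being that the three generators $a$, $b$, $c$ are precisely the operations of crossing the three sides of a triangle. First I would record that, since $\cell$ is compact, the set $T$ is finite (by the compactness criterion established above), and that $\topo{\cell}$ is covered by the finitely many closed triangles $\overline{t}$, each homeomorphic to a genuine triangle and hence connected.

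For each $G$-orbit $O \subseteq T$, set $U_O = \bigcup_{t \in O} \overline{t}$. I would show that each $U_O$ is both closed and connected. Closedness is immediate, being a finite union of closed sets. For connectedness, observe that whenever $t'$ is one of $t^a$, $t^b$, $t^c$ the closed triangles $\overline{t}$ and $\overline{t'}$ meet along a common side, so $\overline{t} \cup \overline{t'}$ is connected; since any two triangles of a single orbit are joined by a chain of applications of $a$, $b$, $c$ (these being involutions, no inverses are needed), the set $U_O$ is connected.

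The heart of the argument is to prove that distinct orbits give disjoint sets, i.e. $U_O \cap U_{O'} = \emptyset$ when $O \neq O'$. Suppose $\overline{t}$ and $\overline{t'}$ share a point $x$. By construction of the triangulation two distinct triangles can only meet along a common side or at a common vertex. If they share a side, then $t'$ is one of $t^a$, $t^b$, $t^c$, and they lie in one $G$-orbit. If they meet only at a vertex $x$, I would use that the points of $\topo{\cell}$ arising from $B$, from $W$, and from the face centres are pairwise distinct, so $x$ is unambiguously a $\bullet$, a $\circ$, or a $\star$. In the black case both triangles have black vertex $x$, hence represent the same element of $B = T/G_{ab}$ and so lie in a single $G_{ab}$-orbit, \emph{a fortiori} in one $G$-orbit; the white case is identical with $W = T/G_{bc}$, and the $\star$ case uses $F = T/G_{ac}$. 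In every case $t$ and $t'$ are $G$-equivalent, contradicting $O \neq O'$. This vertex analysis -- in particular the fact that all triangles meeting at a given vertex form a single orbit, which is where the surface hypothesis (through the connectivity of $\cell_b$ and $\cell_w$) is genuinely used -- is the step I expect to demand the most care.

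Finally I would conclude. As $T$ is finite there are finitely many orbits, and the $U_O$ are pairwise disjoint closed sets covering $\topo{\cell}$; hence each $U_O$ is also open, being the complement of the union of the others. The $U_O$ are therefore exactly the connected components of $\topo{\cell}$, so $\topo{\cell}$ is connected if and only if there is a single orbit, that is, if and only if the full cartographic group $G$ acts transitively on $T$.
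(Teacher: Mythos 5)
Your proof is correct and follows essentially the same route as the paper: the $G$-orbits yield finitely many closed sets covering $\topo{\cell}$, and the crux is that intersecting triangles lie in a single orbit. You are in fact slightly more careful than the paper, which asserts that two intersecting triangles meet along an edge, whereas you correctly handle the case of triangles meeting only at a vertex by invoking the connectedness of the connectivity graphs (i.e.\ the surface hypothesis), which is exactly the right place to use it.
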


\begin{proof}
Let~$T_1$, $T_2$, $\ldots $ be the orbits of~$G$ in~$T$, and let~$X_i
\subset \topo{\cell}$ be the union of the triangles
in~$T_i$. Each~$X_i$ is compact as a finite union of compact
triangles, hence~$X_i$ is closed in~$\topo{\cell}$. Also,
$\topo{\cell}$ is the union of the~$X_i$'s, since a dessin does
not have isolated vertices (condition (1) above).

Thus we merely have to prove that the~$X_i$'s are disjoint. However
when two triangles intersect, they do so along an edge, and then an
element of~$G$ takes one to the other. 
\end{proof}

\subsection{Orientations} \label{subsec-orientations}

\begin{prop}
Let~$\cell$ be a compact, connected dessin. Then the
surface~$\topo{\cell}$ is orientable if and only if it is possible to
assign a colour to each triangle, black or white, in such a way that
two triangles having a side in common are never of the same colour.
\end{prop}

\begin{proof} We give a proof in the case when there is no boundary,
  leaving the general case as an exercise. We use some standard
  results in topology, first and foremost: $\topo{\cell}$ is
  orientable if and only if
\[ H_2(\topo{\cell}, \z) \ne 0 \, .   \]
To compute this group we use cellular homology. More precisely, we
exploit the CW-complex structure on~$\topo{\cell}$ for which the
two-cells are the triangles (of course this space also has a
CW-complex in which the two-cells are the faces, but this is not
relevant here). Recall from an earlier remark that simplicial homology
is not directly applicable.

We need to orient the triangles, and thus declare that the positive
orientation is~$\star - \bullet - \circ$; likewise, we decide to
orient the 1-cells in such a fashion that~$\star - \bullet$, $\bullet
- \circ$ and $\circ - \star$ are oriented from the first named 0-cell
to the second. Writing~$\partial$ for the boundary in cellular
homology, we have then
\[ \partial t = [\star - \bullet]~  +~ [\bullet - \circ] ~+~ [\circ -
  \star] \, , \tag{*}\]
in notation which we hope is suggestive.

So let us assume that there is a 2-chain 
\[ \sigma = \sum_{t \in T} \, n_t t \ne 0\, , \tag{**}   \]
where~$n_t \in \z$, such that~$\partial \sigma = 0$. Suppose~$t$ is
such that~$n_t \ne 0$. From (*), we know the coefficients of the
neighbours of~$t$ in~$\sigma $, namely 
\[ n_{t^a} = n_{t^b} = n_{t^c} =- n_t \, .   \]
Since the full cartographic group acts transitively on~$T$ by the last
lemma, it follows that for each~$t' \in T$, the coefficient~$n_{t'}$
is determined by~$n_t$, and in fact~$n_{t'} = \pm n_t$.

Now let triangles~$t'$ such that~$n_{t'} > 0$ be black, and let the
others be white. We have coloured the triangles as requested. The
converse is no more difficult: given the colours, let~$n_t =1$ if~$t$ is
black and $-1$ otherwise. Then the 2-chain defined by (**) is non-zero
and has zero boundary, so the homology is non-zero.
\end{proof}

When~$\topo{\cell}$ is orientable, we will call an {\em orientation}
of~$\cell$ a colouring as above; there are precisely two orientations
on a connected, orientable dessin. An isomorphism will be said to {\em
  preserve orientations} when it sends black triangles to black
triangles. Note the following:

\begin{lem}
A morphism~$\cell \to \cell'$, where~$\cell$ and~$\cell'$ are
oriented dessins, preserves the orientations if and only
if~$\Theta $ sends black triangles to black triangles, and white
triangles to white triangles.
\end{lem}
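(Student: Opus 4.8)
The plan is to note that one implication is a tautology and to concentrate on the other. Indeed, the condition that $\Theta$ send black triangles to black triangles and white triangles to white triangles contains in particular the clause ``black to black'', which is by definition what it means to preserve orientations; so that implication is immediate. For the converse I would use the following observation: by construction $a$, $b$ and $c$ send a triangle $t$ to the triangle sharing with $t$ the $\star-\bullet$, the $\bullet-\circ$, and the $\star-\circ$ side respectively, and since an orientation colours adjacent triangles oppositely, each of $a$, $b$, $c$ interchanges black and white. I would run the argument with $a$, which is fixed-point-free, so that $t^a$ is always a genuine triangle distinct from $t$ — covering dessins with boundary as well.

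The main step is then a short computation. Suppose $\Theta$ preserves orientations, and let $t$ be a white triangle of $\cell$. Then $t^a$ is black, so $\Theta(t^a)$ is black in $\cell'$; by Corollary~\ref{coro-morphisms-same-equiv-maps} the map $\Theta$ commutes with $a$, whence $\Theta(t^a) = (\Theta(t))^a$. Since $a$ swaps colours in $\cell'$ as well, the fact that $(\Theta(t))^a$ is black forces $\Theta(t)$ to be white. Thus $\Theta$ carries white triangles to white triangles, which together with the hypothesis gives the full statement.

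I expect no genuine obstacle here: once the colour-swapping behaviour of $a$ is recorded, the rest is a two-step deduction using only the commutation relation supplied by Corollary~\ref{coro-morphisms-same-equiv-maps}. The single point requiring a little care is the possible presence of boundary, which is precisely why I favour the fixed-point-free permutation $a$ over $b$ throughout.
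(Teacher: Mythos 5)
Your proof is correct. The paper in fact states this lemma without any proof (it is introduced with ``Note the following''), and your argument --- the forward implication is contained in the definition of preserving orientations, and the converse follows because the fixed-point-free involution~$a$ swaps colours in any orientation while~$\Theta$ commutes with~$a$ --- is exactly the observation the author is leaving to the reader, including the sensible precaution of avoiding~$b$ in the presence of boundary.
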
 

\subsection{More permutations}

Suppose that~$\cell$ is a dessin, and suppose that the
surface~$\topo{\cell}$ is oriented, and has no boundary. Then each
dart is the intersection of precisely two triangles, one black and one
white. The next remark is worth stating as a lemma for emphasis:

\begin{lem}
When~$\cell$ is oriented, without boundary, there is a bijection
between the darts and black triangles.
\end{lem}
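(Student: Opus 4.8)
The plan is to show that the desired bijection is simply the restriction of the map~$\DD \colon T \to D$ to the set of black triangles. First I would record the consequence of having no boundary: by the definition of the permutation~$b$, one has~$b(t) = t$ exactly when no triangle other than~$t$ has~$\DD(t)$ as a side. Since~$\topo{\cell}$ has no boundary, condition~(2) of proposition~\ref{prop-dessin-iff-conditions} guarantees that every dart lies on the boundary of precisely two faces, hence is a side of precisely two triangles; thus~$b$ has no fixed point, and for each dart~$d$ the fibre~$\DD^{-1}(d)$ is exactly the two-element orbit~$\{ t, t^b \}$ for any~$t$ with~$\DD(t) = d$.

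Next I would bring in the orientation. The two triangles~$t$ and~$t^b$ share their~$\bullet - \circ$ side, so the colouring defining the orientation forces them to have opposite colours; in particular exactly one of the two triangles in each fibre~$\DD^{-1}(d)$ is black. Restricting~$\DD$ to the black triangles is therefore surjective onto~$D$, since every dart is realised by its unique black triangle, and injective, since two distinct black triangles sharing a dart would have to be~$t$ and~$t^b$, which have opposite colours --- a contradiction. This establishes the bijection.

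I do not expect any genuine obstacle here: the entire argument is a short piece of bookkeeping with the permutations, and the only step that truly uses the hypotheses is the identification of the no-boundary condition with the fixed-point-freeness of~$b$, which is immediate from the definition of~$b$ together with condition~(2). The role of the orientation is merely to single out one triangle per fibre of~$\DD$.
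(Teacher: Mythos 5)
Your argument is correct and is essentially the paper's: the lemma is stated there as a rephrasing of the immediately preceding remark that, in the absence of boundary, each dart is a side of exactly two triangles, one black and one white, and your write-up just makes explicit the bookkeeping (via condition (2) of proposition~\ref{prop-dessin-iff-conditions}, the fixed-point-freeness of~$b$, and the colouring rule) that the paper leaves implicit.
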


Of course there is also a bijection between the darts and the white
triangles, on which we comment below.

Now consider the permutations~$\sigma = ab$, $\alpha =bc$ and~$\phi =
ca$. Each preserves the subset of~$T$ comprised by the black
triangles, so we may see~$\sigma, \alpha $ and~$\phi$ as permutation
of~$D$. It is immediate that they satisfy~$\sigma \alpha \phi = 1$,
the identity permutation.

Let us draw a little picture to get a geometric understanding of these
permutations. We adopt the following convention: when we draw a
portion of an oriented dessin, we represent the black triangles
in such a way that going from~$\star$ to~$\bullet$ to~$\circ$ rotates
us counterclockwise. (If we arrange this for one black triangle, and
the portion of the dessin really is planar, that is embeds into
the plane, then all black triangles will have this property).

\figurehere{0.8}{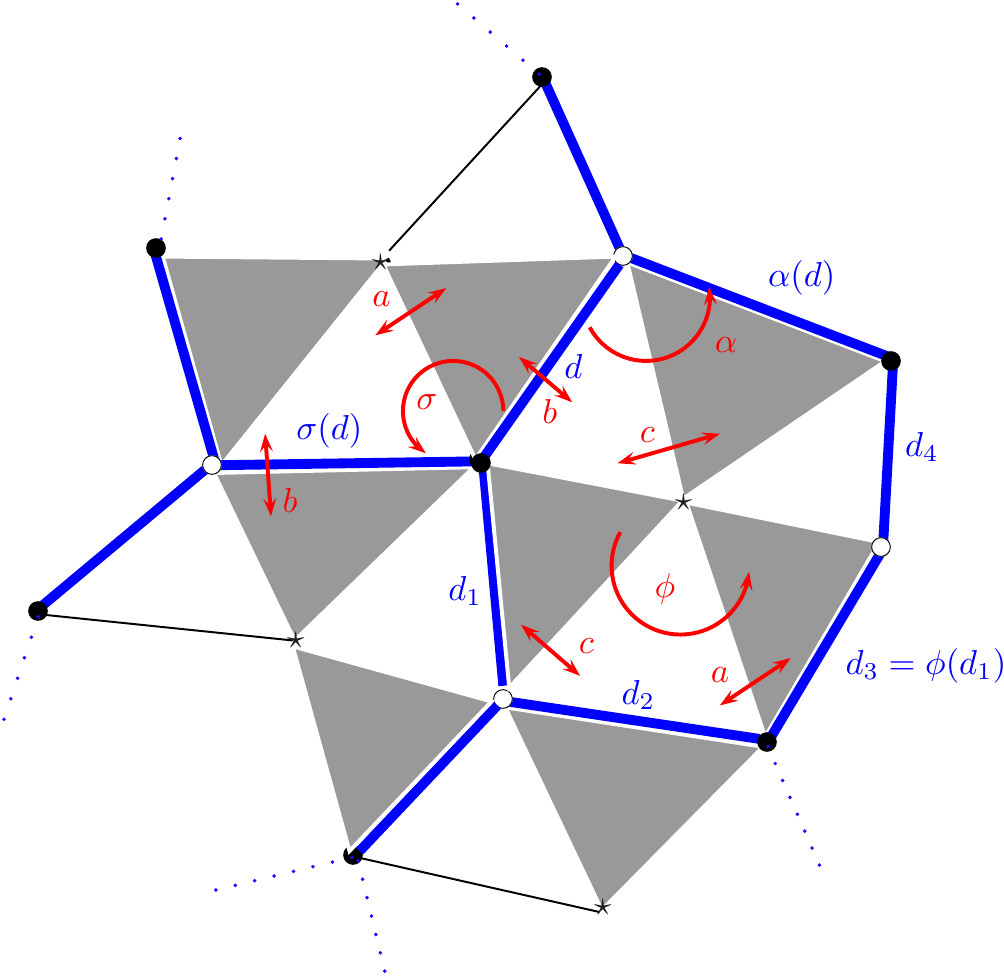}
 (Recall our convention on permutations as per
remark~\ref{rmk-convention-permutations}.)

On this picture, we see that our intuition for~$\sigma $ should be
that it takes a dart to the next one in the rotation around its black
vertex, going counterclockwise. Likewise~$\alpha $ is interpreted as
the rotation around the white vertex of the dart. As for~$\phi$, seen
as a permutation of~$T$, it takes a black triangle to the next one on
the same face, going counterclockwise. This can be made into more than
just an intuition: if~$\partial f = (d_1, \ldots, d_{2n})$, {\em and}
if~$t_i^f$ is black, then $\phi(d_i) = d_{i+2}$. Note that if the
triangle~$t_i^f$ is white, then~$\phi$ takes it to~$t_{i-2}^f$. In particular if one changes the orientation of the
dessin, the rotation~$\phi$ changes direction, as do~$\sigma $
and~$\alpha $.

This is also reflected algebraically in the relation~$b^{-1} \sigma b
= \sigma^{-1}$ (which translates the fact that~$a^2 = 1$): conjugating
by~$b$ amounts to swapping the roles of the black and white triangles
(or to identifying~$D$ with the white triangles instead of the blacks),
and that turns~$\sigma $ into~$\sigma^{-1}$. This relation is
important in the proof of the following.

\begin{thm} \label{thm-oriented-cart-same-as-permutations}
Let~$D$ be a finite set endowed with three permutations~$\sigma$,
$\alpha $, $\phi$ such that~$\sigma \alpha \phi= 1$. Then there exists
a dessin~$\cell$, oriented and without boundary, unique up to
unique orientation-preserving isomorphism, such that~$D$ and $\sigma
$, $\alpha $, $\phi$ can be identified with the set of darts
of~$\cell$ with the permutations described above.
\end{thm}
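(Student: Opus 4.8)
The plan is to deduce this from Theorem~\ref{thm-complexes-same-as-permutations} by \emph{doubling} the set~$D$: an oriented boundaryless dessin has each dart flanked by exactly one black and one white triangle, so I expect its triangle set to look like two copies of~$D$. Concretely, set $T = D \times \{0,1\}$, with the intention that $(d,0)$ be the black triangle along the dart~$d$ and $(d,1)$ the white one. Define $b(d,\epsilon) = (d, 1-\epsilon)$, and, so as to force the relations $\sigma = ab$, $\alpha = bc$, $\phi = ca$, set
\[ a(d,0) = (d^\sigma, 1), \quad a(d,1) = (d^{\sigma^{-1}}, 0), \qquad c(d,0) = (d^{\alpha^{-1}}, 1), \quad c(d,1) = (d^\alpha, 0). \]
Here I use the right-action convention of remark~\ref{rmk-convention-permutations}.

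The first task is to check that $a$, $b$ and $c$ are fixed-point-free involutions. For~$b$ this is clear; for~$a$ and~$c$ involutivity is immediate from the two defining lines, and fixed-point-freeness follows since each of them flips the coordinate in~$\{0,1\}$. The one computation that matters is that $ca = \phi$ on the copy $D \times \{0\}$: unwinding the definitions gives $(d,0)^{ca} = (d^{\alpha^{-1}\sigma^{-1}}, 0)$, and this equals $(d^\phi,0)$ \emph{precisely} because $\sigma \alpha \phi = 1$ forces $\phi = \alpha^{-1}\sigma^{-1}$. This is where, and the only place where, the hypothesis is used. With $(T, a, b, c)$ in hand, Theorem~\ref{thm-complexes-same-as-permutations} produces a dessin~$\cell$, unique up to unique isomorphism, whose triangles are~$T$ with these three permutations; as $b$ has no fixed points, $\cell$ has no boundary.

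It remains to orient~$\cell$ and to match up the permutations. The coordinate in~$\{0,1\}$ is a two-colouring of~$T$, and since each of $a$, $b$, $c$ flips it, two triangles sharing a side always receive different colours; by the criterion for orientability proved above this exhibits~$\cell$ as orientable and the colouring as an orientation (applied componentwise if $\cell$ is disconnected). The darts of~$\cell$ are the $b$-orbits $\{(d,0),(d,1)\}$, which I identify with~$D$; reading $\sigma = ab$, $\alpha = bc$ and $\phi = ca$ as permutations of the black triangles $D \times \{0\} \cong D$ then recovers exactly the given $\sigma$, $\alpha$, $\phi$.

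For uniqueness up to unique orientation-preserving isomorphism, suppose $\cell'$ is any dessin of the required type, with its darts identified with~$D$ so that its three dart-permutations are $\sigma$, $\alpha$, $\phi$. The bijection between darts and black triangles, together with~$b$, identifies the triangle set of~$\cell'$ with $D \times \{0,1\}$ in a \emph{colour-preserving} way, and the relations then pin down its $a$ and $c$ to be exactly the ones above. Thus both $\cell$ and $\cell'$ are identified with the same abstract datum $(T,a,b,c)$ by colour-preserving maps, so the unique isomorphism furnished by Theorem~\ref{thm-complexes-same-as-permutations} (via Corollary~\ref{coro-morphisms-same-equiv-maps}) is colour-preserving, i.e.\ orientation-preserving. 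The part I expect to require the most care is exactly this last point: checking that the composition convention makes $\sigma\alpha\phi = 1$ equivalent to the involutive relations, and that the identification forced by the dart-labelling does not secretly interchange the two colours --- otherwise the isomorphism produced by the earlier theorem, a priori only equivariant for $a,b,c$, might fail to preserve the orientation.
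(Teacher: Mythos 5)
Your proof is correct and follows essentially the same route as the paper: the paper also doubles the dart set, taking $T = D \times \{\pm 1\}$ with $b$ swapping the two copies and $a = \bar\sigma b$, $c = \bar\alpha b$, then invokes Theorem~\ref{thm-complexes-same-as-permutations} and colours $D\times\{1\}$ black to get the orientation. The only difference is that you spell out the verifications (the relation $ca=\phi$ on black triangles via $\phi=\alpha^{-1}\sigma^{-1}$, and the colour-preservation in the uniqueness step) that the paper leaves as ``straightforward''; your explicit formulas for $a$ and $c$ are the correct ones under the right-action convention of Remark~\ref{rmk-convention-permutations}.
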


\begin{proof}
Let~$T = D \times \{ \pm 1 \}$. We extend~$\sigma $ to a
permutation~$\bar \sigma $ on~$T$ by the formula 
\[ \bar \sigma (d, \varepsilon ) = (\sigma ^{\varepsilon }(d),
\varepsilon ) \, ,   \]
and likewise~$ \alpha $ induces~$\bar \alpha $ on~$T$ by 
\[ \bar \alpha (d, \varepsilon ) = (\alpha ^{\varepsilon }(d),
\varepsilon ) \, .   \]
We also define a permutation~$b$ of~$T$ by 
\[ b(d, \varepsilon ) = (d, -\varepsilon ) \, .   \]
Putting~$a= \bar \sigma b$ and~$c = \bar \alpha b$, it is immediate
that~$a$ and~$c$ are of order~$2$ and have no fixed points.

By theorem~\ref{thm-complexes-same-as-permutations}, the set~$T$
together with~$a$, $b$ and $c$ defines a
dessin~$\cell$. Since~$b$ has no fixed points, $\cell$ has no
boundary. Calling the triangles in~$D \times \{ 1 \}$ black, and those
in~$D \times \{ -1 \}$ white, we see that~$\cell$ is naturally
oriented. 

The remaining statements are straightforward to prove.
\end{proof}

\begin{rmk} \label{rmk-euler-char}
We point out that one may prove
theorem~\ref{thm-oriented-cart-same-as-permutations} without appealing
to theorem~\ref{thm-complexes-same-as-permutations} first: one can
identify~$B$, resp~$W$, resp~$F$, with the cycles of~$\sigma $,
resp.\ $\alpha $, resp\ $\phi$, and proceed from there. We leave this
to the reader.

In particular, we may identify the topological surface~$\topo{\cell}$
easily: since it is compact, orientable, and without boundary, it is
determined by its genus or its Euler characteristic. The latter is
\[ \chi(\topo{\cell}) = n_\sigma + n_\alpha  - n + n_\phi \, ,  \]
where~$n$ is the cardinality of~$D$ (the number of darts),
while~$n_\sigma $, resp.\ $n_\alpha $, resp.\ $n_\phi$ is the number
of cycles of~$\sigma $, resp.\ $\alpha $, resp.\ $\phi$.
\end{rmk}

Note that the group of permutations of~$D$ generated by~$\sigma $,
$\alpha $ and~$\phi$ is called the {\em cartographic group}
of~$\cell$, or sometimes the {\em monodromy group}. 

\subsection{Categories}

Next we promote theorems~\ref{thm-complexes-same-as-permutations}
and~\ref{thm-oriented-cart-same-as-permutations} to equivalence of
categories. We write~$\cat$ for the category whose objects are
compact, oriented dessins without boundary, and whose morphisms are
the orientation-preserving maps of cell complexes. Also, $\ucat$ will
be the category whose objects are compact dessins without boundary
(possibly on non-orientable surfaces), and whose morphisms are all
morphisms of cell complexes.


We leave to the reader the task of proving the next theorem based
on theorem~\ref{thm-complexes-same-as-permutations} and
corollary~\ref{coro-morphisms-same-equiv-maps}, as well as
theorem~\ref{thm-oriented-cart-same-as-permutations}. 

\begin{thm} \label{thm-eq-cats-dessins-sets}
Consider the category~$\sets_{a,b,c}$ whose objects are the finite
sets~$T$ equipped with three distinguished permutations~$a$, $b$, $c$,
each of order two and having no fixed points, and whose
arrows are the equivariant maps. Then the assigment~$\cell \to T$
extends to an equivalence of categories between~$\ucat$
and~$\sets_{a, b, c}$.

Likewise, consider the category~$\mathfrak{Sets}_{\sigma , \alpha ,
  \phi}$ whose objects are the finite sets~$D$ equipped with three
distinguished permutations~$\sigma$, $ \alpha $, $\phi$
satisfying~$\sigma \alpha \phi= 1$, and whose arrows are the
equivariant maps. Then the assigment~$\cell \to D$ extends to an
equivalence of categories between~$\cat$ and~$\sets_{\sigma, \alpha,
  \phi}$.
\end{thm}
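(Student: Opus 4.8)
The plan is to show, for each of the two stated assignments, that it is a functor which is both essentially surjective and fully faithful; an equivalence of categories then follows. Most of the substance is already available, so the argument mainly consists in fitting together the earlier results in the right way.

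For the first assignment I would let $F \colon \ucat \to \sets_{a,b,c}$ send a dessin $\cell$ to its set of triangles $T$ with the permutations $a, b, c$, and send a morphism to the induced equivariant map $\Theta \colon T \to T'$; functoriality is clear from the definitions. Essential surjectivity is theorem~\ref{thm-complexes-same-as-permutations}: from $(T, a, b, c)$ it produces a dessin $\cell$, which is compact since $T$ is finite and has no boundary since $b$ has no fixed points, so that $\cell$ lies in $\ucat$ and $F(\cell) \cong (T, a, b, c)$. Full faithfulness is nothing but corollary~\ref{coro-morphisms-same-equiv-maps}, which identifies the morphisms $\cell \to \cell'$ with the equivariant maps $T \to T'$. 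Thus the first equivalence is immediate.

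For the second assignment I would define $G \colon \cat \to \sets_{\sigma, \alpha, \phi}$ by sending an oriented dessin to its set of darts $D$, identified with the set of black triangles and equipped with the restrictions of $\sigma = ab$, $\alpha = bc$, $\phi = ca$; on an orientation-preserving morphism $\Theta$, which by definition carries black triangles to black triangles, I restrict $\Theta$ to the black triangles. This restriction is equivariant for $\sigma, \alpha, \phi$ because $\Theta$ commutes with $a, b, c$, so $G$ is a functor. Essential surjectivity is theorem~\ref{thm-oriented-cart-same-as-permutations}. For full faithfulness I would construct an inverse to the restriction map on each Hom-set; by essential surjectivity (and the invariance of full faithfulness under replacing objects by isomorphic ones) it is enough to treat objects in the standard form $T = D \times \{\pm 1\}$ of that theorem, and there, given an equivariant $g \colon D \to D'$, I set $\Theta(d, \varepsilon) = (g(d), \varepsilon)$.

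The step I expect to be delicate — and the only one where the two families of permutations genuinely interact — is checking that this $\Theta$ is a morphism, i.e.\ that it commutes with $b$ and with $a = \bar\sigma b$ and $c = \bar\alpha b$. Unwinding the formulas $\bar\sigma(d, \varepsilon) = (\sigma^\varepsilon(d), \varepsilon)$ and $b(d, \varepsilon) = (d, -\varepsilon)$, commutation with $a$ reduces exactly to the statement that $g$ commutes with $\sigma$ and $\sigma^{-1}$, which is its equivariance; likewise commutation with $c$ reduces to equivariance for $\alpha$. Finally one checks that restriction and extension are mutually inverse: extending the restriction of a given $\Theta$ recovers it on black triangles by construction and on white triangles because $\Theta$ commutes with $b$, which interchanges the two colours, while the reverse composite is immediate. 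This reconciliation of the relation $\sigma\alpha\phi = 1$ and the orientation-preservation condition with the order-two relations among $a, b, c$ is where all the genuine content of the second equivalence resides.
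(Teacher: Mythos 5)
Your proof is correct and follows exactly the route the paper intends: the paper leaves this theorem to the reader, citing precisely theorem~\ref{thm-complexes-same-as-permutations}, corollary~\ref{coro-morphisms-same-equiv-maps} and theorem~\ref{thm-oriented-cart-same-as-permutations} as the ingredients, which is how you assemble the argument. Your verification of full faithfulness in the oriented case (extending an equivariant map of darts to $T = D \times \{\pm 1\}$ and checking compatibility with $a = \bar\sigma b$ and $c = \bar\alpha b$) supplies correctly the one detail the paper does not spell out.
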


If one removes the requirement that~$b$ have no fixed point, in the
first part, one obtains a category equivalent to that of compact
dessins possibly with boundary.




\subsection{The isomorphism classes}

It is very easy for us now to describe the set of isomorphism classes
of dessins. There are different approaches in the literature and we
try to give several points of view.

\begin{prop} \label{prop-iso-classes}
\begin{enumerate}
\item A dessin~$\cell$ in~$\cat$ determines, and can be reconstructed
  from, an integer~$n$, a subgroup~$G$ of~$S_n$, and two distinguished
  generators~$\sigma $ and~$\alpha $ for~$G$. Two sets of data~$(n, G,
  \sigma, \alpha )$ and~$(n', G', \sigma ', \alpha ')$ determine
  isomorphic dessins if and only if~$n= n'$ and there is a conjugation
  in~$S_n$ taking~$\sigma $ to~$\sigma '$ and~$\alpha $ to~$\alpha '$
  (and in particular~$G$ to~$G'$).

\item The set of isomorphism classes of connected dessins in~$\cat$ is
  in bijection with the set of conjugacy classes of subgroups of
  finite index in the free group on two generators~$\langle \sigma,
  \alpha \rangle$.

\item Any connected dessin in~$\cat$ determines, and can reconstructed
  from, a finite group~$G$ with two distinguished generators~$\sigma $
  and~$\alpha $, and a subgroup~$H$ such that the intersection of all
  the conjugates of~$H$ in~$G$ is trivial. We obtain isomorphic
  dessins from~$(G, \sigma, \alpha, H )$ and~$(G', \sigma ', \alpha ',
  H')$ if and only if there is an isomorphism~$G \to G'$
  taking~$\sigma $ to~$\sigma'$, $\alpha $ to~$\alpha '$, and~$H$
  to a conjugate of~$H'$. 

\end{enumerate}
\end{prop}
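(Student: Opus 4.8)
The plan is to prove Proposition \ref{prop-iso-classes} in three parts, each building on the equivalence of categories established in Theorem \ref{thm-eq-cats-dessins-sets}, which tells us that a dessin in $\cat$ is the same data as a finite set $D$ with permutations $\sigma, \alpha, \phi$ satisfying $\sigma\alpha\phi = 1$, and that isomorphisms of dessins correspond to equivariant bijections of the underlying sets.

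For part (1), the key observation is that since $\phi = (\sigma\alpha)^{-1}$, the relation $\sigma\alpha\phi=1$ means the datum $\phi$ is redundant: a dessin in $\cat$ is determined by the set $D$ together with just $\sigma$ and $\alpha$. First I would put $n = |D|$ and identify $D$ with $\{1, \ldots, n\}$ via some chosen bijection, so that $\sigma, \alpha \in S_n$; then $G := \langle \sigma, \alpha \rangle$ is the cartographic group. The content is purely the translation of ``equivariant bijection'' into ``conjugation in $S_n$''. An isomorphism $D \to D'$ with $|D|=|D'|=n$ is a bijection $\theta$ with $\theta\sigma = \sigma'\theta$ and $\theta\alpha = \alpha'\theta$; writing $\theta$ as an element $g \in S_n$ this reads exactly $\sigma' = g^{-1}\sigma g$ and $\alpha' = g^{-1}\alpha g$ (up to the left/right convention of Remark \ref{rmk-convention-permutations}, which I would track carefully). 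Conjugating $G$ to $G'$ then follows automatically.

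For part (2), I would restrict to \emph{connected} dessins. By Lemma \ref{lem-connected-iff-transitive}, connectedness is equivalent to transitivity of $G = \langle \sigma, \alpha\rangle$ on $D$. The strategy is the standard dictionary between transitive permutation representations and coset spaces: a transitive action of the free group $F = \langle \sigma, \alpha\rangle$ on a finite set $D$ is, up to isomorphism, the action on the coset space $F/H$ for $H$ the stabilizer of a point, a subgroup of finite index $n = |D|$. Changing the base point replaces $H$ by a conjugate, and an isomorphism of the two actions corresponds precisely to conjugacy of the stabilizers in $F$. So I would exhibit the bijection sending the isomorphism class of $(D, \sigma, \alpha)$ to the conjugacy class of a point-stabilizer in $F$, and check both directions: every finite-index subgroup arises (take the action on its cosets), and two give isomorphic dessins iff they are conjugate.

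For part (3), I would combine the previous two. Given a connected dessin, its cartographic group $G$ is a finite \emph{transitive} quotient of the free group, so it comes with generators $\sigma, \alpha$ and acts on $D$; picking a point-stabilizer $H \le G$ recovers $D \cong G/H$, and transitivity-plus-faithfulness of the $G$-action on $D$ translates into the condition that $\bigcap_{g} g^{-1}Hg = 1$ (the kernel of the action on cosets is exactly this normal core, which is trivial precisely when $G$ embeds in $S_n$, i.e. when $G$ really is the cartographic group acting faithfully). The isomorphism criterion then follows: an isomorphism of dessins induces an isomorphism $G \to G'$ matching the distinguished generators and carrying one stabilizer to a conjugate of the other. \textbf{The main obstacle} I anticipate is bookkeeping rather than depth: one must be scrupulous about the right-action convention of Remark \ref{rmk-convention-permutations} when turning equivariant maps into conjugations (a left/right slip flips $g$ to $g^{-1}$), and in part (3) one must verify cleanly that faithfulness of the $G$-action on $D$ is \emph{equivalent} to triviality of the core $\bigcap_g g^{-1}Hg$, so that $G$ is genuinely realized as a subgroup of $S_n$ and not merely a quotient of the free group.
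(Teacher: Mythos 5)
Your proposal is correct and follows essentially the same route as the paper: translate everything through the equivalence with $\sets_{\sigma,\alpha,\phi}$, identify connected objects with coset spaces of the free group via point stabilizers for (2), and for (3) observe that faithfulness of the cartographic group on the darts is exactly triviality of the normal core of the stabilizer, then deduce the isomorphism criterion from (2) via the canonical surjection $\langle\sigma,\alpha\rangle\to G$. The paper's proof is just a terser version of the same argument (it leaves (1) entirely as an exercise), and your explicit attention to the right-action convention is exactly the bookkeeping it silently assumes.
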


\begin{proof}
At this point this is very easy. (1) is left as an exercise. Here are
some indications with (2): A connected object amounts to a finite
set~$X$ with a transitive, right action of~$\langle \sigma, \alpha
\rangle$, so~$X$ must be isomorphic to~$K \bs \langle \sigma, \alpha
\rangle$, where an isomorphism is obtained by choosing a base-point
in~$X$ (whose stabilizer is~$K$); different choices lead to conjugate
subgroups. (2) follows easily.

We turn to (3). It is clear that a connected object~$X$ is isomorphic
to~$H \bs G$ where~$G$ is the cartographic group and~$H$ is the
stabilizer of some point; elements in the intersection of all
conjugates of~$H$ stabilize all the points of~$X$, and so must be
trivial since~$G$ is by definition a subgroup of~$S(X)$. Conversely
any object of the form~$H \bs G$, with the actions of~$\sigma $
and~$\alpha $ by multiplication on the right, can be seen
in~$\sets_{\sigma, \alpha , \phi}$; it is connected since~$\sigma $
and~$\alpha $ generate~$G$; and its cartographic group must be~$G$
itself given the condition on~$H$. What is more, there is a canonical
map~$f \colon \langle \sigma, \alpha \rangle \to G$ sending~$\sigma $
and~$\alpha $ to the elements with the same name in~$G$, and the
inverse image~$K=f^{-1}(H)$ is the subgroup corresponding to the
dessin as in (2), while the intersection~$N$ of all the conjugates
of~$K$ is the kernel of~$f$. Thus we deduce the rest of (3) from (2).
\end{proof}

In \S\ref{sec-regularity} we shall come back to these questions (see
\S\ref{subsec-regularity-sets-with-perms} in particular). For the
moment let us add that it is common, in the literature, to pay special
attention to certain dessins for which some condition on the order
of~$\sigma $, $\alpha $ and~$\phi$ is prescribed. For example, those
interested in clean dessins very often require~$\alpha^2 = 1$. Assuming that
we are interested in the dessins for which, in addition, the order
of~$\sigma  $ divides a fixed integer~$k$, and that of~$\phi$
divides~$\ell$, then the objects are in bijection with the conjugacy
classes of subgroups of finite index in
\[ T_{k, \ell} =  \langle \sigma, \alpha, \phi : \sigma^k = \alpha^2 =
\phi^\ell = 1, \, \sigma \alpha \phi = 1 
\rangle \, , \]
usually called a {\em triangle group}. (We point out that, in doing
so, we include more than the clean dessins, for~$\alpha $ may have
fixed points.)

The variant in the unoriented case is as follows.

\begin{prop}
Consider the group~$\langle a, b, c : a^2 = b^2 = c^2 = 1 \rangle =
C_2 * C_2 * C_2$, the free product of three copies of the group of
order~$2$. The isomorphism classes of connected objects in~$\ucat$ are
in bijection with the conjugacy classes of subgroups~$H$ of~$C_2 * C_2
* C_2$ having finite index, and with the property that no conjugate
of~$H$ contains any of~$a$, $b$, $c$.
\end{prop}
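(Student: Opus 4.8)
The plan is to deduce this directly from the equivalence of categories in Theorem~\ref{thm-eq-cats-dessins-sets}, mimicking the oriented argument of Proposition~\ref{prop-iso-classes}(2). Write $\Gamma = \langle a, b, c : a^2 = b^2 = c^2 = 1\rangle = C_2 * C_2 * C_2$. Under the equivalence $\ucat \simeq \sets_{a,b,c}$, a connected object corresponds to a finite set $T$ carrying a right action of $\Gamma$ in which $a$, $b$ and $c$ act as fixed-point-free involutions; by Lemma~\ref{lem-connected-iff-transitive} connectedness is exactly transitivity of this action.

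First I would set up the orbit--stabilizer dictionary. A transitive right $\Gamma$-set $T$ is isomorphic, after choosing a base-point $x_0$ with stabilizer $H$, to the coset space $H \backslash \Gamma$ with $\Gamma$ acting by right multiplication; $T$ is finite precisely when $H$ has finite index, and two base-points give conjugate stabilizers. Conversely every $H \backslash \Gamma$ is a transitive $\Gamma$-set, and an isomorphism of transitive $\Gamma$-sets must carry one stabilizer to another, hence to a conjugate. This yields a bijection between isomorphism classes of finite transitive $\Gamma$-sets and conjugacy classes of finite-index subgroups $H \le \Gamma$, exactly as in part (2) of Proposition~\ref{prop-iso-classes}.

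The heart of the matter is to translate the three fixed-point conditions. A coset $Hg$ is fixed by $a$ if and only if $Hga = Hg$, that is if and only if $gag^{-1} \in H$; so $a$ acts on $H\backslash\Gamma$ with a fixed point if and only if some conjugate of $H$ contains $a$, and likewise for $b$ and $c$. Hence ``$a$, $b$, $c$ all act without fixed points'' is equivalent to ``no conjugate of $H$ contains any of $a$, $b$, $c$''. I would note that this single condition also delivers the requirement that $a$, $b$, $c$ be honest involutions: the relations $a^2 = b^2 = c^2 = 1$ already force each to act as a permutation of order dividing two, and fixed-point-freeness (on the nonempty set $H\backslash\Gamma$) rules out the identity, so each acts with order exactly two and no fixed points, as $\sets_{a,b,c}$ demands.

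Assembling these observations completes the proof: connected objects of $\ucat$ correspond to finite transitive $\Gamma$-sets whose three involutions are fixed-point-free, these correspond to finite-index subgroups $H$ no conjugate of which meets $\{a, b, c\}$, and passing to isomorphism classes on one side matches passing to conjugacy classes on the other. The only point demanding care --- and the one I would write out fully --- is the equivalence (not merely the implication) in the previous paragraph, together with checking that the fixed-point-free conditions are invariant under $\Gamma$-set isomorphism so that they genuinely descend to conjugacy classes; both are routine given the coset computation above.
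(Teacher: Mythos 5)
Your proof is correct and follows exactly the route the paper intends: the paper gives no written proof of this proposition, only the one-line note that the condition on conjugates of $H$ rephrases the fixed-point-freeness of $a$, $b$, $c$ acting on $H \backslash C_2 * C_2 * C_2$, and your argument is a careful expansion of precisely that observation combined with the orbit--stabilizer dictionary already used in Proposition~\ref{prop-iso-classes}(2). The coset computation $Hga = Hg \iff gag^{-1} \in H$ and the remark that fixed-point-freeness upgrades ``order dividing two'' to ``order exactly two'' are exactly the details the paper leaves implicit.
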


Note that the last condition rephrases the fact that the actions
of~$a$, $b$ and~$c$ on~$H \bs C_2 * C_2 * C_2$ (on the right) have no
fixed points.

\section{Various categories equivalent to~$\cat$} \label{sec-eq-cats}

We proceed to describe a number of categories which are equivalent to
the category~$\cat$ of dessins -- the word dessin will
henceforth mean compact, oriented dessin without boundary. These
should be familiar to the reader, and there will be little need for
long descriptions of the objects and morphisms.

As for proving the equivalences, it will be a matter of quoting
celebrated results: the equivalence between covering spaces and sets
with an action of the fundamental group, the equivalence between
Riemann surfaces and their fields of meromorphic functions, the
equivalence between algebraic curves and their fields of rational
functions\ldots as well as some elementary Galois theory, which we
have taken from Völklein's book~\cite{helmut}. There is a little work
left for us, but we hope to convince the reader that the theory up to
here is relatively easy -- given the classics! What makes all this
quite deep is the combination of strong theorems in many different
branches of mathematics.

\subsection{Ramified covers}

Let~$S$ and~$R$ be compact topological surfaces. A map~$p \colon S \to
R$ is a {\em ramified cover} if there exists for each~$s \in S$ a
couple of charts, centered around~$s$ and~$p(s)$ respectively, in
which the map~$p$ becomes~$z \mapsto z^e$ for some integer~$e \ge 1$
called the {\em ramification index at~$s$} (this index at~$s$ is
well-defined, for~$p$ cannot look like~$z \mapsto z^{e'}$ for~$e'\ne
e$ in other charts, as can be seen by examining how-many-to-$1$ the
map is).

Examples are provided by complex surfaces: if~$S$ and~$R$ have complex
structures, and if~$p$ is analytic (holomorphic), then it is a basic
result from complex analysis that~$p$ must be a ramified cover in the
above sense (as long as it is not constant on any connected component
of~$S$). However we postpone all complex analysis for a while.

Instead, we can obtain examples (and in fact all examples) by the
following considerations. The set of~$s \in S$ such that the
ramification index~$e$ is~$> 1$ is visibly discrete in~$S$ and closed,
so it is finite by compactness. Its image in~$R$ under~$p$ is called
the {\em ramification set} and written~$R_r$. It follows that the
restriction 
\[ p \colon S \smallsetminus f^{-1}(R_r) \longrightarrow R
\smallsetminus R_r  \]
is a finite covering in the traditional sense. Now, it is a classical
result that one can go the other way around: namely, start with a
compact topological surface~$R$, let~$R_r$ denote a finite subset
of~$R$, and let~$p \colon U \longrightarrow R \smallsetminus R_r$
denote a finite covering map; then one can construct a compact
surface~$S$ together with a ramified cover~$\bar p \colon S \to R$
such that~$U$ identifies with~$\bar p^{-1}(R \smallsetminus R_r)$
and~$p$ identifies with the restriction of~$\bar p$. The ramification
set of~$\bar p$ is then contained in~$R_r$. See \S5 of \cite{helmut}
for all the details in the case~$R= \p$ (the general case is no
different).

Thus when the ramification set is constrained once and for all to be a
subset of a given finite set~$R_r$, ramified covers are in one-one
correspondence with covering maps. To make this more precise, let us
consider two ramified covers~$p \colon S \to R$ and~$p' \colon S' \to
R$ both having a ramification set contained in~$R_r$, and let us
define a morphism between them to be a continuous map~$h \colon S \to
S'$ such that~$p' \circ h = p$. Morphisms, of covering maps above~$R
\smallsetminus R_r$ are defined similarly. We may state:

\begin{thm} \label{thm-ramified-useless}
The category of finite coverings of~$R \smallsetminus R_r$ is
equivalent to the category of ramified covers of~$R$ with ramification
set included in~$R_r$.
\end{thm}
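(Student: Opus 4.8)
The plan is to exhibit two functors between the two categories and verify that they are quasi-inverse; most of the geometric substance has already been granted in the paragraph preceding the statement, where the classical ``filling in the punctures'' construction is quoted from \S5 of \cite{helmut}. So the real work is organizing that input into honest functors and, above all, handling the morphisms.

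First I would define the \emph{restriction} functor $\mathcal{R}$ from ramified covers to coverings. Given a ramified cover $p \colon S \to R$ with ramification set contained in $R_r$, I set $U = S \smallsetminus p^{-1}(R_r)$ and let $\mathcal{R}(p)$ be $p|_U \colon U \to R \smallsetminus R_r$, which is a finite covering as already observed. On a morphism $h \colon S \to S'$ satisfying $p' \circ h = p$, I note that $h$ carries $p^{-1}(R_r)$ into $(p')^{-1}(R_r)$, since $p'(h(s)) = p(s) \in R_r$ exactly when $s \in p^{-1}(R_r)$; hence $h$ restricts to a map $U \to U'$ over $R \smallsetminus R_r$, and this assignment is plainly functorial.

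In the other direction I would fix, once and for all, for each finite covering $q \colon U \to R \smallsetminus R_r$ a compact surface $S$ and a ramified cover $\bar q \colon S \to R$ with $\bar q^{-1}(R \smallsetminus R_r) = U$ and $\bar q|_U = q$, as provided by the quoted theorem; call this object $\mathcal{E}(q)$. The hard part will be defining $\mathcal{E}$ on morphisms: given $h \colon U \to U'$ over $R \smallsetminus R_r$, I must produce a continuous $\bar h \colon S \to S'$ over $R$ extending $h$. The two key points are that $U$ is dense in $S$, so that a continuous extension is automatically \emph{unique}, and that existence is a purely local question near each point of $\bar q^{-1}(R_r)$. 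Near such a point both covers are modelled on $z \mapsto z^e$, so in suitable charts $h$ permutes the finitely many sheets over a punctured disc and therefore maps a small punctured neighbourhood of a point $s \in \bar q^{-1}(R_r)$ into a small punctured neighbourhood of a single point $s' \in (\bar q')^{-1}(R_r)$; setting $\bar h(s) = s'$ gives a map continuous at $s$ by the local model. By uniqueness these local extensions agree on overlaps and patch to a global continuous $\bar h$ with $\bar q' \circ \bar h = \bar q$, and the same uniqueness yields functoriality of $\mathcal{E}$.

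Finally I would check that $\mathcal{R}$ and $\mathcal{E}$ are quasi-inverse. On objects, $\mathcal{R} \circ \mathcal{E}$ returns the given covering tautologically by the construction of $\mathcal{E}$, while $\mathcal{E} \circ \mathcal{R}$ returns a compact surface carrying a ramified cover restricting to the given one; the uniqueness clause of the quoted extension theorem identifies this canonically, and homeomorphically over $R$, with the original $S$, giving the natural isomorphism. On morphisms the two operations are mutually inverse: restricting an extension recovers the original map by density, and extending a restriction recovers the original morphism of ramified covers by the uniqueness established above. This yields the desired equivalence of categories.
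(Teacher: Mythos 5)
Your proof is correct and follows the route the paper intends: the paper offers no written proof of this theorem, stating it immediately after quoting the filling-in-the-punctures construction from V\"olklein, so the restriction functor and the chosen extension functor are exactly the expected quasi-inverse pair. Your careful treatment of morphisms --- extending a map of coverings across the punctures by density of $U$ in $S$ together with the local model $z \mapsto z^e$, and deducing functoriality and the natural isomorphisms from uniqueness of continuous extensions --- supplies precisely the detail the paper leaves to the reader.
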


Now let us quote a well-known result from algebraic topology:

\begin{thm} \label{thm-eq-cats-coverings-sets}
Assume that~$R$ is connected, and pick a base point~$* \in
R\smallsetminus R_r$. The category of coverings of~$R\smallsetminus
R_r$ is equivalent to the category of right~$\pi_1(R \smallsetminus
R_r, *)$-sets. The functor giving the equivalence sends~$p\colon U \to
R \smallsetminus R_r$ to the fibre~$p^{-1}(*)$ with the monodromy
action.
\end{thm}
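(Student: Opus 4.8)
The plan is to exhibit the functor of the statement explicitly and then to verify that it is an equivalence by the three standard criteria: essential surjectivity, fullness, and faithfulness. Write $X = R \smallsetminus R_r$ and $\Pi = \pi_1(X, *)$ throughout. The crucial structural fact is that $X$, being an open subset of a surface, is connected, locally path-connected and semi-locally simply connected, so the whole classical theory of covering spaces is available.

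First I would construct the functor. Given a covering $p \colon U \to X$, the homotopy lifting property (quoted from a textbook) produces, for each point $u \in p^{-1}(*)$ and each loop $\gamma$ based at $*$, a unique lift $\tilde\gamma$ of $\gamma$ starting at $u$; its endpoint depends only on the class $[\gamma] \in \Pi$. Setting $u \cdot [\gamma] = \tilde\gamma(1)$ defines an action of $\Pi$ on the fibre $p^{-1}(*)$, and it is a \emph{right} action precisely with the composition convention of remark~\ref{rmk-convention-permutations}. A morphism $h \colon U \to U'$ over $X$ sends lifts to lifts, hence restricts to a $\Pi$-equivariant map of fibres; this makes $p \mapsto p^{-1}(*)$ a functor.

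Next I would reduce to the connected, respectively transitive, case. Since $X$ is locally path-connected the path-components of $U$ are open and are themselves coverings of $X$, and under the functor they correspond exactly to the orbits of $\Pi$ on $p^{-1}(*)$, because a lift of a loop joins two points of the fibre if and only if they lie in the same path-component. Thus it suffices to match connected coverings with transitive $\Pi$-sets. For a connected covering with a chosen point $u_0$ in the fibre, the stabiliser of $u_0$ is the subgroup $p_*\pi_1(U, u_0) \le \Pi$, and the fibre is $\Pi$-isomorphic to the coset space $p_*\pi_1(U, u_0) \bs \Pi$.

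Essential surjectivity is where the hypotheses on $X$ are genuinely used, and I expect this to be the main obstacle. I would build the universal cover $\widetilde X \to X$, which exists by semi-local simple connectivity and carries its deck action of $\Pi$, and then form, for an arbitrary right $\Pi$-set $S$, the balanced product $\widetilde X \times_{\Pi} S$; projection to $X$ yields a covering whose fibre over $*$ is canonically $S$. For fullness and faithfulness I would invoke the lifting criterion for maps of connected coverings: a covering morphism over $X$ taking $u_0$ to a prescribed point exists, and is then unique, if and only if the corresponding inclusion of stabiliser subgroups holds. This matches verbatim the description of $\Pi$-equivariant maps between coset spaces $H \bs \Pi \to H' \bs \Pi$, which exist if and only if $H$ is contained in a conjugate of $H'$ and are determined by the image of the base coset. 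Assembling the connected pieces by disjoint unions then upgrades both statements to the full categories, giving the equivalence.
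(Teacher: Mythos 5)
The paper offers no proof of this statement: it is introduced with the words ``let us quote a well-known result from algebraic topology'' and is simply cited, so there is nothing internal to compare your argument against. Your proposal is a correct outline of the standard textbook proof. The functor is the right one, and the monodromy action is indeed a \emph{right} action under the composition convention of remark~\ref{rmk-convention-permutations}; the reduction to connected coverings versus transitive $\pi_1$-sets via path-components and orbits is sound (local path-connectedness of $R \smallsetminus R_r$ makes the components open and themselves coverings); essential surjectivity via the universal cover and the balanced product $\widetilde{X} \times_{\Pi} S$ works because an open subset of a surface is locally contractible, hence semi-locally simply connected (one only has to keep the left/right bookkeeping straight when forming the quotient, which is routine); and fullness and faithfulness follow from the lifting criterion exactly as you say, matching the description of equivariant maps $H \backslash \Pi \to H' \backslash \Pi$, which exist precisely when $H$ lies in a conjugate of $H'$ and are determined by the image of one point. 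The only step worth spelling out a little more is the final assembly: a morphism of disjoint unions is determined by its restrictions to components, each of which lands in a single component of the target, and this matches the decomposition of an equivariant map into its behaviour on orbits. None of this is a gap; it is the argument the paper is implicitly relying on when it quotes the theorem.
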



We shall now specialize to~$R = \p = S^2$ and~$R_r = \{ 0, 1, \infty
\}$. With the base point~$* = \frac{1} {2}$ (say), one
has~$\pi_1(\pminus, *) = \langle \sigma, \alpha \rangle$, the free
group on the two distinguished generators~$\sigma $ and~$\alpha $;
these are respectively the homotopy classes of the loops~$t \mapsto
\frac{1} {2} e^{2i \pi t}$ and~$t \mapsto 1 - \frac{1} {2} e^{2 i \pi
  t}$. The category of finite, right~$\pi_1(\pminus, *)$-sets is
precisely the category~$\sets_{\sigma, \alpha , \phi}$ already
mentioned. 

The following result combines
theorem~\ref{thm-eq-cats-dessins-sets} from the previous
section, theorem~\ref{thm-ramified-useless} above, as well as
theorem~\ref{thm-eq-cats-coverings-sets}:

\begin{thm}
The category~$\cat$ of oriented, compact dessins without
boundary is equivalent to the category~$\covs$ of ramified covers of~$\p$
having ramification set included in~$\{ 0, 1, \infty \}$. 
\end{thm}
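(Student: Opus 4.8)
The plan is to read off the equivalence as a composite of the three equivalences already quoted, pivoting through the intermediate category~$\sets_{\sigma, \alpha, \phi}$ of finite sets with permutations. Explicitly, I would assemble the chain
\[ \cat \;\simeq\; \sets_{\sigma, \alpha, \phi} \;\simeq\; \{\pi_1(\pminus, *)\text{-sets}\} \;\simeq\; \{\text{coverings of } \pminus\} \;\simeq\; \covs , \]
where everywhere only \emph{finite} objects are meant. The first link is theorem~\ref{thm-eq-cats-dessins-sets}, the third is theorem~\ref{thm-eq-cats-coverings-sets} specialized to~$R = \p$ and~$R_r = \{0, 1, \infty\}$, and the last is theorem~\ref{thm-ramified-useless} for the same~$R$ and~$R_r$. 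The middle link is the identification sketched just before the statement, and it is the only one requiring genuine verification.

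First I would specialize theorem~\ref{thm-eq-cats-coverings-sets}, with base point~$* = \frac{1}{2}$, to identify the category of finite coverings of~$\pminus$ with that of finite right~$\pi_1(\pminus, *)$-sets, the functor sending a covering to its fibre over~$*$ with the monodromy action, which is a \emph{right} action in the sense of remark~\ref{rmk-convention-permutations}. Next I would use that~$\pi_1(\pminus, *) = \langle \sigma, \alpha \rangle$ is \emph{free} on the two distinguished generators, which are the homotopy classes of the two standard loops. Because the group is free on~$\sigma$ and~$\alpha$, endowing a finite set~$D$ with a right action of it amounts to nothing more than choosing two permutations of~$D$ — named~$\sigma$ and~$\alpha$ after the monodromy of the corresponding loops — with no relation to impose; setting~$\phi = (\sigma \alpha)^{-1}$ then supplies a third permutation automatically satisfying~$\sigma \alpha \phi = 1$. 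Conversely every object of~$\sets_{\sigma, \alpha, \phi}$ recovers such an action, as~$\phi$ is redundant. On morphisms both sides ask for exactly the equivariant maps, so this is in fact an isomorphism of categories between finite right~$\pi_1(\pminus, *)$-sets and~$\sets_{\sigma, \alpha, \phi}$.

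With the middle link established, the remaining two are immediate invocations: theorem~\ref{thm-ramified-useless} gives the equivalence between finite coverings of~$\pminus$ and~$\covs$, and theorem~\ref{thm-eq-cats-dessins-sets} gives the equivalence between~$\cat$ and~$\sets_{\sigma, \alpha, \phi}$. Composing the four functors (with their quasi-inverses) yields the asserted~$\cat \simeq \covs$.

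I do not expect a serious obstacle, as the statement is a formal concatenation of results already proved; the one point deserving care is the middle identification, where one must confirm that the passage between right~$\pi_1$-actions and pairs of permutations is functorial and respects the conventions — in particular that the monodromy is read as a right action, so that the composition of equivalences is coherent. It is also worth noting, though not needed for the bare statement, that the composite admits a transparent geometric reading: a dessin~$\cell$ is sent to the ramified cover~$\topo{\cell} \to \p$ whose restriction over~$\pminus$ has the darts of~$\cell$ as a fibre, with~$\sigma$ and~$\alpha$ acting as the monodromy.
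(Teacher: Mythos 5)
Your proposal is correct and follows exactly the paper's route: the paper also obtains the theorem by composing theorem~\ref{thm-eq-cats-dessins-sets}, theorem~\ref{thm-ramified-useless} and theorem~\ref{thm-eq-cats-coverings-sets}, using the observation that finite right~$\pi_1(\pminus,*)$-sets are precisely the objects of~$\sets_{\sigma,\alpha,\phi}$ since the fundamental group is free on~$\sigma$ and~$\alpha$. Your careful verification of the middle identification is the only step the paper leaves implicit, and it is done correctly.
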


\subsection{Geometric intuition}

There are shorter paths between dessins and ramified covers of
the sphere, that do not go via permutations. Here we comment on this
approach.

First, let us examine the following portion of an oriented
dessin:

\figurehere{0.3}{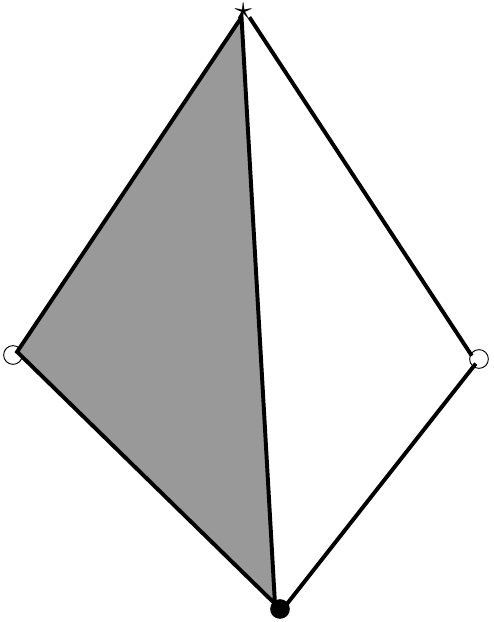}


Consider the identification space obtained from this by gluing the two
white vertices into one, and the four visible edges in pairs
accordingly. The result is a sphere; more precisely, we can
canonically find a homeomorphism with~$S^2$ sending~$\bullet$ to~$0$
and $\circ$ to~$1$, while~$\star$ is sent to~$\infty$. Doing this for
all pairs~$(t, t^a)$, where~$t$ is black, yields a single
map~$\topo{\cell} \to S^2$. The latter is the ramified cover
corresponding to~$\cell$ in the equivalence of categories above.

We will not prove this last claim in detail, nor will we rely on it in
the sequel. On the other hand, we do examine the reverse construction
more closely. In fact let us state:

\begin{prop} \label{prop-dessin-and-ramified-covers}
Let~$\cell$ correspond to~$p \colon S \to \p$ in the above equivalence
of categories. Then~$\topo{\cell} \cong S$, under a homeomorphism
taking~$\topo{\g}$ to the inverse image~$p^{-1}([0, 1])$.
\end{prop}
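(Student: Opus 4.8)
The plan is to build the ramified cover attached to $\cell$ \emph{by hand}, directly from the triangulation, and then to recognise it as $p$ via the uniqueness built into the chain of equivalences. Concretely, I would produce a continuous map $\pi\colon\topo{\cell}\to\p$ — essentially the map sketched just above in the discussion of geometric intuition — and verify that it is a ramified cover with ramification over $\{0,1,\infty\}$ whose monodromy $\pi_1$-set is exactly $(D,\sigma,\alpha)$. Since that is the very $\pi_1$-set attached to $p$, Theorems \ref{thm-eq-cats-coverings-sets} and \ref{thm-ramified-useless} force $\pi$ and $p$ to be isomorphic as ramified covers, and the comparison homeomorphism is the one we want.

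First I would fix a model triangulation of the target. Realise $\p=S^2$ as $\topo{\cell_0}$ for the one-dart dessin $\cell_0$: put $\bullet=0$, $\circ=1$, $\star=\infty$, let the circle $\r\cup\{\infty\}$ be cut by these three points into the three arcs $[0,1]$, $[1,\infty]$, $[\infty,0]$, and let the two closed hemispheres be the black and white triangles, each carrying all three vertices. The single edge is then $[0,1]$ and the base point $*=\tfrac12$ is its midpoint. Now define $\pi$ by sending every black triangle of $\cell$ homeomorphically onto the black hemisphere and every white triangle onto the white hemisphere, using once and for all a fixed pair of homeomorphisms that agree arc-by-arc on the common boundary circle (the $\bullet-\circ$ side onto $[0,1]$, and so on). The map $\pi$ is then well defined and continuous precisely because the combinatorics of side-sharing in $\topo{\cell}$ is encoded by $a$, $b$, $c$: two triangles glued along a $\bullet-\circ$ (resp. $\star-\bullet$, $\star-\circ$) side are interchanged by $b$ (resp. $a$, $c$), so the two prescriptions agree on the shared side.

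Next I would check that $\pi$ is a ramified cover. Over the interior of a hemisphere or of an arc, $\pi$ is a local homeomorphism by construction, so its restriction over $\pminus$ is a finite covering. At a black vertex $b$ the triangles around $b$ form, by the surface conditions of Proposition \ref{prop-dessin-iff-conditions} (the connectivity graph $\cell_b$ is a circle), a fan of $2\deg(b)$ triangles filling a disc, on which $\pi$ is modelled on $z\mapsto z^{\deg(b)}$; the white vertices and the face-centres $\star$ give the analogous local pictures over $1$ and $\infty$. Thus $\pi$ is a ramified cover with ramification contained in $\{0,1,\infty\}$. It remains to compute the monodromy: the fibre $\pi^{-1}(*)$ is the set of edge-midpoints, that is $D$, and dragging a point of the fibre along the generator $\sigma$ (a small loop about $0$) carries an edge to the next edge about its black vertex, which is the action of $\sigma$; likewise the loop $\alpha$ about $1$ acts as $\alpha$. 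Hence $\pi^{-1}(*)$ is the $\langle\sigma,\alpha\rangle$-set $(D,\sigma,\alpha)$ attached to $\cell$ by Theorem \ref{thm-eq-cats-dessins-sets}, which is also the $\pi_1$-set of $p$. An isomorphism of ramified covers $\Psi\colon\topo{\cell}\to S$ with $p\circ\Psi=\pi$ then exists, and since $\pi^{-1}([0,1])$ is exactly the union of the $\bullet-\circ$ sides together with the black and white vertices, namely $\topo{\g}$, the relation $p\circ\Psi=\pi$ yields $\Psi(\topo{\g})=p^{-1}([0,1])$, as claimed.

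The main obstacle I expect is the monodromy computation — pinning down the identification of the fibre with $D$ and verifying that $\sigma$ and $\alpha$ act as the rotations $\sigma$ and $\alpha$ in the correct direction (this is where the chosen orientation and the convention of Remark \ref{rmk-convention-permutations} must be handled with care). The local $z\mapsto z^e$ analysis at the vertices is the other point needing genuine attention, although it is forced by the combinatorics once one knows each connectivity graph is a circle.
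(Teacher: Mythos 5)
Your argument is correct, but it runs in the opposite direction to the paper's. You start from the dessin and build the Belyi map by hand: you fold every black triangle onto one hemisphere of $S^2$ and every white triangle onto the other (this is exactly the ``shorter path'' sketched informally at the start of the subsection on geometric intuition, which the paper explicitly declines to prove in detail), check that the result is a ramified cover using the connectivity graphs of Proposition~\ref{prop-dessin-iff-conditions} to get the local model $z \mapsto z^e$ at the vertices and face centres, and then identify your map with $p$ by matching monodromy $\pi_1$-sets through Theorems~\ref{thm-ramified-useless} and~\ref{thm-eq-cats-coverings-sets}. The paper instead goes from $p$ to the dessin: it sets $B = p^{-1}(0)$, $W = p^{-1}(1)$, $F = p^{-1}(\infty)$, takes the components of $p^{-1}((0,1))$ as darts, uses a lemma on the components of $p^{-1}(U)$ over a punctured disc to attach the faces, and only then compares the combinatorial permutations with the monodromy. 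What the paper's route buys is the explicit recipe for reading a dessin off a Belyi map --- in particular the realization of $\topo{\g}$ as $F^{-1}([0,1])$ that is used repeatedly in the examples and computations later on; what your route buys is a more self-contained construction on the dessin side that avoids the analysis of $p^{-1}(U)$. Note that the step you flag as the main obstacle --- checking that the loop around $0$ really acts as $\sigma$ and not $\sigma^{-1}$, with the black hemisphere chosen consistently with the colouring of triangles --- is not avoidable by either route: the paper spends the second half of its proof on precisely this orientation bookkeeping (the ``triangle on the left'' argument), so a complete write-up of your version would need an equivalent verification rather than the one-sentence assertion you give.
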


For the proof it will be convenient to have a modest lemma at our
disposal. It gives conditions under which a ramified cover~$p \colon S
\to R$, which must be locally of the form~$z \mapsto z^e$, can be
shown to be of this form over some given open set. We will write 
\[ \D = \{ z \in \C : |z| \le 1 \}  \]
as before, while 
\[ \DO = \{ z \in \C : |z| < 1 \} \, ,   \]
and 
\[ \DO' = \DO \smallsetminus \{ 0 \} \, .   \]

\begin{lem}
Let~$p \colon S \to R$ be a ramified cover between compact
surfaces. Let~$x \in R_r$, and let~$U$ be an open neighbourhood
of~$x$. We assume that~$U$ is homeomorphic to a disc, and that~$U \cap
R_r = \{ x \}$.

Then each connected component~$V$ of~$p^{-1}(U)$ contains one and only
one point of the fibre~$p^{-1}(x)$. Moreover, each~$V$ is itself
homeomorphic to a disc and there is a commutative diagram 
\[ \begin{CD}
\DO @>{\cong}>> V \\
@V{z \mapsto z^e}VV     @VVpV \\
\DO  @>{\cong}>> U
\end{CD}
  \]
\end{lem}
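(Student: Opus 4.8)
The plan is to analyze the local structure of the ramified cover near a ramification point, using the classical description of coverings of a punctured disc together with the fact that the ramified cover is obtained by filling in the punctures. Let me lay out the key steps.

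First, I would restrict attention to the punctured neighbourhood. Since $U$ is homeomorphic to a disc and $U \cap R_r = \{x\}$, the punctured set $U \smallsetminus \{x\}$ is homeomorphic to the punctured disc $\DO'$. Recall from the construction preceding theorem~\ref{thm-ramified-useless} that the ramified cover $p$ restricts to a genuine finite covering map over $R \smallsetminus R_r$, and in particular over $U \smallsetminus \{x\}$. So I would set $U' = U \smallsetminus \{x\}$, fix the homeomorphism $U' \cong \DO'$, and study the finite covering $p^{-1}(U') \to U' \cong \DO'$.

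Next, I would invoke the classification of finite connected coverings of the punctured disc. The fundamental group $\pi_1(\DO')$ is infinite cyclic, generated by a loop around the puncture, so every connected finite covering of $\DO'$ of degree $e$ is isomorphic to the covering $\DO' \to \DO'$ given by $z \mapsto z^e$ (these correspond exactly to the subgroups $e\z \subset \z$). Thus each connected component $W'$ of $p^{-1}(U')$ is homeomorphic to $\DO'$ in a way compatible with $p$, i.e.\ fits into the commutative square with $z \mapsto z^e$ on the bottom left. The components $V$ of $p^{-1}(U)$ are obtained from the $W'$ by adding back finitely many points of the fibre $p^{-1}(x)$; here I would use the explicit construction of $S$ from $U$ (filling in one point per component over the puncture) recalled in the text, which guarantees that each such $W'$ is completed by exactly one point lying over $x$. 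Extending the homeomorphism $W' \cong \DO'$ to $V \cong \DO$ by sending that one point to $0$, and checking continuity at $0$ by comparing how-many-to-one the maps are (the ramification index is well-defined, as noted in the definition of ramified cover), yields the desired commutative diagram.

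The main obstacle will be the last step: verifying that each component $V$ of $p^{-1}(U)$ contains exactly one point of $p^{-1}(x)$, and that the extension of the homeomorphism across the added point is itself a homeomorphism $V \cong \DO$ making the square commute. The ``one and only one point'' claim is really the statement that the construction of the ramified cover fills each punctured-disc component with a single point, which is how $S$ was built; I would quote \S5 of \cite{helmut} for this. The continuity and bijectivity of the extended map at $0$ then follows because near $0$ the map $z \mapsto z^e$ is a proper, open, $e$-to-one map of $\DO$ onto $\DO$, matching the local behaviour of $p$ forced by the very definition of ramified cover. Everything else is a routine transport of the standard covering-space classification through the chosen homeomorphisms.
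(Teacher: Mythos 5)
Your skeleton is the same as the paper's: pass to the punctured disc $U\smallsetminus\{x\}\cong\DO'$, use $\pi_1(\DO')\cong\z$ to identify each connected component of $p^{-1}(U\smallsetminus\{x\})$ with $\DO'\to\DO'$, $z\mapsto z^e$, and then fill in the point over $x$. The divergence is in how you handle the one step that carries all the content, namely that each component $V$ of $p^{-1}(U)$ contains \emph{exactly} one point of $p^{-1}(x)$. You propose to get this by quoting the filling-in construction of \S5 of \cite{helmut}; but that only describes ramified covers \emph{built} from coverings of $R\smallsetminus R_r$, whereas the lemma is about an arbitrary ramified cover $p\colon S\to R$. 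To transfer the statement you would need to know that $S$ is isomorphic over $R$ to the filled-in surface, i.e.\ the essential surjectivity and uniqueness halves of the equivalence in theorem~\ref{thm-ramified-useless} --- and the local analysis in the present lemma is precisely the kind of input that equivalence rests on, so the argument skirts circularity (it is formally admissible only because the paper black-boxes that theorem, but it defeats the purpose of proving the lemma at all).

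The paper instead proves the two halves directly. \emph{At least one point:} this genuinely uses compactness of $S$ --- the closures $K_n$ of $\{z\in W:|z|\le 1/n\}$ have the finite intersection property, so they share a point $s$, which must satisfy $p(s)=x$. (Without compactness the claim is false: delete a point of $p^{-1}(x)$ from $S$ and $p$ is still locally $z\mapsto z^e$ everywhere that remains.) \emph{At most one point, and continuity of the extension at $0$:} both come from the observation that a connected covering $c\colon X\to Y$ admits no proper open $\Omega\subsetneq X$ restricting to a covering of $Y$; this forces the punctured neighbourhoods of two candidate points $s,s'$ to coincide with full preimages of punctured discs around $x$, hence to intersect, giving $s=s'$, and it likewise shows that $\bar h^{-1}$ of a neighbourhood of $s$ contains a punctured disc around $0$. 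Your ``compare how-many-to-one the maps are'' gestures at the right phenomenon but does not by itself yield either uniqueness or continuity; if you want a self-contained proof, this covering-theoretic fact is the missing ingredient to supply.
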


\begin{proof}
Let us start with the connected components of~$p^{-1}(U \smallsetminus
\{ x \})$. Let us form the pullback square
\[ \begin{CD}
E @>{\cong}>> p^{-1}(U \smallsetminus \{ x \}) \\
@V{\pi}VV     @VVpV \\
\DO'  @>{\cong}>> U \smallsetminus \{ x \}
\end{CD}
  \]
The map~$\pi$ is a covering map. The connected coverings of~$\DO'$ are
known of course: if~$W$ is a connected component of~$E$, then it can
be identified with~$\DO'$ itself, with~$\pi(z) = z^e$.

If~$V$ is as in the statement of the lemma, then it is a surface, so
it remains connected after removing finitely many points. It follows
that $$V \mapsto W = V \smallsetminus p^{-1}(x)$$ is well-defined, and
clearly injective, from the set of connected components of~$p^{-1}(U)$
to the set of connected components of~$p^{-1}(U \smallsetminus \{ x
\})$. 

Let us prove that~$V \mapsto W$ is surjective, so let~$W$ be a
component. Let~$K_n$ be the closure in~$S$ of 
\[ \{ z \in W= \DO' : |z| \le \frac{1} {n} \} \, .  \]
Since~$S$ is compact, there must be a point~$s\in S$ belonging to all
the closed subsets~$K_n$, for all~$n \ge 1$. It follows that~$p(s) =
x$. The point~$s$ must belong to some component~$V$; and by
definition~$s$ is in the closure of~$W$, so~$V \cap W \ne
\emptyset$. Thus the component~$V \smallsetminus p^{-1}(x)$ must
be~$W$.

We have established a bijection between the~$V$'s and the~$W$'s, and
in passing we have proved that each~$V$ contains at least an~$s$ such
that~$p(s) = x$. Let us show that it cannot contain two distinct such
points~$s$ and~$s'$. For this it is convenient to use the following
fact from covering space theory: given a covering~$c \colon X \to Y$
with~$X$ and~$Y$ both path-connected, there is no open subset~$\Omega
$ of~$X$, other than~$X$ itself, such that the restriction~$c \colon
\Omega \to Y$ is a covering of~$Y$. From this, we conclude that
if~$\Omega $ and~$\Omega '$ are open subsets of~$\DO'$, such that the
restriction of~$\pi$ to both of them yields a covering map, over the same
pointed disc~$Y$, then~$\Omega $ and~$\Omega '$ must be both equal
to~$X= \pi^{-1}(Y)$. If now~$s, s' \in V$ satisfy~$p(s) = p(s')= x$,
using the fact that~$p$ is a ramified cover we see that all the
neighbourhoods of~$s$ and~$s'$ must intersect, so~$s = s'$.

So we have a homeomorphism 
\[ h \colon W = \DO' \longrightarrow V \smallsetminus \{ s \}  \]
and we extend it to a map~$\bar h \colon \DO \to V$ by putting~$\bar
h(0) = s$. We see that this extension of~$h$ is again continuous, for
example by using that a neighbourhood of~$s$ in~$V$ mapping onto a
disc around~$x$ must correspond, under the bijection~$h$, to a disc
around~$0$, by the above ``fact''. This shows also that~$\bar h$ is an
open map, so it is a homeomorphism.
\end{proof}

\begin{proof}[Proof of proposition~\ref{prop-dessin-and-ramified-covers}]

Let us start with~$p \colon S \longrightarrow \p$, a ramified cover
with ramification in~$\{ 0, 1, \infty \}$, and let us build {\em some}
dessin~$\cell$. We will then prove that it is the dessin
corresponding to~$p$ in our equivalence of categories, so this proof
will provide a more explicit construction.

So let~$B = p^{-1}( 0 )$, $W= p^{-1}( 1 )$. There is no ramification
along~$(0, 1)$, and this space is simply-connected, so~$p^{-1}((0,
1))$ is a disjoint union of copies of~$(0, 1)$; we let~$D$ denote the
set of connected components of~$p^{-1}((0, 1))$.

For each~$b \in B$ we can find a neighbourhood~$U$ of~$b$ and a
neighbourhood~$V$ of~$0 \in \p$, both carrying charts onto discs,
within which $p$ looks like the map~$z \mapsto z^e$. Pick~$\varepsilon
$ such that~$[0, \varepsilon ) \subset V$; then the open set~$U$
  with~$p^{-1}([0, \varepsilon )) \cap U$ drawn on it looks like a
    disc with straight line segments connecting the centre to
    the~$e$-th roots of unity. Taking~$\varepsilon $ small enough for
    all~$b \in B$ at once, $p^{-1}([0, \varepsilon ))$ falls into
      connected components looking like stars and in bijection
      with~$B$. As a result, each~$d \in D$ determines a unique~$b \in
      B$, corresponding to the unique component that it
      intersects. This is~$\B(d)$; define~$\W(d)$ similarly.

We have defined a bigraph~$\g$, and it is clear that~$\topo{\g}$ can be identified with the inverse image~$p^{-1}([0, 1])$. We turn it into a cell complex now. Let~$F = p^{-1}( \infty )$. We apply the previous lemma to~$\p \smallsetminus [0, 1]$, which is an open subset in~$\p$ homeomorphic to a disc and containing only one ramification point, namely~$\infty$. By the lemma, we know that~$p^{-1}(\p \smallsetminus [0, 1])$ is a disjoint union of open discs, each containing just one element of~$F$. We need to be a little more precise in order to define~$\partial f$.


We consider the map~$h \colon \D \to \p$ constructed in two steps as
follows. First, let~$\D \to \D/ \!\! \sim$ be the quotient map that
identifies~$z$ and~$\bar z$ if and only if~$|z|= 1$; then, choose a
homeomorphism~$\D / \!\!  \sim \, \to \p$, satisfying~$1 \mapsto 0$,
$-1 \mapsto 1$, $0 \mapsto \infty$, and sending both circular arcs
from~$1$ to~$-1$ in~$\D$ to~$[0, 1]$. We think of~$h$ as the map~$\D
\to \topo{\cell}$ in example~\ref{ex-fundamental}. In~$\D$, we think
of~$1$ as a black vertex, of~$-1$ as a white vertex, of the circular
arcs just mentioned as darts, and of the two half-discs separated by
the real axis as black and white triangles.

\figurehere{0.25}{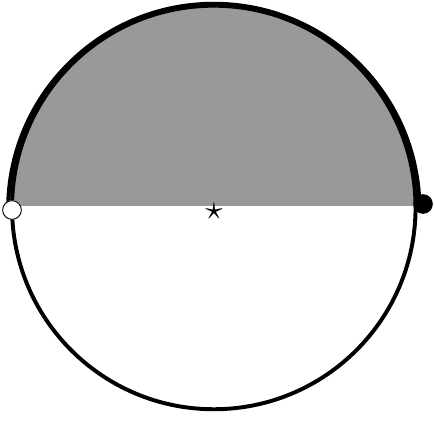}


Let~$\D^1 = \D \smallsetminus \{ 1, -1, 0 \}$ and in fact define~$\D^n
= \D \smallsetminus \{ \omega : \omega^{2n} = 1 \} \cup \{ 0
\}$. We emphasize that~$\D^n$ contains numbers of modulus~$1$. There
is a covering map~$\D^n \to \D^1$ given by~$z \mapsto
z^n$. Since~$\D^1$ retracts onto a circle, its fundamental group
is~$\z$, and we see that any connected covering of finite degree~$n$
must actually be of this form.

Now let~$S' \to \pminus$ be the covering defined by~$p$. Let us
construct a pull-back square
\[ \begin{CD}
E @>\theta >> S' \\
@VqVV    @VV{p}V \\
\D^1 @>h>> \pminus
\end{CD}
  \]
Here~$E \to \D^1$ is a finite covering map, so each connected
component of~$E$ can be identified with~$\D^n$ for some~$n$, while the
map~$q$ becomes~$z \mapsto z^n$. These components are in bijection
with~$F$, so we write~$\D^n_f$ for~$f \in F$.

If~$\omega$ is a~$2n$-th root of unity, the circular arc~$(\omega^i,
\omega ^{i+1}) \subset \D^n_f$ is mapped onto a dart by the
map~$\theta \colon E \to S'$. This defines, for each face~$f$, a
sequence of darts which is~$\partial f$. This completes our
construction of a cell complex from a ramified cover of~$\p$. Note
that~$\theta \colon \D^n_f \to S'$ can be extended to a map~$\D \to S$,
clearly, and it follows easily that~$\topo{\cell}$ is homeomorphic
to~$S$ itself, or in other words that~$\cell$ is a dessin on~$S$.

It remains to prove that~$\cell$ is the dessin corresponding to
the ramified cover~$p$ in the equivalence of categories at hand. For
this we compare the induced actions. To~$\cell$ are attached two
permutations~$\sigma $ and~$\alpha $ of the set~$D$ of darts. Note
that~$D$ is here in bijection with the fibre~$p^{-1}(\frac{1} {2})$,
and taking~$\frac{1} {2}$ as base point we have the monodromy action
of~$\pi_1(\pminus) = \langle \sigma ', \alpha' \rangle$, defining the
permutations~$\sigma '$ and~$\alpha'$. We must prove that~$\sigma =
\sigma '$ and~$\alpha = \alpha '$. Here~$\sigma '$ and~$\alpha '$ are
the classes of the loops defined above (where we used the
notation~$\sigma $ and~$\alpha $ in anticipation). 

We will now use the fact (of which we say more after the proof)
that~$S$ can be endowed with a unique smooth structure and
orientation, such that~$p\colon S \to \p$ is smooth and
orientation-preserving. We use this first to obtain, for each dart, a
smooth parametrization~$\gamma \colon [0, 1] \to S$ such that~$p \circ
\gamma $ is the identity of~$[0, 1]$. Each dart belongs to two
triangles, and it now makes sense to talk about the triangle {\em on
  the left} of the dart as we travel along~$\gamma $. Colour it
black. We will prove that this is a colouring of the type considered
in~\S\ref{subsec-orientations}.

Pick~$b \in B$, and a centered chart~$\DO \to U$ onto a
neighbourhood~$U$ of~$b$, such that the map~$p$ when pulled-back
to~$\DO$ is~$z \mapsto z^e$. The monodromy action of~$\pi_1(\DO')$ on
the cover~$\DO' \to \DO'$ given by~$z \mapsto z^e$ is generated by the
counterclockwise rotation of angle~$\frac{2 \pi} {e}$. Now it is
possible for us to insist that the chart~$\DO \to U$ be
orientation-preserving, so ``counterclockwise'' can be safely
interpreted on~$S$ as well as~$\DO$. Let us draw a picture of~$U$
with~$p^{-1}([0, 1 )) \cap U$ on it, together with the
  triangles, for~$e= 4$.

\figurehere{0.3}{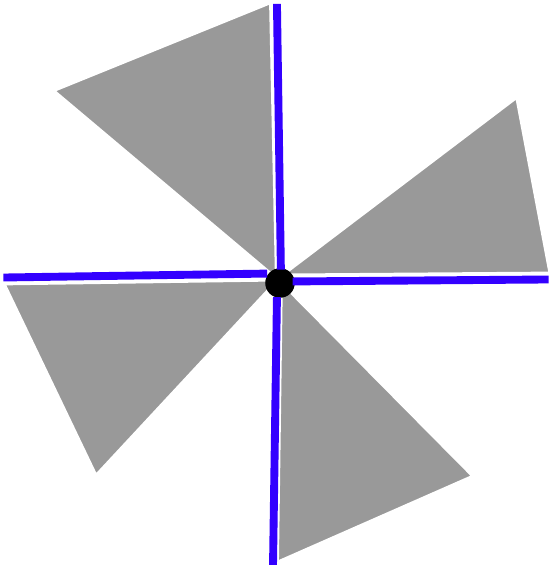}

The complement of the star-like subset of~$U$ given by~$p^{-1}([0,
  1 ))$ falls into connected components, each contained in a
  face; so two darts obtained by a rotation of angle~$\frac{2 \pi}
  {e}$ are on the boundary of the same face, and must be
  consecutive. The symmetry~$a$, that is the symmetry in the~$\star -
  \bullet$ side, is now clearly seen to exchange a black triangle with
  a white one. What is more, calling~$b$ as usual the symmetry in the
  darts, the permutation~$\sigma = ab$ sends a black triangle to its
  image under the rotation already mentioned. This is also the effect
  of the monodromy action, and~$\sigma = \sigma '$. 

Reasoning in the same fashion with white vertices, we see that~$c$,
the symmetry in the~$\star - \circ$ side, also exchanges triangles of
different colours. So the colouring indeed has the property that
neighbouring triangles are never of the same colour. That~$\alpha =
\alpha '$ is observed similarly. This concludes the proof. 
\end{proof}

\begin{ex}[Duality] \label{ex-duality}
The geometric intuition gained with this proposition and its proof may
clarify some arguments. Let~$\cell$ be a dessin, whose sets of
triangles and darts will be written~$T$ and~$D$, so that~$\cell$
defines the object~$(D, \sigma , \alpha, \phi )$ in~$\sets_{\sigma ,
  \alpha , \phi}$. Now let~$p \colon S \longrightarrow \p$ correspond
to~$\cell$. What is the dessin corresponding to~$1/p$ ? And what
is the object in~$\sets_{\sigma, \alpha, \phi}$ ?

Let us use the notation~$\cell'$, $T'$ and~$D'$. We can think
of~$\cell$ and~$\cell'$ as being drawn on the same surface~$S$. Zeroes
of~$1/p$ are poles of~$p$ and {\em vice-versa}, so black vertices are
exchanged with face centres, while the white vertices remain in
place. In fact, the most convenient property to observe is
that~$\cell$ and~$\cell'$ have exactly the same triangles, as
subspaces of~$S$, and we identify~$T = T'$. The~$\star - \circ$ sides
are promoted to darts.

The symmetries of~$T$ which we have called~$a, b$ and~$c$ become,
for~$\cell'$, the symmetries~$a'= a$, $b' = c$ and~$c'=b$ (simply look
at the definitions and exchange~$\star$ and~$\bullet$ throughout). It
follows that~$\sigma = ab$ becomes~$\sigma ' = a'b' = ac = \phi^{-1}$
and similarly one obtains~$\alpha ' = \alpha ^{-1}$ and~$\phi' =
\sigma^{-1}$. 

One must be careful, however. The object in~$\sets_{\sigma , \alpha,
  \phi}$ defined by~$1/p$, which we are after, is hidden behind one
more twist. The {\em black} triangles in~$T$ for~$\cell$ are those
mapping to the upper half plane under~$p$; the white triangles
for~$\cell$ are the black ones for~$\cell'$ as a result. Identifying
darts and black triangles, we see~$T$ as the disjoint union of~$D$
and~$D'$. While it is the case that~$\cell'$ corresponds to~$(D',
\phi^{-1}, \alpha^{-1}, \sigma^{-1})$ in~$\sets_{\sigma, \alpha,
  \phi}$, this notation is confusing since we tend to think
of~$\phi^{-1}$ as a map defined on either~$T$ or~$D$, when in fact it
is the induced map on~$D'$ which is considered here (in fact we should
write something like~$\phi^{-1} |_{D'}$). It is clearer to use for
example the map~$b' \colon D \to D'$ and transport the permutations
to~$D$, which is simply a conjugation. As already observed, this
``change of orientation'' amounts to taking inverses for~$\sigma '$
and~$\alpha '$.

The conclusion is that {\em replacing~$p$ by~$1/p$ takes the
  object~$(D, \sigma, \alpha, \phi)$ to the object~$(D, \phi, \alpha,
  \alpha^{-1} \sigma \alpha )$}.
\end{ex}

\begin{ex}[Change of colours] \label{ex-change-colours}
As an exercise, the reader will complete the following
outline. If~$\cell$ is represented by~$p \colon S \to \p$, with
corresponding object~$(D, \sigma, \alpha, \phi)$, then~$1 - p \colon S
\to \p$ corresponds to~$(D, \alpha, \sigma, \alpha \phi \alpha ^{-1}
)$. Indeed, $\cell$ and~$\cell'$ have the same triangles, as subsets
of~$S$, and the black triangles for~$\cell$ are precisely the white
ones for~$\cell'$ and {\em vice-versa}; the vertices of~$\cell'$ are
those of~$\cell$ with the colours exchanged, while the face centres
remain in place. (Informally~$\cell'$ is just that: the same
as~$\cell$ with the colours exchanged.) So~$c'= a$, $b'=b$ and~$a'=c$,
and~$\sigma ' = c \alpha c^{-1}$, $\alpha ' = b \sigma b^{-1}$, as
maps of~$T$. As maps of~$D$, using the bijection~$b \colon D \to D'$
to transport the maps induced on~$D'$, we end up with the permutations
announced.
\end{ex}

\subsection{Complex structures}

When~$p \colon S \to R$ is a ramified cover, and~$R$ is equipped with
a complex structure, there is a unique complex structure on~$S$
such that~$p$ is complex analytic (\cite{douady}, 6.1.10). Any
morphism between~$S$ and~$S'$, over~$R$, is then itself complex
analytic. Conversely if~$S$ and~$R$ both have complex structures, an
analytic map~$S \to R$ is a ramified cover as soon as it is not
constant on any connected component of~$S$.

We may state yet another equivalence of categories. Recall that an
analytic map~$S \to \p$ is called a meromorphic function on~$S$.

\begin{thm}
The category~$\cat$ is equivalent to the category~$\belyi$ of compact
Riemann surfaces with a meromorphic function whose ramification set is
contained in~$\{ 0, 1, \infty \}$.
\end{thm}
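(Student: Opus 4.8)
The plan is to reduce everything to the two facts about complex structures quoted just above, together with the equivalence $\cat \simeq \covs$ already established. Since $\p$ carries its standard structure as the Riemann sphere, it suffices to produce mutually inverse functors between $\covs$ and $\belyi$ and then compose with the earlier equivalence.

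First I would define a functor $\Phi \colon \covs \to \belyi$. On objects, given a ramified cover $p \colon S \to \p$ with ramification set inside $\{0,1,\infty\}$, I equip $S$ with the unique complex structure for which $p$ becomes analytic (this is exactly the cited result, \cite{douady}, 6.1.10). The map $p$ is then a meromorphic function on the compact Riemann surface $S$, with unchanged ramification set, so $(S,p)$ is an object of $\belyi$. On morphisms, a morphism $h \colon S \to S'$ over $\p$ in $\covs$ is by definition a continuous map with $p' \circ h = p$; by the companion statement that any map over the base between two such surfaces is automatically analytic, $h$ is analytic, hence a morphism in $\belyi$. So $\Phi$ is a functor which is the identity on morphisms.

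Next I would define $\Psi \colon \belyi \to \covs$ by simply forgetting the complex structure. The only thing to check on objects is that the meromorphic function $f \colon S \to \p$ really is a ramified cover: this is the converse statement quoted above, valid provided $f$ is not constant on any connected component of $S$, which holds since a meromorphic function with a prescribed finite ramification set is non-constant. Thus $\Psi(S,f)$ is an object of $\covs$ with ramification inside $\{0,1,\infty\}$. On morphisms, an analytic map over $\p$ is in particular continuous over $\p$, so it is a morphism of $\covs$; again $\Psi$ is the identity on morphisms.

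Finally I would check that $\Phi$ and $\Psi$ are mutually inverse, which is where the uniqueness clause of the cited result does the work. The composite $\Psi \circ \Phi$ adds the unique analytic-making complex structure and then forgets it, returning $p$ unchanged; the composite $\Phi \circ \Psi$ forgets the complex structure of $(S,f)$ and then re-adds the unique one making $f$ analytic, and by uniqueness this recovers the structure we started with. Both composites are the identity on objects, and each functor is the identity on morphisms, so $\covs \simeq \belyi$. Composing with the equivalence $\cat \simeq \covs$ of the previous theorem gives $\cat \simeq \belyi$. The only genuinely delicate point is the morphism correspondence, namely that continuous maps over $\p$ are exactly the analytic maps over $\p$; but this is precisely guaranteed by the automatic-analyticity part of the quoted complex-structure result, so no real obstacle remains beyond invoking it carefully.
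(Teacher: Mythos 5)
Your argument is exactly the paper's: the theorem is stated immediately after the paragraph quoting the unique analytic-making complex structure on $S$ (\cite{douady}, 6.1.10), the automatic analyticity of morphisms over the base, and the converse that non-constant analytic maps are ramified covers, and the intended proof is precisely the composition with the equivalence $\cat \simeq \covs$ that you spell out. You have merely made the mutually inverse functors explicit, which is a faithful elaboration of the same approach.
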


(The arrows considered are the maps above~$\p$.) A
pair~$(S, p)$ with~$p \colon S \to \p$ meromorphic, not ramified
outside of~$\{ 0, 1, \infty \}$, is often called a {\em Belyi pair},
while~$p$ is called a {\em Belyi map}.

\begin{ex} \label{ex-belyi-fractions}
Let us illustrate the results up to now with dessins on the sphere, so
let~$\cell$ be such that~$\topo{\cell}$ is homeomorphic to~$S^2$. By
the above, $\cell$ corresponds to a Riemann surface~$S$ equipped with
a Belyi map~$p \colon S \to \p$.

By proposition~\ref{prop-dessin-and-ramified-covers}, $S$ is
itself topologically a sphere. The uniformization theorem states that
there is a complex isomorphism~$\theta \colon \p \longrightarrow S$,
so we may as well replace~$S$ with~$\p$ equipped with~$F= p \circ \theta
$. Then~$(\p, F )$ is a Belyi pair isomorphic to~$(S, p)$.

Now $ F \colon \p \to \p$, which is complex analytic and not constant,
must be given by a rational fraction, as is classical. {\em The
  bigraph~$\g$ can be realized as the inverse image~$F^{-1}([0,
    1])$ where~$F\colon \p \longrightarrow \p$ is a rational
  fraction.} 

Let us take this opportunity to explain the terminology {\em dessins
  d'enfants} (children's drawings), and stress again some remarkable
features. By drawing a simple picture, we may as in
example~\ref{ex-complexes-by-pictures} give enough information to
describe a cell complex~$\cell$. Very often it is evident
that~$\topo{\cell}$ is a sphere, as we have seen in this example. What
the theory predicts is that we can find a rational fraction~$F$ such
that the drawing may be recovered as~$F^{-1}([0, 1])$. This works with
pretty much any planar, connected drawing that you can think of, and
gives these drawings a rigidified shape.

To be more precise, the fraction~$F$ is unique up to an isomorphism
of~$\p$, that is, up to precomposing with a Moebius
transformation. This allows for rotation and stretching, but still
some features will remain unchanged. For example the darts around a
given vertex will all have the same angle~$\frac{2\pi} {e}$ between
them, since~$F$ looks like~$z \mapsto z^e$ in conformal charts.

\end{ex}

\subsection{Fields of meromorphic functions}

When~$S$ is a compact, connected Riemann surface, one can consider all
the meromorphic functions on~$S$, comprising a field~$\m
(S)$. When~$S$ is not assumed connected, the meromorphic functions
form an {\em étale algebra}, still written~$\m (S)$: in this paper an
étale algebra is simply a direct sum of fields, here corresponding to
the connected components of~$S$. In what follows we shall almost
always have to deal with an {\em étale algebra over~$K$} where~$K$ is
some field, by which we mean an étale algebra which is also
a~$K$-algebra, and which is finite-dimensional over~$K$. (In the
literature étale algebras have to satisfy a separability condition,
but we work in characteristic~$0$ throughout the paper.)

If now~$p \colon S \to R$ is a ramified cover between compact
surfaces, we may speak of its degree, as the degree of the
corresponding covering~$p^{-1}(R \smallsetminus R_r) \to R
\smallsetminus R_r$. The following is given in \S6.2.4
in~\cite{douady}.

\begin{thm} \label{thm-riemann-surfaces-same-as-fields}
Fix a compact, connected Riemann surface~$R$. The category of compact
Riemann surfaces~$S$ with a ramified cover~$S \to R$ is
anti-equivalent to the category of étale algebras over~$\m (R)$. The
equivalence is given by~$S \mapsto \m (S)$, and the degree of~$S \to
R$ is equal to the dimension of~$\m (S)$ as a vector space over~$\m
(R)$.
\end{thm}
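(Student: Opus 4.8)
The plan is to exhibit $S \mapsto \m(S)$ as an anti-equivalence by checking, in turn, that it is well-defined and contravariant on objects and morphisms, that it respects degrees, that it is fully faithful, and that it is essentially surjective. First I would set up the functor. A ramified cover $p \colon S \to R$ induces by pullback an injection $p^* \colon \m(R) \to \m(S)$ (a map of fields on each connected component, hence injective, since $p$ is non-constant there), turning $\m(S)$ into an $\m(R)$-algebra; and a morphism $h \colon S \to S'$ over $R$ induces the $\m(R)$-algebra homomorphism $h^* \colon \m(S') \to \m(S)$, which accounts for the contravariance. Because $\m$ sends a disjoint union of surfaces to the direct sum of the corresponding fields, the whole statement reduces to the connected case: it suffices to prove that connected compact Riemann surfaces with a ramified cover to $R$ are anti-equivalent to finite field extensions of $\m(R)$, the general étale statement following by forming finite direct sums.

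So assume $S$ is connected, so that $\m(S)$ is a field. I would first record the classical fact that $\m(S)$ is finitely generated of transcendence degree one over $\C$; since the same holds for $\m(R)$ and $\m(R) \subseteq \m(S)$, the extension $\m(S)/\m(R)$ is algebraic and finitely generated, hence finite. To pin the dimension to $n = \deg p$, pick a meromorphic function taking $n$ distinct values on a generic fibre: this forces $[\m(S):\m(R)] \ge n$. Conversely, any $g \in \m(S)$ satisfies the monic degree-$n$ polynomial whose roots are the values of $g$ on the $n$ sheets; its coefficients are the elementary symmetric functions of these values, which are single-valued and holomorphic on the complement of the branch locus and extend across it by the removable-singularity theorem, hence lie in $\m(R)$. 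With a primitive element this gives $[\m(S):\m(R)] \le n$, so equality holds and the dimension formula is proved. For full faithfulness it remains to recover $h$ from $h^*$; here I would use the standard dictionary identifying the points of a compact Riemann surface with the places (discrete valuations) of its function field, under which an $\m(R)$-homomorphism of function fields determines the induced map on places and hence the analytic map over $R$.

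The principal obstacle is essential surjectivity: given a finite extension $L/\m(R)$, I must produce a connected compact Riemann surface $S$ with a ramified cover $p \colon S \to R$ and an $\m(R)$-isomorphism $\m(S) \cong L$. Writing $L = \m(R)(\theta)$ with $\theta$ a root of an irreducible monic polynomial $P(T) = T^d + a_{d-1}T^{d-1} + \cdots + a_0$ with $a_i \in \m(R)$, I would let $R'$ be the open subset of $R$ where all the $a_i$ are holomorphic and the discriminant of $P$ is nonzero; over $R'$ the equation $P(t) = 0$ defines an unramified $d$-sheeted covering $U \to R'$ carrying a unique complex structure for which the projection is analytic and $t$ is meromorphic. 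The construction of ramified covers recalled before Theorem~\ref{thm-ramified-useless} then fills in the finitely many deleted points to yield a compact surface $S$ with a ramified cover $p \colon S \to R$ extending $U \to R'$; taking $S$ to be the component on which $t$ restricts to $\theta$, the function $t$ gives an $\m(R)$-embedding $L \hookrightarrow \m(S)$ which is an isomorphism by the degree count above. The delicate points are entirely analytic -- that symmetric functions of fibre values extend holomorphically across branch points, and that the filling-in yields a genuine Riemann surface rather than a mere topological one -- both classical; a reader preferring algebra may instead identify $\m(R)$ with the function field of a smooth projective curve, take the normalisation in $L$, and analytify.
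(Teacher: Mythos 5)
The paper does not prove this theorem at all: it is one of the classical results it deliberately quotes, with a pointer to \S 6.2.4 of Douady's book, so there is no argument of the paper's to measure yours against. What you have written is a correct outline of the standard classical proof, and its architecture -- reduce to the connected case by compatibility with direct sums, prove the degree formula $[\m(S):\m(R)]=\deg p$ via elementary symmetric functions of the fibre values, recover morphisms from the places of the function field, and get essential surjectivity by spreading an irreducible polynomial out over the complement of its discriminant locus and filling in the punctures with the ramified-cover construction recalled before theorem~\ref{thm-ramified-useless} (plus the unique complex structure of \cite{douady}, 6.1.10) -- is exactly how the cited sources proceed. Two places where your sketch silently leans on the hard part are worth making explicit. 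First, the inequality $[\m(S):\m(R)]\ge n$, and likewise the faithfulness of $S\mapsto\m(S)$, presuppose that a compact Riemann surface carries enough meromorphic functions to separate the $n$ points of a generic fibre; this is the Riemann existence theorem, the genuinely deep analytic input of the whole statement, and it should be named as such (your $\le n$ bound and your essential-surjectivity construction, by contrast, do not need it). Second, the elementary symmetric functions of the fibre values of $g$ need not have removable singularities over the branch locus -- if $g$ has a pole there they acquire poles -- so the correct assertion is that they extend meromorphically because their growth in a local coordinate is at worst polynomial, or else one first treats those $g$ with no poles over the branch points. Neither point is a gap in the mathematics, only in the bookkeeping.
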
 

(Here and elsewhere, ``anti-equivalent'' means ``equivalent to the
opposite category''.)


Taking~$R = \p$, we get a glimpse of yet another category that could
be equivalent to~$\cat$. However to pursue this, we need to translate
the condition about the ramification into a statement about étale
algebras (lest we should end up with a half-baked category, consisting
of algebras such that the corresponding surface has a certain
topological property; that would not be satisfactory). For this we
reword §2.2.1 of~\cite{helmut}.

Recall that~$\m (\p) = \C (x)$, where~$x$ is the identity of~$\p$. So
let us start with any field~$k$ at all, and consider a finite, Galois
extention~$L$ of~$k(x)$. We shall say that~$L/k(x)$ is {\em not
  ramified at~$0$} when it embeds into the extension~$k((x))/k(x)$,
where as usual~$k((x))$ is the field of formal power series in~$x$. In
this paper we will not enter into the subtleties of the
field~$k((x))$, nor will we discuss the reasons why this definition
makes sense. We chiefly want to mention that there is a simple
algebraic statement corresponding to the topological notion of
ramification, quoting the results we need.

Now take any~$s \in k$. From~$L$ we construct~$L_s = L \otimes_{k(x)}
k(x)$, where we see~$k(x)$ as an algebra over~$k(x)$ {\em via} the
map~$k(x) \to k(x)$ which sends~$x$ to~$x+s$; concretely if we pick a
primitive element~$y$ for~$L/k(x)$, so that~$L \cong k(x)[y] /(P)$,
then~$L_s$ is~$k(x)[y] / (P_s)$ where~$P_s$ is the result of
applying~$x \mapsto x+s$ to the coefficients of~$P$. When~$L_s/k(x)$
is not ramified at~$0$, we say that~$L/k(x)$ {\em is not ramified
  at~$s$}.

Finally, using the map~$k(x) \to k(x)$ which sends~$x$ to~$x^{-1}$, we
get an extension~$L_\infty/k(x)$, proceeding as above. When the latter
is not ramified at~$0$, we say that~$L/k(x)$ is {\em not ramified
  at~$\infty$}.

When the conditions above are not satisfied, for~$s \in k \cup \{
\infty \}$, we will of course say that~$L$ does ramify at~$s$ (or is
ramified at~$s$). That the topological and algebraic definitions of
ramification actually agree is the essence of the next lemma.

\begin{lem}
Let~$p \colon S \to \p$ be a ramified cover, with~$S$ connected, and
assume that the corresponding extension~$\m (S)/ \C(x)$ is
Galois. Then for any~$s \in \p$, the ramification set~$\p_r$
contains~$s$ if and only if~$\m (S)/\C (x)$ ramifies at~$s$ in the
algebraic sense.

In particular, the ramification set in contained in~$\{ 0, 1, \infty
\}$ if and only if the extension~$\m (S)/\C (x)$ does not ramify
at~$s$ whenever~$s \not \in \{ 0, 1, \infty \}$.
\end{lem}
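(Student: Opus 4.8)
The plan is to prove the equivalence of the topological and algebraic notions of ramification by reducing to the local picture and exploiting the lemma proved just above (the one giving the local structure of a ramified cover over a disc as $z \mapsto z^e$).

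First I would fix $s \in \p$ and choose a chart around $s$ identifying a neighbourhood $U$ with $\DO$, with $U \cap \p_r \subset \{s\}$. By the previous lemma, each connected component $V$ of $p^{-1}(U)$ contains exactly one point of the fibre $p^{-1}(s)$, and the restriction $p \colon V \to U$ looks like $z \mapsto z^e$ for some integer $e = e(V) \ge 1$. The point $s$ lies in $\p_r$ precisely when some component $V$ has $e(V) > 1$; otherwise $p$ is a genuine (unramified) local homeomorphism near every point of the fibre over $s$. So the \emph{topological} content is entirely captured by whether all the local exponents $e(V)$ equal $1$.

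Next I would translate this into the algebraic statement. The field $\m(S)$ is obtained from $\C(x)$ by adjoining the meromorphic functions on $S$, and localizing at $s$ corresponds, on the algebra side, to passing to the completion: the extension $\m(S)_s/\C(x)$ is unramified at $0$ exactly when it embeds into $\C((x))/\C(x)$. The bridge is that the formal power series field $\C((x))$ is the algebraic incarnation of a small punctured disc $\DO'$ around $s$, and the local factors of $\m(S) \otimes_{\C(x)} \C((x))$ are indexed by the components $V$, each contributing a totally ramified extension of degree $e(V)$ (of the form $\C((x^{1/e(V)}))$), mirroring the covering $z \mapsto z^e$ of $\DO'$. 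Thus $\m(S)_s$ embeds into $\C((x))$ if and only if every such local factor has $e(V) = 1$, which is exactly the condition that $s \notin \p_r$. I would invoke the Galois hypothesis on $\m(S)/\C(x)$ to guarantee that all the local exponents $e(V)$ coincide, so that being unramified at one point of the fibre forces it at all of them, keeping the local-to-global comparison clean.

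The main obstacle is making rigorous the identification of the completed local algebra $\m(S) \otimes_{\C(x)} \C((x))$ with the product over components $V$ of the degree-$e(V)$ totally ramified extensions. This is precisely the comparison the paper has deliberately chosen not to develop in full (it states it will ``not enter into the subtleties of the field $k((x))$''), so I expect the proof to lean on the cited reference \cite{helmut} (\S2.2.1) rather than construct this dictionary from scratch; the honest work is checking that the analytic local form $z \mapsto z^e$ produces exactly the formal extension $\C((x)) \hookrightarrow \C((x^{1/e}))$, after which the ``if and only if'' for the single point $s$, and then the final ``in particular'' clause (quantifying over all $s \notin \{0,1,\infty\}$), follow immediately. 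The second statement is a formal consequence of the first: the ramification set avoids a point $s \notin \{0,1,\infty\}$ for all such $s$ precisely when $\p_r \subset \{0,1,\infty\}$.
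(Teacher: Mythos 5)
Your proposal is correct in outline, but it is worth knowing that the paper does not prove this lemma at all: its entire ``proof'' is the sentence ``This is the addendum to theorem 5.9 in~\cite{helmut}'', i.e.\ a citation to V\"olklein. What you have written is essentially a reconstruction of the argument behind that citation, and it is the standard one: reduce to the local picture via the preceding lemma (each component $V$ of $p^{-1}(U)$ is a disc on which $p$ is $z \mapsto z^{e(V)}$), observe that $s \in \p_r$ iff some $e(V) > 1$, use the Galois hypothesis (the Galois group acts transitively on the fibre over $s$ by homeomorphisms over $\p$, so all $e(V)$ coincide) and match this with the splitting $\m(S) \otimes_{\C(x)} \C((x)) \cong \prod_V \C((x^{1/e(V)}))$, noting that a $\C(x)$-embedding $\m(S) \hookrightarrow \C((x))$ exists iff one (hence every) local factor has degree $1$. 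Two small remarks. First, your phrase ``$\m(S)_s$ \ldots passing to the completion'' is slightly off relative to the paper's conventions: the paper's $L_s$ is just $L$ with $x$ replaced by $x+s$ (a relabelling, not a completion); the completion only enters through the target $\C((x))$ of the embedding. Second, you correctly identify the one genuinely technical point — that the analytic local form $z \mapsto z^e$ corresponds to the formal extension $\C((x)) \hookrightarrow \C((x^{1/e}))$ (in effect, Puiseux expansion of meromorphic functions) — and you are right that this is exactly the content the paper deliberately outsources to \cite{helmut}. So your sketch is sound; it simply makes explicit what the paper leaves as a black box.
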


This is the addendum to theorem 5.9 in~\cite{helmut}. Now we need to
get rid of the extra hypothesis that~$\m (S)/ \C(x)$ be Galois (a case
not considered in~\cite{helmut}, strictly speaking). Algebraically, we
say that an extension~$L/k(x)$ does not ramify at~$s$ when its Galois
closure~$\tilde L / k(x)$ does not. To see that, with this definition,
the last lemma generalizes to all ramified covers, we need to prove the
following. 

\begin{lem}
Let~$p \colon S \to \p$ be a ramified cover, where~$S$ is
connected. Let~$\tilde p \colon \tilde S \to \p$ be the ramified cover
such that~$\m (\tilde S) / \C(x)$ is the Galois closure of~$\m (S)/
\C(x)$. Then the ramification sets for~$S$ and~$\tilde S$ are equal. 
\end{lem}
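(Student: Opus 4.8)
The plan is to prove the two inclusions $R_S \subseteq R_{\tilde S}$ and $R_{\tilde S} \subseteq R_S$ separately, where I write $R_S, R_{\tilde S} \subset \p$ for the ramification sets of $p$ and~$\tilde p$.

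First I would dispatch the easy inclusion $R_S \subseteq R_{\tilde S}$. Since $\m(\tilde S)$ is the Galois closure of $\m(S)/\C(x)$, it contains $\m(S)$ as a $\C(x)$-subalgebra; by the anti-equivalence of Theorem~\ref{thm-riemann-surfaces-same-as-fields} this inclusion corresponds to a morphism $q \colon \tilde S \to S$ of ramified covers over $\p$, so that $\tilde p = p \circ q$. The map $q$ is a non-constant analytic map between compact connected Riemann surfaces, hence surjective and itself a ramified cover, and ramification indices multiply: $e_{\tilde p}(\tilde s) = e_q(\tilde s)\, e_p(q(\tilde s))$ for every $\tilde s \in \tilde S$. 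Now suppose $s \notin R_{\tilde S}$. Given any $s' \in p^{-1}(s)$, surjectivity of $q$ provides a $\tilde s$ with $q(\tilde s) = s'$; then $e_{\tilde p}(\tilde s) = 1$ forces $e_p(s') = 1$. As this holds for every $s' \in p^{-1}(s)$, the cover $p$ is unramified over $s$, i.e.\ $s \notin R_S$.

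For the reverse inclusion $R_{\tilde S} \subseteq R_S$, I would identify $\tilde S$ explicitly through the monodromy of $p$. Restricting $p$ over $\p \smallsetminus R_S$ gives a genuine (unramified) finite covering, which by Theorem~\ref{thm-eq-cats-coverings-sets} corresponds to a transitive $\Gamma$-set, where $\Gamma = \pi_1(\p \smallsetminus R_S, *)$; fixing a base point in the fibre identifies this set with $H \bs \Gamma$ for a finite-index subgroup $H \le \Gamma$. Let $N = \bigcap_{g \in \Gamma} g H g^{-1}$ be the normal core of $H$; it is normal of finite index, being the kernel of the monodromy homomorphism $\Gamma \to S(H \bs \Gamma)$. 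The connected covering of $\p \smallsetminus R_S$ attached to $N$ is regular, so its field of functions is a Galois extension of $\C(x)$; it dominates the covering attached to $H$ (because $N \subseteq H$), so its field contains $\m(S)$; and since $N$ is the largest normal subgroup of $\Gamma$ contained in $H$, any Galois extension of $\C(x)$ containing $\m(S)$ corresponds to a normal subgroup sitting inside $H$, hence inside $N$, and therefore contains this field. Thus the field is exactly the Galois closure, and by Theorem~\ref{thm-ramified-useless} the $N$-covering extends to a ramified cover of $\p$ whose ramification set is contained in $R_S$. By the uniqueness clause of the anti-equivalence this ramified cover is $\tilde S$, whence $R_{\tilde S} \subseteq R_S$.

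The main obstacle is the middle claim of the last paragraph: matching the normal-core covering with the Galois closure of the field extension. This rests on the standard dictionary ``regular covering $\leftrightarrow$ Galois extension'' together with the minimality property of the normal core, and on the fact (implicit in the equivalences already established, plus elementary Galois theory à la \cite{helmut}) that dominations of coverings correspond to inclusions of the associated fields. Once this identification is in place the ramification statement is immediate, since the $N$-covering is by construction unramified over all of $\p \smallsetminus R_S$. An alternative, avoiding explicit monodromy, would be to realise $\tilde S$ as a connected component of a high fibre power $S \times_{\p} \cdots \times_{\p} S$: fibre products of coverings over $\p \smallsetminus R_S$ are again coverings there, so every component is unramified outside $R_S$; but one would still have to argue that the Galois closure occurs among these components, which is essentially the same algebraic input.
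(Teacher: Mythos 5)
Your proof is correct and follows essentially the same route as the paper's: the easy inclusion via the factorization $\tilde S \to S \to \p$, and the reverse inclusion by constructing the regular cover of $\p \smallsetminus R_S$ attached to the normal core of $H$ (the intersection of its conjugates), extending it to a ramified cover unramified outside $R_S$, and identifying its function field with the Galois closure by minimality. The only difference is cosmetic: you spell out the monodromy homomorphism and the multiplicativity of ramification indices where the paper leaves these implicit.
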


\begin{proof}
We have~$\C(x) \subset \m(S) \subset \m(\tilde S)$, so we also have a
factorization of~$\tilde p$ as~$\tilde S \to S \to \p$. From this it
is clear that, if~$\tilde p$ is not ramified at~$s \in \p$, then
neither is~$p$. 

The crux of the proof of the reverse inclusion is the fact that
covering maps have Galois closures, usually called regular
covers. The following argument anticipates the material of the next
section, though it should be understandable now. 

Let~$\p_r$ be the ramification set for~$p$, and let~$U = p^{-1}(\p
\smallsetminus \p_r)$, so that~$U \to \p \smallsetminus \p_r$ is a
finite covering map. Now let~$\tilde U \to \p \smallsetminus \p_r$ be
the corresponding regular covering map. Here ``regular'' can be taken
to mean that this cover has as many automorphisms as its degree;
and~$\tilde U$ is minimal with respect to this property, among the
covers factoring through~$U$. The existence of~$\tilde U$ is standard
in covering space theory, and should become very clear in the next
section. Note that, if~$U$ corresponds to the subgroup~$H$
of~$\pi_1(\p \smallsetminus \p_r)$, then~$\tilde U$ corresponds to the intersection of
all the conjugates of~$H$.

As above, we can construct a Riemann surface~$S'$ from~$\tilde U$, and
the latter does not ramify outside of~$\p_r$. To prove the lemma, it
is sufficient to show that~$S'$ can be identified with~$\tilde S$.

However from basic Galois theory we see that~$\m(S') / \C(x)$ must be
Galois since it possesses as many automorphisms as its degree, and by
minimality it must be the Galois closure of~$\m (S) / \C(x)$. So~$S'$
and~$\tilde S$ are isomorphic covers of~$\p$.
\end{proof}

Finally, an étale algebra over~$k(x)$ will be said not to ramify
at~$s$ when it is a direct sum of field extensions, none of which
ramifies at~$s$. This clearly corresponds to the topological situation
when~$k= \C$, and we have established the following.

\begin{thm} \label{thm-C-complex-fields}
The category~$\cat$ is anti-equivalent to the category~$\et$ of
finite, étale algebras over~$\C(x)$ that are not ramified outside
of~$\{ 0, 1, \infty \}$, in the algebraic sense.
\end{thm}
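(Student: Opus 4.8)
The plan is to assemble the equivalences already established and then check that, under the functor $S \mapsto \m(S)$, the topological restriction on ramification corresponds exactly to the algebraic one. First I would invoke the equivalence between $\cat$ and $\belyi$ proved above, which presents a dessin as a compact Riemann surface $S$ together with a meromorphic map $p \colon S \to \p$ ramified only over $\{0,1,\infty\}$; thus $\belyi$ is precisely the full subcategory of all compact Riemann surfaces carrying a ramified cover to $\p$, cut out by the condition that the ramification set lie in $\{0,1,\infty\}$. Next, taking $R = \p$ in theorem~\ref{thm-riemann-surfaces-same-as-fields} and using $\m(\p) = \C(x)$, this ambient category is anti-equivalent, via $S \mapsto \m(S)$, to the category of all finite étale algebras over $\C(x)$. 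The category $\et$ is by definition the full subcategory of those not ramified outside $\{0,1,\infty\}$.

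Since an equivalence of categories restricts to an equivalence between any full subcategory and its essential image, and since theorem~\ref{thm-riemann-surfaces-same-as-fields} already matches up all morphisms, the only point left to settle is that $S \mapsto \m(S)$ sends one defining condition to the other: that $p \colon S \to \p$ ramifies only over $\{0,1,\infty\}$ if and only if $\m(S)$ is unramified outside $\{0,1,\infty\}$ in the algebraic sense. For connected $S$ this is exactly the two preceding lemmas: the first disposes of the case where $\m(S)/\C(x)$ is Galois, showing that the topological and algebraic ramification sets agree point by point, while the second handles a general extension, whose algebraic ramification is defined through the Galois closure $\tilde S$ --- shown there to have the same ramification set as $S$.

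Finally I would pass from the connected case to the general one. Writing $S$ as the disjoint union of its connected components $S_i$, we have $\m(S) = \bigoplus_i \m(S_i)$ as a decomposition into fields, and both conditions are read off componentwise: the ramification set of $p$ is the union of those of the restrictions $p|_{S_i}$, while an étale algebra is unramified at $s$ precisely when each of its field summands is. The connected result therefore yields the general equivalence of conditions, and the theorem follows.

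I do not anticipate a genuine obstacle, as all the substantial input --- the equivalence with Belyi pairs, the anti-equivalence of theorem~\ref{thm-riemann-surfaces-same-as-fields}, and the identification of topological with algebraic ramification --- is already in hand. The only step demanding care is the bookkeeping of the reduction: aligning the direct-sum definition of ramification for étale algebras and the Galois-closure definition for a single field extension with the fact that the topological ramification set is computed component by component. Given the two lemmas this reduction is routine, and the proof amounts to observing that a full-subcategory restriction of the established anti-equivalence does the job.
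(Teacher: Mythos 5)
Your proposal is correct and follows essentially the same route as the paper: it combines the equivalence $\cat\simeq\belyi$ with theorem~\ref{thm-riemann-surfaces-same-as-fields} for $R=\p$, and then uses the two preceding lemmas (the Galois case, then the general case via the Galois closure) together with the componentwise definition for étale algebras to match the topological and algebraic ramification conditions. The paper compresses this into the remark that the direct-sum definition ``clearly corresponds to the topological situation,'' so your write-up simply makes explicit the bookkeeping the paper leaves implicit.
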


\subsection{Extensions of~$\qb (x)$}

Let~$L/ \C(x)$ be a finite, Galois extension, and let~$n= [L :
  \C(x)]$. We shall say that it is {\em defined over~$\qb$} when there
is a subfield~$L_{rat}$ of~$L$, containing~$\qb (x)$ and Galois over
it, such that~$[L_{rat} : \qb(x)] = n$. This is equivalent to
requiring the existence of~$L_{rat}$ containing~$\bar \q(x)$ and
Galois over it such that~$L \cong L_{rat} \otimes_{\qb} \C$. That
these two conditions are equivalent follows (essentially) from (a) of
lemma 3.1 in~\cite{helmut}: more precisely this states that, when the
condition on dimensions holds, there is a primitive element~$y$
for~$L/\C(x)$ whose minimal polynomial has coefficients in~$\qb (x)$,
and~$y$ is also a primitive element for~$L_{rat}/\bar\q (x)$.

 Item (d) of the same lemma reads:

\begin{lem}
When~$L$ is defined over~$\qb$, the subfield~$L_{rat}$ is unique.
\end{lem}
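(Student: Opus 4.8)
The plan is to characterise $L_{rat}$ intrinsically, so that uniqueness becomes automatic. I claim that $L_{rat}$ coincides with the relative algebraic closure of $\qb(x)$ in $L$, that is, with the set
\[ L^{\mathrm{alg}} = \{\, z \in L : z \text{ is algebraic over } \qb(x) \,\}. \]
This is a subfield of $L$ depending only on the inclusions $\qb(x) \subset \C(x) \subset L$ and on no choice whatsoever, so once the identification $L_{rat} = L^{\mathrm{alg}}$ is proved the uniqueness of $L_{rat}$ follows immediately. One inclusion is free: since $[L_{rat}:\qb(x)] = n < \infty$, every element of $L_{rat}$ is algebraic over $\qb(x)$, whence $L_{rat} \subseteq L^{\mathrm{alg}}$. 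As $[L_{rat}:\qb(x)] = n$, it then suffices to establish the bound $[L^{\mathrm{alg}}:\qb(x)] \le n$; combined with the inclusion this forces $L^{\mathrm{alg}} = L_{rat}$.

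To bound the dimension I would take an arbitrary finite subextension $E$ of $L^{\mathrm{alg}}/\qb(x)$ and show that $E$ and $\C(x)$ are linearly disjoint over $\qb(x)$ inside $L$. A $\qb(x)$-basis of $E$ then stays $\C(x)$-linearly independent in $L$, so $[E:\qb(x)] \le [L:\C(x)] = n$; since this holds for every finite $E$, we get $[L^{\mathrm{alg}}:\qb(x)] \le n$. For the linear disjointness, write $E = \qb(x)(\theta)$ (primitive element theorem, characteristic $0$) with minimal polynomial $p \in \qb(x)[T]$; it is enough to verify that $p$ remains irreducible over $\C(x)$. Any monic factor of $p$ over $\C(x)$ has coefficients that are elementary symmetric functions of some of the roots of $p$, hence lie in $\C(x)$ and are algebraic over $\qb(x)$.

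The crux, and the only genuine obstacle, is therefore the assertion that \emph{$\qb(x)$ is algebraically closed in $\C(x)$}: granting it, such coefficients lie in $\qb(x)$, so the factor already belongs to $\qb(x)[T]$, contradicting the irreducibility of $p$ over $\qb(x)$. To prove this last fact I would argue by specialisation followed by interpolation. If $h \in \C(x)$ satisfies a monic relation with coefficients in $\qb[x]$, then evaluating at all but finitely many $a \in \qb$ shows $h(a) \in \qb$. Writing $h = P/Q$ with $P, Q \in \C[x]$ of degree $\le d$, the $2d+2$ conditions $P(a_k) = h(a_k)\, Q(a_k)$ at suitable points $a_k \in \qb$ form a homogeneous linear system with coefficients in $\qb$; it thus admits a nonzero solution defined over $\qb$, and any two solutions define the same rational function, because their cross-difference $P_1 Q_2 - P_2 Q_1$ has degree $\le 2d$ yet vanishes at $2d+2$ points and so is identically zero. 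Hence $h$ agrees with a quotient of polynomials in $\qb[x]$, i.e.\ $h \in \qb(x)$, which completes the argument.
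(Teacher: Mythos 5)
Your proof is correct. The paper itself does not prove this lemma: it quotes it as item (d) of Lemma 3.1 in V\"olklein's book and only remarks that the statement ``relies on the fact that~$\qb$ is algebraically closed''. You supply a complete, self-contained argument, and its structure explains that remark precisely: you characterise $L_{rat}$ intrinsically as the relative algebraic closure of $\qb(x)$ in $L$, and the only nontrivial input is that $\qb(x)$ is algebraically closed in $\C(x)$ --- which is exactly where the algebraic closedness of $\qb$ enters (via the specialisation step $h(a)\in\qb$), and exactly what fails for arbitrary base fields. Two small points you should make explicit. First, to pass from ``$h$ algebraic over $\qb(x)$'' to ``$h$ satisfies a monic relation with coefficients in $\qb[x]$'' one clears denominators, replacing $h$ by $ch$ for a suitable $c\in\qb[x]$; this is harmless but should be said. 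Second, a square homogeneous system need not have a nonzero solution in general: the reason yours does over $\qb$ is that it already has the nonzero solution $(P,Q)$ over $\C$, so the coefficient matrix (with entries in $\qb$) has nontrivial kernel, and the kernel of a matrix over $\qb$ is spanned by vectors over $\qb$. With those two sentences added, the argument is airtight, and it has the virtue of fitting the paper's stated aim of replacing citations by elementary, self-contained reasoning.
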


This relies on the fact that~$\qb$ is algebraically closed, and
would not be true with~$\qb$ and~$\C$ replaced by arbitrary
fields. 

There is also an existence statement, which is theorem 7.9
in~\cite{helmut}:

\begin{thm} \label{thm-defined-over-qb}
If~$L/\C (x)$ is a finite, Galois extension which does not ramify
at~$s \in \C$ unless~$s \in \qb \cup \{ \infty \}$, then it is
defined over~$\qb$.
\end{thm}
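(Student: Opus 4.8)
The plan is to realize the statement as a descent, from~$\C$ down to~$\qb$, of the covering attached to~$L$ by the equivalences of the previous sections. By the equivalence of the two formulations of \emph{defined over~$\qb$} recalled above, it suffices to exhibit \emph{some} finite Galois extension~$L_{rat}/\qb(x)$ of degree~$n=[L:\C(x)]$ with $L\cong L_{rat}\otimes_{\qb}\C$; the subfield version then follows. Passing through theorem~\ref{thm-C-complex-fields} and the equivalence with ramified covers, $L$ corresponds to a connected finite cover $p\colon S\to\p$ of degree~$n$ branched only over a finite set $\Sigma\subset\qb\cup\{\infty\}$, and the content of the theorem is that such a cover is automatically defined over~$\qb$. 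Conceptually this is the assertion that the fundamental group of $\p\smallsetminus\Sigma$, and with it the set of finite covers, is unchanged when the field of constants grows from~$\qb$ to~$\C$; the essential point being that the branch locus~$\Sigma$ is itself defined over~$\qb$.

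To carry this out concretely I would first spread the algebraic data of~$L$ out over a finitely generated subring of~$\C$. Pick a primitive element~$y$ for~$L/\C(x)$ with monic minimal polynomial $f\in\C(x)[y]$ of degree~$n$; since $L/\C(x)$ is Galois, each of the~$n$ conjugates of~$y$ is a polynomial in~$y$ over~$\C(x)$, and I would record these polynomials together with the finitely many relations witnessing that they realize $G=\Gal(L/\C(x))$ and that ramification occurs only over~$\Sigma$. Let $A\subset\C$ be the $\qb$-subalgebra generated by the (finitely many) coefficients appearing; it is a finitely generated $\qb$-algebra and a domain. Over $\operatorname{Spec}A$ the data assemble into a single connected finite Galois cover with group~$G$, branched only over~$\Sigma$ --- here it is crucial that $\Sigma\subset\qb\cup\{\infty\}$, so that the branch locus stays constant over the base --- and the inclusion $A\hookrightarrow\C$ recovers the original cover, hence~$L$.

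Next I would specialize. As~$\qb$ is algebraically closed and~$A$ is finitely generated over it, the Nullstellensatz supplies a $\qb$-point, that is a $\qb$-algebra map $\phi\colon A\to\qb$; and since the conditions that the fibre remain separable of degree~$n$, with group~$G$, and unramified outside~$\Sigma$ cut out a dense open locus, I may take~$\phi$ inside it. Specializing the spread-out cover along~$\phi$ yields a finite Galois extension~$L_{rat}/\qb(x)$ of degree~$n$, with group~$G$, unramified outside~$\Sigma$.

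The main obstacle is the final identification $L_{rat}\otimes_{\qb}\C\cong L$: a priori specialization might land on a \emph{different} cover with the same branch locus and group (exactly the kind of movement that the Galois action on dessins will later exploit). The remedy is rigidity. The original cover and $L_{rat}\otimes_{\qb}\C$ are two $\C$-fibres of one and the same connected family over $\operatorname{Spec}A$; but a connected finite cover of $\p\smallsetminus\Sigma$ is pinned down, up to isomorphism, by the purely combinatorial datum of its monodromy --- a conjugacy class of finite-index subgroups of the free group $\pi_1(\p\smallsetminus\Sigma)$ --- which is a discrete invariant, hence locally constant, hence constant along the connected base. Therefore the two fibres are isomorphic covers of $\p\smallsetminus\Sigma$, giving $L\cong L_{rat}\otimes_{\qb}\C$, and by the equivalence recalled at the outset this shows that~$L$ is defined over~$\qb$.
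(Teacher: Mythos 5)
First, a point of reference: the paper does not prove this statement at all --- it quotes it as Theorem 7.9 of Völklein's book \cite{helmut}. So you are supplying an argument where the paper supplies a citation. Your route (spread the data out over a finitely generated $\qb$-subalgebra $A$ of $\C$, specialize at a $\qb$-point furnished by the Nullstellensatz, then use rigidity to identify the specialized cover with the original one) is the standard way such descent statements are proved, and your observation that everything hinges on the branch locus $\Sigma$ being $\qb$-rational, so that it does not move over the base, is exactly the right one.

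The step that carries all the weight, however, is asserted rather than proved, and the inference used to dispatch it is invalid as stated: ``a discrete invariant, hence locally constant, hence constant along the connected base.'' A function valued in a discrete set is not automatically locally constant, and the local constancy here is precisely where the content of the theorem sits. What you actually need is: after inverting one element (shrinking to $U=\operatorname{Spec} A[1/g]$, which still has $\qb$-points since $A[1/g]$ is a nonzero finitely generated $\qb$-algebra), the spread-out cover is finite étale over $U\times(\p\smallsetminus\Sigma)$; its analytification is then a genuine topological covering of $U(\C)\times(\p\smallsetminus\Sigma)$; the set $U(\C)$ is path-connected in the \emph{analytic} topology because $A$ is a domain (Zariski-connectedness of the base is not enough for your locally-constant argument); and covering-space theory for a product then shows that the slices $\{u\}\times(\p\smallsetminus\Sigma)$ carry conjugate monodromy subgroups of $\pi_1(\p\smallsetminus\Sigma)$ for all $u\in U(\C)$, hence isomorphic covers. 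Comparing the two $\C$-points given by the inclusion $A\hookrightarrow\C$ (which returns $L$) and by $\phi$ followed by $\qb\hookrightarrow\C$ (which returns $L_{rat}\otimes_{\qb}\C$) then closes the argument. Relatedly, ``record the relations witnessing that ramification occurs only over $\Sigma$'' should be replaced by an honest open condition on the base (étaleness, away from $\Sigma$, of the normalization of the projective line over $A$ in the spread-out extension); the discriminant of one chosen defining polynomial is not a faithful witness of ramification. None of this is unfixable, but as written the proposal delegates the theorem's entire difficulty to a one-line non sequitur.
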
 

We need to say a word about extensions which are not assumed to be
Galois over~$\C(x)$. For this we now quote (b) of the same lemma 3.1
in~\cite{helmut}:

\begin{lem} \label{lem-Lrat-preserves-galois}
When~$L/\C(x)$ is finite, Galois, and defined over~$\bar\q$, there is
an isomorphism~$Gal(L/\C (x)) \cong Gal(L_{rat} / \qb (x))$
induced by restriction.
\end{lem}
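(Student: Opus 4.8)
The plan is to exhibit an explicit homomorphism $\Gal(L/\C(x)) \to \Gal(L_{rat}/\qb(x))$ by restriction, and then argue it is an isomorphism purely by comparing orders, using the Galois property and the dimension hypothesis already recorded above. First I would recall the setup: since $L/\C(x)$ is defined over $\qb$, we have $L \cong L_{rat} \otimes_{\qb} \C$ with $[L_{rat}:\qb(x)] = [L:\C(x)] = n$, and by the quoted part (a) of lemma 3.1 in \cite{helmut} there is a primitive element $y$ for $L/\C(x)$ whose minimal polynomial $P$ has coefficients in $\qb(x)$, with the same $y$ serving as a primitive element for $L_{rat}/\qb(x)$. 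Thus $L = \C(x)[y]/(P)$ and $L_{rat} = \qb(x)[y]/(P)$ for one and the same polynomial $P \in \qb(x)[y]$.

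The key point is that an element $\tau \in \Gal(L/\C(x))$ is determined by where it sends $y$, and $\tau(y)$ must be another root of $P$; since all coefficients of $P$ lie in $\qb(x) \subset L_{rat}$, and since $L/\C(x)$ is Galois, $P$ splits completely in $L$, with all its roots expressible as $\qb(x)$-polynomial combinations of $y$ (this is where I would use that $L_{rat}$ is Galois over $\qb(x)$ of the same degree, so $P$ already splits in $L_{rat}$). Consequently $\tau(y) \in L_{rat}$, and since $\tau$ fixes $\qb(x)$ pointwise, $\tau$ carries $L_{rat}=\qb(x)[y]$ into itself. This shows the restriction map $\tau \mapsto \tau|_{L_{rat}}$ is a well-defined group homomorphism into $\Gal(L_{rat}/\qb(x))$.

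For injectivity, if $\tau|_{L_{rat}}$ is the identity then $\tau(y)=y$, so $\tau$ fixes the primitive element and is the identity on all of $L$. Finally I would close the argument by counting: both Galois groups have order $n$ (the first because $L/\C(x)$ is Galois of degree $n$, the second because $L_{rat}/\qb(x)$ is Galois of degree $n$ by the definition of ``defined over $\qb$''), so an injective homomorphism between two groups of the same finite order is automatically an isomorphism.

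The main obstacle I anticipate is the verification that $\tau(y)$ actually lands in $L_{rat}$ rather than merely in $L$ — that is, showing restriction is well-defined and not just a map into $\Gal(L/\qb(x))$ or some larger group. The cleanest route is to lean on the fact that $P$ already splits completely over $L_{rat}$ (because $L_{rat}/\qb(x)$ is Galois and $y$ is a root), so every root of $P$ lies in $L_{rat}$; since $\tau$ permutes the roots of $P$ and fixes $\qb(x)$, it must permute the roots \emph{within} $L_{rat}$ and hence stabilize $L_{rat}=\qb(x)[y]$. Everything else is a routine order count, and the result is stated as quoted material from \cite{helmut}, so the role of the proof here is chiefly to make the restriction map and its bijectivity transparent.
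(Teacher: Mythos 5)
Your proof is correct, but note that the paper does not actually prove this lemma: it is quoted verbatim as item (b) of lemma 3.1 in V\"olklein's book \cite{helmut}, with no argument supplied. What you have written is therefore a genuine addition rather than a parallel to the paper's reasoning. Your derivation leans only on the one ingredient the paper \emph{does} quote, namely item (a) of that same lemma (the existence of a primitive element $y$ for $L/\C(x)$ whose minimal polynomial $P$ lies in $\qb(x)[y]$ and which is simultaneously primitive for $L_{rat}/\qb(x)$), and from there everything is elementary: $P$ is irreducible of degree $n$ over $\qb(x)$ with a root in the Galois (hence normal) extension $L_{rat}/\qb(x)$, so $P$ splits in $L_{rat}$; any $\tau \in \Gal(L/\C(x))$ fixes $\qb(x)$ and permutes the roots of $P$, so it stabilizes $L_{rat}=\qb(x)[y]$ (injective $\qb(x)$-linear, hence surjective by finite dimensionality); restriction is injective because $\tau$ is determined by $\tau(y)\in L_{rat}$; and both groups have order $n$, forcing the injection to be an isomorphism. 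This is exactly the kind of self-contained argument the paper's stated philosophy favours, and it buys independence from the external reference at the cost of a few lines. The only cosmetic point: the remark that ``$P$ splits completely in $L$ since $L/\C(x)$ is Galois'' is a red herring --- the splitting you actually need, and correctly invoke in the parenthetical, is the splitting of $P$ inside $L_{rat}$ coming from normality of $L_{rat}/\qb(x)$.
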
 

So from the Galois correspondence, we see that fields between~$\C(x)$
and~$L$, Galois or not over~$\C(x)$, are in bijection with fields
between~$\bar\q (x)$ and~$L_{rat}$. If~$K/\C(x)$ is any finite
extension, not ramified outside of~$\{ 0, 1, \infty \}$, we see by the
above that its Galois closure~$L/\C(x)$ is defined over~$\qb$, and
thus there is a unique field~$K_{rat}$, between~$\qb(x)$
and~$L_{rat}$, such that~$K \cong K_{rat} \otimes_{\qb} \C$.

Putting together the material in this section, we get:

\begin{thm} \label{thm-C-rational-fields}
The category~$\cat$ is anti-equivalent to the category~$\etq$ of
finite, étale extensions of~$\qb(x)$ that are not ramified outside
of~$\{ 0, 1, \infty \}$, in the algebraic sense.
\end{thm}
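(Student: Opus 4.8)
The plan is to reduce to the anti-equivalence between~$\cat$ and~$\et$ already established in Theorem~\ref{thm-C-complex-fields}: it suffices to produce an \emph{equivalence} between~$\etq$ and~$\et$, since composing it with that anti-equivalence yields the desired anti-equivalence between~$\cat$ and~$\etq$. The natural functor to use is base change along~$\qb(x) \hookrightarrow \C(x)$,
\[ \Phi \colon \etq \longrightarrow \et \, , \qquad A \longmapsto A \otimes_{\qb(x)} \C(x) \, , \]
which is visibly functorial on~$\qb(x)$-algebra homomorphisms. First I would check that~$\Phi$ is well defined, that is, that the base change of a finite étale~$\qb(x)$-algebra unramified outside~$\{0,1,\infty\}$ is again finite and étale over~$\C(x)$ and unramified outside~$\{0,1,\infty\}$; finiteness and étaleness survive base change, and non-ramification can be tested on Galois closures, base change introducing no new branch points. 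This verification I expect to be routine given the dictionary set up above. Note that, because~$\qb$ is algebraically closed in each field factor, every factor stays a field after base change, so~$\Phi$ really does send étale algebras to étale algebras.

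To prove that~$\Phi$ is an equivalence I would show it is essentially surjective and fully faithful. For essential surjectivity, take~$K \in \et$ and reduce to the case of a single field by splitting into connected components. Passing to the Galois closure~$L/\C(x)$, the ramification hypothesis together with Theorem~\ref{thm-defined-over-qb} shows that~$L$ is defined over~$\qb$, producing the unique rational subfield~$L_{rat}$ with~$L \cong \Phi(L_{rat})$. Transporting the intermediate field~$K$ through the Galois correspondence -- via the restriction isomorphism~$\Gal(L/\C(x)) \cong \Gal(L_{rat}/\qb(x))$ of Lemma~\ref{lem-Lrat-preserves-galois} -- yields an intermediate field~$K_{rat}$ between~$\qb(x)$ and~$L_{rat}$, automatically unramified outside~$\{0,1,\infty\}$, with~$\Phi(K_{rat}) \cong K$. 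Hence every object of~$\et$ is reached up to isomorphism.

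Full faithfulness is the substance of the argument. Injectivity on each~$\operatorname{Hom}$-set is immediate from the inclusion~$A \hookrightarrow \Phi(A)$. For surjectivity I must descend a given~$\C(x)$-algebra map~$\Phi(A) \to \Phi(B)$ to a~$\qb(x)$-algebra map~$A \to B$. Reducing again to fields, such a morphism amounts to a field embedding into a common Galois closure; choosing~$L_{rat}/\qb(x)$ Galois and large enough to contain the relevant fields, every~$\C(x)$-embedding extends to an element of~$\Gal(L/\C(x))$ with~$L = \Phi(L_{rat})$, which corresponds under the isomorphism of Lemma~\ref{lem-Lrat-preserves-galois} to an element of~$\Gal(L_{rat}/\qb(x))$; since the subfield correspondence matches~$\Phi(K)$ with~$K$ throughout, this element restricts to the sought~$\qb(x)$-embedding, whose base change is the original map. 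The main obstacle I anticipate is exactly this descent of morphisms in the non-Galois, non-connected setting: one has to arrange a single Galois closure accommodating all the fields at once, keep track of how an algebra map may permute or collapse the field factors of an étale algebra, and appeal to the uniqueness of~$L_{rat}$ to see that the descended morphism is independent of these auxiliary choices and compatible with composition, which is what promotes~$\Phi$ to a genuine, natural equivalence.
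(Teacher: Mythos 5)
Your proposal follows the paper's own route exactly: the paper also obtains the result by composing the anti-equivalence of Theorem~\ref{thm-C-complex-fields} with the base-change functor $-\otimes_{\qb}\C$, citing Theorem~\ref{thm-defined-over-qb} for essential surjectivity and an argument "similar to the proof of Lemma~\ref{lem-Lrat-preserves-galois}" for full faithfulness. You have simply spelled out the details (Galois closures, descent of morphisms, uniqueness of $L_{rat}$) that the paper leaves as a sketch, and your elaboration is sound.
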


The functor giving the equivalence with the previous category is the
tensor product~$- \otimes_{\qb} \C$. Theorem~\ref{thm-defined-over-qb} shows  that it is essentially surjective; proving that it is fully faithful is an argument similar to the proof of lemma~\ref{lem-Lrat-preserves-galois} above. 

\subsection{Algebraic curves} \label{subsec-curves}

Strictly speaking, the following material is not needed to understand
the rest of the paper, and to reach our goal of describing the action
of~$\gal$ on dessins. Moreover, we expect the majority of our
readers to fit one of two profiles: those who know about algebraic
curves and have immediately translated the above statements about fields
into statements about curves; and those who do not know about
algebraic curves and do not wish to know. Nevertheless, in the sequel
we shall occasionally (though rarely) find it easier to make a point
in the language of curves. 

Let~$K$ be an algebraically closed field, which in the sequel will
always be either~$\C$ or~$\qb$. A {\em curve}~$C$ over~$K$ will be,
for us, an algebraic, smooth, complete curve over~$K$. We do not
assume curves to be irreducible, though smoothness implies that a
curve is a disjoint union of irreducible curves.

We shall not recall the definition of the above terms, nor the
definition of morphisms between curves. We also require the reader to
be (a little) familiar with the {\em functor of points} of a
curve~$C$, which is a functor from~$K$-algebras to sets that we
write~$L \mapsto C(L)$. There is a bijection between the set of
morphisms~$C \to C'$ between two curves on the one hand, and the set
of natural transformations between their functors of points on the
other hand; in particular if~$C$ and~$C'$ have isomorphic functors of
points, they must be isomorphic. For example, the first projective
space~$\p$ is a curve for which~$\p(L)$ is the set of lines in~$L^2$
when~$L$ is a field. (This holds for any base field~$K$; note that we
have already used the notation~$\p$ for~$\p(\C)$, the Riemann
sphere. We also use below the notation~$\mathbb{P}^n(L)$ for the set
of lines in~$L^{n+1}$, as is perfectly standard (though~$\mathbb{P}^n$
is certainly not a curve for~$n \ge 2$)).

 In concrete terms, given a connected curve~$C$ it is always possible
 to find an integer~$n$ and homogeneous polynomials~$P_i(z_0, \ldots ,
 z_n)$ (for~$1 \le i \le m$) with the following property: for each
 field~$L$ containing~$K$, we can describe~$C(L)$ as the subset of
 those points~$[z_0 : \cdots : z_n]$ in the projective
 space~$\mathbb{P}^n(L)$ satisfying
\[ P_i(z_0, \ldots , z_n) = 0 \qquad (1 \le i \le m)\, . \tag{*}  \]
Thus one may (and should) think of curves as subsets of~$\mathbb{P}^n$
defined by homogeneous polynomial equations. When~$K$ is algebraically
closed, as is the case for us, one can in fact show that~$C$ is
entirely determined by the {\em single} subset~$C(K)$ {\em together}
with its embedding in~$\mathbb{P}^n(K)$. 

We illustrate this with the so-called {\em rational functions} on~$C$,
which by definition are the morphisms~$C\to \p$ with the exclusion of
the ``constant morphism which is identically~$\infty$''. When~$C(K)$
is presented as above as a subset of~$\mathbb{P}^n(K)$, these
functions can alternatively be described in terms of maps of
sets~$C(K) \to K \cup \{ \infty \}$ of the following particular form:
take~$P$ and~$Q$, two homogeneous polynomials in~$n+1$ variables, of
the same degree, assume that~$Q$ does not vanish identically
on~$C(K)$, assume that~$P$ and~$Q$ do not vanish simultaneously
on~$C(K)$, and consider the map on~$C(K)$ sending~$z$ to~$P(z) /Q(z)$
if~$Q(z) \ne 0$, and to~$\infty$ otherwise. (In other words~$z$ is
sent to~$[P(z) : Q(z)]$ in~$\p(K) = K \cup \{ \infty \}$.)

The rational functions on the connected curve~$C$ comprise a field~$\m (C)$ (an étale algebra when~$C$ is not connected). We use the same letter as we did for meromorphic functions, which is justified by the following arguments. Assume that~$K= \C$. Then our hypotheses guarantee that~$S = C(\C)$ is naturally a Riemann surface. In fact if we choose polynomial equations as above, then~$S$ appears as a complex submanifold of~$\mathbb{P}^n(\C)$. It follows that the rational functions on~$C$, from their description as functions on~$S$, are meromorphic. However, a non-trivial but classical result asserts the converse : all meromorphic functions on~$S$ are in fact rational functions (\cite{harris}, chap.\ 1, \S3). Thus~$\m (S) = \m(C)$. When~$K= \qb$, it still makes sense to talk about the Riemann surface~$S= C(\C)$, and then~$\m(S) = \m(C) \otimes_\qb \C$. For example~$\m(\p) = K(x)$, when we see~$\p$ as a curve over any field~$K$.

The following theorem is classical.

\begin{thm} \label{thm-curves-same-as-fields}
The category of connected curves over~$K$, in which constant morphisms
are excluded, is anti-equivalent to the category of fields of
transcendence degree~$1$ over~$K$, the equivalence being given by~$C
\mapsto \m (C)$.
\end{thm}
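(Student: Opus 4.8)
The plan is to establish the three properties that together give an anti-equivalence of categories: contravariant functoriality with well-defined image, essential surjectivity, and full faithfulness.

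First I would pin down the functor. A non-constant morphism $f \colon C \to C'$ between connected curves is dominant, so composition $g \mapsto g \circ f$ carries a rational function on $C'$ to one on $C$ and yields a $K$-algebra map $f^* \colon \m(C') \to \m(C)$; as $\m(C')$ is a field and $f^* \ne 0$, this map is injective, hence a legitimate morphism of the target category, and the assignment is visibly contravariant. To see that $\m(C)$ lands in the right category, note that for any non-constant $x \in \m(C)$ the induced morphism $C \to \p$ has finite fibres, so $\m(C)$ is a finite extension of $K(x) = \m(\p)$; thus $\m(C)$ is a finitely generated field of transcendence degree $1$ over $K$.

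Next, essential surjectivity. Starting from a finitely generated field $F$ of transcendence degree $1$ over $K$, I would reconstruct a curve as the classical \emph{smooth projective model} of $F$: its points are the discrete valuation rings $\mathcal{O} \subset F$ containing $K$ with fraction field $F$ (the places of $F/K$), equipped with the structure of a smooth complete curve whose field of rational functions is, by construction, $F$. Concretely one may write $F$ as a finite extension of some $K(x) = \m(\p)$ and build the corresponding branched cover of $\p$, as in the earlier sections of the paper.

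The crux is full faithfulness, and this is where the real work sits. Faithfulness is easy: since $C'$ is embedded in a projective space by rational functions, two morphisms $C \to C'$ inducing the same $f^*$ agree on the pullbacks of these coordinate functions and therefore coincide. Fullness is the substantial step. Given a $K$-homomorphism $\varphi \colon \m(C') \to \m(C)$, I must recover an $f \colon C \to C'$ with $f^* = \varphi$. Using the place-theoretic description from the previous paragraph, a point of $C$ is a discrete valuation ring $\mathcal{O} \subset \m(C)$, and its preimage $\varphi^{-1}(\mathcal{O})$ is a valuation ring of $\m(C')$ containing $K$; because $\varphi$ makes $\m(C)$ an extension of $\varphi(\m(C'))$ of transcendence degree $0$ (both fields having transcendence degree $1$ over $K$), which is finitely generated and hence finite, this preimage is again a discrete valuation ring with fraction field $\m(C')$, thus a point of $C'$. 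This defines $f$ on points. The main obstacle I anticipate is verifying that $f$ so defined is an actual morphism of curves -- not merely a set map -- and that $f^* = \varphi$; this is the standard valuative argument for curves, which I would invoke from a classical reference such as \cite{harris} rather than redevelop here.
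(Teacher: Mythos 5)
The paper offers no proof of this statement at all: it is introduced with the single sentence ``The following theorem is classical'' and then used as a black box, so there is nothing to compare your argument against line by line. Your sketch is the standard classical proof (smooth projective model via places/discrete valuation rings, as in Hartshorne I.6 or \cite{harris}), and its overall architecture -- contravariant functor, essential surjectivity via the model whose points are the DVRs of $F/K$, faithfulness via a projective embedding, fullness via pulling back valuation rings -- is correct. Two small points are worth tightening. First, the target category should really be the \emph{finitely generated} fields of transcendence degree~$1$ (you silently and correctly restrict to these; a field like $\overline{K(x)}$ has transcendence degree $1$ but is not a function field). Second, in the fullness step you should check that $\varphi^{-1}(\mathcal{O})$ is a proper subring of $\m(C')$, i.e.\ that the valuation of $\m(C)$ does not restrict trivially to $\varphi(\m(C'))$: this follows because the residue field of $\mathcal{O}$ is $K$ itself ($K$ being algebraically closed), so a field of transcendence degree $1$ over $K$ cannot be contained in $\mathcal{O}$. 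With those remarks your outline is a faithful account of the classical proof the paper is implicitly invoking.
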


From this we have immediately a new category equivalent to~$\cat$, by
restricting attention to the fields showing up in
theorem~\ref{thm-C-complex-fields} or
theorem~\ref{thm-C-rational-fields}.  Let us define a morphism~$C \to
\p$ to be ramified at~$s \in K \cup \{ \infty \}$ if and only if the
corresponding extension of fields~$\m (C) / K(x)$ ramifies at~$s$;
this may sound like cheating, but expressing properties of a morphism
in terms of the effect on the fields of rational functions seems to be
in the spirit of algebraic geometry. It is then clear that:

\begin{thm}
The category~$\cat$ is equivalent to the category of curves~$C$,
defined over~$\C$, equipped with a morphism~$C \to \p$ which does not ramify
outside of~$\{ 0, 1, \infty \}$. Here the morphisms taken into account
are those over~$\p$.

Likewise, ~$\cat$ is equivalent to the category of curves defined
over~$\qb$ with a map~$C \to \p$ having the same ramification
property.
\end{thm}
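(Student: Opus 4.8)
The plan is to realize both sides of each claimed equivalence as the image of one and the same field-theoretic category under an anti-equivalence, so that composing two anti-equivalences yields the stated (covariant) equivalence. The key observation is that, by Theorem~\ref{thm-curves-same-as-fields}, the functor $C \mapsto \m(C)$ is an anti-equivalence from connected curves over~$K$ to fields of transcendence degree~$1$ over~$K$; and a morphism $p \colon C \to \p$ other than the constant map to~$\infty$ corresponds contravariantly to the ring homomorphism $\m(\p) = K(x) \to \m(C)$, that is, to a $K(x)$-algebra structure on~$\m(C)$. So I would first upgrade Theorem~\ref{thm-curves-same-as-fields} to the statement that the category of curves over~$K$ equipped with a morphism to~$\p$, with morphisms taken over~$\p$, is anti-equivalent to the category of finite-dimensional étale algebras over~$K(x)$, the functor being again~$\m$.

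The bookkeeping to make this precise concerns disconnectedness and finiteness. Since a smooth curve is a disjoint union of its connected (irreducible) components, and $\m$ carries a disjoint union of curves to the direct sum of the corresponding fields, the connected case extends verbatim to possibly disconnected curves and to étale algebras. A morphism $C \to \p$ that is non-constant on each component makes $\m(C)$ algebraic, hence finite (being finitely generated), over~$K(x)$, so the resulting étale algebra is finite-dimensional; conversely each factor of a finite étale algebra over $K(x)$ has transcendence degree~$1$ over~$K$. Morphisms over~$\p$ match $K(x)$-algebra homomorphisms by the functoriality of~$\m$, so the anti-equivalence of the previous paragraph follows.

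It then remains to match the ramification condition and to compose. By the definition adopted just before the statement, a morphism $C \to \p$ ramifies at~$s$ precisely when $\m(C)/K(x)$ ramifies at~$s$; hence restricting to maps unramified outside $\{0,1,\infty\}$ corresponds exactly to restricting to étale algebras over~$K(x)$ unramified outside $\{0,1,\infty\}$. Taking $K = \C$, this target subcategory is precisely~$\et$, and Theorem~\ref{thm-C-complex-fields} already supplies an anti-equivalence $\cat \simeq \et$; composing it with the inverse of the anti-equivalence $\m$ produces the desired covariant equivalence between~$\cat$ and curves over~$\C$. Taking $K = \qb$ instead, the target is~$\etq$, and combining with Theorem~\ref{thm-C-rational-fields} gives the second statement in identical fashion.

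I expect the only genuine care to lie in the assembly rather than in any new computation: one must track the connected-versus-disconnected distinction, remember to exclude the identically-$\infty$ morphism so that $\m(C)$ really becomes an extension of $K(x)$, and confirm that an anti-equivalence composed with the inverse of an anti-equivalence is a covariant equivalence. Essential surjectivity of $\m$ onto the prescribed étale-algebra subcategory---that every finite étale algebra over $K(x)$ unramified outside $\{0,1,\infty\}$ actually arises as $\m(C)$ with the stated ramification---is not something I would reprove, since it is exactly what the cited Theorems~\ref{thm-C-complex-fields} and~\ref{thm-C-rational-fields} already guarantee.
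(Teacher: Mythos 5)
Your proposal is correct and follows essentially the same route as the paper: the paper also obtains this theorem by composing the anti-equivalence $C \mapsto \m(C)$ of Theorem~\ref{thm-curves-same-as-fields} with the anti-equivalences of Theorems~\ref{thm-C-complex-fields} and~\ref{thm-C-rational-fields}, with the ramification condition on morphisms $C \to \p$ matched by definition. The extra bookkeeping you supply (disconnected curves versus étale algebras, finiteness over $K(x)$, morphisms over $\p$ as $K(x)$-algebra structures) is exactly what the paper leaves implicit in its ``it is then clear that''.
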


(Note that we have used the same notation~$\p$ for an object which is
sometimes seen as a curve over~$\C$, sometimes as a curve over~$\qb$,
sometimes as a Riemann surface.)

As a side remark, we note that these equivalences of categories imply in particular the well-known fact that ``Riemann surfaces are algebraic''. For if we start with~$S$, a Riemann surface, and consider the field~$\m (S)$, then by theorem~\ref{thm-curves-same-as-fields} there must be a curve~$C$ over~$\C$ such that~$\m (C) = \m(S)$ (where on the left hand side~$\m$ means ``rational functions'', and on the right hand side it means ``meromorphic functions''). However, we have seen that~$\m(C) = \m( C(\C))$ (with the same convention), and the fact that~$\m(S)$ and~$\m(C(\C))$ can be identified implies that~$S$ and~$C(\C)$ are isomorphic (theorem~\ref{thm-riemann-surfaces-same-as-fields}). Briefly, any Riemann surface~$S$ can be cut out by polynomial equations in projective space.

Likewise, the above theorems show that if~$S$ has a Belyi map, then
there is a curve {\em over~$\qb$} such that~$S$ is isomorphic
to~$C(\C)$. This is usually expressed by saying that~$S$ is ``defined
over~$\qb$'', or is an ``arithmetic surface''. The converse is
discussed in the next section.

\subsection{Belyi's theorem}

When considering a dessin~$\cell$, we define a curve~$C$
over~$\qb$. Is it the case that {\em all} curves over~$\qb$ are
obtained in this way? Given~$C$, it is of course enough to find a
Belyi map, that is a morphism~$C \to \p$ with ramification in~$\{ 0,
1, \infty \}$: the above equivalences then guarantee that~$C$
corresponds to some~$\cell$. In turn, Belyi has proved precisely this
existence statement:

\begin{thm}[Belyi]
Any curve~$C$ over~$\qb$ possesses a Belyi map.
\end{thm}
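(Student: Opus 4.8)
The plan is to reduce the statement to a purely combinatorial assertion about self-maps of~$\p$, and then to construct the required map explicitly by Belyi's two classical tricks. As noted just above the statement, it suffices to produce a morphism $C \to \p$ ramified only over~$\{0,1,\infty\}$. I would begin by choosing any non-constant rational function on~$C$: since $\m(C)$ has transcendence degree~$1$ over~$\qb$ by theorem~\ref{thm-curves-same-as-fields}, such a function exists and yields a non-constant morphism $f \colon C \to \p$, which is a ramified cover with a finite branch locus $S \subset \p(\qb)$ (the critical values are algebraic, being images of the critical points). The key reduction is a remark about composition of ramified covers: if $\beta \colon \p \to \p$ is ramified only over~$\{0,1,\infty\}$ and satisfies $\beta(S) \subseteq \{0,1,\infty\}$, then the branch locus of $\beta \circ f$ is contained in $\beta(S) \cup B_\beta \subseteq \{0,1,\infty\}$, where $B_\beta$ is the branch locus of~$\beta$; so $\beta \circ f$ is a Belyi map. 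Everything thus comes down to the following lemma: for any finite $S \subset \p(\qb)$ there is a morphism $\beta \colon \p \to \p$, defined over~$\q$, ramified only over~$\{0,1,\infty\}$ and sending~$S$ into~$\{0,1,\infty\}$. I would build~$\beta$ as a composition, in two phases.

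Phase one pushes the branch points into $\q \cup \{\infty\}$ using minimal polynomials. Suppose some point of~$S$ is irrational, and let $d \ge 2$ be the largest degree over~$\q$ occurring in~$S$. Pick one Galois orbit~$O$ of points of degree exactly~$d$, with minimal polynomial $m \in \q[x]$ of degree~$d$, and replace~$S$ by the branch locus of~$m$ composed with the current map, namely a subset of $m(S) \cup \{\, m(c) : m'(c)=0 \,\} \cup \{\infty\}$. The orbit~$O$ is sent to~$0$; each remaining point~$s$ of degree~$e$ has $m(s) \in \q(s)$ of degree at most~$e$, contributing at most one Galois orbit of degree at most~$d$; and the new critical values~$m(c)$, being roots of a polynomial of degree~$d-1$, all have degree at most~$d-1$. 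Hence the number of Galois orbits of degree exactly~$d$ strictly drops while no orbit of higher degree is created. A double induction---on~$d$, and within each~$d$ on the number of top-degree orbits---then drives all branch points down to degree~$1$, that is, into $\q \cup \{\infty\}$.

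Phase two removes the remaining rational branch points one at a time with Belyi's normalised polynomial. If the branch locus $B \subset \p(\q)$ has at most three points, a single Möbius transformation over~$\q$ carries them into~$\{0,1,\infty\}$ and introduces no new ramification, finishing the construction. If $|B| \ge 4$, I would first apply a $\q$-Möbius transformation sending three points of~$B$ to~$0,1,\infty$; using that one of the six cross-ratio values of the configuration lies in~$(0,1)$, this can be arranged so that a fourth point lands at some $\lambda = \frac{m}{m+n} \in (0,1) \cap \q$. Composing with $g(x) = \frac{(m+n)^{m+n}}{m^m n^n}\, x^m (1-x)^n$, a direct computation of~$g'$ shows that the only critical points are $0, 1, \frac{m}{m+n}, \infty$, with critical values $0, 0, 1, \infty$, so~$g$ is ramified only over~$\{0,1,\infty\}$ while sending $0,1,\lambda,\infty$ into~$\{0,1,\infty\}$. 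The new branch locus is therefore contained in~$\{0,1,\infty\}$ together with the images of the other $|B|-4$ rational points, so it has at most $|B|-1$ points and stays inside~$\p(\q)$. Induction on~$|B|$ completes phase two, and the composite of all the maps produced is the desired~$\beta$.

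The main obstacle is the bookkeeping in phase one: composing with~$m$ can create entirely new branch points among the critical values~$m(c)$, and a careless choice of invariant---for instance the mere number of irrational branch points, or the degree of a single minimal polynomial---need not decrease. What makes the induction work is that the critical values of a degree-$d$ polynomial have degree strictly less than~$d$, so they never interfere with the top degree; getting this monotonicity exactly right, and choosing the lexicographic invariant ``(largest degree, number of orbits of that degree)'', is the crux. Phase two is essentially one explicit computation, the ramification of~$g$, whose only mild subtlety is the cross-ratio argument guaranteeing that a branch point can be moved into~$(0,1)$.
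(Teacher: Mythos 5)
Your argument is correct and complete: phase one's lexicographic invariant (maximal degree of a branch point, number of Galois orbits of that degree) does strictly decrease, since the images of the remaining orbits under the rational minimal polynomial $m$ have degree dividing their original degree while the new critical values of $m$ have degree at most $d-1$, and phase two's computation of $g'$ is the standard verification. The paper itself gives no proof, only the citation to Belyi together with the one-line description ``start with any morphism and modify it ingeniously,'' and your write-up is precisely that classical argument, so there is nothing to contrast.
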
 

The proof given by Belyi in~\cite{belyi}, and reproduced in many
places, is very elegant and elementary. It starts with any morphism~$F
\colon C \to \p$, and modifies it ingeniously to obtain another one
with appropriate ramification.

\section{Regularity} \label{sec-regularity}

From now on, it will be convenient to use the word {\em dessin} to
refer to an object in any of the equivalent categories at our disposal
(especially when we want to think of it simultaneously as a cell
complex and a field, for example). 

In this section we study regular dessins. These could have been called
``Galois'' instead of ``regular'', since the interpretation in the
realm of field extensions is precisely the Galois condition, but we
want to avoid the confusion with the Galois group~$\gal$ which will
become a major player in the sequel.

\subsection{Definition of regularity}

An object in~$\cat$ has a {\em degree} given by the number of darts. In the other categories equivalent to~$\cat$, this translates in various ways. In~$\sets_{\sigma , \alpha , \phi}$, it is the cardinality of the set having the three permutations on it. In the categories of étale algebras over~$\C(x)$ or~$\qb (x)$, it is the dimension of the algebra as a vector space over~$\C(x)$ or~$\qb (x)$ respectively. In the category of finite coverings of~$\p \smallsetminus \{ 0, 1, \infty \}$, it is the cardinality of any fibre.

There is also a notion of {\em connectedness} in these categories. A
dessin~$\cell$ is connected when~$\topo{\cell}$ is connected,
which happens precisely when the corresponding étale algebras are
actually fields, or when the cartographic group acts transitively (cf
lemma~\ref{lem-connected-iff-transitive}).

In this section we shall focus on the automorphism groups of connected
dessins. We are free to conduct the arguments in any category, and
most of the time we prefer~$\sets_{\sigma, \alpha, \phi}$. However,
note the following at once.

\begin{lem}
The automorphism group of a connected dessin is a finite group, of
order no greater than the degree.
\end{lem}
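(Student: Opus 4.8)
The plan is to work in the category $\sets_{\sigma,\alpha,\phi}$, where a connected dessin of degree $n$ corresponds to a finite set $X$ of cardinality $n$ carrying a transitive right action of the free group $\langle \sigma,\alpha\rangle$ (equivalently, of the cartographic group $G$, a transitive subgroup of $S(X) = S_n$). An automorphism of $\cell$ is then precisely an equivariant bijection $\Theta\colon X \to X$; that is, a permutation of $X$ commuting with the action of $G$. The key observation I would exploit is that such an equivariant map is \emph{completely determined by the image of a single point}. This is the standard rigidity of maps out of a transitive $G$-set, and it immediately bounds the automorphism group.

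Concretely, I would fix a base point $x_0 \in X$. Given two automorphisms $\Theta$ and $\Theta'$ with $\Theta(x_0) = \Theta'(x_0)$, I claim $\Theta = \Theta'$. Indeed, for any other $y \in X$, transitivity gives some $g \in G$ with $y = x_0^{\,g}$ (using the right-action notation of remark~\ref{rmk-convention-permutations}); equivariance then forces
\[
\Theta(y) = \Theta(x_0^{\,g}) = \Theta(x_0)^{\,g} = \Theta'(x_0)^{\,g} = \Theta'(y)\, .
\]
Hence the evaluation map $\Theta \mapsto \Theta(x_0)$ from $\operatorname{Aut}(\cell)$ into $X$ is injective. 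Since $|X| = n$, this shows at once that $\operatorname{Aut}(\cell)$ is finite of order at most $n$, which is exactly the claim.

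The \emph{finiteness} part is essentially free: any subgroup of the finite symmetric group $S(X)$ is finite, so no serious argument is needed there; the content of the lemma is really the sharp bound by the degree, and that is what the evaluation argument delivers. The main (and only) subtle point to get right is the \emph{direction of composition and the handedness of the action}: because the paper writes permutations acting on the right with $x^{\sigma\tau} = (x^\sigma)^\tau$, I must be careful that ``equivariant'' means $\Theta(x^{\,g}) = \Theta(x)^{\,g}$ and that this is consistent with $\Theta$ being a $G$-map rather than merely a bijection. Once the conventions are pinned down, the displayed computation goes through verbatim. I would also remark, though it is not needed for the stated bound, that the image of the evaluation map is exactly the set of points with the same stabilizer as $x_0$, so equality $|\operatorname{Aut}(\cell)| = n$ holds precisely in the regular case where $G$ acts freely (i.e.\ simply transitively) — foreshadowing the definition of regularity that follows.
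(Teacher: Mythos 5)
Your argument is correct and is essentially the route the paper takes: the paper defers the $\sets_{\sigma,\alpha,\phi}$ proof to lemma~\ref{lem-aut-sets-explicit}, whose proof is exactly your observation that an equivariant map out of a transitive $G$-set is determined by the image of one point (yielding $\operatorname{Aut}(X)\cong N(H)/H$, of order at most $|G/H|=n$). The paper also records a one-line alternative via Galois theory in $\etq$ ($|\operatorname{Aut}(L/K)|\le [L:K]$), but your evaluation-map argument is the same combinatorial content, and your closing remark correctly identifies the image of evaluation with the points having stabilizer $H$.
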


\begin{proof}
This is obvious in~$\etq$: in fact for any finite-dimensional
extension of fields~$L/K$, basic Galois theory tells us that the
automorphism group of the extension has order no greater
than~$[L:K]$. 

A proof in~$\sets_{\sigma, \alpha, \phi}$ will be immediate from
lemma~\ref{lem-aut-sets-explicit} below.
\end{proof}

A dessin will be called {\em regular} when it is connected and the
order of its automorphism group equals its degree.

In terms of field extensions for example, then~$L/\C(x)$
is regular if and only if it is Galois (in the elementary sense, ie
normal and separable). In terms of a covering~$U \to \p \smallsetminus
\{ 0, 1, \infty \}$, with~$U$ is connected, then it is regular if
and only if it is isomorphic to the cover~$U \to U/G$ where~$G$ is the
automorphism group (this agrees with the use of the term ``regular''
in covering space theory, of course).

\begin{rmk}
The reader needs to pay special attention to the following
convention. When~$X$ is a dessin and~$h, k \in Aut(X)$, we write~$hk$
for the composition of~$k$ followed by~$h$; that is~$hk(x) = h(k(x))$,
at least when we are willing to make sense of~$x \in X$ (for example
in~$\cat$ this will mean that~$x$ is in fact a triangle). In other
words, we are letting~$Aut(X)$ act on~$X$ {\em on the left}. While
this will be very familiar to topologists, for whom it is common to
see the ``group of deck transformations'' of a covering map act on the
left and the ``monodromy group'' act on the right, other readers may
be puzzled to see that we have treated the category of sets
differently when we took the convention described in
remark~\ref{rmk-convention-permutations}.

To justify this, let us spoil the surprise of the next paragraphs, and
announce the main result at once: in~$\sets_{\sigma, \alpha, \phi}$, a
regular dessin is precisely a group~$G$ with two distinguished
generators~$\sigma $ and~$\alpha $; the monodromy group is~$G$ itself,
acting on the right by translations, while the automorphism group is
again~$G$ itself, acting on the left by translations.

If we had taken different conventions, we would have ended up with one
of these actions involving inverses, in a way which is definitely
unnatural. 
\end{rmk}

\subsection{Sets with permutations} \label{subsec-regularity-sets-with-perms}

We explore the definition of regularity in the context
of~$\sets_{\sigma, \alpha , \phi}$, where it is very easy to
express. 

Let~$X$ be a set of cardinality~$n$, with three permutations~$\sigma ,
\alpha, \phi$ satisfying~$\sigma \alpha \phi = 1$. Let~$G$ denote the
cartographic group; recall that by definition, it is generated
by~$\sigma$ and~$\alpha $ as a subgroup of~$S(X) \cong S_n$, acting
on~$X$ on the right. We assume that~$G$ acts transitively (so the
corresponding dessin is connected).

We choose a base-point~$*\in X$. The map~$g \mapsto *^g$ identifies~$H
\bs G$ with~$X$, where~$H$ is the stabilizer of~$*$. This is an
isomorphism in~$\sets_{\sigma , \alpha , \phi}$, with~$G$ acting on~$H
\bs G$ by right translations. As we shall insist below that the choice
of base-point is somewhat significant, we shall keep the notation~$X$
and not always work directly with~$H \bs G$.

Since the morphisms in~$\sets_{\sigma, \alpha, \phi}$ are special maps
of sets, we can relate~$Aut(X)$ and~$S(X)$, where the automorphism
group is taken in~$\sets_{\sigma, \alpha, \phi}$, and~$S(X)$ as always
is the group of all permutations of~$X$. More precisely, any~$h \in
Aut(X)$ can be seen as an element of~$S(X)$, still written~$h$, and
there is a homomorphism~$Aut(X) \to S(X)$ given by~$h \mapsto h^{-1}$;
our left-right conventions force us to take inverses to get a
homomorphism. (In other words, $Aut(X)$ is naturally a subgroup
of~$S(X)^{op}$, the group~$S(X)$ with the opposite composition law.)
As announced, the conventions will eventually lead to a result without
inverses.

\begin{lem} \label{lem-aut-sets-explicit}
Let~$X, G, H$ be as above. We have the following two descriptions
of~$Aut(X)$.
\begin{enumerate}
\item Let~$N(H)$ be the normalizer of~$H$ in~$G$. Then for each~$g \in
  N(H)$, the map~$H \bs G \to H \bs G$ given by~$[x] \mapsto [gx]$ is
  in~$Aut(H \bs G)$. This construction induces an isomorphism~$Aut(X)
  \cong N(H)/H$.
\item The map~$Aut(X) \to S(X)$ is an isomorphism onto the centralizer
  of~$G$ in~$S(X)$.
\end{enumerate}
\end{lem}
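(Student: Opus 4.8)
The plan is to work with the identification $X \cong H \bs G$ coming from the chosen base-point $*$, with $G$ acting on the right by translations. Recall that morphisms in $\sets_{\sigma,\alpha,\phi}$ are exactly those maps of sets commuting with the right action of $\sigma$ and $\alpha$, hence with the whole right action of $G$. So an automorphism of $X$ is a bijection of $H \bs G$ commuting with all right translations, and the homomorphism $Aut(X) \to S(X)$, $h \mapsto h^{-1}$, lands in the set of such bijections. The key algebraic fact is that a permutation of $H \bs G$ commutes with every right translation if and only if it is itself a \emph{left} translation by an element normalizing $H$; this is precisely the content connecting (1) and (2).

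**Proving (1).**
First I would check that for $g \in N(H)$ the formula $[x] \mapsto [gx]$ is well-defined: if $[x]=[x']$ then $x' = hx$ for some $h \in H$, and $gx' = ghx = (ghg^{-1})gx$ with $ghg^{-1}\in H$ since $g$ normalizes $H$, so $[gx']=[gx]$. This map clearly commutes with right translations, hence is an automorphism. The assignment $g \mapsto \big([x]\mapsto[gx]\big)$ is a homomorphism $N(H) \to Aut(X)$; its kernel is the set of $g \in N(H)$ with $[gx]=[x]$ for all $x$, which taking $x=1$ forces $g \in H$, and conversely any $g \in H$ gives the identity. So the kernel is exactly $H$, yielding an injection $N(H)/H \hookrightarrow Aut(X)$. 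For surjectivity I would take an arbitrary $h \in Aut(X)$, let $h([1]) = [g]$ for some $g \in G$, and use commutation with right translations to compute $h([x]) = h([1]x) = h([1])x = [gx]$ for all $x$; that $h$ is well-defined as a map on $H\bs G$ then forces $g \in N(H)$ by reversing the well-definedness computation above. Hence every automorphism is of the stated form, and the map $N(H)/H \to Aut(X)$ is an isomorphism.

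**Proving (2) and the main obstacle.**
For (2) I would show that the image of $Aut(X) \to S(X)$ is the centralizer $C_{S(X)}(G)$. Since automorphisms commute with the right $G$-action and $G \le S(X)$ acts by those very right translations, their images commute with $G$, giving image $\subseteq C_{S(X)}(G)$. Conversely, a permutation $f$ centralizing $G$ commutes with all right translations, so the same base-point argument ($f([x]) = f([1]x) = f([1])x$) shows $f$ is a left translation by a normalizing element, hence comes from $Aut(X)$; injectivity of $Aut(X)\to S(X)$ is clear since an automorphism is determined by its underlying map of sets. The one point demanding genuine care --- the main obstacle --- is bookkeeping the left/right and inverse conventions: because $Aut(X)$ acts on the left while $S(X)$ acts on the right (remark~\ref{rmk-convention-permutations}), the homomorphism to $S(X)$ sends $h$ to $h^{-1}$, and I must verify this inversion still carries $Aut(X)$ isomorphically onto the centralizer rather than onto something twisted. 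Since inversion is an anti-automorphism of $S(X)$ and $C_{S(X)}(G)$ is a subgroup stable under inversion, this causes no real trouble, but it is exactly the place where a sign-convention slip would produce a spurious inverse, so I would state the verification explicitly.
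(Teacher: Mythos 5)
Your proposal is correct and follows essentially the same route as the paper: the well-definedness computation $ghx = (ghg^{-1})gx$, the base-point argument $h([x]) = h([1]^x) = [gx]$ for surjectivity, and the identification of the kernel with $H$ are all identical to the paper's proof of (1). For (2) the paper simply observes that an automorphism is by definition a self-bijection commuting with the $G$-action, so the statement is immediate; your extra care with the $h \mapsto h^{-1}$ convention is sound but not needed beyond that observation.
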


\begin{proof}
(1) The notation~$[x]$ is for the class of~$x$ in~$H \bs G$, of
  course. To see that~$[gx]$ is well-defined, let~$h \in H$, then~$ghx
  = g h g^{-1} g x$ so~$[ghx] = [gx]$. The map clearly commutes with
  the right action of~$G$, and so is an automorphism, with inverse
  given by~$[x] \mapsto [g^{-1} x]$.

Conversely, any automorphism~$h$ is determined by~$h([1])$, which we
call~$[g]$, and we must have~$h([x]) = h([1]^x) = [g]^x = [gx]$ for
any~$x$; the fact that~$h$ is well-defined implies that~$g \in
N(H)$. So there is a surjective map~$N(H) \to Aut(H \bs G)$ whose
kernel is clearly~$H$.

(2) An automorphism of~$X$, by its very definition, is a
self-bijection of~$X$ commuting with the action of~$G$; so this second
point is obvious.
\end{proof}

We also note the following.

\begin{lem}
$Aut(X)$ acts freely on~$X$.
\end{lem}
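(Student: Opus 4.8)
The plan is to exploit the transitivity of the cartographic group~$G$, which holds precisely because~$X$ is connected (lemma~\ref{lem-connected-iff-transitive}, transported into~$\sets_{\sigma, \alpha, \phi}$). The essential observation is that an automorphism of~$X$, by its very definition, is a bijection intertwining~$\sigma$ and~$\alpha$, hence commuting with the entire right action of~$G$. Once this is combined with transitivity, any automorphism fixing a single point is forced to be the identity, and freeness is immediate.

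Concretely, I would argue as follows. Suppose~$h \in Aut(X)$ satisfies~$h(x_0) = x_0$ for some~$x_0 \in X$, and let~$y \in X$ be arbitrary. By transitivity there is~$g \in G$ with~$x_0^g = y$. Since~$h$ commutes with the~$G$-action we obtain
\[ h(y) = h(x_0^g) = (h(x_0))^g = x_0^g = y \, . \]
As~$y$ was arbitrary, $h$ is the identity, so~$Aut(X)$ acts freely on~$X$.

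Alternatively, one can read the statement off directly from part~(1) of lemma~\ref{lem-aut-sets-explicit}. Under the identification~$Aut(X) \cong N(H)/H$, an automorphism represented by~$g \in N(H)$ acts on~$H \bs G$ by~$[x] \mapsto [gx]$, and it fixes a class~$[x]$ exactly when~$Hgx = Hx$, that is, when~$g \in H$; but then the automorphism is trivial in~$N(H)/H$. So fixing any point already forces the automorphism to be the identity.

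There is no real obstacle beyond keeping the left/right conventions of remark~\ref{rmk-convention-permutations} straight: automorphisms act on the left and commute with the right action of~$G$, while one must not confuse this with the action of~$G$ itself, which is emphatically not free in general. Once the conventions are in place, either version of the argument is essentially a single line.
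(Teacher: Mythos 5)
Your main argument is exactly the paper's proof: an automorphism commutes with the right $G$-action, so a fixed point propagates to every point by transitivity of the cartographic group. Correct, and essentially identical to the paper's one-line argument (the alternative via $N(H)/H$ is a fine cross-check but not needed).
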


\begin{proof}
If~$h(x) = x$ for some~$x \in X$, then~$h(x^g) = h(x)^g = x^g$
so~$x^g$ is also fixed by~$h$, for any~$g \in G$. By assumption~$G$
acts transitively, hence the lemma.
\end{proof}

\begin{prop} \label{prop-equiv-conditions-regularity}
The following are equivalent.
\begin{enumerate}
\item $Aut(X)$ acts transitively on~$X$.
\item $G$ acts freely on~$X$.
\item $H$ is normal in~$G$.
\item $H$ is trivial.
\item $G$ and~$Aut(X)$ are isomorphic.
\item $G$ and~$Aut(X)$ are both of order~$n$.
\item $X$ is regular.
\end{enumerate}
\end{prop}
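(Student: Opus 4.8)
The plan is to funnel all seven conditions through the single condition (4), that $H$ is trivial, exploiting two facts established above: the isomorphism $Aut(X) \cong N(H)/H$ from lemma~\ref{lem-aut-sets-explicit}, and the freeness of the $Aut(X)$-action on $X$. Throughout I would use that the transitive action identifies $X$ with $H \bs G$, so that $[G:H] = n$ and hence $|G| = n\,|H|$, and that $G$ acts \emph{faithfully} on $X$ since by definition it sits inside $S(X)$.

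First I would dispose of the conditions phrased through $G$. The equivalence (2)$\iff$(4) is immediate: a transitive action is free precisely when every point-stabilizer is trivial, and all stabilizers are conjugate to $H$. For (3)$\iff$(4), the implication (4)$\Rightarrow$(3) is trivial, while for the converse I would observe that the kernel of the faithful action $G \to S(X)$ is exactly the intersection of all conjugates of $H$, the \emph{core} of $H$; faithfulness makes this core trivial, so if $H$ happens to be normal it coincides with its core and is therefore trivial.

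Next I would treat the conditions involving $Aut(X)$ by means of the key inequality $|Aut(X)| = [N(H):H] \le [G:H] = n$, with equality if and only if $N(H)=G$, that is $H \trianglelefteq G$. Since $Aut(X)$ acts freely on the $n$-element set $X$, it acts transitively exactly when $|Aut(X)|=n$ (a single orbit), so (1) is equivalent to ``$|Aut(X)|=n$'', which by the inequality is equivalent to (3). By definition of regularity and the standing assumption that $X$ is connected, (7) is likewise just ``$|Aut(X)|=n$'', hence equivalent to (1). For (6), note that $|G|=n$ is equivalent to $|H|=1$ because $|G|=n\,|H|$, while $|Aut(X)|=n$ is equivalent to (4) as above, so (6) reduces to (4) as well.

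The step I expect to require the most care is (5), the bare abstract isomorphism $G \cong Aut(X)$. The implication (4)$\Rightarrow$(5) is clean: when $H$ is trivial, $N(H)/H = G$, so $Aut(X) \cong G$. The reverse is the delicate one, since an abstract isomorphism carries no a priori information about the actions; here I would extract only the equality of orders, $n\,|H| = |G| = |Aut(X)| = [N(H):H] \le n$, which forces $|H|=1$. Thus every condition is equivalent to (4), completing the proof.
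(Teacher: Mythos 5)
Your proof is correct, and it uses the same two ingredients as the paper (the isomorphism $Aut(X)\cong N(H)/H$ from lemma~\ref{lem-aut-sets-explicit} and the freeness of the $Aut(X)$-action), but it organizes the implications differently: you funnel everything through condition (4), whereas the paper first closes the cycle $(1)\Rightarrow(2)\Rightarrow(4)\Rightarrow(3)\Rightarrow(1)$ and then attaches $(5)$, $(6)$, $(7)$ as spokes. The most noteworthy difference is your treatment of (5). The paper only proves $(4)\Rightarrow(5)$ and never shows that the bare abstract isomorphism $G\cong Aut(X)$ implies any of the other conditions, so strictly speaking its proof of the full equivalence is incomplete on that point. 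Your counting argument --- $n\,|H| = |G| = |Aut(X)| = [N(H):H] \le [G:H] = n$, forcing $|H|=1$ --- supplies exactly the missing implication, and is the right way to do it since an abstract isomorphism carries no information about the actions beyond equality of orders. Your derivation of $(3)\Rightarrow(4)$ via the core of $H$ and faithfulness of $G\subset S(X)$ is also slightly different from the paper's route (which gets $(3)$ from $(4)$ via $(2)$ and closes the loop by showing $(3)\Rightarrow(1)$ directly), but both are sound; yours has the small advantage of making every condition equivalent to the single normal form $(4)$, which keeps the bookkeeping transparent.
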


\begin{proof}
That (1) implies (2) is almost the argument we used for the last
lemma, only with the roles of~$Aut(X)$ and~$G$ interchanged. Condition
(2) implies (4) by definition and hence (3); when we have (3) we have
$ N(H) /H = G/H$, and the description of the action of~$N(H) / H$
on~$H \bs G$ makes it clear that (1) holds.

Condition (4) implies~$N(H)/H \cong G$, so we have (5); we also have
(6) since~$X$ (whose cardinality is~$n$) can be identified with~$G$
acting on itself on the right. Conversely if we have (6), given that
the cardinality of~$X$ is~$n = |G| / |H|$ we deduce (4).

Finally (7), by definition, means that~$Aut(X)$ has order~$n$, so it
is implied by (6). Conversely, since this group acts freely on~$X$,
having cardinality~$n$, it is clear that (7) implies that the action
is also transitive, which is (1).
\end{proof}

\begin{coro}[of the proof] \label{coro-regular-objects}
Let~$X$ be a regular object in~$\sets_{\sigma, \alpha, \phi}$ with
cartographic group~$G$. Then~$X$ can be identified with~$G$ itself
with its action on itself on the right by translations. The
automorphism group~$Aut(X)$ can also be identified with~$G$, acting
on~$X=G$ on the left by translations. 

Conversely any finite group~$G$ with two distinguished
generators~$\sigma $ and~$\alpha $ defines a regular object in this
way. 
\end{coro}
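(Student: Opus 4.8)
The plan is to read everything off the equivalences proved in Proposition~\ref{prop-equiv-conditions-regularity}, which is precisely why this is billed as a corollary of that proof. Fix a base-point~$* \in X$ and identify~$X$ with~$H \bs G$ as in the preceding discussion, where~$H$ is the stabilizer of~$*$. Since~$X$ is regular, condition~(7) of the proposition holds, hence so does condition~(4): the subgroup~$H$ is trivial. Therefore~$X = H \bs G = G$, and the right action of~$G$ on~$X$ becomes right translation of~$G$ on itself. This yields the first identification.

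For the automorphism group I would invoke Lemma~\ref{lem-aut-sets-explicit}(1). With~$H$ trivial its normalizer is all of~$G$, so~$N(H)/H = G$, and the isomorphism~$Aut(X) \cong N(H)/H$ supplied by that lemma sends~$g$ to the map~$[x] \mapsto [gx]$. Under the identification~$X = G$ this is precisely~$x \mapsto gx$, i.e.\ left translation by~$g$. Thus~$Aut(X)$ is~$G$ acting on~$X = G$ by left translations, as claimed. As a sanity check, left and right translations commute by associativity, which matches the description of~$Aut(X)$ as the centralizer of~$G$ in~$S(X)$ given in Lemma~\ref{lem-aut-sets-explicit}(2).

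For the converse, given a finite group~$G$ with generators~$\sigma$ and~$\alpha$, I would set~$X = G$ and let~$\sigma$, $\alpha$ act by right translation, defining~$\phi$ to act by right translation by the element~$(\sigma\alpha)^{-1}$; with the right-action convention of Remark~\ref{rmk-convention-permutations} one then has~$\sigma\alpha\phi = 1$ in~$S(X)$, so~$X$ is genuinely an object of~$\sets_{\sigma, \alpha, \phi}$. Its cartographic group is the subgroup of~$S(X)$ generated by right translation by~$\sigma$ and by~$\alpha$; since these generate~$G$, it is the image of the right regular representation, which is faithful, hence isomorphic to~$G$. In particular the action is transitive and~$X$ is connected. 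Taking~$1 \in G$ as base-point, the stabilizer is trivial, so condition~(4) of Proposition~\ref{prop-equiv-conditions-regularity} holds and~$X$ is regular.

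The only place demanding care is the bookkeeping with the left/right conventions: one must check that right translations really generate the cartographic group while left translations really give the automorphisms, and that the two families commute. No serious obstacle is expected, since the heavy lifting was already carried out in Proposition~\ref{prop-equiv-conditions-regularity} and Lemma~\ref{lem-aut-sets-explicit}; the corollary is essentially a matter of specializing those results to the case~$H = \{1\}$.
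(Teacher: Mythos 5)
Your proposal is correct and follows essentially the same route as the paper: the forward direction is read off from the equivalences in Proposition~\ref{prop-equiv-conditions-regularity} together with Lemma~\ref{lem-aut-sets-explicit} (with~$H$ trivial), and the converse is the Cayley embedding of~$G$ into~$S(G)$ via right translations, whose free and transitive action gives regularity. The left/right bookkeeping you flag is handled exactly as you describe, so there is nothing to add.
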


\begin{proof}
There remains the (very easy) converse to prove. If we start with~$G$,
a finite group generated by~$\sigma $ and~$\alpha $, we can let it act
on itself on the right by translations, thus defining an object
in~$\sets_{\sigma, \alpha , \phi}$. The cartographic group is easily
seen to be isomorphic to~$G$ (in fact this is the traditional Cayley
embedding of~$G$ into the symmetric group~$S(G)$). The action of the
cartographic group is, as a result, free and transitive, so the object
is regular.
\end{proof}


However, some care must be taken. The identifications above are not
canonical, but depend on the choice of base-point. Also, the actions
of~$g \in G$ on~$X$, given by right and left multiplications, are very
different-looking maps of the set~$X$. We want to make these points
crystal-clear.  The letter~$d$ below is used for ``dart''.

\begin{prop} \label{prop-various-isos-carto-auto}
Suppose that~$X$ is regular. Then for each~$d \in X$ there is an
isomorphism
\[ \iota_d \colon G \longrightarrow Aut(X) \, .   \]
The automorphism~$\iota_d(g)$ is the unique one taking~$d$
to~$d^g $. 

Changing~$d$ to~$d'$ amounts to conjugating, in~$Aut(X)$, by the
unique automorphism taking~$d$ to~$d'$.
\end{prop}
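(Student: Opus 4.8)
The plan is to use regularity directly, without re-deriving the identifications of Corollary~\ref{coro-regular-objects}. By Proposition~\ref{prop-equiv-conditions-regularity} regularity gives that both $G$ and $Aut(X)$ act transitively on $X$, while $G$ acts freely; and the lemma preceding that proposition gives that $Aut(X)$ acts freely as well. These are the only facts I need. Since $Aut(X)$ acts freely, any automorphism is determined by its value at the single point $d$; since it acts transitively, for every $g \in G$ there is a (hence unique) automorphism sending $d$ to $d^g$. I would \emph{define} $\iota_d(g)$ to be that automorphism, so that $\iota_d \colon G \to Aut(X)$ is well defined and the stated characterization holds by construction.

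Next I would check that $\iota_d$ is a homomorphism, which is exactly where the opposite conventions of Remark~\ref{rmk-convention-permutations} (the cartographic group acting on the right) and of $Aut(X)$ (acting on the left) do their work. Recalling that every automorphism commutes with the right action of $G$, I compute
\[ (\iota_d(g)\,\iota_d(h))(d) = \iota_d(g)(d^h) = \left(\iota_d(g)(d)\right)^h = (d^g)^h = d^{gh}. \]
Since $\iota_d(gh)$ is by definition the unique automorphism sending $d$ to $d^{gh}$, and two automorphisms agreeing at $d$ coincide, this forces $\iota_d(g)\iota_d(h) = \iota_d(gh)$. Bijectivity is then immediate: $\iota_d$ is injective because $G$ acts freely (if $d^g = d^h$ then $g=h$), and surjective because $G$ acts transitively (given $\phi \in Aut(X)$, write $\phi(d) = d^g$ and observe $\iota_d(g) = \phi$). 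Hence $\iota_d$ is an isomorphism.

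For the final assertion, let $\psi \in Aut(X)$ be the unique automorphism with $\psi(d) = d'$, which exists by transitivity. I would prove $\iota_{d'}(g) = \psi\,\iota_d(g)\,\psi^{-1}$ by evaluating the right-hand side at $d'$: using $\psi^{-1}(d') = d$ and that $\psi$ commutes with the right $G$-action,
\[ (\psi\,\iota_d(g)\,\psi^{-1})(d') = \psi(d^g) = (\psi(d))^g = (d')^g, \]
which is precisely $\iota_{d'}(g)(d')$. As both sides are automorphisms agreeing at $d'$, they are equal, giving the conjugation formula.

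The argument is essentially routine, so I do not anticipate a serious obstacle; the only genuine care required is the bookkeeping between the two opposite conventions, and in particular invoking the defining property of morphisms in~$\sets_{\sigma,\alpha,\phi}$ — that an automorphism commutes with the right $G$-action — at each point where $g$ or $h$ is moved past an application of $\iota_d$. This is exactly what makes $\iota_d$ an honest isomorphism rather than an anti-isomorphism, and it is the one place where a sign (here, an inverse) could silently creep in if the conventions were mishandled.
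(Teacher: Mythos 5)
Your proof is correct and follows essentially the same route as the paper's: define $\iota_d(g)$ as the unique automorphism sending $d$ to $d^g$, verify the homomorphism property by evaluating at $d$ and using that automorphisms commute with the right $G$-action, and conclude from the fact that automorphisms agreeing at one point coincide. You simply spell out the bijectivity and conjugation checks that the paper dismisses as ``a reformulation of the discussion above,'' and your handling of the left/right conventions matches the paper's.
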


\begin{proof}
This is merely a reformulation of the discussion above, and we only
need to check some details. We take~$*=d$ as base-point. The
map~$\iota_d$ is clearly well-defined, and we check that it is a
homomorphism: $\iota_d(gh)(d) = d^{gh} = (d^g)^h = \iota_d(g)(d)^h =
\iota_d( g)(d^h) = \iota_d (g) \iota_d (h) (d)$, so the
automorphisms~$\iota_d(gh)$ and~$\iota_d(g) \iota_d(h)$ agree at~$d$,
hence everywhere by transitivity of the action of~$G$. 
\end{proof}

\begin{ex}
Consider the dessin on the sphere given by the tetrahedron, as
follows:

\figurehere{0.3}{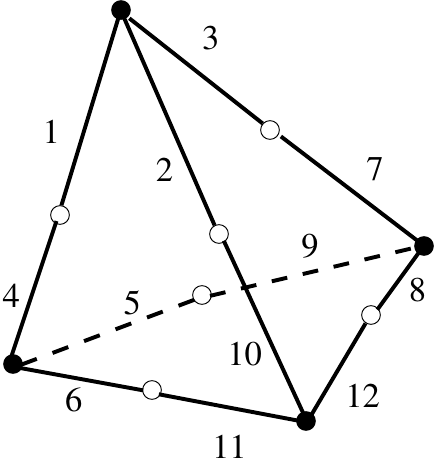}
Here we have numbered the darts, for convenience (the faces, on the
other hand, are implicit). There are many ways to see that this is a
regular dessin. For example, one may find enough rotations to
take any one dart to any other one, and apply criterion (1) of
proposition~\ref{prop-equiv-conditions-regularity}. Or, we could write
the permutations
\[ \sigma = (123)(456)(789)(10, 11, 12) \, , \qquad \alpha = (14)(2,
10)(37)(59)(6, 11)(8, 12) \, ,   \]
and compute the order of the group generated by~$\sigma $ and~$\alpha
$, which is~$12$ (a computer does that for you immediately). Then
appeal to criterion (6) of the same proposition. Finally, one could
also determine the automorphism group of this dessin, and find
that it has order 12. This is the very definition of regularity.

Take~$d= 1$ as base point, and write~$\iota $ for~$\iota_1$. What
is~$\iota (\sigma )$? This is the automorphism taking~$1$ to~$2$,
which is the rotation around the black vertex adjacent to~$1$
and~$2$. The permutation of the darts induced by~$\iota (\sigma )$ is 
\[ (123)(4, 10, 7) (6, 12, 9) (11, 8, 5) \, .   \]
We see that~$\sigma $ and~$\iota (\sigma )$ are not to be
confused. Likewise, $\iota (\alpha )$ is the rotation taking~$1$
to~$4$, and the induced permutation is 
\[ (14)(8, 12) (2, 5)(3, 6)(10, 9)(11, 7) \, .  \]

\end{ex}

\subsection{The distinguished triples}

From proposition~\ref{prop-various-isos-carto-auto}, we see that each
choice of dart in a regular dessin~$\cell$ defines three elements
of~$Aut(\cell)$, namely~$\tilde \sigma = \iota_d( \sigma )$, $\tilde
\alpha = \iota_d(\alpha )$, and~$\tilde \phi = \iota_d(\phi)$. These
are generators of~$Aut(\cell)$, and they satisfy~$\tilde \sigma \tilde
\alpha \tilde \phi = 1$. Changing~$d$ to another dart conjugates all
three generators {\em simultaneously}. Any such triple, obtained for a
choice of~$d$, will be called a {\em distinguished triple}
for~$\cell$. 

\begin{lem}
If~$d$ and~$d'$ are darts with a common black vertex, then~$\iota_d(
\sigma ) = \iota _{d'}(\sigma )$. Similarly if they have a common
white vertex then~$\iota_d( \alpha ) = \iota_{d'} (\alpha )$. Finally
if the black triangles corresponding to~$d$ and~$d'$ respectively lie
in the same face, then~$\iota_d( \phi) = \iota_{d'}(\phi)$.
\end{lem}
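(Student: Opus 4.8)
The plan is to reduce all three assertions to a single computation, combining the geometric descriptions of $\sigma$, $\alpha$ and $\phi$ recorded earlier with the fact that an element of $Aut(\cell)$, viewed as a self-bijection of the set $X$ of darts, commutes with the right action of the cartographic group $G$.

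First I would isolate the geometric input needed in each case. The permutation $\sigma$ cyclically rotates the darts around a fixed black vertex, so the darts sharing a given black vertex constitute a single cycle of $\sigma$, that is, one orbit of $\langle \sigma \rangle$ on $X$. Hence, if $d$ and $d'$ share a black vertex, then $d' = d^{\sigma^k}$ for some integer $k$. The analogous statement holds with $\alpha$ for darts with a common white vertex, and with $\phi$ for black triangles lying in a common face --- here one uses the bijection between darts and black triangles (valid since $\cell$ is oriented and without boundary) together with the fact, recorded before theorem~\ref{thm-oriented-cart-same-as-permutations}, that $\phi$ advances a black triangle to the next one on the same face, so that the black triangles of a single face form one $\langle \phi \rangle$-orbit.

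The heart of the matter is then a short computation, which I carry out for $\sigma$; the cases of $\alpha$ and $\phi$ are word for word the same. Set $\psi = \iota_d(\sigma)$, the unique automorphism sending $d$ to $d^\sigma$ (proposition~\ref{prop-various-isos-carto-auto}). Since $\psi$ commutes with right translation by $G$ and $d' = d^{\sigma^k}$, I would compute
\[
\psi(d') = \psi\!\left( d^{\sigma^k} \right) = \psi(d)^{\sigma^k} = \left( d^\sigma \right)^{\sigma^k} = d^{\sigma^{k+1}} = \left( d^{\sigma^k} \right)^\sigma = (d')^\sigma .
\]
Thus $\psi$ sends $d'$ to $(d')^\sigma$; by the characterization of $\iota_{d'}(\sigma)$ as the unique automorphism with this property, this gives $\psi = \iota_{d'}(\sigma)$, which is the first claim.

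I do not anticipate a real obstacle. The one point deserving care is the bookkeeping dictated by the right-action convention of remark~\ref{rmk-convention-permutations}: the identities $\psi(x^g) = \psi(x)^g$ and $\sigma\,\sigma^k = \sigma^k\,\sigma = \sigma^{k+1}$ must be applied in the correct order, and the displayed chain is arranged precisely so that each equality is one of these. The only genuinely geometric ingredient is the orbit statement of the second paragraph, which is immediate from the rotational interpretations of $\sigma$, $\alpha$ and $\phi$.
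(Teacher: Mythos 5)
Your proof is correct and is essentially the paper's argument: both reduce to the observation that $d' = d^{\sigma^k}$ and then invoke the characterization of $\iota_{d'}(\sigma)$ from proposition~\ref{prop-various-isos-carto-auto}. The paper phrases the final step as the conjugation $\iota_{d'}(\sigma) = \tilde\sigma^k\,\tilde\sigma\,\tilde\sigma^{-k} = \tilde\sigma$, while you verify directly via equivariance that $\iota_d(\sigma)$ sends $d'$ to $(d')^\sigma$ --- the same computation in different clothing.
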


\begin{proof}
We treat the first case, for which~$d' = d^{\sigma^k} $ for
some~$k$. Write~$\tilde \sigma = \iota_d(\sigma )$. Since~$d^{\sigma
  ^k} = \tilde \sigma^k(d)$, we see that~$\iota_{d'}(\sigma ) =
\tilde \sigma ^k \tilde \sigma \tilde \sigma ^{-k} = \tilde \sigma $.
\end{proof}

Thus the notation~$\tilde \sigma $ makes senses unambiguously when it
is understood that the possible base-darts are incident to a given
black vertex. Similarly for the other types of points. We can now
fully understand the fixed points of automorphisms:

\begin{prop} \label{prop-fixed-points}
Let~$h \in Aut(\cell)$, where~$\cell$ is regular. Suppose that the
induced homeomorphism~$\topo{\cell} \to \topo{\cell}$ has a fixed
point. Suppose also that~$h$ is not the identity. Then the fixed point
is a vertex or the centre of a face; moreover there exists an
integer~$k$ such that, for any choice of dart~$d$ incident with the
fixed point, we can write~$h=\tilde \sigma ^k$, $\tilde \alpha ^k$
or~$\tilde \phi ^k$, according to the type of fixed point, $\bullet$,
$\circ $ or~$\star $.

In particular, the subgroup of~$Aut(\cell)$ comprised of the
automorphisms fixing a given point of type~$\bullet$ is cyclic,
generated by~$\tilde \sigma = \iota_d(\sigma ) $ where we have chosen
any dart incident with the fixed point. Likewise for the other types
of fixed point.
\end{prop}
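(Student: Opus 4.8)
The plan is to handle the geometric fixed point entirely combinatorially, exploiting that $h$ permutes the set of triangles $T$ of $\cell$ compatibly with $a,b,c$, together with the fact that an automorphism of a connected dessin acts \emph{freely}. First I would record this freeness on all of $T$: by the lemma asserting that $Aut(X)$ acts freely on the darts $X=D$, and since in the oriented case $D$ is identified with the set of black triangles, $Aut(\cell)$ acts freely on black triangles. As $h$ commutes with $b$ (which exchanges the two colours), any fixed white triangle $w$ would give a fixed black triangle $b(w)$; so when $h\neq 1$ it fixes no triangle at all. Recall also that the induced homeomorphism restricts to a homeomorphism between triangles, hence carries the interior of a triangle onto the interior of a triangle.

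For the first assertion I would rule out the remaining possible locations of a fixed point $x$. If $x$ lies in the interior of a triangle $t$, then $h(t)=t$, contradicting freeness. If $x$ lies in the interior of an edge $e$, then $e$ is the common side of exactly two triangles $t_1,t_2$, and since $\cell$ is oriented these carry opposite colours (\S\ref{subsec-orientations}); as $h$ fixes $x$ it maps the pair $\{t_1,t_2\}$ to itself, and being orientation-preserving, hence colour-preserving, it must fix each of $t_1,t_2$ individually — again contradicting freeness. Therefore $x$ is a vertex or a face centre.

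For the second assertion, suppose $x$ is a black vertex $b$ and let $d$ be any dart incident with $b$; the darts incident with $b$ are precisely the $\sigma$-orbit of $d$. Since $h$ fixes $b$ as an element of $B$ and commutes with $\B$, it permutes this orbit, so $h(d)=d^{\sigma^k}$ for some integer $k$. By proposition~\ref{prop-various-isos-carto-auto} the unique automorphism sending $d$ to $d^{\sigma^k}$ is $\iota_d(\sigma^k)=\tilde\sigma^k$, whence $h=\tilde\sigma^k$; the exponent $k$ is independent of the chosen incident dart because $h$ commutes with $\sigma$ while $\tilde\sigma$ itself does not depend on $d$, by the preceding lemma. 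The white-vertex and face-centre cases are identical, with $\sigma,\tilde\sigma$ replaced by $\alpha,\tilde\alpha$ and $\phi,\tilde\phi$, using that the darts at a white vertex (resp.\ the darts of a face) form a single $\alpha$-orbit (resp.\ $\phi$-orbit). The closing ``in particular'' is then immediate: every automorphism fixing $b$ is a power of $\tilde\sigma$, and $\tilde\sigma$ itself fixes $b$, so the stabiliser is the cyclic group $\langle\tilde\sigma\rangle$.

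I expect the only genuine subtlety to be the reduction in the first two paragraphs — namely justifying that a fixed point forces a whole adjacent triangle to be fixed — which hinges on combining freeness of the action on $T$ with the fact that the two triangles meeting along any edge carry opposite colours. Once the fixed point is pinned to a vertex or face centre, the identification of $h$ as a power is a direct application of proposition~\ref{prop-various-isos-carto-auto}.
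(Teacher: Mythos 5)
Your argument is correct and follows essentially the same route as the paper: both proofs combine the freeness of the $Aut(\cell)$-action on triangles with the fact that $h$ preserves the two-colouring while adjacent triangles have opposite colours, to force the fixed point to be a vertex of a triangle (the paper phrases this as ``$t\cap h(t)$ is nonempty but cannot contain an edge''), and then identify $h$ as $\iota_d(\sigma^k)$ exactly as you do. Your extra remarks (freeness on white triangles via $b$, independence of $k$ from the incident dart) are correct elaborations of points the paper leaves implicit.
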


(In this statement we have abused the language slightly, by saying that
a dart is ``incident'' to the centre of a face if the corresponding
black triangle belongs to that face.)

\begin{proof}
Let~$t$ be a triangle containing the fixed point. Note that~$h(t) \ne
t$: otherwise by regularity we would have~$h=$ identity. We have~$t
\cap h(t) \ne \emptyset$ though, and as the triangle~$h(t)$ is of the
same colour as~$t$, unlike its neighbours, we conclude that~$t \cap
h(t)$ is a single vertex of~$t$, and the latter is our fixed point.

Say it is a black vertex. Let~$d$ be the dart on~$t$. Then~$h(d)$ is a
dart with the same black vertex as~$d$, so~$h(d) = d^{\sigma ^k}$ for
some integer~$k$. In other words~$h = \iota_d(\sigma^k)$.
\end{proof}

Thus we have a canonical generator for each of these subgroups. Here
we point out, and this will matter in the sequel, that the
generator~$\tilde \sigma $ agrees with what Völklein calls the
``distinguished generator'' in Proposition 4.23 of~\cite{helmut}. This
follows from unwinding all the definitions.

The following result is used very often in the literature on regular
``maps''. 

\begin{prop} \label{prop-regular-dessins-are-groups-with-generators}
Let~$\cell$ be a dessin, with cartographic group~$G$, and the
distinguished elements~$\sigma , \alpha, \phi \in G$. Similarly,
let~$\cell', G', \sigma ', \alpha ', \phi'$ be of the same
kind. Assume that~$\cell$ and~$\cell'$ are both regular. Then the
following conditions are equivalent: \begin{enumerate}
\item $\cell$ and~$\cell'$ are isomorphic,
\item there is an isomorphism~$G \to G'$ taking~$\sigma $ to~$\sigma
  '$, $\alpha$ to $ \alpha '$ and~$\phi$ to $\phi'$,
\item there is an isomorphism~$Aut(\cell) \to Aut(\cell')$ taking a
  distinguished triple to a distinguished triple.
\end{enumerate}
\end{prop}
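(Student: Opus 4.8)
The plan is to establish $(1)\Leftrightarrow(2)$ and $(2)\Leftrightarrow(3)$ separately, working throughout in the category~$\sets_{\sigma,\alpha,\phi}$ and using corollary~\ref{coro-regular-objects} to identify each regular object with its cartographic group acting on itself on the right; so I write~$X=G$ and~$X'=G'$ with the right-translation actions, and recall our convention~$x^g=xg$.

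For $(1)\Leftrightarrow(2)$, I would first invoke theorem~\ref{thm-eq-cats-dessins-sets}: $\cell$ and~$\cell'$ are isomorphic if and only if there is a bijection~$\Psi\colon X\to X'$ equivariant for~$\sigma$ and~$\alpha$ (hence for~$\phi$). The crux is to promote such a~$\Psi$ to a group isomorphism. I would argue by induction on word length that $\Psi(x^{w(\sigma,\alpha)})=\Psi(x)^{w(\sigma',\alpha')}$ for every word~$w$ in~$\sigma,\alpha$ and every~$x$, using the base equivariance~$\Psi(x^\sigma)=\Psi(x)^{\sigma'}$ and~$\Psi(x^\alpha)=\Psi(x)^{\alpha'}$ (and their inverse versions). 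The key observation is then that if $w_1(\sigma,\alpha)=w_2(\sigma,\alpha)$ as permutations of~$X$, applying the bijection~$\Psi$ to the identity $x^{w_1}=x^{w_2}$ forces $y^{w_1(\sigma',\alpha')}=y^{w_2(\sigma',\alpha')}$ for all~$y\in X'$, i.e. $w_1(\sigma',\alpha')=w_2(\sigma',\alpha')$ in~$G'$. Hence $\psi\colon w(\sigma,\alpha)\mapsto w(\sigma',\alpha')$ is a well-defined surjective homomorphism, an isomorphism by the symmetric argument with~$\Psi^{-1}$, and it sends~$\sigma\mapsto\sigma'$, $\alpha\mapsto\alpha'$, hence~$\phi=\alpha^{-1}\sigma^{-1}\mapsto\phi'$. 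Conversely, given~$\psi$ as in~(2), the computation $\psi(x^\sigma)=\psi(x\sigma)=\psi(x)\sigma'=\psi(x)^{\sigma'}$ shows that~$\psi$, viewed as a map $X\to X'$, is itself an equivariant bijection, which yields~(1).

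For $(2)\Leftrightarrow(3)$, I would simply transport along the isomorphisms~$\iota_d\colon G\to Aut(\cell)$ of proposition~\ref{prop-various-isos-carto-auto}, which by construction carry~$(\sigma,\alpha,\phi)$ to the distinguished triple~$(\tilde\sigma,\tilde\alpha,\tilde\phi)$. Fixing base darts~$d,d'$, an isomorphism~$\psi$ as in~(2) produces $\iota_{d'}\circ\psi\circ\iota_d^{-1}\colon Aut(\cell)\to Aut(\cell')$, which sends the distinguished triple of~$d$ to that of~$d'$, giving~(3). Conversely, since by definition a distinguished triple is exactly the image of~$(\sigma,\alpha,\phi)$ under some~$\iota_d$, an isomorphism~$\Xi$ as in~(3) carries $\iota_d(\sigma,\alpha,\phi)$ to $\iota_{d'}(\sigma',\alpha',\phi')$ for suitable darts, and then $\iota_{d'}^{-1}\circ\Xi\circ\iota_d$ matches the distinguished generators, giving~(2).

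I expect the main obstacle to be precisely the well-definedness step inside $(1)\Rightarrow(2)$: upgrading a mere equivariant bijection of underlying sets to a genuine group isomorphism respecting the marked generators. Everything else is bookkeeping with the identifications from corollary~\ref{coro-regular-objects} and proposition~\ref{prop-various-isos-carto-auto}. The two points I would watch are the right-action convention (so that the intertwining reads $\Psi(x^g)=\Psi(x)^{\psi(g)}$ with no stray inverses) and the harmless fact that~$\Psi$ need not match base darts, which never enters the argument since the well-definedness reasoning is base-point free.
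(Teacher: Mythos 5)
Your proposal is correct and follows essentially the same route as the paper: the equivalence $(1)\Leftrightarrow(2)$ via transport of the permutations along an equivariant bijection (your word-length argument is just a spelled-out version of the conjugation isomorphism $g\mapsto\Psi^{-1}g\Psi$, which the paper dismisses as obvious), and $(2)\Leftrightarrow(3)$ via the isomorphisms $\iota_d$ of proposition~\ref{prop-various-isos-carto-auto} together with the identification of a regular object with its automorphism group acting on itself by right translation. The only difference is organizational -- the paper closes the cycle $(3)\Rightarrow(1)$ directly rather than passing back through $(2)$ -- and you correctly isolate where regularity is actually needed (namely for $(2)\Rightarrow(1)$, not for $(1)\Rightarrow(2)$).
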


\begin{proof}
That (1) implies (2) is obvious, and holds without any regularity
assumption. Since there are isomorphisms~$G \cong Aut(\cell)$
and~$G' \cong Aut(\cell')$ taking the distinguished permutations in the
cartographic group to a distinguished triple (though none of this is
canonical), we see that (2) implies (3). 

Finally, if we work in~$\sets_{\sigma , \alpha , \phi}$, we can
identify~$\cell$ with the group~$Aut(\cell)$ endowed with the three
elements~$\tilde \sigma , \tilde \alpha , \tilde \phi$ acting by
multiplication on the right, where we have picked some distinguished
triple~$\tilde \sigma, \tilde \alpha , \tilde \phi$. Thus (3) clearly
implies (1).
\end{proof}

The equivalence of (1) and (3), together with
corollary~\ref{coro-regular-objects}, reduces the classification of
regular dessins to that of finite groups with two distinguished
generators (or three distinguished generators whose product
is~$1$). We state this separately as an echo to
proposition~\ref{prop-iso-classes}. Recall that dessins are implicitly
compact, oriented and without boundary here.

\begin{prop} \label{prop-iso-classes-regular}
\begin{enumerate}
\item A regular dessin determines, and can be reconstructed from, a
  finite group~$G$ with two distinguished generators~$\sigma $
  and~$\alpha $. We obtain isomorphic dessins from~$(G, \sigma, \alpha
  )$ and~$(G', \sigma ', \alpha ')$ if and only if there is an
  isomorphism~$G \to G'$ taking~$\sigma $ to~$\sigma '$ and~$\alpha $
  to~$\alpha '$.

\item The set of isomorphism classes of regular dessins is in
  bijection with the normal subgroups of the free group on two
  generators. More precisely, if a connected dessin corresponds to the
  conjugacy class of the subgroup~$K$ as in
  proposition~\ref{prop-iso-classes}, then it is regular if and only
  if~$K$ is normal.
\end{enumerate}
\end{prop}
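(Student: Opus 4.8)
The plan is to read both parts off the machinery already assembled in the preceding propositions, so that very little genuinely new argument is required.

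For part~(1), I would take the reconstruction of the dessin from~$(G, \sigma, \alpha)$ to be exactly the content of corollary~\ref{coro-regular-objects}: a finite group with two distinguished generators, acting on itself on the right, is a regular object of~$\sets_{\sigma, \alpha, \phi}$ with cartographic group~$G$, and conversely every regular dessin arises this way. The isomorphism criterion is then the equivalence of~(1) and~(2) in proposition~\ref{prop-regular-dessins-are-groups-with-generators}. The only point to notice is that the condition on~$\phi$ appearing there is automatic here: since~$\sigma \alpha \phi = 1$ one has~$\phi = \alpha^{-1} \sigma^{-1}$, so any isomorphism~$G \to G'$ sending~$\sigma \mapsto \sigma'$ and~$\alpha \mapsto \alpha'$ necessarily sends~$\phi \mapsto \phi'$. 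Thus requiring the generators~$\sigma, \alpha$ to match is the same as requiring the full distinguished triple to match, and part~(1) follows.

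For part~(2), I would start from the bijection of proposition~\ref{prop-iso-classes}(2) between isomorphism classes of connected dessins and conjugacy classes of finite-index subgroups~$K$ of the free group~$F = \langle \sigma, \alpha \rangle$. Recall from the proof of that proposition the precise dictionary: the dessin is the right~$F$-set~$X = K \bs F$, its cartographic group is~$G = F/N$ where~$N$ is the normal core of~$K$ (the intersection of all conjugates of~$K$, equivalently the kernel of the canonical surjection~$f \colon F \to G$), and the stabilizer of the base coset is~$H = f(K) = K/N$. Now I would apply proposition~\ref{prop-equiv-conditions-regularity}, whose equivalence of~(4) and~(7) says that the dessin is regular exactly when~$H$ is trivial. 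Since~$N \subseteq K$ always, triviality of~$K/N$ amounts to~$K = N$, that is, to~$K$ being equal to its own normal core, which is the same as~$K$ being normal in~$F$. This establishes the ``more precisely'' assertion.

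Finally, to obtain the stated bijection it remains to observe that a normal subgroup is its own conjugacy class, so that the conjugacy classes consisting of normal subgroups are in bijection with normal subgroups themselves; restricting the bijection of proposition~\ref{prop-iso-classes} to the regular dessins --- which we have just identified with the normal~$K$ --- therefore yields a bijection between isomorphism classes of regular dessins and the (finite-index) normal subgroups of~$F$. I expect no real obstacle here beyond this bookkeeping; the one step deserving care is the identification of the stabilizer~$H$ with the image~$K/N$ of~$K$ in~$G$, together with the elementary remark that the normal core of~$K$ equals~$K$ precisely when~$K$ is normal.
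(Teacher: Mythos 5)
Your proposal is correct and follows essentially the same route as the paper: part (1) is read off from corollary~\ref{coro-regular-objects} and proposition~\ref{prop-regular-dessins-are-groups-with-generators}, and part (2) from the dictionary of proposition~\ref{prop-iso-classes} (cartographic group $G = F/N$ with $N$ the normal core of $K$, stabilizer $H = K/N$) combined with the regularity criterion of proposition~\ref{prop-equiv-conditions-regularity}, so that regularity amounts to $K = N$, i.e.\ $K$ normal. The only cosmetic difference is that the paper obtains the first assertion of (2) directly from (1) (the groups with two distinguished generators are exactly the $F_2/N$, up to the right notion of isomorphism iff $N = N'$), whereas you deduce it from the ``more precisely'' statement by noting that a normal subgroup is its own conjugacy class --- both are equally valid bookkeeping.
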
 

\begin{proof}
We have already established (1). As for the first statement in (2), we
only need to remark that the groups mentioned in (1) are precisely the
groups of the form~$G = \langle \sigma, \alpha \rangle / N$ for some
normal subgroup~$N$ in the free group~$F_2 =\langle \sigma, \alpha
\rangle$, and that an isomorphism of the type specified in (1)
between~$G= F_2/N$ and~$G' = F_2 / N'$ exists if and only if~$N = N'$.

We turn to the last statement. If a connected dessin corresponds
to~$K$, then it is isomorphic to~$X= K \bs \langle \sigma, \alpha
\rangle$ in~$\sets_{\sigma, \alpha, \phi}$. The action of~$\langle
\sigma, \alpha \rangle$ on~$X$ yields a homomorphism~$f \colon \langle
\sigma, \alpha \rangle \to S(X)$ whose image is the cartographic
group~$G$, and whose kernel is the intersection~$N$ of all the conjugates
of~$K$, so~$G \cong \langle \sigma, \alpha \rangle / N$. Let~$H$ be
the stabilizer in~$G$ of a point in~$X$. Then~$f^{-1}(H)$ is the
stabilizer of that same point in~$\langle \sigma, \alpha \rangle$, so
it is a conjugate of~$K$. Now, $X$ is regular if and only if~$H$ is
trivial, which happens precisely when~$f^{-1}(H) = N$, which in turn
occurs precisely when~$K$ is normal. 
\end{proof}

\subsection{Regular closure \& Galois correspondence}

In the discussion that follows, we restrict our attention to {\em
  connected} dessins. 

When~$\cell$ and~$\cell'$ are two dessins, we call~$\cell'$ an {\em
  intermediate dessin} of~$\cell$ when there exists a morphism~$\cell
\to \cell'$. To appreciate the term ``intermediate'', it is best to
move to categories other than~$\cat$. In~$\covs$, if~$\cell$
corresponds to~$p \colon S \to \p$ and~$\cell'$ corresponds to~$p'
\colon S' \to \p$, then~$\cell'$ is an intermediate dessin of~$\cell$
when there is a factorization of~$p$ as 
\[ p \colon S \stackrel{f}{\longrightarrow} S' \stackrel{p'}{\longrightarrow} \p \,
,  \]
for some map~$f$; so~$\topo{\cell'} = S'$ is intermediate
between~$\topo{\cell} = S$ and~$\p$, if you will. In~$\etq$, the
towers~$\qb(x) \subset L' \subset L$ provide examples where~$L'/
\qb(x)$ is an intermediate dessin of~$L/\qb(x)$, and all examples are
isomorphic to one of this kind. 

Of course the word ``intermediate'' is borrowed from field/Galois
theory, where the ideas for the next paragraphs come from. Let us
point out one more characterization.

\begin{lem}
Let~$\cell$ and~$\cell'$ correspond to the conjugacy classes of the
subgroups~$H$ and~$H'$ of~$\langle \sigma, \alpha \rangle$
respectively, as in proposition~\ref{prop-iso-classes}. Then~$\cell'$
is an intermediate dessin of~$\cell$ if and only if some conjugate
of~$H'$ contains~$H$.
\end{lem}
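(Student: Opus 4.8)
The plan is to translate the whole statement into $\sets_{\sigma, \alpha, \phi}$, where it becomes an elementary fact about equivariant maps between transitive $G$-sets. Write $F = \langle \sigma, \alpha \rangle$. By proposition~\ref{prop-iso-classes} and the equivalence of theorem~\ref{thm-eq-cats-dessins-sets}, the dessin~$\cell$ is represented by $X = H \bs F$ and $\cell'$ by $X' = H' \bs F$, each carrying the right translation action of~$F$ (consistent with remark~\ref{rmk-convention-permutations}); and, by the very definition of ``intermediate dessin'', $\cell'$ is an intermediate dessin of~$\cell$ precisely when there exists an $F$-equivariant map $X \to X'$. So the content reduces to: such a map exists if and only if $H$ is contained in some conjugate of~$H'$.

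First I would use connectedness of~$\cell$, which says $F$ acts transitively on~$X$. Hence any equivariant map $f \colon X \to X'$ is determined by the single value $f([1])$, where $[1] = H$ is the base coset; the map need not be assumed surjective, though in fact it will be, since $X'$ is a single orbit too. Writing $f([1]) = H'a$ for some $a \in F$, equivariance forces $f([g]) = H'ag$ for every $g \in F$.

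Next I would pin down exactly when this formula is well defined, and this single computation yields both directions. The coset $[g] = Hg$ is unchanged under $g \mapsto hg$ for $h \in H$, so we need $H'ahg = H'ag$ for all $h \in H$ and all $g$, equivalently $H'ah = H'a$ for all $h \in H$, equivalently $aha^{-1} \in H'$ for all $h \in H$, i.e. $aHa^{-1} \subseteq H'$, i.e. $H \subseteq a^{-1}H'a$. Thus an equivariant map exists if and only if $H$ lies in the conjugate $a^{-1}H'a$ of~$H'$, which is exactly the asserted condition; and conversely, whenever this containment holds for some~$a$, the formula $f([g]) = H'ag$ visibly defines an equivariant map.

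Finally I would remark that the lemma is stated in terms of the conjugacy classes of $H$ and~$H'$ (these subgroups being determined only up to conjugacy by the choice of base-point), so I should check that the predicate ``some conjugate of~$H'$ contains~$H$'' is invariant under replacing $H$ or~$H'$ by a conjugate; this is immediate, since conjugating $H$ only moves the required witness~$a$, and the family of conjugates of~$H'$ is preserved. I do not expect a real obstacle here: the only point demanding genuine care is tracking the left/right conventions so that the conjugation lands on the correct side and the containment points the right way -- a conjugate of~$H'$ containing~$H$, and not the reverse.
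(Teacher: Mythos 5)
Your proof is correct and follows exactly the paper's route: the paper also translates the statement into $\sets_{\sigma,\alpha,\phi}$, represents the two dessins as $H\bs\langle\sigma,\alpha\rangle$ and $H'\bs\langle\sigma,\alpha\rangle$, and invokes the fact that an equivariant map exists iff a point-stabilizer of the source is contained in one of the target. You have merely written out in full the coset computation ($f([g])=H'ag$ is well defined iff $H\subseteq a^{-1}H'a$) that the paper leaves implicit.
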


So~$H'$ is intermediate between~$H$ and the free group~$\langle
\sigma, \alpha \rangle$.

\begin{proof}
The object in~$\sets_{\sigma, \alpha, \phi}$ corresponding to~$H$ (and
also to~$\cell$) is~$X=H \bs \langle \sigma, \alpha \rangle$, and
likewise for~$H'$ we can take~$X' = H' \bs \langle \sigma, \alpha
\rangle$; there is a map~$X \to X'$ if and only if the stabilizer of
some point in~$X$ is contained in the stabilizer of some point
in~$X'$, hence the lemma.
\end{proof}

\begin{lem}
Let~$\cell$ be a connected dessin. There exists a regular
dessin~$\tilde \cell$ such that~$\cell$ is an intermediate dessin
of~$\tilde \cell$. Moreover, we can arrange for~$\tilde \cell$ to be
minimal in the following sense: if~$\cell$ is an intermediate dessin
of any regular dessin~$\cell'$, then~$\tilde \cell$ is itself an
intermediate dessin of~$\cell'$. Such a minimal~$\tilde \cell$ is
unique up to isomorphism.

Finally, the cartographic group of~$\cell$ is isomorphic
to~$Aut(\tilde \cell)$.
\end{lem}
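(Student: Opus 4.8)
The plan is to carry out everything in~$\sets_{\sigma,\alpha,\phi}$, where the bookkeeping reduces to elementary group theory. By proposition~\ref{prop-iso-classes}, the connected dessin~$\cell$ corresponds to the conjugacy class of a finite-index subgroup~$H$ of the free group~$\langle \sigma, \alpha \rangle$. I would define~$N$ to be the \emph{normal core} of~$H$, that is, the intersection of all conjugates~$gHg^{-1}$; equivalently~$N$ is the kernel of the action of~$\langle\sigma,\alpha\rangle$ on the finite set~$H\bs\langle\sigma,\alpha\rangle$. Since this action has finite image in a symmetric group, $N$ has finite index, and it is normal by construction, so proposition~\ref{prop-iso-classes-regular}(2) produces a regular dessin~$\tilde\cell$ attached to~$N$. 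Because~$N\subseteq H$ (take~$g=1$ in the intersection), the subgroup~$H$ is itself a conjugate of~$H$ containing~$N$, so the preceding lemma characterizing intermediate dessins shows that~$\cell$ is an intermediate dessin of~$\tilde\cell$. This settles existence.

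For minimality, suppose~$\cell$ is an intermediate dessin of a regular dessin~$\cell'$, which by proposition~\ref{prop-iso-classes-regular}(2) corresponds to a normal subgroup~$M$. The characterization lemma then gives some~$g$ with~$M\subseteq gHg^{-1}$; conjugating and using that~$M$ is normal yields~$M\subseteq H$. Since~$M$ is a normal subgroup contained in~$H$, and~$N$ is by definition the largest such (the normal core), we get~$M\subseteq N$. As~$N$ is normal, this says that a conjugate of~$N$, namely~$N$ itself, contains~$M$, so the characterization lemma shows~$\tilde\cell$ is an intermediate dessin of~$\cell'$, which is exactly minimality.

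Uniqueness is then immediate: the normal core~$N$ depends only on the conjugacy class of~$H$ (conjugate subgroups share the same core), so~$\tilde\cell$ is determined up to isomorphism; alternatively, two minimal regular covers are each intermediate of the other, forcing their defining normal subgroups to coincide. Finally, for the cartographic-group statement, recall that the cartographic group~$G$ of~$\cell$ is the image of the monodromy map~$\langle\sigma,\alpha\rangle\to S(H\bs\langle\sigma,\alpha\rangle)$, whose kernel is precisely~$N$, so~$G\cong\langle\sigma,\alpha\rangle/N$. On the other hand~$\tilde\cell$ is regular with defining normal subgroup~$N$, so by corollary~\ref{coro-regular-objects} its cartographic group and its automorphism group are both identified with~$\langle\sigma,\alpha\rangle/N$ (the coincidence of the cartographic group and~$Aut$ for regular objects being the content of proposition~\ref{prop-equiv-conditions-regularity}). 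Hence~$Aut(\tilde\cell)\cong\langle\sigma,\alpha\rangle/N\cong G$.

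I expect the only genuinely delicate point to be the minimality step, where one must remember that the hypothesis only supplies a conjugate of~$H$ lying above~$M$, and that the passage from ``$M$ lies in some conjugate of~$H$'' to ``$M$ lies in the core~$N$'' rests essentially on the normality of~$M$; everything else is formal manipulation within the dictionary already set up.
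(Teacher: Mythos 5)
Your proof is correct and follows essentially the same route as the paper's own argument, which likewise takes $N$ to be the intersection of all conjugates of $H$ and invokes proposition~\ref{prop-iso-classes}, the intermediate-dessin lemma, and proposition~\ref{prop-iso-classes-regular}; you have simply written out the details (minimality, uniqueness, identification of the cartographic group with $\langle\sigma,\alpha\rangle/N$) that the paper leaves implicit.
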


We call~$\tilde \cell$ the {\em regular closure} of~$\cell$.

\begin{proof}
Leaving the last statement aside, in~$\etq$, this is a basic result
from Galois theory. Alternatively, we can rely on
proposition~\ref{prop-iso-classes} and the previous lemma: if~$\cell$
corresponds to the conjugacy class of~$H$, then clearly the object
corresponding to~$N$, the intersection of all conjugates of~$H$, suits
our purpose. As for the last statement, that the cartographic group
of~$H \bs \langle \sigma , \alpha \rangle$ is isomorphic to~$\langle
\sigma , \alpha \rangle / N$ was already observed during the proof of
proposition~\ref{prop-iso-classes-regular} (and is obvious anyway).
\end{proof}



The fundamental theorem of Galois theory applied in~$\etq$, or some
elementary considerations with the subgroups of~$\langle \sigma,
\alpha \rangle$, imply:

\begin{prop}
Let~$\cell$ be a regular dessin. There is a bijection between the set
of isomorphism classes of intermediate dessins of~$\cell$ on the one
hand, and the conjugacy classes of subgroups of~$Aut(\cell)$ on the
other hand. Normal subgroups corresponds to regular, intermediate
dessins.
\end{prop}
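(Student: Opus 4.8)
The plan is to push everything into the free group $\langle \sigma, \alpha \rangle = F_2$, where the statement reduces to the correspondence theorem for subgroups. First I would recall from proposition~\ref{prop-iso-classes} that a connected dessin is the same datum as a conjugacy class of finite-index subgroups of $F_2$, and from proposition~\ref{prop-iso-classes-regular} that the regular dessin $\cell$ corresponds to a \emph{normal} subgroup $N$ of $F_2$, with $G = F_2/N$ its cartographic group. By corollary~\ref{coro-regular-objects} this $G$ is isomorphic to $Aut(\cell)$; since conjugacy classes of subgroups are preserved by any isomorphism, I may identify $G$ with $Aut(\cell)$ without worrying that the identification is non-canonical.

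Next I would identify the intermediate dessins. If $\cell'$ corresponds to the conjugacy class of a subgroup $H' \le F_2$, then by the earlier lemma characterizing intermediate dessins (via containment of a conjugate) it is intermediate of $\cell$ exactly when some conjugate of $H'$ contains $N$. As $N$ is normal, $gH'g^{-1} \supseteq N$ is equivalent to $H' \supseteq g^{-1}Ng = N$, so the condition collapses to the single requirement $N \subseteq H'$ (a property visibly invariant under conjugating $H'$). Hence the intermediate dessins of $\cell$ correspond precisely to the conjugacy classes of subgroups of $F_2$ containing $N$, all of which have finite index automatically because $N$ does. The correspondence theorem then supplies an inclusion- and conjugation-respecting bijection $H' \mapsto H'/N$ between such subgroups and the subgroups of $G = F_2/N$; passing to conjugacy classes and using $G \cong Aut(\cell)$ yields the asserted bijection.

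For the last sentence I would trace normality through the same dictionary: a subgroup $H' \supseteq N$ is normal in $F_2$ if and only if $H'/N$ is normal in $G$, while $H'$ is normal in $F_2$ if and only if the intermediate dessin $\cell'$ is regular, by part~(2) of proposition~\ref{prop-iso-classes-regular}; so normal subgroups of $Aut(\cell)$ are matched with regular intermediate dessins. I expect no serious obstacle here: the argument is routine once the translation is in place, and the only points demanding genuine care are the reduction that uses normality of $N$ and the consistent bookkeeping of \emph{conjugacy classes} of subgroups rather than individual subgroups, which is exactly where the non-canonical nature of the correspondences $\cell' \leftrightarrow H'$ and $G \cong Aut(\cell)$ is absorbed. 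As a sanity check one can instead argue inside $\etq$, where $\cell$ is a Galois extension $L/\qb(x)$ with $\Gal(L/\qb(x)) \cong Aut(\cell)$; the statement is then literally the fundamental theorem of Galois theory, with $\qb(x)$-isomorphism classes of intermediate fields corresponding to conjugacy classes of subgroups and Galois intermediate extensions to normal ones.
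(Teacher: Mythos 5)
Your argument is correct and is exactly the "elementary considerations with the subgroups of $\langle \sigma, \alpha\rangle$" route that the paper itself invokes (and leaves to the reader), with the Galois-theoretic reading in $\etq$ offered as the alternative, just as in the text. The details you supply — the collapse of "some conjugate of $H'$ contains $N$" to $N \subseteq H'$ via normality, the correspondence theorem, and the handling of the non-canonical identification $G \cong Aut(\cell)$ at the level of conjugacy classes — are all sound.
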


The concepts of this section are, as usual, very easily illustrated
within~$\sets_{\sigma, \alpha, \phi}$. A connected object is of the
form~$H \bs G$, as we have seen, where~$G$ has two distinguished
generators~$\sigma $ and~$\alpha $. The regular closure is the
object~$G$, with its right action on itself, seen in~$\sets_{\sigma ,
  \alpha , \phi}$. Of course there is the natural map~$G \to H \bs
G$. Conversely any~$X$ with a surjective, equivariant map~$G \to X$
(that is, any connected, intermediate object of~$G$) must be of the
form~$H \bs G$, clearly. From this we see that whenever~$\cell$ is
regular, its intermediate dessins might called its {\em quotient}
dessins instead.





\section{The action of~$\gal$} \label{sec-galois-action}

In this section we show how each element~$\lambda \in \gal$ defines a
self-equivalence of~$\cat$, or any of the other categories equivalent
to it. Writing~$\act{\lambda }{\cell}$ for the object obtained by
applying this functor to the dessin~$\cell$, we show that there is an
isomorphism between~$\act{\lambda \mu }{\cell}$ and~$\act{\lambda
}{(\act{\mu }{\cell})}$, so~$\gal$ acts on the set of isomorphism
classes of dessins. 

The definition of the action is in fact given in~$\etq$, where it is
most natural. The difficulty in understanding it in~$\cat$ has much to
do with the zig-zag of equivalences that one has to go through. For
example, the functor from Riemann surfaces to fields is
straightforward, and given by the ``field of meromorphic functions''
construction, but the inverse functor is more mysterious.

We study carefully the genus~$0$ case, and include a detailed
description of a procedure to find a Belyi map associated to a planar
dessin -- which is, so far, an indispensable step to study the
action. We say just enough about the genus 1 case to establish that
the action is faithful.

We then proceed to study the features which are common to~$\cell$
and~$\act{\lambda }{\cell}$, for example the fact that the
surfaces~$\topo{\cell}$ and~$\topo{\act{\lambda }{\cell}}$ are
homeomorphic (so that the action modifies dessins on a given
topological surface). Ultimately one would hope to know enough of
these ``invariant'' features to predict the orbit of a given dessin
under~$\gal$ without having to compute Belyi maps, but this remains an
open problem.

\subsection{The action}

Let~$\lambda \colon \qb \to \qb$ be an element
of~$\gal$. We extend it to a map~$\qb(x) \to \qb (x)$
which fixes~$x$, and use the same letter~$\lambda $ to denote it. In
this situation the tensor product operation
\[ - \otimes_{\lambda } \qb (x)  \]
defines a functor from~$\etq$ to itself. In more details,
if~$L/\qb(x)$ is an étale algebra, one considers 
\[ \act{\lambda }{L} = L \otimes_\lambda \qb (x) \, .    \]
The notation suggests that we see~$\qb (x)$ as a module over itself
{\em via} the map~$\lambda $. We turn~$\act{\lambda }{L}$ into an
algebra over~$\qb(x)$ using the map~$t \mapsto 1 \otimes t$.

To describe this in more concrete terms, as well as verify
that~$\act{\lambda }{L}$ is an étale algebra over~$\qb(x)$
whenever~$L$ is, it is enough to consider field extensions, since the
operation clearly commutes with direct sums. So if~$L \cong
\qb(x)[y]/(P)$ is a field extension of~$\qb(x)$, with~$P \in
\qb(x)[y]$ an irreducible polynomial, then~$\act{\lambda }{L} \cong
\qb(x)[y]/(\act{\lambda }P )$, where~$\act{\lambda }P $ is what you
get when the (extented) map~$\lambda $ is applied to the coefficients
of~$P$. Clearly~$\act{\lambda }{P}$ is again irreducible (if it could
be factored as a product, the same could be said of~$P$ by
applying~$\lambda^{-1}$). Therefore~$\act{\lambda }{L}$ is again a
field extension of~$\qb(x)$, and coming back to the general case, we
do conclude that~$\act{\lambda }{L}$ is an étale algebra whenever~$L$
is. What is more, the ramification condition satisfied by the objects of~$\etq$ is obviously preserved.

Let~$\mu \in \gal$. Note that~$y \otimes s \otimes t \mapsto
y \otimes \mu (s) t$ yields an isomorphism 
\[ \act{\mu }{\left( \act{\lambda }{L}    \right)} = L
\otimes_\lambda \qb (x) \otimes_\mu \qb (x) \longrightarrow L
\otimes_{\mu \lambda } \qb (x) = \act{\mu \lambda }{L} \, .  \]
As a result, the group~$\gal$ acts (on the left) on the
set of isomorphism classes of objects in~$\etq$, or in any category
equivalent to it. We state this separately in~$\cat$.

\begin{thm}
The absolute Galois group~$\gal$ acts on the set of
isomorphism classes of compact, oriented dessins without
boundaries.
\end{thm}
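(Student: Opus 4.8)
The plan is to assemble the desired action from the two ingredients already prepared above: the functor $-\otimes_\lambda\qb(x)$ on~$\etq$, and the anti-equivalence of Theorem~\ref{thm-C-rational-fields}. First I would note that, for each fixed $\lambda\in\gal$, the operation $L\mapsto \act{\lambda}{L}$ is a functor $\etq\to\etq$: it was checked in the paragraphs above that it sends an object of~$\etq$ to another object of~$\etq$ (both the étale property and the condition on ramification being preserved), and, being a functor, it carries isomorphisms to isomorphisms. Consequently it descends to a well-defined self-map $[L]\mapsto[\act{\lambda}{L}]$ of the set of isomorphism classes of objects of~$\etq$.

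Next I would check the two axioms of a left group action directly at the level of these isomorphism classes. For the identity element $1\in\gal$ one has a canonical isomorphism $\act{1}{L}=L\otimes_1\qb(x)\cong L$, so $1$ fixes every class. For composites, the explicit isomorphism $\act{\mu}{(\act{\lambda}{L})}\cong\act{\mu\lambda}{L}$ exhibited above (via $y\otimes s\otimes t\mapsto y\otimes\mu(s)t$) gives $[\act{\mu}{(\act{\lambda}{L})}]=[\act{\mu\lambda}{L}]$, that is $\mu\cdot(\lambda\cdot[L])=(\mu\lambda)\cdot[L]$. It is worth emphasising that the target here is $\act{\mu\lambda}{L}$ and not $\act{\lambda\mu}{L}$; this is exactly what makes the assignment a genuine \emph{left} action. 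Thus $\gal$ acts on the left on the set of isomorphism classes of objects of~$\etq$.

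Finally I would transport this action to~$\cat$. By Theorem~\ref{thm-C-rational-fields} there is an anti-equivalence between $\cat$ and~$\etq$, which in particular induces a bijection $\Phi$ between their respective sets of isomorphism classes. Setting $\lambda\cdot[\cell]=\Phi^{-1}\bigl(\lambda\cdot\Phi([\cell])\bigr)$ then defines a left action on the isomorphism classes of~$\cat$, since transporting a group action along a bijection always yields a group action; the verification of the two axioms is formal and uses only that $\Phi$ is a bijection.

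I do not expect any single deep difficulty, as the substantive verifications were carried out in the paragraphs preceding the statement. The one point that requires genuine care is bookkeeping with the conventions: one must keep the group law in the correct order throughout, so as to land on a left action by~$\gal$ rather than a right action or an action of the opposite group, and one must resist the temptation to worry that composing the covariant functor $-\otimes_\lambda\qb(x)$ with a \emph{contravariant} equivalence might secretly flip the direction of the action. On the latter point, the reversal of morphisms is invisible here because we transport an action defined purely on objects; and on the former, the explicit isomorphism computed above pins down the order~$\mu\lambda$ unambiguously.
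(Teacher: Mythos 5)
Your proposal is correct and follows essentially the same route as the paper: the action is defined by the functor $-\otimes_\lambda\qb(x)$ on~$\etq$, the axioms are verified via the explicit isomorphism $\act{\mu}{(\act{\lambda}{L})}\cong\act{\mu\lambda}{L}$, and the result is transported to~$\cat$ through the (anti-)equivalences of categories. Your extra remarks on left-versus-right bookkeeping and on the harmlessness of the contravariance are sound and merely make explicit what the paper leaves implicit.
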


\subsection{Examples in genus~$0$; practical computations}

We expand now on example~\ref{ex-belyi-fractions}. Let~$\cell$ be a
dessin on the sphere. We have seen that we can find a rational
fraction~$F$ such that~$F \colon \p \to \p$ is the ramified cover
corresponding to~$\cell$. 

In terms of fields of meromorphic functions, we have the
injection~$\C(x) \to \C(z)$ mapping~$x$ to~$F(z)$; here~$x$ and~$z$
both denote the identity of~$\p$, but we use different letters in
order to distinguish between the source and target of~$F$. The
extension of fields corresponding to~$\cell$, as per
theorem~\ref{thm-C-complex-fields}, is~$\C(z) / \C( F(z) )$. We will
write~$x= F(z)$ for simplicity, thus seeing the injection above as an
inclusion. If~$F = P/Q$, note that~$P(z) - x Q(z) = 0$, illustrating
that~$z$ is algebraic over~$\C(x)$.

Suppose that we had managed to find an~$F$ as above whose coefficients
are in~$\qb$. Then~$z$ is algebraic over~$\qb(x)$, and in this
case~$\C(z)_{rat}$ can be taken to be~$\qb (z)$.  We have
identified the extension~$\qb(z) / \qb (x)$ corresponding
to~$\cell$ as in theorem~\ref{thm-C-rational-fields}.

Now that theorem and the discussion preceding it do not, as stated,
claim that~$F$ can always be found with coefficients in~$\qb$: we
merely now that some primitive element~$y$ can be found with minimal
polynomial having its coefficients in~$\qb$. The stronger
statement is equivalent to~$\C(z)_{rat}$ being purely transcendental
over~$\qb$, as can be seen easily. Many readers will no doubt
be aware of abstract reasons why this must in fact always be the case;
we will now propose an elementary proof which, quite importantly, also
indicates how to find~$F$ explicitly in practice. The Galois action
will be brought in as we go along.

Let us first discuss the number of candidates for~$F$. Any two
rational fractions corresponding to~$\cell$ must differ by an
isomorphism in the category of Belyi pairs; that is, any such fraction
is of the form~$F(\phi(z))$ where~$F$ is one fixed solution and~$\phi
\colon \p \to \p$ is some isomorphism. Of course~$\phi$ must be a
Moebius transformation, $\phi(z) = (az+b)/(cz+d)$.  
%
%
Let us call a Belyi map~$F \colon \p \to \p$ {\em normalized} when
$F(0)=0$, $F(1)=1$ and~$F(\infty) = \infty$.

\begin{lem} \label{lem-finitely-many-normalized}
Let~$\cell$ be a dessin on the sphere. There are finitely many
normalized fractions corresponding to~$\cell$.
\end{lem}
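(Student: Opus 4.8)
The plan is to leverage the description of the solution set given just above the statement: having fixed one rational fraction $F_0 \colon \p \to \p$ corresponding to $\cell$, every fraction corresponding to $\cell$ is of the form $F_0 \circ \phi$ for some Moebius transformation $\phi$. The whole problem then reduces to bounding how many $\phi$ can possibly produce a \emph{normalized} fraction. The input I would rely on is the geometric reading of the three distinguished fibres established in the proof of proposition~\ref{prop-dessin-and-ramified-covers}: for any fraction $G$ representing $\cell$, the set $G^{-1}(0)$ is the set of black vertices, $G^{-1}(1)$ the white vertices, and $G^{-1}(\infty)$ the face centres. Applied to $F_0$, and writing $B_0 = F_0^{-1}(0)$, $W_0 = F_0^{-1}(1)$, $C_0 = F_0^{-1}(\infty)$, these are three finite, pairwise disjoint subsets of $\p$ — finite because $\cell$, being a dessin on the sphere, is a finite complex, so each fibre has at most $\deg F_0$ elements.

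Next I would translate the normalization conditions on a candidate $F = F_0 \circ \phi$. Since $F(z) = F_0(\phi(z))$, the requirement $F(0) = 0$ says exactly that $\phi(0) \in B_0$, the requirement $F(1) = 1$ says $\phi(1) \in W_0$, and $F(\infty) = \infty$ says $\phi(\infty) \in C_0$. Thus every normalized fraction gives rise to a triple
\[ \bigl(\phi(0), \phi(1), \phi(\infty)\bigr) \in B_0 \times W_0 \times C_0 . \]
The key observation is that this triple already pins down $\phi$: a Moebius transformation is the unique one sending the three distinct points $0, 1, \infty$ to three prescribed distinct values. Consequently each element of $B_0 \times W_0 \times C_0$ is the image-triple of at most one Moebius transformation, so there are at most $|B_0| \cdot |W_0| \cdot |C_0|$ Moebius transformations $\phi$ yielding a normalized fraction, and a fortiori at most that many normalized fractions (possibly fewer, since distinct $\phi$ can give the same fraction when $\cell$ has automorphisms, but an upper bound is all we need). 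As all three cardinalities are finite, the claim follows.

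The one point that deserves a moment's care — and the nearest thing to an obstacle — is the well-definedness of $\phi$ from its image-triple: one must know that $\phi(0), \phi(1), \phi(\infty)$ are genuinely distinct, so that a Moebius transformation with those prescribed values exists and is unique. This is immediate here because these three points lie respectively in $B_0$, $W_0$, $C_0$, which are the disjoint fibres of $F_0$ over the distinct points $0$, $1$, $\infty$; hence no coincidence among them is possible, and the counting argument goes through.
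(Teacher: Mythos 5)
Your proof is correct and follows essentially the same route as the paper: write any candidate as $F_0 \circ \phi$, observe that normalization forces $\phi(0)$, $\phi(1)$, $\phi(\infty)$ to land in the finite fibres over $0$, $1$, $\infty$ respectively, and conclude since a Moebius transformation is determined by its values at three points. Your extra remark on the distinctness of the image-triple is a point the paper leaves implicit, but it does not change the argument.
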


\begin{proof}
The group of Moebius transformations acts simply transitively on
triples of points, so we can arrange for there to be at least one
normalized Belyi fraction, say~$F$, corresponding to~$\cell$. Other
candidates will be of the form~$F \circ \phi$ where~$\phi$ is a
Moebius transformation, so~$\phi(0)$ must be a root of~$F$
and~$\phi(1)$ must be a root of~$F-1$, while~$\phi(\infty)$ must be a
pole of~$F$. Since~$\phi$ is determined by these three values, there
are only finitely many possibilities.
\end{proof}

We shall eventually prove that any normalized fraction has its
coefficients in~$\qb$. 

Our strategy for finding a fraction~$F \colon \p \to \p$ which is a
Belyi map is to pay attention to the associated fraction 
\[ A = \frac{F'} {F(F-1)} \, .   \]

\begin{prop} \label{prop-properties-A}
Let~$F$ be a Belyi fraction such that~$F(\infty) = \infty$, and
let~$A$ be as above. Then the following holds.
\begin{enumerate}
\item The partial fraction decomposition of~$A$ is of the form 
\[ A = \sum_i \frac{m_i} {z - w_i}  ~ - ~ \sum_i \frac{n_i} {z - b_i}
\, ,   \]
where the~$n_i$'s and the~$m_i$'s are positive integers, the~$b_i$'s
are the roots of~$F$, and the~$w_i$'s are the roots of~$F-1$. In
fact~$n_i$ is the degree of the black vertex~$b_i$, and~$m_i$ is the
degree of the white vertex~$w_i$.

\item One can recover~$F$ from~$A$ as: 
\[ \frac{1} {F} = 1 - \frac{ \prod_i (z-w_i)^{m_i}} {\prod_i (z - b_i)^{n_i}} \, .   \]

\item The fraction~$A$ can be written in reduced form 
\[ A = \lambda \, \frac{ \prod_i (z - f_i)^{r_i-1}} { \prod_i(z - b_i) \, \prod_i
(z - w_i)} \, ,   \]
where the~$f_i$'s are the poles of~$F$ (other than~$\infty$),
and~$r_i$ is the multiplicity of~$f_i$ as a pole of~$F$. In fact~$r_i$
is the number of black triangles inside the face corresponding
to~$f_i$. 
\end{enumerate}

Conversely, let~$A$ be any rational fraction of the form given in (3),
with the numbers~$f_i$, $b_i$, $w_i$ distinct. Assume that~$A$ has a
partial fraction decomposition of the form given in (1); define~$F$ by
(2); and finally assume that the~$f_i$'s are poles of~$F$. Then~$F$ is
a Belyi map, $A = F'/( F(F-1))$, and we are in the previous situation.
\end{prop}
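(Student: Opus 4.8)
The plan is to run the computation of parts (1)--(3) backwards, the whole thing resting on the observation that $F'/(F(F-1))$ is a difference of logarithmic derivatives. First I would introduce the two polynomials $P = \prod_i (z-w_i)^{m_i}$ and $Q = \prod_i (z-b_i)^{n_i}$; the hypothesis that $A$ admits a partial fraction decomposition of the shape~(1), with the $m_i$ and $n_i$ \emph{positive integers}, is exactly what guarantees that $P$ and $Q$ are honest polynomials. Rewriting~(2) as $1/F = 1 - P/Q = (Q-P)/Q$ gives
\[ F = \frac{Q}{Q-P}, \qquad F - 1 = \frac{P}{Q-P}. \]
Because the $b_i$ and the $w_i$ are distinct, no root of $Q$ or of $P$ can be a root of $Q-P$ (a common root of $Q$ and $Q-P$ would force $P$ to vanish there as well, and conversely); hence there is no cancellation, the zeros of $F$ are precisely the $b_i$ with order $n_i$, the zeros of $F-1$ are precisely the $w_i$ with order $m_i$, and the finite poles of $F$ are the roots of $Q-P$.

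Next I would verify the identity $A = F'/(F(F-1))$. Direct differentiation gives $F' = (P'Q - PQ')/(Q-P)^2$ and $F(F-1) = PQ/(Q-P)^2$, so that
\[ \frac{F'}{F(F-1)} = \frac{P'Q - PQ'}{PQ} = \frac{P'}{P} - \frac{Q'}{Q} = \sum_i \frac{m_i}{z-w_i} - \sum_i \frac{n_i}{z-b_i}, \]
which is $A$ by the assumed decomposition~(1). This disposes of the claim $A = F'/(F(F-1))$ with essentially no work, and it is the computational heart of the converse.

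It then remains to prove that $F$ is a Belyi map, i.e.\ that it ramifies only over $\{0,1,\infty\}$. The main obstacle is ruling out spurious ramification, and this is exactly where the hypothesis that ``the $f_i$ are poles of $F$'' is used. Suppose $z_0$ were a finite critical point with $F(z_0) \notin \{0,1,\infty\}$. Then $F(z_0)(F(z_0)-1) \neq 0$ while $F'(z_0) = 0$, so from the identity just proved we get $A(z_0) = 0$; but the zeros of $A$ are, by~(3), among the $f_i$, each of which is a pole of $F$ by assumption, contradicting $F(z_0) \neq \infty$. Thus every finite ramification point lies over $0$ (the $b_i$), over $1$ (the $w_i$), or over $\infty$ (the poles $f_i$). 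For the point at infinity I would simply compare leading coefficients: according as $\deg Q$ is greater than, less than, or equal to $\deg P$ one finds $F(\infty) = 1$, $0$, or $\infty$, so in every case $F(\infty) \in \{0,1,\infty\}$ and no forbidden ramification occurs there either. Hence $F$ is a Belyi map.

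Finally, with $F$ now known to be a Belyi fraction satisfying $A = F'/(F(F-1))$, the forward implications~(1)--(3) apply verbatim to $F$ and return the stated interpretations of the $b_i$, $w_i$, $f_i$ and of the exponents $n_i$, $m_i$, $r_i$; the normalization $F(\infty) = \infty$ of the ``previous situation'' is precisely the case $\deg P = \deg Q$. The only points demanding care are the bookkeeping at infinity and the absence of cancellation in $Q/(Q-P)$, both controlled by the distinctness of the marked points.
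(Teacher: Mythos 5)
Your treatment of the converse is correct and in places more careful than the paper's: writing $F=Q/(Q-P)$ with $P=\prod_i(z-w_i)^{m_i}$, $Q=\prod_i(z-b_i)^{n_i}$ and recognizing $F'/(F(F-1))=P'/P-Q'/Q$ as a difference of logarithmic derivatives gives $A=F'/(F(F-1))$ cleanly (the paper merely asserts this ``is clear from the arguments above''), and you check the point at infinity, which the paper leaves implicit. The argument that a finite critical point not lying over $0$ or $1$ is a zero of $A$, hence one of the $f_i$, hence a pole of $F$, is the same as the paper's.

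The genuine gap is that the forward direction --- parts (1), (2), (3) themselves --- is never proved. Your closing sentence \emph{invokes} ``the forward implications (1)--(3)'' to finish the converse, but nothing in the proposal establishes them, and they are where the paper spends most of its effort. For a given Belyi fraction $F$ it is not automatic that $A$ has only simple poles with positive integer residues; the paper gets this by formally integrating $A$ to obtain $\log\bigl((F-1)/F\bigr)$ and noting that exponentiating a partial fraction decomposition yields a rational function only if every pole is simple with integer residue, which proves (1) and (2) at one stroke. (Your logarithmic-derivative observation would serve equally well if run forwards: $A=(F-1)'/(F-1)-F'/F$, and the pole contributions cancel because $F$ and $F-1$ have the same poles --- but you would have to say so.) More importantly, part (3) is the place where the Belyi hypothesis actually enters the forward direction: one must show the reduced numerator of $A$ vanishes only at the $f_i$, with multiplicities $r_i-1$, and the paper does this by observing that any other zero of $A$ would be a zero of $F'$ not over $\infty$, hence over $0$ or $1$ by the Belyi property, hence equal to some $b_i$ or $w_i$ and therefore absent from the reduced form. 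None of this appears in your proposal, so as written it proves only half of the statement.
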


We submit a proof below. For the moment, let us see how we can use
this proposition to establish the results announced above. So
assume~$\cell$ is a given dessin on the sphere, and we are
looking for a corresponding normalized Belyi map~$F \colon \p \to
\p$. We look for the fraction~$A$ instead, and our ``unknowns'' are
the~$f_i$'s, the~$b_i$'s, the~$w_i$'s, and~$\lambda $, cf (3). Of
course we now the numbers~$r_i$ from counting the black triangles
on~$\cell$, just as we now the number of black vertices, white vertices,
and faces, giving the number of~$b_i$'s, $w_i$'s, and~$f_i$'s (keeping
in mind the pole at~$\infty$ already accounted for). 

Now comparing (3) and (1) we must have
\[ \lambda \, \frac{ \prod_i (z - f_i)^{r_i-1}} { \prod_i(z - b_i) \, \prod_i
(z - w_i)} = \sum_i \frac{m_i} {z -
  w_i}  ~ - ~ \sum_i \frac{n_i} {z - b_i} \tag{*} \]
where the integers~$n_i$ and~$m_i$ are all known, since they are the
degrees of the black and white vertices respectively, and again these
can be read from~$\cell$.

Further, the~$f_i$'s must be poles of~$F$, which is related to~$A$ by
(2). Thus we must have
\[  \prod_i (f_j-w_i)^{m_i} = \prod_i (f_j - b_i)^{n_i} \, , \tag{**} \]
for all~$j$. We also want~$F$ to be normalized so we pick
indices~$i_0$ and~$j_0$ and throw in the equations 
\[ b_{i_0} = 0 \, , \qquad w_{j_0} = 1 \, . \tag{***}  \]
Finally we want our unknowns to be distinct. The usual trick to
express this as an equality rather than an inequality is to take an
extra unknown~$\eta$ and to require 
\[ \eta (b_1 - b_2)(f_1 - f_2) \cdots =1\, , \tag{****}  \]
where in the dots we have hidden all the required differences. 

\begin{lem}
The system of polynomials equations given by (*), (**), (***) and
(****) has finitely many solutions in~$\C$. These solutions are all
in~$\qb$.
\end{lem}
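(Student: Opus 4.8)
The plan is to prove the two assertions separately: finiteness of the solution set over $\C$, which is essentially combinatorial and rests on results already established, and then the statement that every solution has coordinates in $\qb$, which is a soft descent principle. Throughout I regard a solution as a point $v$ in the affine space $\C^N$ whose coordinates are the unknowns $\lambda$, the $b_i$, the $w_i$, the $f_i$ and $\eta$.

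First I would establish finiteness. By the converse half of Proposition~\ref{prop-properties-A}, every solution produces, via formula (2), a genuine Belyi fraction $F$; the equations (***) force $F(0)=0$ and $F(1)=1$, while $F(\infty)=\infty$ is built into the setup, so $F$ is normalized. Applying the forward direction of the same proposition to this $F$, its black vertices, white vertices and faces carry exactly the prescribed degrees $n_i$, $m_i$ and ramification data $r_i$; in particular $F$ realises a dessin on the sphere with $n=\sum_i n_i$ darts. There are only finitely many dessins of a given degree (finitely many pairs of permutations in a fixed symmetric group), and by Lemma~\ref{lem-finitely-many-normalized} each admits only finitely many normalized fractions. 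Since a solution is recovered from $F$ together with a choice of ordering of the finitely many roots of $F$, of $F-1$, and of the finite poles (with $\lambda$ and $\eta$ then determined), the assignment $v \mapsto F$ is finite-to-one with finite image, so the solution set is finite.

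The second step is the descent. The key observation is that all of the equations (*), (**), (***) and (****) have coefficients in $\q$ -- indeed in $\z$ -- since the only constants occurring are the integers $n_i$, $m_i$, $r_i$ together with $0$ and $1$. Hence the group $\operatorname{Aut}(\C/\q)$ of field automorphisms of $\C$ fixing $\q$ acts on the solution set: if $\tau \in \operatorname{Aut}(\C/\q)$ and $v$ is a solution, then applying $\tau$ coordinatewise gives $\tau(v)$, which is again a solution because each defining polynomial is fixed by $\tau$. Thus $\operatorname{Aut}(\C/\q)$ permutes the \emph{finite} solution set, and in particular the orbit of any one coordinate of any solution is finite.

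Finally I would invoke the fact that a complex number with finite $\operatorname{Aut}(\C/\q)$-orbit is algebraic over $\q$. Were a coordinate $c$ transcendental over $\q$, then for every other transcendental $c'$ the field isomorphism $\q(c)\to\q(c')$ sending $c$ to $c'$ would extend to an automorphism of $\C$ fixing $\q$ -- here one uses that $\C$ is algebraically closed of infinite transcendence degree over $\q$, a standard consequence of Steinitz's theorem -- and since there are infinitely many such $c'$ the orbit of $c$ would be infinite, a contradiction. Therefore every coordinate lies in $\qb$, which is exactly the claim. I expect the main obstacle to be conceptual rather than computational, and to lie entirely in the first step: one must set up the finiteness correspondence with care, noting that a solution records only the degree sequence of $\cell$ and not its isomorphism class, so that finiteness is guaranteed by there being finitely many dessins of bounded degree rather than by any uniqueness; the arithmetic half is then the standard, easy part.
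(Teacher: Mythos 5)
Your proof is correct, and the first half coincides with the paper's argument: the paper also observes that every solution yields a normalized Belyi map, hence a dessin on $n=\sum_i n_i$ darts, and combines the finiteness of dessins of a given degree with lemma~\ref{lem-finitely-many-normalized} to bound the solution set (the paper phrases this as an equivalence relation on solutions with finitely many finite classes, whereas you phrase it as a finite-to-one map with finite image -- the same argument, and you are right to flag that no uniqueness of the dessin is needed, only the degree bound). Where you diverge is in the descent step. The paper simply quotes ``a classical fact from either algebraic geometry, or the theory of Gr\"obner bases'': a system of polynomial equations over $K$ with finitely many solutions in an algebraically closed extension has all its solutions in $\overline{K}$. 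You instead prove this fact in the case at hand, by letting $\operatorname{Aut}(\C/\q)$ act on the finite solution set (legitimate since the equations have integer coefficients) and showing that a transcendental coordinate would have infinite orbit, via Steinitz's theorem on extending isomorphisms of subfields of $\C$. Your version is more self-contained and arguably more in the elementary spirit the paper advertises, at the cost of invoking the axiom of choice through the wild automorphisms of $\C$; the paper's version is shorter and, via elimination theory, gives the effective statement that one can actually compute the solutions over $\qb$, which matters for the practical computations the section is about.
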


\begin{proof}
By the proposition, each solution defines a normalized Belyi map, and
thus a dessin on the sphere. Define an equivalence relation on
the set of solutions, by declaring two solutions to be equivalent when
the corresponding dessins are isomorphic. By
lemma~\ref{lem-finitely-many-normalized}, there are finitely many
solutions in an equivalence class. However there must be finitely many
classes as well, since for each~$n$ there can be only a finite number
of dessins on~$n$ darts, clearly, and for all the solutions we
have~$n=\sum_i n_i$ darts. 

It is a classical fact from either algebraic geometry, or the theory
of Gr\"obner bases, that a system of polynomial equations with
coefficients in a field~$K$, having finitely many solutions in an
algebraically closed field containing~$K$, has in fact all its
solutions in the algebraic closure of~$K$. Here the equations have
coefficients in~$\q$.
\end{proof}

We may state, as a summary of the discussion:

\begin{prop}
A dessin~$\cell$ on the sphere defines, and is defined by, a rational
fraction~$F$ with coefficients in~$\qb$ which is also a Belyi map. The
dessin~$\act{\lambda }{\cell}$ corresponds to the
fraction~$\act{\lambda }{\cell}$ obtained by applying~$\lambda $ to
the coefficients of~$F$.
\end{prop}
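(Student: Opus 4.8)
The plan is to combine the genus-$0$ dictionary of example~\ref{ex-belyi-fractions} with the rationality statement just established. First I would recall that a dessin~$\cell$ on the sphere corresponds, through the equivalence of categories, to a Belyi pair~$(\p, F)$ with~$F \colon \p \to \p$ a rational fraction, the bigraph being recovered as~$F^{-1}([0,1])$; moreover~$F$ is unique up to precomposition with a Moebius transformation. Using lemma~\ref{lem-finitely-many-normalized} I would normalize so that~$F(0)=0$, $F(1)=1$ and~$F(\infty)=\infty$, leaving only finitely many candidates. The lemma just proved, applied to the auxiliary fraction~$A = F'/(F(F-1))$ and the defining system~(*)--(****), shows that the roots~$b_i$, $w_i$, the finite poles~$f_i$, and the leading constant all lie in~$\qb$; reconstructing~$F$ through part~(2) of proposition~\ref{prop-properties-A} then exhibits~$F$ as a fraction with coefficients in~$\qb$. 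This settles the forward direction: $\cell$ determines such an~$F$.

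For the converse direction I would observe that any Belyi fraction~$F \in \qb(z)$ defines, after extending scalars to~$\C$, a genuine Belyi pair over~$\C$ and hence a dessin by theorem~\ref{thm-C-rational-fields}; so the assignment is a correspondence in both directions. The real content here is simply that~$F$ may be chosen over~$\qb$ rather than merely over~$\C$, which is exactly what the rationality lemma provides.

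The heart of the statement is compatibility with the Galois action, and here I would work directly in~$\etq$. Writing~$F = P/Q$ with~$P, Q \in \qb[z]$, the extension attached to~$\cell$ is~$\qb(z)/\qb(x)$ with~$x = F(z)$; concretely~$z$ is a primitive element whose minimal polynomial over~$\qb(x)$ is a scalar multiple of~$P(z) - x\,Q(z)$, viewed as a polynomial in~$z$ with coefficients in~$\qb[x]$ (its degree in~$z$ is~$\deg F = [\qb(z):\qb(x)]$, so it is irreducible). Applying the functor~$-\otimes_\lambda \qb(x)$ amounts, by the concrete description given when the action was defined, to applying~$\lambda$ to the coefficients of this minimal polynomial. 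Since~$\lambda$ fixes~$x$ and acts on the coefficients of~$P$ and~$Q$, it sends~$P(z) - x\,Q(z)$ to~$\act{\lambda}{P}(z) - x\,\act{\lambda}{Q}(z)$, which is exactly the relation~$x = \act{\lambda}{F}(z)$ for the fraction~$\act{\lambda}{F} = \act{\lambda}{P}/\act{\lambda}{Q}$. Hence~$\act{\lambda}{\cell}$ corresponds to~$\act{\lambda}{F}$, as claimed.

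I expect the main obstacle to be the bookkeeping in this last step: one must check that~$z$ really is a primitive element for~$\qb(z)/\qb(x)$ with the stated minimal polynomial, and that~$\act{\lambda}{F}$ is again a normalized Belyi fraction corresponding to a dessin on the sphere. The latter is not forced by the algebra alone and should be confirmed against the ramification condition; it follows because~$\lambda$ preserves the shape of the system~(*)--(****), carrying one solution to another. Everything else is routine unwinding of the definitions.
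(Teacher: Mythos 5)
Your argument is correct and follows the same route as the paper, which states this proposition explicitly "as a summary of the discussion" — namely the normalization lemma, the analysis of $A = F'/(F(F-1))$ and the system (*)--(****) forcing the coefficients into $\qb$, and the identification of the extension $\qb(z)/\qb(x)$ with $x=F(z)$. Your explicit minimal-polynomial computation $P(Y)-xQ(Y)\mapsto \act{\lambda}{P}(Y)-x\act{\lambda}{Q}(Y)$ just spells out the step the paper leaves implicit in its concrete description of the functor $-\otimes_\lambda\qb(x)$.
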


\begin{ex} \label{ex-galois-action}
Suppose~$\cell$ is the following dessin on the sphere:

\figurehere{0.85}{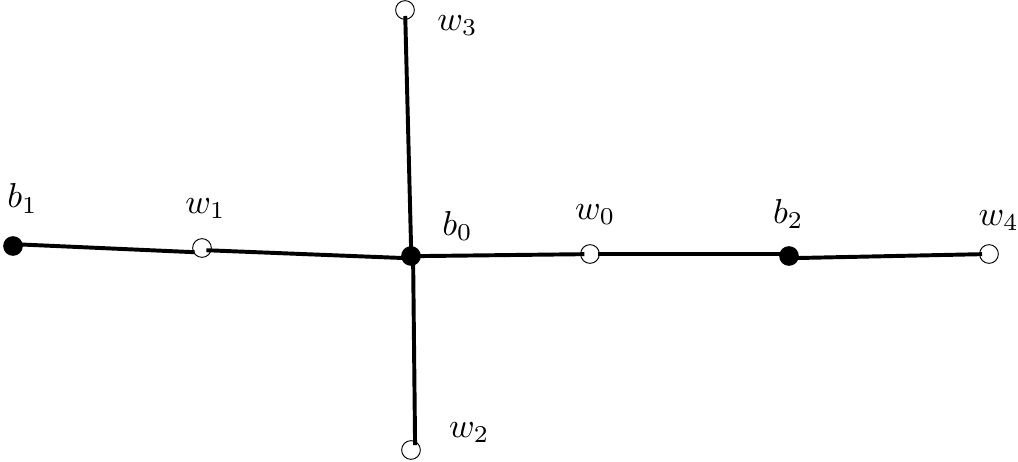}
Let us find a fraction~$F$ corresponding to~$\cell$ by the method just
described. Note that, whenever the dessin is really a planar
tree, one can greatly improve the efficiency of the computations, as
will be explained below, but we want to illustrate the general case. 

We point out that the letters~$b_i$ and~$w_i$ above are used to label
the sets~$B$ and~$W$, and the same letters will be used in the
equations which we are about to write down. A tricky aspect is that,
in the equations, there is really nothing to distinguish between, say,
$w_2$, $w_3$, and~$w_4$; and we expect more solutions to our system of
equations than the one we want. We shall see that some solutions will
actually give a different dessin. 

Here there is just one face, so~$F$ will have just the one pole
at~$\infty$; in other words~$F$ will be a polynomial. As for~$A$, it
is of the form 
\[ A= \frac{\lambda } {(z-b_0) (z-b_1) (z-b_2) (z-w_0)  (z-w_1)  (z-w_2)
      (z-w_2)  (z-w_4)} \, .   \]
The first equations are obtained by comparing this with the expression 
\[ A= -\frac{4} {z - b_0} - \frac{1} {z-b_1} - \frac{2} {z-b_2} +
\frac{2} {z-w_0} + \frac{2} {z-w_1} + \frac{1} {z - w_2} + \frac{1} {z -
  w_3} + \frac{1} {z - w_4} \, .   \]
There are no~$f_i$'s so no extra condition, apart from the one
expressing that the unknowns are distinct: 
\[ \eta (b_0 - b_1) \cdots (b_2 - w_3) \cdots = 1 \, ,   \]
where we do not write down the 28 terms. Finally, for~$F$ to be
normalized, we add 
\[ b_0 = 0 \, , \qquad w_0 = 1 \, .   \]

At this point we know that there must be a finite set of
solutions. This is confirmed by entering all the polynomial equations
into a computer, which produces exactly 8 solutions (using Groebner
bases). For each solution, we can also ask the computer to plot (an
approximation to) the set~$F^{-1}([0, 1])$.

\begin{tabular}{llll}
{\footnotesize 1} \includegraphics[width=0.25\textwidth]{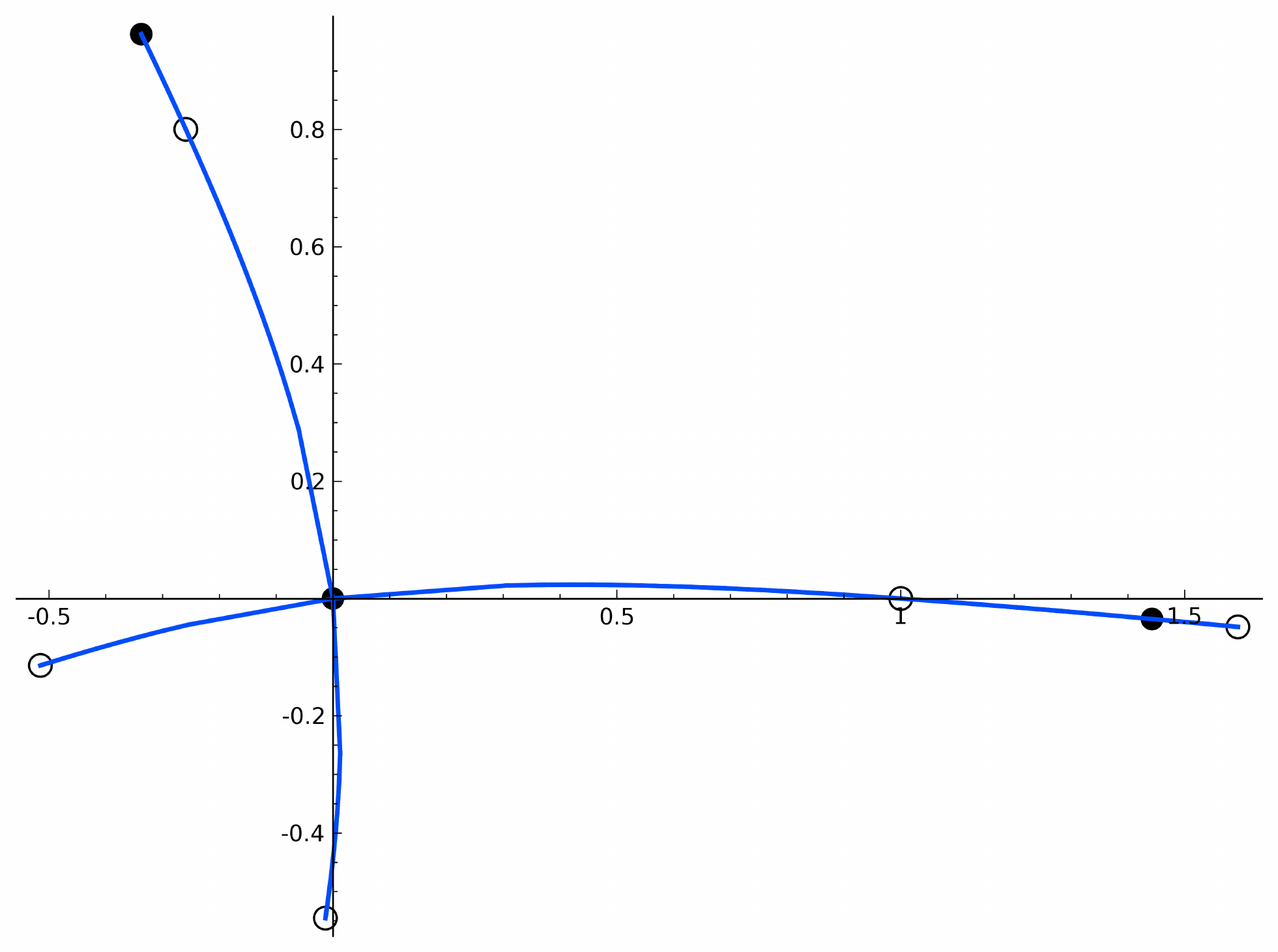} &
{\footnotesize 2} \includegraphics[width=0.25\textwidth]{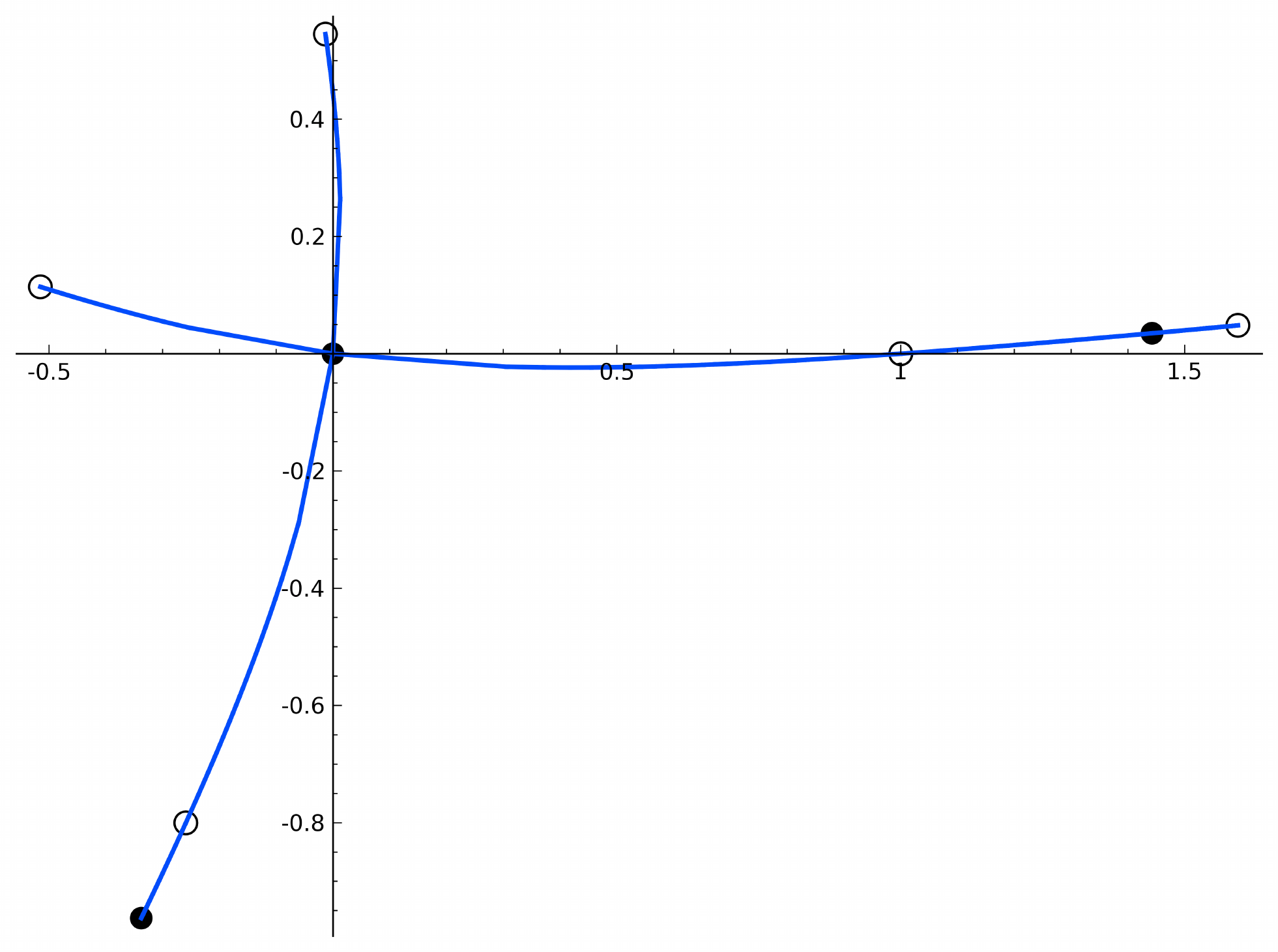} &
{\footnotesize 3} \includegraphics[width=0.25\textwidth]{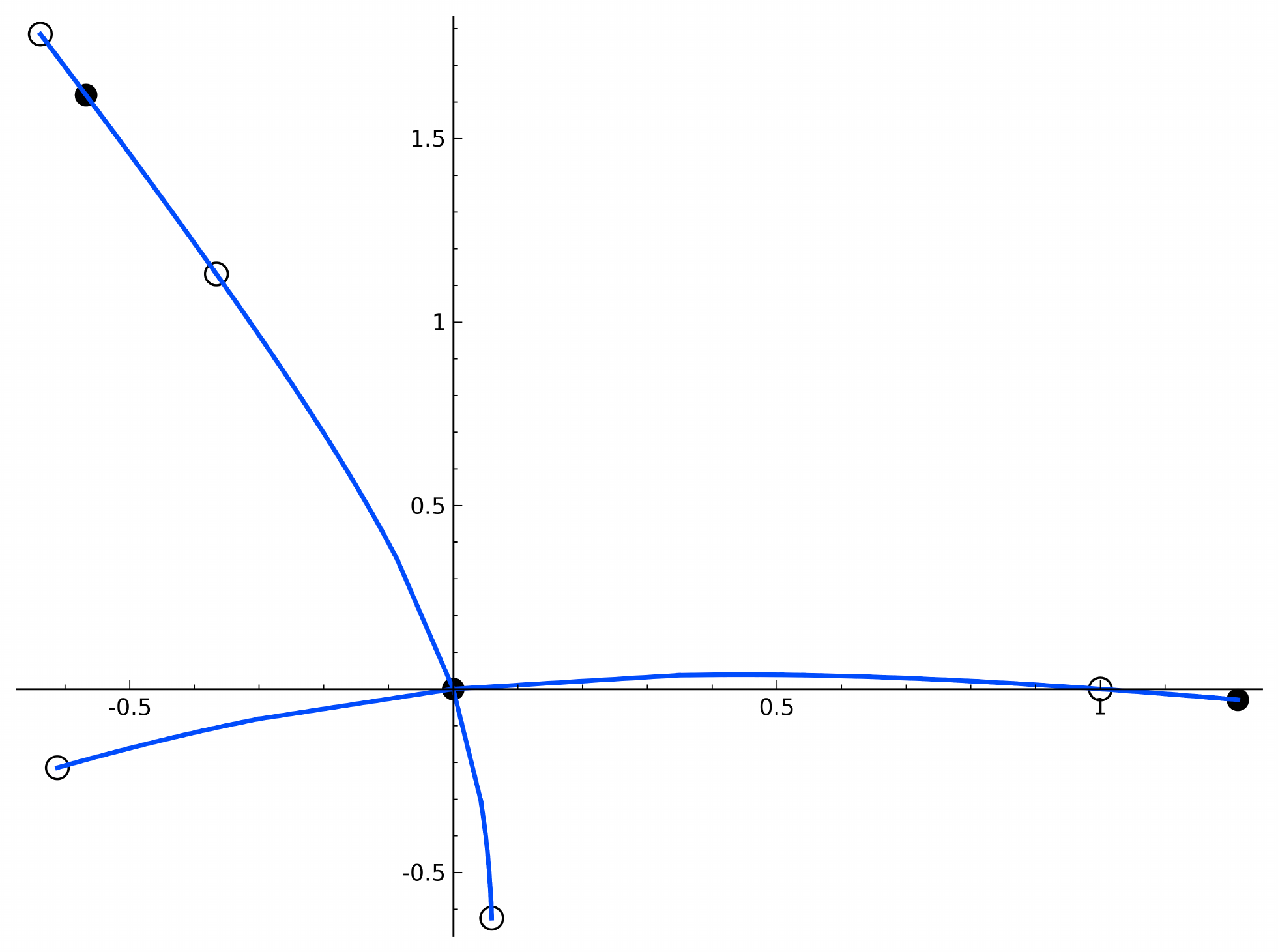} &
{\footnotesize 4} \includegraphics[width=0.25\textwidth]{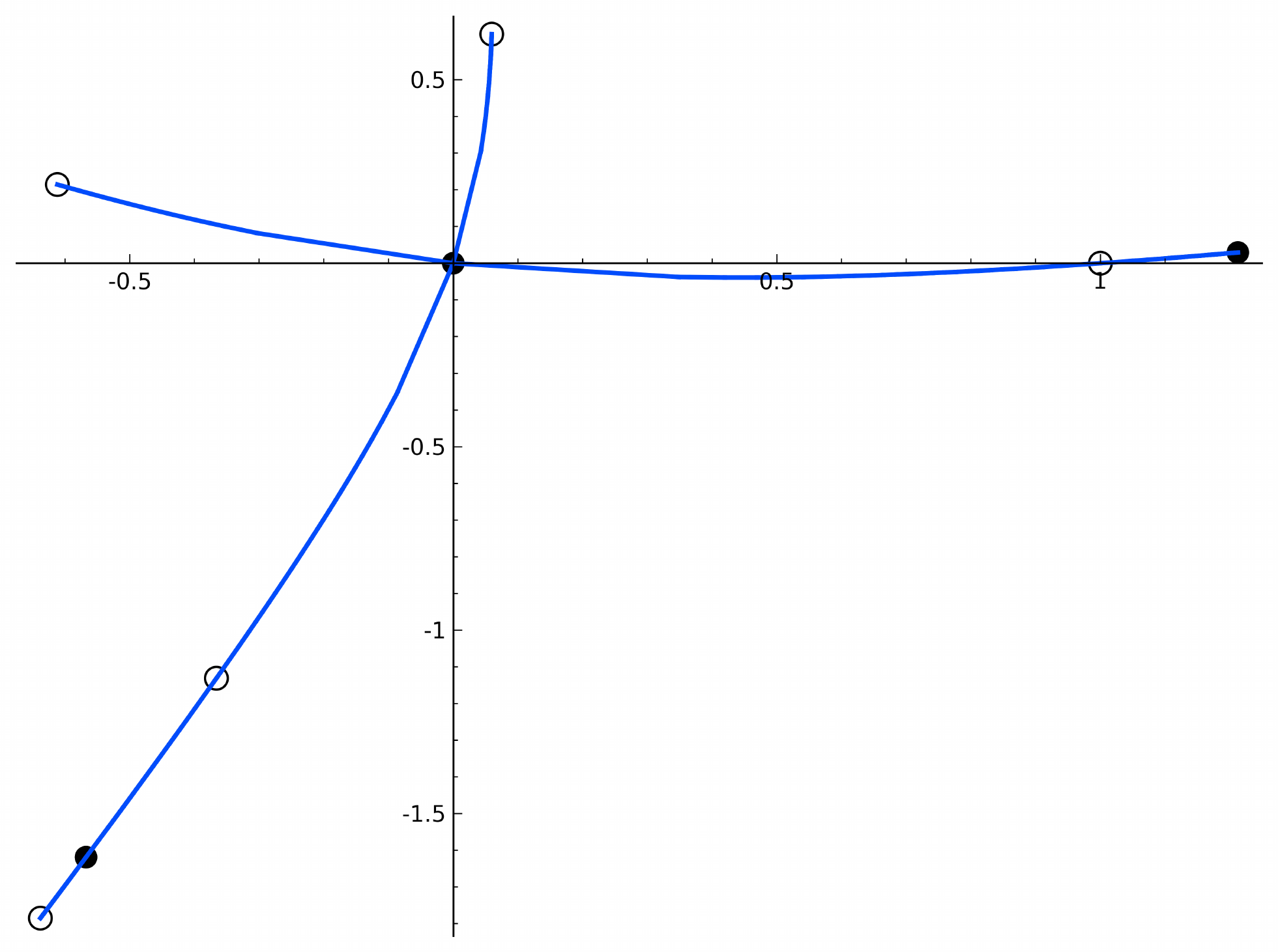} \\
{\footnotesize 5} \includegraphics[width=0.25\textwidth]{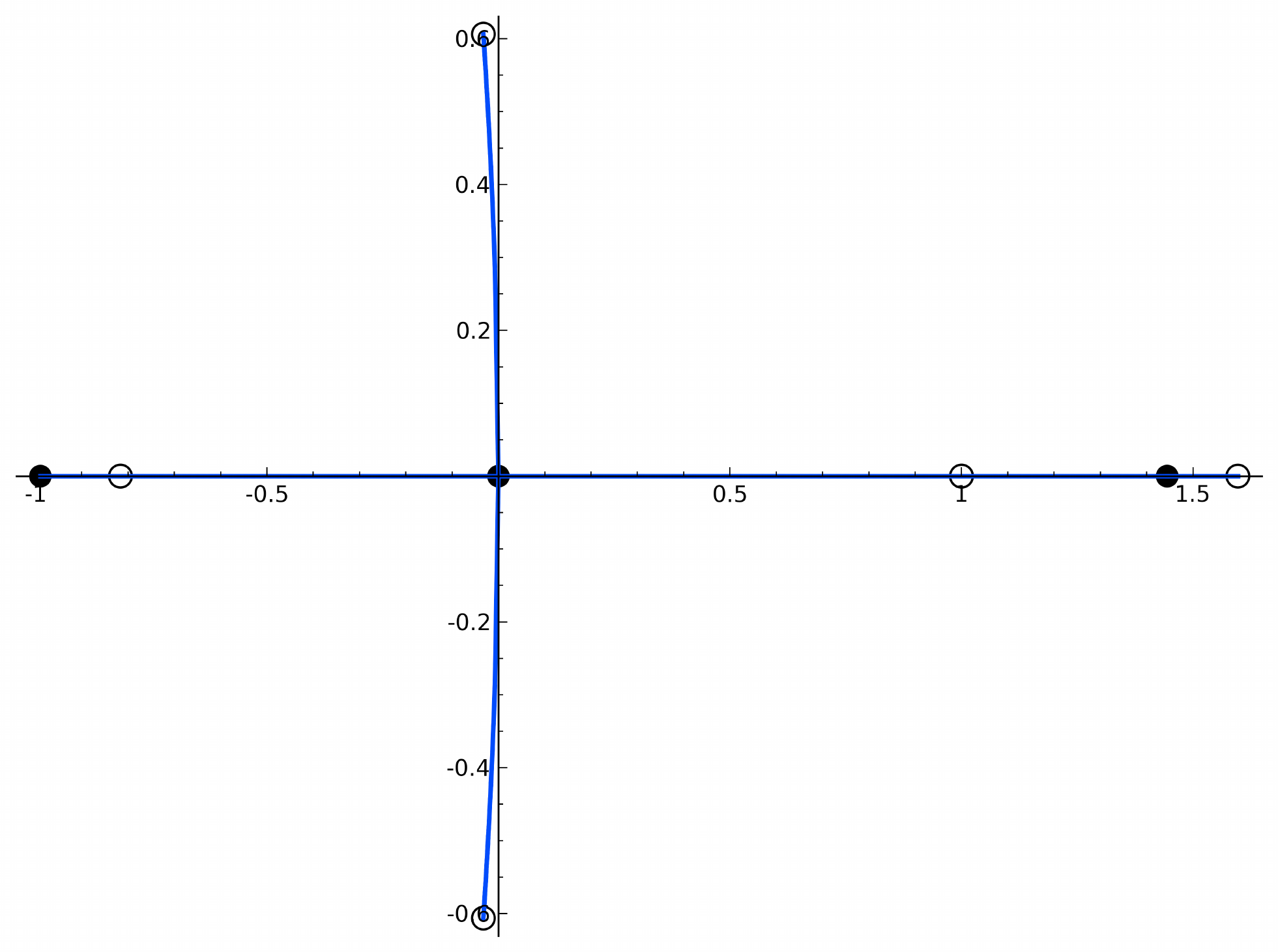} &
{\footnotesize 6} \includegraphics[width=0.25\textwidth]{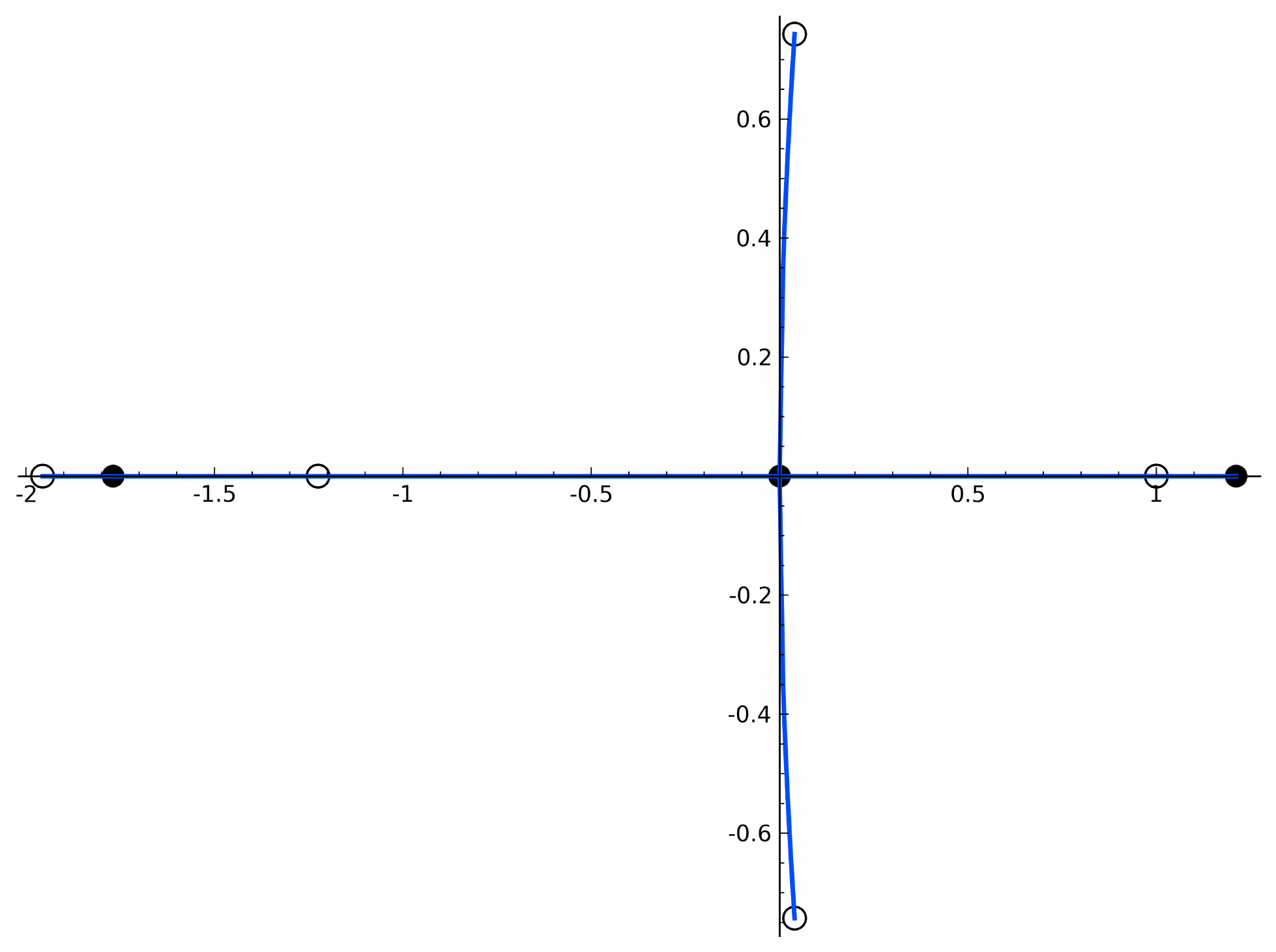} &
{\footnotesize 7} \includegraphics[width=0.25\textwidth]{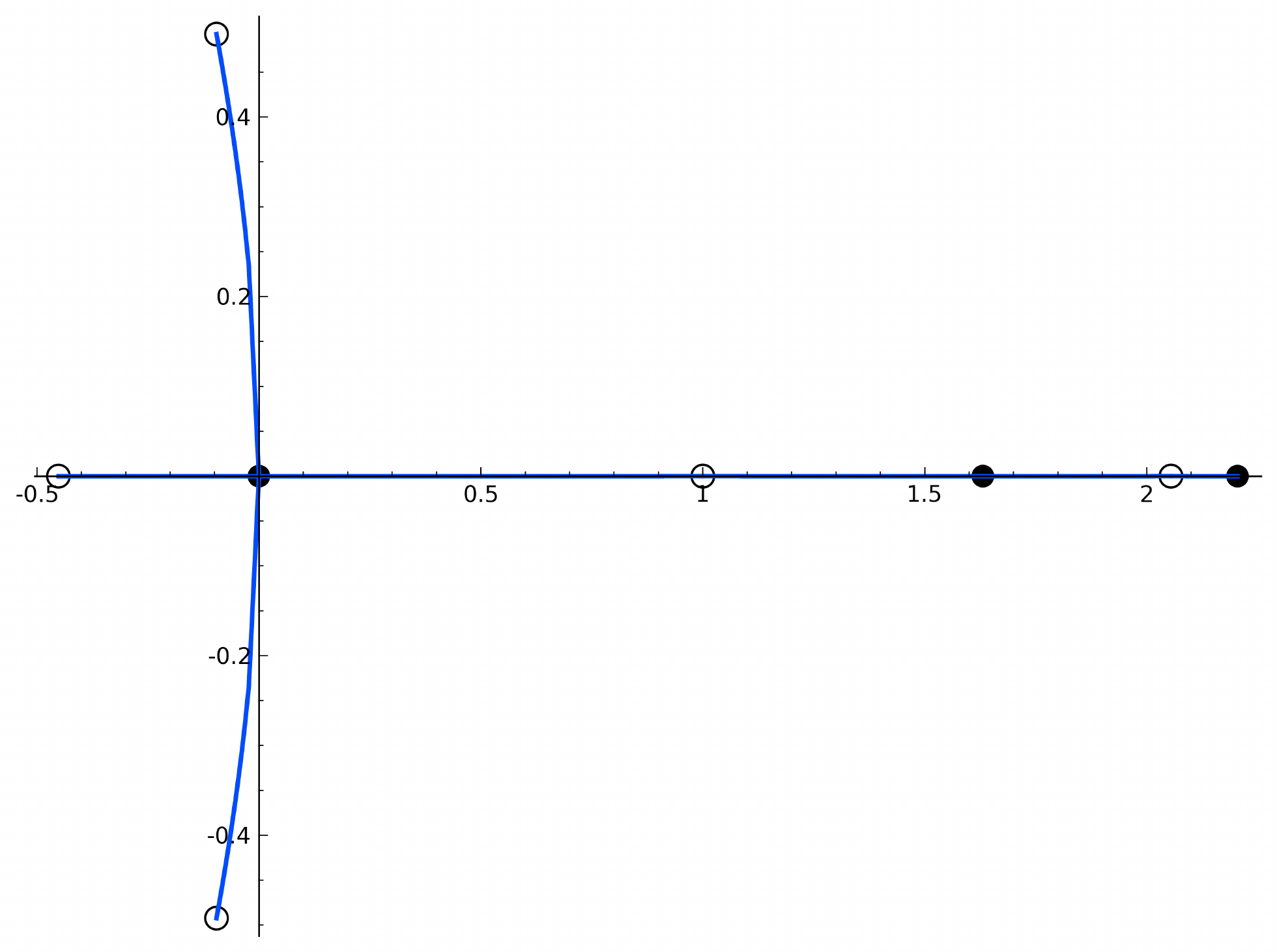} &
{\footnotesize 8} \includegraphics[width=0.25\textwidth]{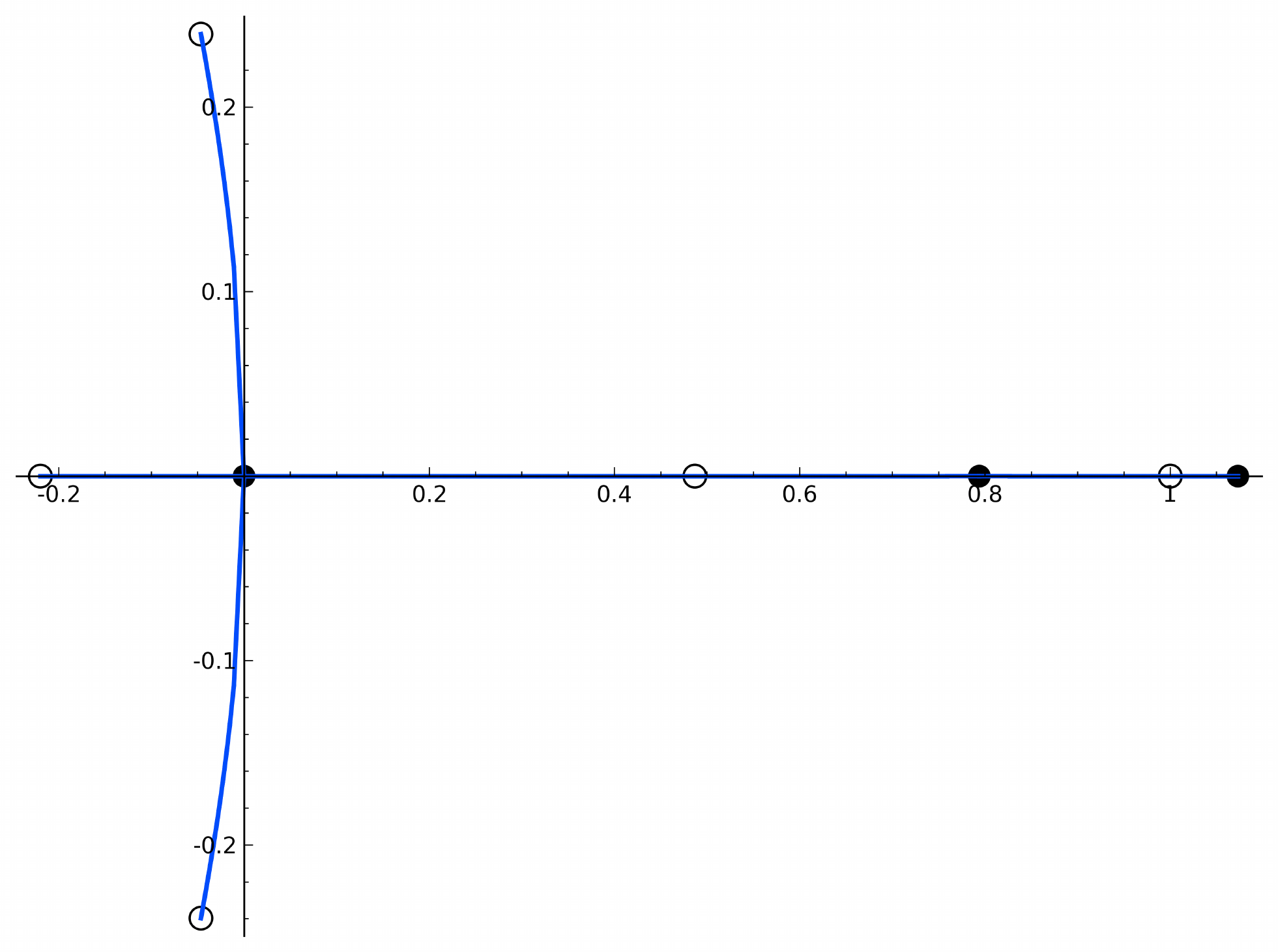} 
\end{tabular}

It seems that 5 and 6 look like our original dessin~$\cell$,
while the other six are certainly not isomorphic to~$\cell$ (even the
underlying bigraphs are not isomorphic to that of~$\cell$). Let us
have a closer look at 5 and 6:

\begin{tabular}{llll}
\includegraphics[width=0.4\textwidth]{sol0-big.pdf} & \includegraphics[width=0.4\textwidth]{sol5-big.pdf}
\end{tabular}

We see precisely what is going on: we have imposed the condition~$w_0=
1$, but in the equations there was nothing to distinguish the two
white vertices of degree two, and they can really both play the role
of~$w_0$. These two solutions give isomorphic dessins, though:
one diagram is obtained from the other by applying a rotation of
angle~$\pi$, that is~$z \mapsto -z$, and the two fractions are of the
form~$F(z)$ and~$F(-z)$ respectively. This could be confirmed by
calculations, though we will spare the tedious verifications.

The other solutions all come in pairs, for the same reason. Let us
have a closer look at 1, 3, 5, 7:

\begin{tabular}{ll}
{\footnotesize 1} \includegraphics[width=0.4\textwidth]{sol1-big.pdf} &
{\footnotesize 3} \includegraphics[width=0.4\textwidth]{sol4-big.pdf} \\
{\footnotesize 5} \includegraphics[width=0.4\textwidth]{sol0-big.pdf} & 
{\footnotesize 7} \includegraphics[width=0.4\textwidth]{sol6-big.pdf}
\end{tabular}

Here 1 and 3 present the same bicolored tree; 1, 5 and 7 are
non-isomorphic bicolored trees. However 1 and 3 are not isomorphic
dessins -- or rather, they are not isomorphic as oriented
dessins, as an isomorphism between the two would have to
change the orientation.

Let~$\sigma $, $\alpha $ and~$\phi$ be the three permutations
corresponding to~$\cell$. Now suppose we were to look for a
dessin~$\cell'$ with permutations~$\sigma '$, $\alpha '$
and~$\phi'$ such that~$\sigma' $ is conjugated to~$\sigma $
within~$S_7$ (there are 7 darts here), and likewise for~$\alpha '$
and~$\alpha $, and~$\phi '$ and~$\phi$. Then we would write down the
same equations, which only relied on the cycle types of the
permutations. Thus~$\cell'$ would show up among the solutions, and
conversely. So we have an interpretation of this family of four
dessins.


Let us have a look at the Galois action. Here is the number~$b_1$ in
the cases 1, 3, 5, 7:
\[ \frac{1}{32} \, {\left(-2 i \, \sqrt{5 i \, \sqrt{7} - 7} \sqrt{7}
  + 3 i \, \sqrt{2} \sqrt{7} + 7 \, \sqrt{2}\right)} \sqrt{2} \, ,   \]
\[ \frac{1}{32} \, {\left(2 \, \sqrt{5 i \, \sqrt{7} + 7} \sqrt{7} - 3
  i \, \sqrt{2} \sqrt{7} + 7 \, \sqrt{2}\right)} \sqrt{2} \, ,   \]
\[ -\frac{1}{72} \, {\left(\sqrt{8 \, \sqrt{3} \sqrt{7} + 63} \sqrt{3} \sqrt{7} - 21 \, \sqrt{3} + 12 \, \sqrt{7}\right)} \sqrt{3} \, ,   \]
\[ \frac{1}{72} \, {\left(\sqrt{-8 \, \sqrt{3} \sqrt{7} + 63} \sqrt{3}
  \sqrt{7} + 21 \, \sqrt{3} + 12 \, \sqrt{7}\right)} \sqrt{3} \, .   \]
%

One can check that the minimal polynomial for~$b_1$ in case 1 has
degree 4, and that the four distinct values for~$b_1$ in cases 1, 2,
3, 4 all have the same minimal polynomial (these are questions easily
answered by a computer). Thus they are the four roots of this
polynomial, which are in the same~$\gal$-orbit. On the other hand, in
cases 5, 6, 7, 8 the values for~$b_1$ have another minimal polynomial
(and they have the same one), so~$\gal$ cannot take solution 1 to any
of the solutions 5, 6, 7, 8. In the end we see that the four solutions
1, 2, 3, 4 are in the same Galois orbit, in particular 1 and 3 are in
the same orbit. A similar argument shows that 5 and 7 also belong to
the same orbit. However these orbits are different.

Understanding the action of the absolute Galois group of~$\q$ on
(isomorphism classes of) dessins will be a major theme in the
rest of this paper.
\end{ex}

\begin{rmk}
Let us comment of efficiency issues. A seemingly anecdotal trick,
whose influence on the computation is surprising, consists in grouping
the vertices of the same colour and the same degree. In the last
example, we would ``group together'' $w_2$, $w_3$ and~$w_4$, and write 
\[ (z-w_2)(z-w_3)(z-w_4) = z^3 + u z^2 + v z + s \, .   \]
All subsequent computations are done with the unknowns~$u$, $v$
and~$s$ instead of~$w_2$, $w_3$ and~$w_4$, thus {\em reducing the
  degree of the equations}. 

More significant is the alternative approach at our disposal when the
dessin is a planar tree. Then~$F$ is a polynomial (if we arrange
for the only pole to be~$\infty$), and~$F'$ {\em divides} $F(F-1)$,
so~$F(F-1)= P F' $, where everything in sight is a polynomial. 

Coming back to the last example, we would write 
\[ F = c z^4 (z-b_1) (z-b_2)^2  \]
(incorporating~$b_0 = 0$) and 
\[ F - 1 = c (z-1)^2 (z-w_1)^2 (z^3 + uz^2 + vz + s) \, ,   \]
the unknowns being now~$c, b_1, b_2, w_1, u, v$ and~$s$. In the very
particular case at hand, there is already a finite number of solutions
to the polynomial equations resulting from the comparison of the
expressions for~$F$ and~$F-1$. In general though, the very easy next
step is to compute the remainder in the long division of~$F(F-1)$
by~$F'$, say in~$\q(c, b_1, b_2, w_1, u, v, s)[z]$. Since~$F$ and~$F'$
both have~$c$ as the leading coefficient, it is clear that the result
will have coefficients in~$\q[c, b_1, b_2, w_1, u, v, s]$. These
coefficients must be zero, and these are the equations to consider. 

Proceeding in this way is, based on a handful of examples, several
orders of magnitude faster than with the general method.
\end{rmk}

We conclude with a proof of proposition~\ref{prop-properties-A}. 

\begin{proof}
Let~$F$ be as in the proposition, let~$A= F'/F(F-1)$, and let us write
the partial fraction decomposition of~$A$ over~$\C$: 
\[ A  = \sum_{\alpha , r, k} \frac{\alpha } {(z - r)^k} \, .   \]
Now we integrate; we do this formally, though it can be made rigorous
by restricting~$z$ to lie in a certain interval of real numbers. Note
that essentially we are solving the differential equation~$F(F-1) =
A^{-1} F'$. On the one hand:
\[ \int \frac{F'(z) dz} {F(z)(F(z)-1)} = \int \frac{dF} {F(F-1)} =
\int \left(\frac{-1} {F} + \frac{1} {F - 1}\right) dF =
\log(\frac{F-1} {F})  \, ,   \]
up to a constant. On the other hand this must be equal to 
\[ \sum_{\alpha, r, k > 1} \frac{\alpha }
   {(1-k)(z-r)^{k-1}} + \sum_{\alpha , r} \alpha \log(z-r) \, , \]
up to a constant. Thus the exponential of this last expression is a
rational fraction, from which it follows that the first sum above must
be zero. In other words, $k=1$ in all the nonzero terms of the partial
fraction decomposition of~$A$. Moreover, for the same reason
all~$\alpha $'s must be integers. In the end
\[ A = \sum_{\alpha, r} \frac{\alpha } {z-r} \, ,  \]
and 
\[ \frac{F-1} {F} = c \prod_{\alpha, r} (z-r)^\alpha \, .   \]
We rewrite this 
\[ \frac{1} {F} =  1 - c  \prod_{\alpha, r} (z - r)^\alpha \, .   \]
Examination of this expression establishes (1) and (2) simultaneously. Indeed $F(\infty) = \infty$ implies~$c=1$ (and~$\sum \alpha = 0$). Likewise, the roots of~$F$ are the numbers~$r$'s such that~$\alpha < 0$, and the roots of~$F-1$ are the~$r$'s such that~$\alpha > 0$. The multiplicities are interpreted as degrees of vertices, as already discussed (we see that~$\sum \alpha = 0$ amounts to~$\sum m_i = \sum n_i$, and as a matter a fact these two sums are equal to the number~$n$ of darts, each dart joining a back vertex and a white one). Let us now use the notation~$b_i$, $w_i$, $n_i$ and~$m_i$.

We have shown that 
\[ A = \lambda \frac{B} {\prod_i (z-b_i) (z-w_i)}  \]
where~$B$ is a monic polynomial. It remains, in order to prove (3), to
find the roots of~$B$ together with their multiplicities, knowing
that~$B$ does not vanish at any~$b_i$ or any~$w_i$. 

For this write~$F = P/Q$ with~$P, Q$ coprime polynomials, so that 
\[ A = \frac{P'Q + P Q'} {P(P-Q)} \, .   \]
If~$f_i$ is a root of~$Q$, with multiplicity~$r_i$, then it is a root
of~$P'Q + PQ'$ with multiplicity~$r_i-1$. Also, it is not a root
of~$P(P-Q)$, so in the end~$f_i$ is a root of~$B$ of
multiplicity~$r_i-1$. 

Finally, from the expression~$A = F'/F(F-1)$ we know that the roots
of~$A$ are to be found among the roots of~$F'$ and the poles
of~$F(F-1)$, that is the roots of~$Q$. So a root of~$A$ which is not a
root of~$Q$ would have to be a root of~$F'$. Now we use the fact
that~$F$ is a Belyi map: a root of~$F'$ is taken by~$F$ to~$0$ or~$1$,
so it is among the~$b_i$'s and the~$w_i$'s. These are not roots
of~$B$, as observed, so we have proved (3).

Now we turn to the converse, so we let~$A$ have the form in (3), we
suppose that (1) holds and define~$F$ by (2). From the arguments above
it is clear that~$A = F' / F(F-1)$.

Is~$F$ a Belyi map? For~$z_0$ satisfying~$F'(z_0) = 0$, we need to
examine whether the value~$F(z_0)$ is among~$0, 1, \infty$. Suppose
$F(z_0)$ is neither~$0$ nor~$1$. Then it is not a root of~$F(F-1)$, so
it is a root of~$A$. If we throw in the assumption that the roots
of~$A$ are poles of~$F$, it follows that~$F(z_0) = \infty$.
\end{proof}

\subsection{Examples in genus~$1$; faithfulness of the action}

Let us briefly discuss the Galois action in the language of curves, as
in~\S\ref{subsec-curves}. A dessin defines a curve~$C$, which can
be taken to be defined by homogeneous polynomial equations~$P_i = 0$
in projective space, where~$P_i$ has coefficients in~$\qb$. Also~$C$
comes equiped with a map~$F \colon C \to \p$, or equivalently~$F \in
\m(C)$, and~$F$ can be written as a quotient~$F= P/Q$ where~$P$
and~$Q$ are homogeneous polynomials of the same degree, again with
coefficients in~$\qb$. Conversely such a curve, assuming that~$F$ does
not ramify except possibly at~$0$, $1$ or~$\infty$, defines a
dessin.

It is then easy to show (though we shall not do it here)
that~$\act{\lambda }{\cell}$ corresponds to the curve~$\act{\lambda
}{C}$ obtained by applying~$\lambda $ to the coefficients of
each~$P_i$; it comes with a Belyi map, namely~$\act{\lambda }{F}$,
which we again obtain by applying~$\lambda $ to the coefficients
of~$F$. (Note in particular that~$\act{\lambda }{C}$, as a curve
without mention of a Belyi map, is obtained from~$\lambda $ and~$C$
alone, and~$F$ does not enter the picture.)

We illustrate this with dessins in degree~$1$. An {\em elliptic
  curve} is a curve~$C$ given in~$\mathbb{P}^2$ by a ``Weierstrass
equation'', that is, one of the form
\[ y^2z - x^3 - a x z^2 - b z^3 = 0 \, .   \]
Assuming we work over~$\qb$ or~$\C$, the surface~$C(\C)$ is then a
torus. One can show conversely that whenever~$C(\C)$ has genus 1, the
curve is an elliptic curve. 

The equation is of course not uniquely determined by the
curve. However one can prove that 
\[  j = 1728 (4a)^3 /  16 (4a^3 + 27 b^2)  \]
depends only on~$C$ up to isomorphism. (The notation is standard, with
1728 emphasized.) What is more, over an algebraically closed field we
have a converse: the number~$j$ determines~$C$ up to
isomorphism. Further, each number~$j \in K$ actually corresponds to an
elliptic curve over~$K$. These are all classical results, see for example~\cite{silverman}. 

Now we see that, in obvious notation, $j(\act{\lambda }{C}) =
\act{\lambda }{j(C)}$, with the following consequence. Given~$\lambda
\in \gal$ which is not the identity, there is certainly a number~$j
\in \qb$ such that~$\act{\lambda }{j} \ne j$. Considering the (unique)
curve~$C$ such that~$j(C) = j$, we can use Belyi's theorem to make
sure that it possesses a Belyi map~$F$ (it really does not matter
which, for our purposes), producing at least one
dessin~$\cell$. It follows that~$\act{\lambda }{\cell}$ is not
isomorphic to~$\cell$, and we see that the action of~$\gal$ on
dessins is {\em faithful}. 

As it happens, one can show that the action is faithful even when
restricted to genus~$0$, and even to plane trees. What is more, the
argument is easy and elementary, see the paper by
Schneps~\cite{leila}, who ascribes the result to Lenstra.  

We note for the record: 

\begin{thm} \label{thm-action-on-dessins-faithful}
The action of~$\gal$ on dessins is faithful. In fact, the action on
plane trees is faithful, as is the action on dessins of genus~$1$. 
\end{thm}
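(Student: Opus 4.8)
The plan is to establish the two refinements, after which the first assertion comes for free: plane trees and genus-$1$ dessins each form a subclass of all dessins that is preserved by the action (the genus, and the cycle type of the ramification over $\infty$, are Galois invariants), and an element $\lambda \in \gal$ that moves some object of a subclass a fortiori moves some dessin. So it suffices to produce, for every $\lambda \neq 1$, a plane tree and a genus-$1$ dessin $\cell$ with $\act{\lambda}{\cell} \not\cong \cell$.

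The genus-$1$ case is the computation already sketched above, which I would record as follows. Since distinct elements of $\gal$ are distinct automorphisms of $\qb$, for $\lambda \neq 1$ there is a value $j_0 \in \qb$ with $\lambda(j_0) \neq j_0$. Choose an elliptic curve $C$ over $\qb$ with $j$-invariant $j_0$ (one exists for every value of $j$, unique up to isomorphism over $\qb$), and use Belyi's theorem to equip $C$ with a Belyi map $F$; the pair is a genus-$1$ dessin $\cell$. The curve underlying $\act{\lambda}{\cell}$ is $\act{\lambda}{C}$, whose $j$-invariant is $\lambda(j_0) \neq j_0$, so $\act{\lambda}{C} \not\cong C$ as curves; since an isomorphism of dessins restricts to an isomorphism of the underlying curves, $\act{\lambda}{\cell} \not\cong \cell$. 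As $\act{\lambda}{C}$ is again elliptic, $\act{\lambda}{\cell}$ is again of genus $1$, and faithfulness on genus-$1$ dessins follows.

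For plane trees I would follow the elementary argument of Lenstra, as presented by Schneps~\cite{leila}. A plane tree is a Shabat polynomial $p \colon \p \to \p$, that is, a polynomial with at most two critical values normalized to $0$ and $1$; such a $p$ is defined over $\qb$ and is determined by the tree only up to an affine change of source variable $z \mapsto az+b$. Given $\lambda \neq 1$, one picks $\alpha \in \qb$ moved by $\lambda$ and manufactures a tree rigid enough that, after normalization, one of its defining data (a root, a critical point, or a ratio of coefficients) is forced to equal $\alpha$, whence $\act{\lambda}{p}$ cannot be affinely equivalent to $p$ and the two trees are non-isomorphic. The main obstacle, and the reason this case is genuinely harder than the genus-$1$ one, is exactly this affine ambiguity in the source: unlike the intrinsic isomorphism invariant $j$, the coordinates of a tree are well-defined only modulo the affine group, so one must choose the combinatorial type carefully --- asymmetric enough to have trivial automorphism group and to force the normalization to be unique --- before a prescribed algebraic number becomes an honest invariant that $\lambda$ is seen to move.
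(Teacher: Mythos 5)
Your proposal is correct and follows essentially the same route as the paper: the genus-$1$ case via the $j$-invariant of an elliptic curve combined with Belyi's theorem (using that $j(\act{\lambda}{C}) = \act{\lambda}{j(C)}$ and that $j$ classifies elliptic curves over an algebraically closed field), and the plane-tree case by deferring to the elementary argument of Lenstra as presented by Schneps. The only cosmetic difference is that you make explicit the (correct) preliminary remark that both subclasses are Galois-stable, which the paper handles by pointing to its theorem on Galois invariants.
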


In this statement it is implicit that the image of a plane tree under the Galois action is another plane tree. Theorem~\ref{thm-galois-invariants} below proves this, and more. 

\subsection{Invariants}

We would like to find common features to the dessins~$\cell$
and~$\act{\lambda }{\cell}$, assumed connected for simplicity. First
and foremost, if~$L/\qb(x)$ corresponds to~$\cell$, one must observe
that there is the following commutative diagram:
\[ \begin{CD}
\qb(x)   @>\lambda >>     \qb(x) \\
  @VVV                          @VVV    \\
L = L \otimes_{id} \qb(x)    @>{y \otimes s \mapsto y \otimes
  \lambda (s)}>>    L \otimes_\lambda \qb (x) = \act{\lambda }{L}
\, . 
\end{CD}
  \]
Here both horizontal arrows are isomorphisms of fields (but the bottom
one is not an isomorphism of~$\qb(x)$-extensions, of course). It
follows that there is an isomorphism
\[ \lambda ^* \colon \Gal(L/\qb(x)) \longrightarrow \Gal(\act{\lambda
}{L} / \qb(x)  ) \, ,   \]
obtained by conjugating by the bottom isomorphism (this is the
approach taken in~\cite{helmut}). Alternatively, the existence of a
homomorphism~$\lambda^*$ between these groups is guaranteed by the
functoriality of the Galois action; while the fact that~$\lambda^*$ is
a bijection is established by noting that its inverse is~$(\lambda
^{-1})^*$. The two definitions of~$\lambda^*$ agree, as is readily
seen. 

For the record, we note:

\begin{lem}
If~$\cell$ is regular, so is~$\act{\lambda }{\cell}$. 
\end{lem}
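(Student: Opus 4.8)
The plan is to argue entirely inside~$\etq$, where the action is defined, so let~$L/\qb(x)$ be the field extension corresponding to the connected dessin~$\cell$. Recall from the discussion of regularity that, phrased for field extensions, $\cell$ is \emph{regular} precisely when~$L$ is a field (this is connectedness) and the order of its automorphism group over~$\qb(x)$ equals the degree~$[L:\qb(x)]$ --- equivalently, $L/\qb(x)$ is Galois. I will not even need the Galois reformulation: it suffices to transport the two ingredients ``being a field'' and ``$|Aut|$ equals degree'' from~$L$ to~$\act{\lambda}{L}$, and the cleanest route is to keep them separate.

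First I would record that~$\act{\lambda}{L}$ is again a field, so that~$\act{\lambda}{\cell}$ is connected. This was already checked when the action was introduced: applying~$\lambda$ to the coefficients of an irreducible polynomial keeps it irreducible, the inverse operation being the one attached to~$\lambda^{-1}$. Next I would transport the two numerical invariants along the field isomorphism~$\psi\colon L \to \act{\lambda}{L}$, $y \otimes s \mapsto y \otimes \lambda(s)$, appearing in the commutative diagram just before the lemma; by commutativity~$\psi$ restricts to~$\lambda$ on~$\qb(x)$. For the degree, note that~$\psi$ is not~$\qb(x)$-linear but only~$\lambda$-semilinear, i.e.~$\psi(sv) = \lambda(s)\psi(v)$ for~$s \in \qb(x)$; since~$\lambda$ is a bijection of~$\qb(x)$, a~$\qb(x)$-basis of~$L$ is carried to a~$\qb(x)$-basis of~$\act{\lambda}{L}$, whence~$[\act{\lambda}{L}:\qb(x)] = [L:\qb(x)]$. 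For the automorphism group I would simply invoke the isomorphism~$\lambda^*$ between the~$\qb(x)$-automorphism groups of~$L$ and~$\act{\lambda}{L}$ constructed just above (conjugation by~$\psi$, with inverse given by~$\lambda^{-1}$), which yields~$|Aut(\act{\lambda}{L}/\qb(x))| = |Aut(L/\qb(x))|$.

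Combining the three facts finishes the proof: if~$\cell$ is regular, then~$\act{\lambda}{L}$ is a field with~$|Aut(\act{\lambda}{L}/\qb(x))| = |Aut(L/\qb(x))| = [L:\qb(x)] = [\act{\lambda}{L}:\qb(x)]$, which is exactly the assertion that~$\act{\lambda}{\cell}$ is regular. There is no real obstacle here; the only point that deserves a word of care is that~$\psi$ is semilinear rather than linear, so that preservation of the degree rests on the bijectivity of~$\lambda$ on~$\qb(x)$ rather than on any linearity. Everything else is bookkeeping once regularity is read off as ``connected, with automorphism count equal to degree''.
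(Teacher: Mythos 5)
Your proof is correct and follows essentially the same route as the paper, which simply observes that the degree is preserved and that the automorphism groups are isomorphic under~$\lambda^*$ and declares the lemma obvious. You merely spell out the details the paper leaves implicit (preservation of connectedness, and the semilinearity point behind the equality of degrees), which is harmless.
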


\begin{proof}
It is clear that~$\cell$ and~$\act{\lambda }{\cell}$ have the same
degree, and their automorphism groups are isomorphic
under~$\lambda^*$, so the lemma is obvious.
\end{proof}

Using curves, we can guess a property of~$\lambda ^*$ which is
essential (a rigorous argument will be given next). Let~$C$ be a curve
in projective space corresponding to~$\cell$. It is a consequence of
the material in~\S\ref{sec-eq-cats} that~$C(\C)$ is homeomorphic
to~$\topo{\cell}$. The automorphism~$\tilde \sigma $ must then
correspond to a self-map~$C \to C$, and the latter must fix a black
vertex by proposition~\ref{prop-fixed-points}. This black vertex has
its coordinates (in projective space) lying in~$\qb$.

Now, this map~$\tilde \sigma \colon C \to C$ is a map of curves
over~$\qb$, and so is given, at least locally, by rational fractions
with coefficients in~$\qb$. Applying~$\lambda^*$ amounts to
applying~$\lambda $ to these coefficients. Thus we get a
map~$\lambda^*( \tilde \sigma ) \colon \act{\lambda }{C} \to
\act{\lambda }{C}$, and clearly it also has a fixed point. By
proposition~\ref{prop-fixed-points} again, we see that~$\lambda^*(
\tilde \sigma )$ must be a power of the distinguished
generator~$\tilde \sigma_\lambda $ (in suggestive notation). Likewise
for~$\lambda^*(\tilde \alpha )$ and~$\lambda^*(\tilde \phi)$.

With a little faith, one may hope that the map~$C \to C$, having a
fixed point, looks like~$z \mapsto \zeta z$ in local coordinates,
where~$\zeta$ is some root of unity. If so, the power of~$\tilde
\sigma _\lambda $ could be found by examining the effect of~$\lambda $
on roots of unity, and we may hope that it is the same power
for~$\tilde \sigma$, $\tilde \alpha $ and~$\tilde \phi$.

Exactly this is true. The result even has an easy and elementary
proof, that goes {\em via} fields.

\begin{prop}[Branch cycle argument] \label{prop-branch-cycle-argument}
Assume that~$\cell$ is regular, and let~$\tilde \sigma , \tilde \alpha
$ and~$\tilde \phi$ be a distinguished triple for~$\Gal(L/\qb(x))
\cong Aut(\cell)$. Let~$n$ be the degree of~$\cell$, let~$\zeta_n =
e^{\frac{2 i \pi} {n}}$, and let~$m$ be such that 
\[ \lambda^{-1} (\zeta_n) = \zeta_n^m \, .   \]
Finally, let~$\tilde \sigma_{\lambda}, \tilde \alpha_{\lambda}$
and~$\tilde \phi_{\lambda}$ be a distinguished triple
for~$\Gal(\act{\lambda }{L}/ \qb(x))$.

Then~$\lambda ^*( \tilde \sigma^m )$ is conjugated to~$\tilde
\sigma_{\lambda}$, while~$\lambda ^*( \tilde \alpha^m )$ is conjugated
to~$\tilde \alpha _{\lambda}$ and $\lambda ^*( \tilde \phi^m )$ is
conjugated to~$\tilde \phi_{\lambda}$.
\end{prop}

\begin{proof}
This is lemma 2.8 in~\cite{helmut}, where it is called ``Fried's
branch cycle argument''. The following comments may be helpful. In
{\em loc.\ cit.}, this is stated using the ``conjugacy classes
associated with~$0, 1, \infty$''; in the addendum to theorem 5.9,
these are identified with the ``topological conjugacy classes
associated with~$0, 1, \infty$''; and we have already observed (after
proposition~\ref{prop-fixed-points}) that they are the conjugacy
classes of~$\tilde \sigma , \tilde \alpha , \tilde \phi$. 
\end{proof}

We should pause to compare this with
proposition~\ref{prop-regular-dessins-are-groups-with-generators},
which states that a regular dessin, up to isomorphism, is nothing
other than a finite group~$G$ with two distinguished
generators~$\sigma, \alpha$ (and~$\phi= (\sigma \alpha )^{-1}$ is
often introduced to clarify some formulae). Let us see the
map~$\lambda ^*$ as an identification (that is, we pretend that it is
the identity). Then the action of~$\lambda $ on~$(G, \sigma, \alpha )$
produces the same group, with two new generators, which are of the
form~$g \sigma^m g^{-1}$ and~$h \alpha^m h^{-1}$; moreover, if we call
these~$\sigma_\lambda $ and~$\alpha_\lambda $ respectively,
then~$\phi_\lambda = (\sigma_\lambda \alpha_\lambda )^{-1}$ is
conjugated to~$\phi^m$.

Of course, not all random choices of~$g, h, m$ will conversely produce
new generators for~$G$ by the above formulae. And not all recipes for
producing new generators out of old will come from the action of
a~$\lambda \in \gal$. Also note that, if~$g = h$ and~$m=1$,
that is if we simply conjugate the original generators, we get an
object isomorphic to the original dessin -- more generally when
there is an automorphism of~$G$ taking~$\sigma $ to~$\sigma_\lambda $
and~$\alpha $ to~$\alpha_\lambda $, then~$\act{\lambda }{\cell} \cong
\cell$.  

One further remark. In~$\sets_{\sigma, \alpha, \phi}$, the regular
dessin~$\cell$ is modeled by the set~$X = Aut(\cell)$ with the
distinguished triple~$\tilde \sigma, \tilde \alpha, \tilde \phi$
acting by right multiplication; similarly for~$\act{\lambda }
{\cell}$. Now, if we simply look at~$X$, and its
counterpart~$\act{\lambda }{X}$, in the category of
sets-with-an-action-of-a-group, that is if we forget the specific
generators at our disposal, then~$X$ and~$\act{\lambda }{X}$ become
impossible to tell apart, by the discussion above. 

We expand on this idea in the next theorem, where we make no
assumption of regularity.

\begin{thm} \label{thm-galois-invariants}
Let~$\cell$ be a compact, connected, oriented dessin without
boundary, and let~$\lambda \in \gal$.
\begin{enumerate}
\item $\cell$ and~$\act{\lambda }{\cell}$ have the same degree~$n$.
\item It is possible to number the darts of~$\cell$ and~$\act{\lambda
}{\cell}$ in such a way that these two dessins have precisely
  the same cartographic group~$G \subset S_n$. 

\item Let~$m$ be such that~$\lambda^{-1} (\zeta_N) = \zeta_N^m$,
  where~$N$ is the order of~$G$ and~$\zeta_N = e^{\frac{2 i \pi}
    {N}}$. Then within~$G$, the generator~$\sigma_\lambda $ is
  conjugated to~$ \sigma^m$, while~$\alpha _\lambda $ is conjugated
  to~$\alpha^m$ and~$\phi_\lambda $ is conjugated to~$\phi^m$.

\item Within~$S_n$, the generator~$\sigma_\lambda $ is conjugated to~$
  \sigma$, while~$\alpha _\lambda $ is conjugated to~$\alpha$
  and~$\phi_\lambda $ is conjugated to~$\phi$.

\item $\cell$ and~$\cell'$ have the same number of black vertices of a
  given degree, white vertices of a given degree, and faces of a given
  degree. 

\item The automorphism groups of~$\cell$ and~$\act{\lambda }{\cell}$
  are isomorphic.

\item The surfaces~$\topo{\cell}$ and~$\topo{\act{\lambda }{\cell}}$
  are homeomorphic.

\end{enumerate}
\end{thm}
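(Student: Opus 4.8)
The plan is to deduce all seven parts from two inputs: the isomorphism $\lambda^*$ furnished by the commutative diagram preceding the statement, and the branch cycle argument (Proposition~\ref{prop-branch-cycle-argument}) applied to the regular closure. I would first dispose of (1) and (6). The dimension of $\act{\lambda}{L} = L \otimes_\lambda \qb(x)$ over $\qb(x)$ equals that of $L$, which gives (1); and conjugation by the bottom horizontal field isomorphism in that diagram carries $\mathrm{Aut}_{\qb(x)}(L)$ isomorphically onto $\mathrm{Aut}_{\qb(x)}(\act{\lambda}{L})$, which is exactly the isomorphism of automorphism groups asserted in (6) (no normality of $L/\qb(x)$ is needed here, only that $\lambda$ induces a field isomorphism commuting with the structure maps).

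For (2) and (3) I would pass to the regular closure $\tilde{\cell}$, whose automorphism group is, by the regular-closure lemma, the cartographic group $G$ of $\cell$. Since $\tilde{\cell}$ corresponds to the Galois closure $\tilde{L}$ in $\etq$, and the self-equivalence $-\otimes_\lambda \qb(x)$ preserves degrees, normality and separability, the regular closure of $\act{\lambda}{\cell}$ is $\act{\lambda}{\tilde{\cell}}$, with $\lambda^*$ an isomorphism $\mathrm{Aut}(\tilde{\cell}) \to \mathrm{Aut}(\act{\lambda}{\tilde{\cell}})$. Using $\lambda^*$ to identify both cartographic groups with the single abstract group $G$, and numbering the darts of each dessin by the corresponding transitive $G$-set (whose point stabilizers have trivial core by Proposition~\ref{prop-iso-classes}, so the $G$-action is faithful), realizes both as the common $G \subset S_n$ of (2). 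Then Proposition~\ref{prop-branch-cycle-argument}, applied to the regular dessin $\tilde{\cell}$ whose degree is $N = |G|$, says precisely that $\sigma_\lambda$ is conjugate in $G$ to $\sigma^m$, and likewise for $\alpha$ and $\phi$, where $\lambda^{-1}(\zeta_N) = \zeta_N^m$; this is (3).

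Part (4) follows from (3) together with the observation that $m$ is coprime to $N$: indeed $\lambda^{-1}$ permutes the primitive $N$-th roots of unity, so $\zeta_N^m$ is again primitive and $\gcd(m,N)=1$. The order of $\sigma$ divides $N$, hence $\gcd(m,\mathrm{ord}(\sigma))=1$, and raising a permutation to a power coprime to its order preserves the length of every cycle, so $\sigma^m$ and $\sigma$ have the same cycle type and are conjugate in $S_n$; combined with $\sigma_\lambda \sim \sigma^m$ this gives $\sigma_\lambda \sim \sigma$ in $S_n$, and similarly for $\alpha_\lambda$ and $\phi_\lambda$. Part (5) is then a translation of (4): the cycles of $\sigma$ are the black vertices with cycle lengths equal to their degrees, those of $\alpha$ the white vertices, and those of $\phi$ the faces, so equal cycle types force equal counts of vertices and faces of each degree.

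Finally, for (7) I would invoke the Euler characteristic formula of Remark~\ref{rmk-euler-char}, $\chi = n_\sigma + n_\alpha - n + n_\phi$. Both dessins are compact, connected (as $\act{\lambda}{L}$ is again a field), oriented and without boundary; by (1) and (5) they share $n$ and all three cycle counts, hence have equal Euler characteristic, equal genus, and therefore homeomorphic realizations $\topo{\cell}$ and $\topo{\act{\lambda}{\cell}}$. The step I expect to be the main obstacle is (2): making precise that the two cartographic groups are realized as the \emph{same} subgroup of $S_n$, rather than merely abstractly isomorphic groups. This is where the careful transport along $\lambda^*$ at the level of the regular closure, together with the triviality of the relevant cores, does the genuine work; everything downstream is bookkeeping built on the branch cycle argument.
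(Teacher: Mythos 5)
Your proposal is correct and follows essentially the same route as the paper: pass to the regular closure, use the lemma that $\act{\lambda}{\tilde\cell}$ is the regular closure of $\act{\lambda}{\cell}$ with intermediate subgroup $\lambda^*(H)$ to get (1)--(2), read off (3) from the branch cycle argument, deduce (4)--(5) from coprimality of $m$ with $N$ and cycle types, and (7) from the Euler characteristic formula. The only (harmless) divergence is in (6), where you conjugate by the field isomorphism directly rather than, as the paper does, identifying both automorphism groups with the centralizer of the common $G$ in $S_n$ obtained in (2).
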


There is an ingredient in the proof that will be used again later, so
we isolate it:

\begin{lem} \label{lem-galois-correspondence}
Let~$\cell$ be a regular dessin, and let~$\cell'$ be the intermediate
dessin corresponding to the subgroup~$H$
of~$Aut(\cell)$. Then~$\act{\lambda }{\cell}$ is regular,
and~$\act{\lambda }{\cell'}$ is its intermediate dessin corresponding
to the subgroup~$\lambda^*(H)$. 
\end{lem}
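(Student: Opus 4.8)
The plan is to work entirely in the category~$\etq$, where both the Galois action and the Galois correspondence are transparent. Since~$\cell$ is regular, the corresponding extension~$L/\qb(x)$ is Galois, and the identification~$Aut(\cell) \cong \Gal(L/\qb(x))$ from the previous sections lets me read the phrase ``$\cell'$ is the intermediate dessin corresponding to~$H$'' as the assertion that~$\cell'$ corresponds to the fixed field~$K = L^H$, by the fundamental theorem of Galois theory. Likewise~$\act{\lambda}{\cell}$ corresponds to~$\act{\lambda}{L}$, which is again Galois over~$\qb(x)$ (so~$\act{\lambda}{\cell}$ is regular by the lemma already established above), and~$\act{\lambda}{\cell'}$ corresponds to~$\act{\lambda}{K}$. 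The goal is therefore to prove the equality of subfields~$\act{\lambda}{K} = (\act{\lambda}{L})^{\lambda^*(H)}$ inside~$\act{\lambda}{L}$.

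The key tool is the field isomorphism appearing in the commutative diagram that defines~$\lambda^*$, namely
\[ \Phi \colon L \longrightarrow \act{\lambda}{L} \, , \qquad z \longmapsto z \otimes 1 \, . \]
First I would record that~$\Phi$ carries the subfield~$\qb(x) \subseteq L$ onto the subfield~$\qb(x) \subseteq \act{\lambda}{L}$, acting there as~$\lambda$ (indeed~$\Phi$ sends the image of~$s$ to~$1 \otimes \lambda(s)$), and that by its very construction~$\lambda^*(g) = \Phi \circ g \circ \Phi^{-1}$ for~$g \in \Gal(L/\qb(x))$. Thus~$\Phi$ is an isomorphism of fields intertwining the two base fields, and conjugation by it transports the entire Galois correspondence: an intermediate field~$M$ with fixing subgroup~$\Gal(L/M)$ is carried to~$\Phi(M)$, whose fixing subgroup is~$\Phi \, \Gal(L/M) \, \Phi^{-1}$. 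Applied to~$M = K = L^H$, this yields~$\Phi(K) = (\act{\lambda}{L})^{\lambda^*(H)}$.

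It remains only to identify~$\Phi(K)$ with~$\act{\lambda}{K}$ as subfields of~$\act{\lambda}{L}$. Here I would invoke naturality: the restriction of~$\Phi$ to~$K$ is the analogous canonical isomorphism~$K \to \act{\lambda}{K}$ (the bottom arrow of the same diagram written for~$K$ in place of~$L$) followed by the inclusion~$\act{\lambda}{K} \hookrightarrow \act{\lambda}{L}$, whence~$\Phi(K) = \act{\lambda}{K}$. Concretely this rests on the identity~$z \otimes t = (z \cdot \lambda^{-1}(t)) \otimes 1$ in~$\act{\lambda}{L}$, which shows that~$K \otimes 1$ already contains all of~$K \otimes_\lambda \qb(x)$. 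Combining the two equalities gives~$\act{\lambda}{K} = (\act{\lambda}{L})^{\lambda^*(H)}$, which is precisely the claim that~$\act{\lambda}{\cell'}$ is the intermediate dessin of~$\act{\lambda}{\cell}$ corresponding to~$\lambda^*(H)$. The only genuinely delicate point, which I would write out most carefully, is the bookkeeping in this last identification — verifying that~$\Phi$ maps~$K$ onto~$\act{\lambda}{K}$ and not some larger subfield, which hinges on the fact that~$K$ contains~$\qb(x)$, so that scalars from the second tensor factor can be absorbed.
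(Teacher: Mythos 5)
Your proof is correct and takes essentially the same route as the paper: both arguments work in~$\etq$ and rest on the fact that~$\lambda^*$ is conjugation by the canonical isomorphism~$L \to \act{\lambda}{L}$, which carries~$\qb(x)$ to itself via~$\lambda$ and hence carries~$K$ to~$\act{\lambda}{K}$. The only cosmetic difference is that the paper settles the identification of the subgroup by one containment plus a degree count, whereas you transport the full Galois correspondence through~$\Phi$ and obtain the equality directly.
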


\begin{proof}
This is purely formal, given that the action of~$\lambda $ is via a
self-equivalence of the category~$\etq$ which preserve degrees (this
is the first point of the proposition, and it is obvious!). Clearly
regular objects must be preserved. If~$K/\qb(x)$ is an intermediate
extension of~$L/\qb(x)$ corresponding to~$H$, then the elements of~$H$
are automorphisms of~$L$ fixing~$K$, so the elements of~$\lambda^*(H)$
are automorphisms of~$\act{\lambda }{L}$ fixing~$\act{\lambda
}{K}$. Comparing degrees we see that~$\lambda^*(H)$ is precisely the
subgroup corresponding to~$\act{\lambda }{K}$.
\end{proof}

\begin{proof}[Proof of the theorem]
We need a bit of notation. Let~$\tilde \cell$ be the regular cover
of~$\cell$. Let us pick a dart~$d$ of~$\cell$ as a base-dart. This
defines an isomorphism between the cartographic group~$G$
and~$Aut(\tilde \cell)$, under which~$\sigma $ is identified
with~$\tilde \sigma $, and likewise for~$\alpha $ and~$\phi$. Finally,
let~$H$ be the stabilizer of the dart~$d$, so that in~$\sets_{\sigma,
  \alpha, \phi}$ our dessin is the object~$H \bs G$. The
subgroup~$H$ of~$G$ corresponds to~$\cell$ in the ``Galois
correspondence'' for~$\tilde \cell$.

By the lemma, $\act{\lambda }{\tilde \cell}$ is the regular closure
of~$\act{\lambda }{\cell}$, and the latter corresponds to the
subgroup~$\lambda^{*}(H)$. Therefore in~$\sets_{\sigma, \alpha, \phi}$
we can represent~$\act{\lambda }{\cell}$ by~$\lambda ^*(H) \bs
\lambda^*(G)$. In the category of~$G$-sets, this is isomorphic to~$H
\bs G$ {\em via}~$\lambda^*$. If we use the bijection~$H \bs G \to
\lambda^*(H) \bs \lambda^*(G)$ in order to number the elements
of~$\lambda^*(H) \bs \lambda^*(G)$, then we have arranged things so
that the cartographic groups for~$\cell$ and~$\act{\lambda }{\cell}$
coïncide as subgroups of~$S_n$.

This proves (1) and (2). Point (3) is a reformulation of the previous
proposition. To establish (4), we note that~$m$ is prime to the
order~$N$ of~$G$, and in particular it is prime to the order
of~$\sigma$. In this situation~$\sigma^m$ has the same cycle-type
as~$\sigma $ and is therefore conjugated to~$\sigma $
within~$S_n$. Likewise for~$\alpha $ and~$\phi$. Those cycle-types
describe the combinatorial elements refered to in (5).

Point (6) follows since the automorphism groups of~$\cell$
and~$\act{\lambda }{\cell}$ are both isomorphic to the centralizer
of~$G$ in~$S_n$.

Finally, point (7) is obtained by comparing Euler characteristics, as
in remark~\ref{rmk-euler-char}. 
\end{proof}

\begin{ex} 
We return to example~\ref{ex-galois-action}. While looking for an
explicit Belyi map, we found four candidates, falling into two Galois
orbits. Let us represent them again, with a numbering of the darts. 

\figurehere{1}{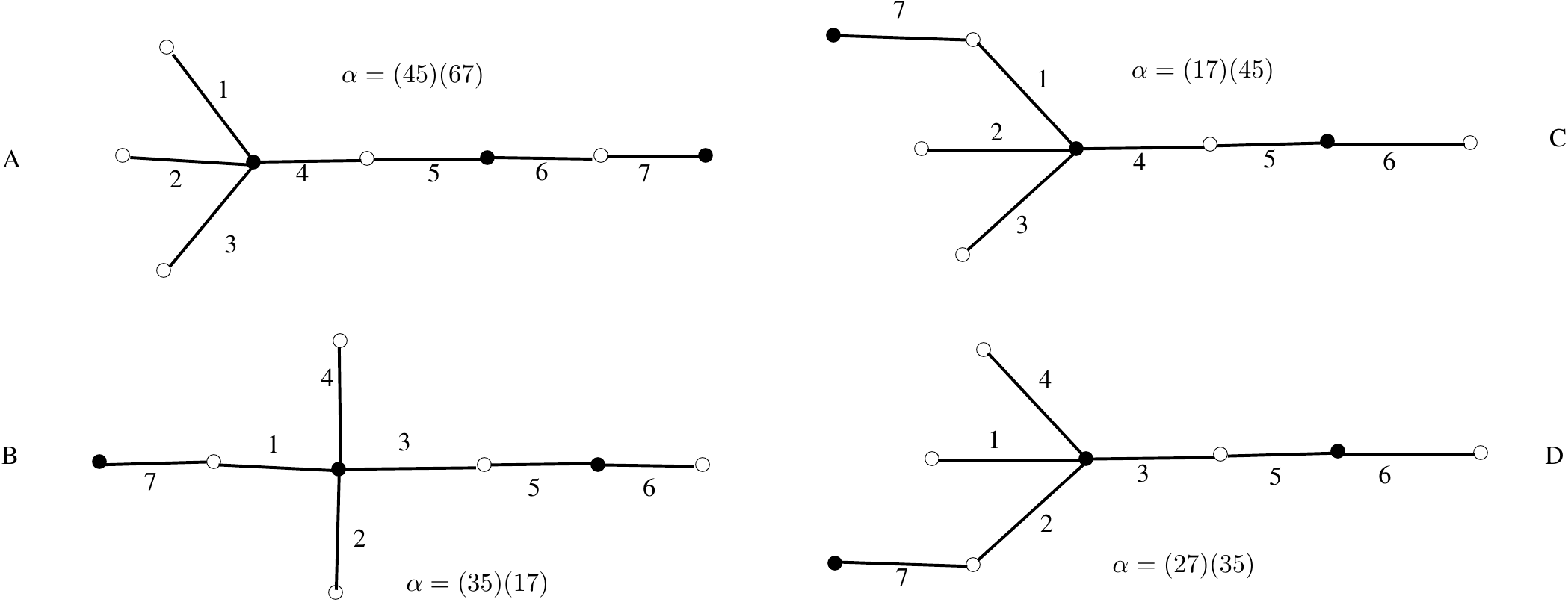}

In all four cases one has~$\sigma = (1234)(56)$, while~$\alpha $ is
given on the pictures. The following facts are obtained by asking
GAP: in cases A and B, the group generated by~$\sigma $ and~$\alpha $
is the alternating group~$A_7$ (of order 2520); in cases~$C$ and~$D$,
we get a group isomorphic to~$PSL_3(\mathbb{F}_2)$ (of order
168). This prevents A and B from being in the same orbit as C or D,
by the theorem, and suggests that A and B form one orbit, C and D
another. We have seen earlier that this is in fact the case.

Note that the cartographic groups for A and B are actually the same
subgroups of~$S_7$, and likewise for C and D. The theorem asserts
that this can always be arranged, though it does not really provide an
easy way of making sure that a numbering will be correct. With random
numberings of the darts, it is a consequence of the theorem that
the cartographic groups will be conjugated. In general the conjugation
will not preserve the distinguished generators, unless the two dessins
under consideration are isomorphic, cf
theorem~\ref{prop-iso-classes}.  

\end{ex}

\section{Towards the Grothendieck-Teichmüller group} \label{sec-GT}

In this section we define certain finite groups~$H_n$ for~$n \ge 1$,
and prove that there is an injection 
\[ \gal \longrightarrow \lim_n Out(H_n) \, .   \]
We further prove that the image lies in a certain subgroup, which we
call~$\GT$ and call the coarse Grothendieck-Teichmüller group. The
group~$\GT$ is an inverse limit of finite groups, and one can compute
approximations for it in finite time.

Beside these elementary considerations, we shall also use the language
of profinite groups, which has several virtues. It will show that our
constructions are independent of certain choices which seem arbitrary;
it will help us relate our construction to the traditional literature
on the subject; and it will be indispensable to prove a refinement of
theorem~\ref{thm-action-on-dessins-faithful}: the action of~$\gal$ on the set
of {\em regular} dessins is also faithful.

\subsection{The finite groups~$H_n$} \label{subsec-Hn}

Let~$F_2$ denote the free group on two generators, written~$\sigma $
and~$\alpha $. We encourage the reader to think of~$F_2$
simultaneously as~$\langle \sigma, \alpha \rangle$ and~$\langle
\sigma, \alpha, \phi ~|~ \sigma \alpha \phi = 1 \rangle$.

For any group~$G$ we shall employ the notation~$G ^{(n)}$ to denote
the intersection of all normal subgroups of~$G$ whose index is~$\le
n$. We define then~$H_n = F_2 / F_2 ^{(n)}$.  It is easily seen
that~$H_n$ is a finite group; moreover the intersection of all the
normal subgroups of~$H_n$ of index~$\le n$ is trivial, that is~$H_n
^{(n)} = \{ 1 \}$.

 In fact~$H_n$ is universal among the groups sharing these properties,
 as the following proposition makes precise (it is extracted
 from~\cite{helmut}, see \S7.1). The proof is essentially trivial.

\begin{prop} \label{prop-props-of-Hn}
\begin{enumerate}
\item For any finite group~$G$ of order~$\le n$ and~$g_1, g_2 \in G$,
  there is a homomorphism~$H_n \to G$ sending~$\sigma $ to~$g_1$
  and~$\alpha $ to~$g_2$.
\item If~$g_1, g_2$ are generators of a group~$G$ having the property
  that~$G ^{(n)} = \{ 1 \}$, then there is a surjective map~$H_n \to
  G$ sending~$\sigma $ to~$g_1$ and~$\alpha $ to~$g_2$.
\item If~$h_1, h_2$ are generators of~$H_n$, there is an automorphism
  of~$H_n$ sending~$\sigma $ to~$h_1$ and~$\alpha $ to~$h_2$.
\end{enumerate}
\end{prop}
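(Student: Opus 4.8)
The plan is to dispatch the three statements in order, using throughout only the universal property of the free group $F_2$ and the definition of $F_2^{(n)}$ as the intersection of all normal subgroups of $F_2$ of index $\le n$. For part (1), I would begin with the unique homomorphism $f \colon F_2 \to G$ determined by $f(\sigma) = g_1$ and $f(\alpha) = g_2$, which exists because $F_2$ is free on $\sigma, \alpha$. Its kernel $\ker f$ is a normal subgroup of $F_2$ whose index equals $|f(F_2)| \le |G| \le n$. By the very definition of $F_2^{(n)}$ we then have $F_2^{(n)} \subseteq \ker f$, so $f$ descends to a homomorphism $H_n = F_2/F_2^{(n)} \to G$ with the prescribed effect on $\sigma$ and $\alpha$.

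For part (2), I would again form $f \colon F_2 \to G$ with $f(\sigma) = g_1$ and $f(\alpha) = g_2$; since $g_1, g_2$ generate $G$, this $f$ is surjective. The key step is to check $F_2^{(n)} \subseteq \ker f$. To this end, let $N$ be any normal subgroup of $G$ of index $\le n$. Then $f^{-1}(N)$ is a normal subgroup of $F_2$, and by surjectivity of $f$ its index equals $[G : N] \le n$; hence $F_2^{(n)} \subseteq f^{-1}(N)$, and so $f(F_2^{(n)}) \subseteq f(f^{-1}(N)) = N$. Intersecting over all such $N$ gives $f(F_2^{(n)}) \subseteq G^{(n)} = \{1\}$, which is exactly the inclusion $F_2^{(n)} \subseteq \ker f$. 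Thus $f$ factors through $H_n$, producing the desired surjection $H_n \to G$.

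For part (3), I would apply part (2) with $G = H_n$ itself, which is legitimate because $H_n^{(n)} = \{1\}$ was recorded just above the proposition, taking as the two generators the images $h_1, h_2$ of $\sigma, \alpha$. This yields a surjective endomorphism $\psi \colon H_n \to H_n$ with $\psi(\sigma) = h_1$ and $\psi(\alpha) = h_2$. Since $H_n$ is finite, a surjective endomorphism is automatically injective, hence an automorphism, which completes the argument.

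As for where the real content lies, none of the three parts is genuinely hard, the whole proposition being a direct exercise in the universal property and the definition of $G^{(n)}$. The only point requiring a moment's care is the preimage computation in part (2): surjectivity of $f$ is used twice, first to identify the index $[F_2 : f^{-1}(N)]$ with $[G : N]$, and then to conclude $f(f^{-1}(N)) = N$. A secondary subtlety, easy to state but worth flagging explicitly, is the passage in part (3) from a surjective endomorphism to an automorphism, which rests entirely on the finiteness of $H_n$.
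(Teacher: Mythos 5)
Your proof is correct, and it is precisely the standard argument the paper has in mind: the paper itself supplies no details, remarking only that ``the proof is essentially trivial,'' and your write-up fills in exactly the expected steps (freeness of $F_2$, the index computation placing $F_2^{(n)}$ inside the relevant kernels, and finiteness of $H_n$ to upgrade the surjective endomorphism in part (3) to an automorphism). Nothing to correct.
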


(Here we have written~$\sigma $ and~$\alpha $ for the images in~$H_n$
of the generators of~$F_2$.)

In particular, there is a surjective map~$H_{n+1} \to H_n$. The kernel
of this map is~$H_{n+1} ^{(n)}$, which is characteristic ; it follows
that we also have maps~$Aut(H_{n+1}) \to Aut(H_n)$ as well as
$Out(H_{n+1}) \to Out(H_n) $.

Here is a concrete construction of~$H_n$. Consider all triples~$(G,
x, y)$ where~$G$ is a finite group of order~$\le n$ and~$x, y$ are
generators for~$G$, and consider two triples~$(G, x, y)$ and~$(G', x',
y')$ to be isomorphic when there is an isomorphism~$G \to G'$
taking~$x$ to~$x'$ and~$y$ to~$y'$. Next, pick representatives for the
isomorphism classes, say~$(G_1, x_1, y_1), \ldots, (G_N, x_N,
y_N)$. By the material above, this is equivalent to classifying all
the regular dessins on no more than~$n$ darts. Consider then 
\[ U = G_1 \times \cdots \times G_N \, ,   \]
and its two elements~$\sigma = (x_1, \ldots, x_N)$ and~$\alpha = (y_1,
\ldots, y_N)$. The subgroup~$K$ of~$U$ generated by~$\sigma $
and~$\alpha $ is then isomorphic to~$H_n$. Indeed, if~$G$ is any group
generated by two elements~$g_1, g_2$ satisfying~$G ^{(n)} = \{ 1 \}$,
by considering the projections from~$G$ to its quotients of order~$\le
n$ we obtain an injection of~$G$ into~$U$; under this injection~$g_1$,
resp.\ $g_2$, maps to an element similar to~$\sigma $, resp.\ $\alpha
$, except that some entries are replaced by~$1$'s, for those
indices~$i$ such that~$G_i$ is not a quotient of~$G$. As a result
there is a projection~$K \to G$ sending~$\sigma $ to~$g_1$ and~$\alpha
$ to~$g_2$. Since~$K$ satisfies the ``universal'' property (2) of
proposition~\ref{prop-props-of-Hn}, just like~$H_n$ does, these two
groups must be isomorphic.

The finite groups~$H_n$ will play a major role in what follows. Variants are possible: other collections of quotients of~$F_2$ could have been chosen, and we comment on this in \S\ref{subsec-variants}. We shall presently use the language of profinite groups, which allows a reformulation which is plainly independent of choices. Yet, in the sequel where elementary methods are preferred, and whenever we attempt a computation in finite time, the emphasis is on~$H_n$ or the analogous finite groups. The use of profinite groups is necessary, however, to prove theorem~\ref{thm-action-out-faithful}.

\begin{lem} \label{lem-inverse-limit-Hn}
The inverse limit~$\lim_n H_n$ is isomorphic to~$\hat F_2$, the
profinite completion of~$F_2$.
\end{lem}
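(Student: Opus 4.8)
The plan is to identify the tower $(H_n)$ with a cofinal subsystem of the defining inverse system of the profinite completion. Recall that, by definition, $\hat F_2 = \lim_N F_2/N$, the limit running over the set $\mathcal{N}$ of all finite-index normal subgroups $N$ of $F_2$, directed by reverse inclusion (the intersection of two members of $\mathcal{N}$ again lying in $\mathcal{N}$). The restriction of this system to the descending chain $F_2^{(1)} \supseteq F_2^{(2)} \supseteq \cdots$ is exactly the tower $(H_n)$ together with the surjections $H_{n+1} \to H_n$ already described, since the transition map attached to $F_2^{(n+1)} \subseteq F_2^{(n)}$ is the natural quotient $F_2/F_2^{(n+1)} \to F_2/F_2^{(n)}$. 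So it will suffice to prove that this chain is cofinal in $\mathcal{N}$ and then to invoke the elementary fact that a cofinal subsystem of an inverse system has the same limit.

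First I would record that each $F_2^{(n)}$ genuinely lies in $\mathcal{N}$. Since $F_2$ is finitely generated, it has only finitely many subgroups of any prescribed finite index (an index-$m$ subgroup being determined by a transitive action of $F_2$ on an $m$-element set, of which there are finitely many); in particular there are only finitely many normal subgroups of index $\le n$. Hence $F_2^{(n)}$, their intersection, is a finite intersection of members of $\mathcal{N}$, and so is itself normal of finite index --- indeed characteristic, as already noted in the excerpt.

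The crux is cofinality, and here it is immediate: given $N \in \mathcal{N}$ with $[F_2 : N] = m$, the subgroup $N$ is one of the finitely many normal subgroups of index $\le m$ over which $F_2^{(m)}$ is defined as an intersection, so $F_2^{(m)} \subseteq N$. Thus every $N \in \mathcal{N}$ contains some $F_2^{(n)}$, which is precisely cofinality of the chain in $\mathcal{N}$. Therefore $\lim_n H_n = \lim_n F_2/F_2^{(n)} \cong \lim_N F_2/N = \hat F_2$, as claimed. The only input beyond formal nonsense is the finiteness of the set of index-$m$ subgroups of a finitely generated group, which is standard; I expect no genuine obstacle, the entire argument reducing to the observation that the $F_2^{(n)}$ run cofinally through the finite-index normal subgroups.
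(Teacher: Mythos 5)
Your argument is correct and is essentially the paper's own: both proofs reduce the statement to the cofinality of the chain $F_2^{(1)}\supseteq F_2^{(2)}\supseteq\cdots$ among the finite-index normal subgroups, observing that any $N$ of index $m$ contains $F_2^{(m)}$. You merely spell out in addition why each $F_2^{(n)}$ has finite index (finitely many subgroups of bounded index in a finitely generated group), a point the paper disposes of earlier when asserting that $H_n$ is finite.
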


\begin{proof}
By definition the profinite completion is 
\[ \hat F_2 = \lim F_2/N  \]
where the inverse limit is over all normal subgroups~$N$ of finite
index. Each such~$N$ contains some~$F_2 ^{(n)}$ for~$n$ large enough,
so the collection of subgroups~$F_2 ^{(n)}$ is ``final'' in the
inverse limit, implying the result.
\end{proof}

\begin{lem} \label{lem-out-f2-is-inv-lim-out-hn}
There is an isomorphism~$Out(\hat F_2) \cong \lim_n Out(H_n)$. 
\end{lem}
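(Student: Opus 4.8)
The plan is to prove the stronger statement at the level of full automorphism groups first, namely $Aut(\hat F_2) \cong \lim_n Aut(H_n)$, and then to descend to outer automorphisms by a diagram chase. The structural fact I would lean on throughout is that each $F_2^{(n)}$ is a \emph{characteristic} subgroup of $F_2$: it is defined intrinsically as the intersection of all normal subgroups of index $\le n$, hence is visibly stable under every automorphism, and the corresponding open subgroups $\hat F_2^{(n)} = \overline{F_2^{(n)}}$ of $\hat F_2$ are cofinal among all open normal subgroups (an open normal $N$ of index $k$ already contains $\hat F_2^{(k)}$). Combined with lemma~\ref{lem-inverse-limit-Hn}, this presents $\hat F_2 = \lim_n H_n$ with \emph{characteristic} kernels $\hat F_2^{(n)}$.

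First I would construct the isomorphism $\Phi \colon Aut(\hat F_2) \to \lim_n Aut(H_n)$. Since $\hat F_2^{(n)}$ is characteristic, every continuous $\phi \in Aut(\hat F_2)$ satisfies $\phi(\hat F_2^{(n)}) = \hat F_2^{(n)}$ and so descends to some $\phi_n \in Aut(H_n)$; these are compatible with the projections $H_{n+1} \to H_n$, so $\phi \mapsto (\phi_n)_n$ is well defined. Injectivity is immediate from $\hat F_2 = \lim_n H_n$ (an automorphism inducing the identity on every $H_n$ is the identity), and surjectivity follows by assembling a compatible family $(\phi_n)$ into the continuous automorphism $\lim_n \phi_n$, whose inverse is $\lim_n \phi_n^{-1}$.

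To pass to $Out$, I would compare the short exact sequences $1 \to Inn(H_n) \to Aut(H_n) \to Out(H_n) \to 1$ with the analogous one for $\hat F_2$. Applying $\lim_n$ is left exact, and since each $Inn(H_n)$ is finite the system satisfies the Mittag--Leffler condition, so $\lim^1 Inn(H_n) = 0$ and the bottom row $1 \to \lim_n Inn(H_n) \to \lim_n Aut(H_n) \to \lim_n Out(H_n) \to 1$ is exact. It then remains to identify the left-hand term with $Inn(\hat F_2)$: the natural map $\hat F_2 \to \lim_n Inn(H_n)$ sending $g=(g_n)$ to the system of conjugations is surjective (again because $\lim^1 Z(H_n)=0$), and its kernel is $\{(g_n) : g_n \in Z(H_n)\ \forall n\}$, which equals $Z(\hat F_2)$ because the projections $\hat F_2 \to H_n$ are onto. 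Hence $\lim_n Inn(H_n) \cong \hat F_2/Z(\hat F_2) = Inn(\hat F_2)$, compatibly with $\Phi$. With the two outer vertical maps now isomorphisms, the short five lemma yields $Out(\hat F_2) \cong \lim_n Out(H_n)$.

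The step I expect to be the main obstacle is exactly this last one: inverse limits do not commute with passage to quotient groups, so all the real work lies in showing that the induced map $\lim_n Aut(H_n) \to \lim_n Out(H_n)$ is \emph{surjective} and in pinning down its kernel as $Inn(\hat F_2)$. Both points rest on the vanishing of the relevant $\lim^1$ terms for inverse systems of finite groups and on the identification $Z(\hat F_2) = \lim_n Z(H_n)$; once these are secured, the five-lemma conclusion is purely formal.
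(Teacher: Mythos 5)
Your proof is correct, and it diverges from the paper's in the one place that matters. The first half (identifying $Aut_c(\hat F_2)$ with $\lim_n Aut(H_n)$ via the characteristic kernels $\overline{F_2^{(n)}}$) and the surjectivity of $\lim_n Aut(H_n) \to \lim_n Out(H_n)$ are essentially what the paper does -- its inductive correction of representatives by inner automorphisms is exactly the proof that $\lim^1$ vanishes for a tower of finite groups. The real difference is the kernel. The paper identifies $\ker\pi$ with $Inn(\hat F_2)$ by observing that an element of the kernel fixes every open normal subgroup of $\hat F_2$ and then invoking Jarden's theorem (any automorphism of $\hat F_2$ fixing all open normal subgroups is inner), which the author explicitly flags as deep. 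You instead note that $\ker\pi = \lim_n Inn(H_n)$ by left-exactness, and that the sequence $1 \to Z(H_n) \to H_n \to Inn(H_n) \to 1$ of finite groups stays exact in the limit, so $\lim_n Inn(H_n) \cong \hat F_2 / \lim_n Z(H_n) = \hat F_2/Z(\hat F_2) = Inn(\hat F_2)$. This is sound: concretely, the sets $C_n = \{h \in H_n : \beta_n = c_h\}$ form an inverse system of nonempty finite sets (cosets of $Z(H_n)$), so a compatible choice exists by compactness, exhibiting $\beta$ as a genuine inner automorphism; and $\lim_n Z(H_n) = Z(\hat F_2)$ because the projections are onto and separate points. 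So for this lemma your route is strictly more elementary: it replaces Jarden's theorem by exactness of inverse limits of finite groups. What it does not buy is independence of the paper from Jarden altogether -- theorem~\ref{thm-action-out-faithful} uses the genuinely weaker hypothesis that $\beta$ merely permutes-fixes every open normal subgroup (rather than inducing inner automorphisms on the quotients $H_n$), and there the deep theorem is still needed. Two cosmetic points: write $Aut_c(\hat F_2)$ for continuous automorphisms throughout, as the paper does (you say ``continuous'' in the construction but drop the subscript in the statement), and note that your five-lemma step is the variant deducing that the right-hand vertical map is an isomorphism from the left and middle ones, which requires the exactness of the bottom row at the right -- precisely the surjectivity you established.
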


Note that~$Out(\hat F_2)$ is, by definition, $Aut_c(\hat F_2) / Inn(\hat
F_2)$ where~$Aut_c(\hat F_2)$ is the group of {\em continuous}
automorphisms of~$\hat F_2$. The proof will give a description
of~$Aut_c(\hat F_2)$ as an inverse limit of finite groups.

\begin{proof}
We will need the fact that normal subgroups of finite index in~$F_2$
are in bijection with open, normal subgroups of~$\hat F_2$ (which are
automatically closed and of finite index), under the closure
operation~$N \mapsto \bar N$: in fact the quotient map~$F_2 \to F_2/N$
extends to a map~$\hat F_2 \to F_2 /N$ whose kernel is~$\bar N$. It
follows easily that~$\bar N_1 \cap \bar N_2 = \overline{N_1 \cap
  N_2}$, where~$N_i$ has finite index in~$F_2$. In particular, the
closure of~$F_2 ^{(n)}$ in~$\hat F_2$, which is the kernel of~$\hat
F_2 \to H_n$, is preserved by all continuous automorphisms -- we call
it characteristic.

We proceed with the proof. Using the previous lemma we identify~$\hat
F_2$ and~$\lim_n H_n$. There is a natural map
\[ \lim_n Aut(H_n) \longrightarrow Aut_c( \lim_n H_n ) \, ,   \]
and since the kernel of~$\hat F_2 \to H_n$ is characteristic there is
also a map going the other way:
\[ Aut_c(\hat F_2) \longrightarrow \lim_n Aut(H_n) \, .   \]
%
These two maps are easily seen to be inverses to one another.

Next we show that the corresponding map
\[ \pi \colon \lim_n Aut(H_n) \longrightarrow \lim_n Out(H_n)  \]
is surjective. This can be done as follows. Suppose that a
representative~$\tilde \gamma_n \in Aut(H_n)$ of~$\gamma_n \in
Out(H_n)$ has been chosen. Pick any representative~$\tilde
\gamma_{n+1}$ of~$\gamma_{n+1}$. It may not be the case that~$\tilde
\gamma_{n+1}$ maps to~$\tilde \gamma_n$ under the map~$Aut(H_{n+1})
\to Aut(H_n)$, but the two differ by an inner automorphism of~$H_n$;
since~$H_{n+1} \to H_n$ is surjective, we can compose~$\tilde
\gamma_{n+1}$ with an inner automorphism of~$H_{n+1}$ to compensate
for this. This defines~$(\tilde \gamma_n)_{n \ge 1} \in \lim_n
Aut(H_n)$ by induction, and shows that~$\pi$ is surjective.

To study the kernel of~$\pi$, we rely on a deep theorem of
Jarden~\cite{ jarden}, which states that any automorphism of~$\hat
F_2$ which fixes all the open, normal subgroups is in fact inner. An
element~$\beta \in \ker(\pi)$ must satisfy this assumption: indeed
each open, normal subgroup of~$\hat F_2$ is the closure~$\bar N$ of a
normal subgroup~$N$ of finite index in~$F_2$, and each such subgroup
contains some~$F_2 ^{(n)}$ for some~$n$ large enough, so if~$\beta $
induces an inner automorphism of~$H_n$ it must fix~$\bar N$. We
conclude that the kernel of~$\pi$ is~$Inn(\hat F_2)$, and the lemma
follows.
\end{proof}



\subsection{A group containing~$\gal$}

We make use of the axiom of choice, and select an algebraic
closure~$\Omega $ of~$\qb(x)$.

The finite group~$H_n$ with its two generators gives a regular dessin,
and so also an extension of fields~$L_n/\qb (x)$ which is in~$\etq$;
it is Galois with $\Gal(L_n / \qb (x)) \cong H_n$. Now we may
choose~$L_n$ to be a subfield of~$\Omega $. What is more, $L_n$ is
then unique: for suppose we had~$L_n' \subset \Omega $ such that there
is an isomorphism of field extensions~$L_n \to L_n'$, then we would
simply appeal to the fact that any map~$L_n \to \Omega $ has its
values in~$L_n$, from basic Galois theory. In the same vein, we point
out that if~$L/\qb(x)$ is any extension which is isomorphic to~$L_n /
\qb(x)$, then any two isomorphisms~$L_n \to L$ differ by an element
of~$\Gal(L_n/\qb(x))$. From now on we identify once and for all~$H_n$
and~$\Gal(L_n/\qb(x))$.

Now let~$\lambda \in \Gal(\qb / \q )$. We have seen that~$\act{\lambda
}{L_n}$ is again regular (just like~$L_n$ is), and that it corresponds
to a choice of two new generators~$\sigma_\lambda $
and~$\alpha_\lambda $ of~$H_n$. However by (3) of
proposition~\ref{prop-props-of-Hn} there is an automorphism~$H_n \to
H_n$ such that~$\sigma \mapsto \sigma_\lambda $ and~$\alpha \mapsto
\alpha_\lambda $, and so~$L_n$ and~$\act{\lambda }{L_n}$ are
isomorphic. In other words there exists an isomorphism~$\iota \colon
L_n \to \act{\lambda }{L_n}$ of extensions of~$\qb(x)$, which is
defined up to pre-composition by an element of~$\Gal(L_n/\qb(x)) =
H_n$.

Given~$h \in H_n$, we may consider now the following diagram, which
{\em does not} commute.
\[ \begin{CD}
L_n @>{\iota }>> \act{\lambda }{L_n} \\
@V{h}VV                 @VV{\lambda ^*(h)}V \\
L_n @>{\iota }>> \act{\lambda }{L_n} \, . 
\end{CD}
  \]
The map~$\iota ^{-1} \circ \lambda ^*(h) \circ \iota $ depends on the
choice of~$\iota $, and more precisely it is defined up to conjugation
by an element of~$H_n$. As a result the automorphism~$h \mapsto \iota
^{-1} \circ \lambda ^*(h) \circ \iota$ of~$H_n$ induces a well-defined
element in~$ Out(H_n)$, which depends only on~$\lambda $.

\begin{thm}
There is an injective homomorphism of groups 
\[ \gt \colon \gal \longrightarrow \lim_n Out(H_n) \cong Out(\hat F_2) \, .   \]
\end{thm}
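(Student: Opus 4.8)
The construction preceding the statement already attaches to each $\lambda \in \gal$ and each $n$ a well-defined element $\gt_n(\lambda) \in Out(H_n)$, so the plan is to verify three things: that the family $(\gt_n(\lambda))_n$ is compatible with the transition maps $Out(H_{n+1}) \to Out(H_n)$ (hence defines $\gt(\lambda) \in \lim_n Out(H_n)$), that $\gt$ is a group homomorphism, and that it is injective.

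For the first two points I would repackage the definition so that the twisting by $\iota$ becomes transparent. First I would choose the fields $L_n \subset \Omega$ so that $L_n \subset L_{n+1}$, matching the characteristic surjection $H_{n+1} \to H_n$; compatibility of $\gt_n(\lambda)$ with the transition maps then follows from the naturality of $\lambda^*$ under restriction, the different choices of $\iota$ being irrelevant once one has passed to $Out$. Writing $L_\infty = \bigcup_n L_n$, so that $\Gal(L_\infty/\qb(x)) \cong \hat F_2$ by lemma~\ref{lem-inverse-limit-Hn}, I would next extend each $\lambda$ to an automorphism $\hat\lambda$ of $L_\infty$ fixing $\q(x)$. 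This is legitimate because each $L_n$ is stable under any such extension: $\hat\lambda(L_n)$ is $\qb(x)$-isomorphic to $\act{\lambda}{L_n}$, which is in turn isomorphic to $L_n$ and unique as a subfield of $\Omega$. Thus $L_\infty/\q(x)$ is Galois and we obtain a short exact sequence
\[ 1 \longrightarrow \hat F_2 \longrightarrow \Gal(L_\infty/\q(x)) \longrightarrow \gal \longrightarrow 1 \, . \]
With this description, $\gt(\lambda)$ is exactly the image of $\hat\lambda$ under the outer action $\gal \to Out(\hat F_2)$ attached to the extension, so the homomorphism property becomes automatic, once the left/right conventions are reconciled with the cocycle $\act{\mu}{(\act{\lambda}{L})} \cong \act{\mu\lambda}{L}$ that defines the action.

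The hard part will be injectivity, and here I would combine two earlier inputs. Suppose $\gt(\lambda) = 1$. Through the identification $\lim_n Out(H_n) \cong Out(\hat F_2)$ of lemma~\ref{lem-out-f2-is-inv-lim-out-hn} — whose proof used Jarden's theorem to establish $\ker \pi = Inn(\hat F_2)$ — the conjugation action $\Psi_\lambda$ of $\hat\lambda$ on $\hat F_2$ is inner. Replacing $\hat\lambda$ by its product with a suitable element of $\hat F_2 = \Gal(L_\infty/\qb(x))$, which does not alter $\hat\lambda|_{\qb} = \lambda$, I may then assume that $\hat\lambda$ centralizes $\hat F_2$. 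Consequently $\hat\lambda$ normalizes every open normal subgroup $N$ and acts trivially on each finite quotient $\hat F_2/N$; translating through the Galois correspondence (lemma~\ref{lem-galois-correspondence}) this says precisely that $\act{\lambda}{\cell} \cong \cell$ for every regular dessin $\cell$, and hence — passing to regular closures, again via lemma~\ref{lem-galois-correspondence} — for every dessin $\cell$ whatsoever. By the faithfulness of the action on all dessins (theorem~\ref{thm-action-on-dessins-faithful}), this forces $\lambda = 1$, proving injectivity.

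Finally I would record the promised refinement essentially for free. If $\lambda \neq 1$, then $\gt(\lambda) \neq 1$, so $\Psi_\lambda$ is not inner; by the contrapositive of Jarden's theorem it fails to fix some open normal subgroup $N$ of $\hat F_2$. The regular dessin corresponding to $\hat F_2/N$ then satisfies $\act{\lambda}{\cell} \not\cong \cell$, which is exactly the statement that the action of $\gal$ on regular dessins is faithful.
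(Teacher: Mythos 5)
Your proposal is correct, and the skeleton — the tower $L_n \subset L_{n+1}$ with $L_n$ corresponding to a characteristic subgroup, a well-defined outer class for each $\lambda$, and injectivity reduced to the faithfulness of the Galois action on dessins — is the paper's. But you diverge in two places worth noting. For the homomorphism property, the paper argues directly with the comparison isomorphisms: if $\gt_n(\lambda_i)$ is represented by $h \mapsto \iota_i^{-1}\circ\lambda_i^*(h)\circ\iota_i$, the composite is represented by $\iota_3 = \act{\lambda_1}{\iota_2}\circ\iota_1$, which is an isomorphism $L_n \to \act{\lambda_1\lambda_2}{L_n}$, and that is the whole verification. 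You instead assemble $L_\infty = \bigcup_n L_n$, check it is Galois over $\q(x)$, and read off $\gt$ as the outer action attached to the extension $1 \to \hat F_2 \to \Gal(L_\infty/\q(x)) \to \gal \to 1$; this is more conceptual (and closer to how Ihara and the subsequent literature set things up), at the cost of the stability argument for $L_n$ under extensions of $\lambda$, which you supply correctly. For injectivity, the paper's argument is strictly more elementary than yours: given $\gt(\lambda)=1$, it works level by level — any $L \subset L_n$ corresponds to a subgroup $K$ of $H_n$, and since $\lambda^*$ becomes conjugation by an element of $H_n$ after identifying $\act{\lambda}{L_n}$ with $L_n$, the dessin $\act{\lambda}{L}$ corresponds to a conjugate of $K$ and is therefore isomorphic to $L$ — with no appeal to Jarden's theorem. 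You route through $\ker\pi = Inn(\hat F_2)$, i.e.\ through Jarden, to make $\hat\lambda$ centralize $\hat F_2$; this is valid given lemma~\ref{lem-out-f2-is-inv-lim-out-hn}, but it imports a deep result where a finite-level conjugacy argument suffices. Your closing derivation of faithfulness on regular dessins does genuinely need Jarden, and it matches the paper's corollary to theorem~\ref{thm-action-out-faithful}.
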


\begin{proof}
We have explained how to associate to~$\lambda \in \gal$ an element
in~$Out(H_n)$. First we need to prove that this gives a homomorphism 
\[  \gt_n \colon \gal \longrightarrow Out(H_n) \, ,   \]
for each fixed~$n$.  Assume that~$\gt_n(\lambda_i)$ is represented by
$ h \mapsto \iota_i^{-1} \circ \lambda_i^*(h) \circ \iota_i$, for~$i=
1, 2$. Then~$\gt_n (\lambda_1) \circ \gt_n (\lambda_2)$ is represented
by their composition, which is
\[ h \mapsto \iota_3^{-1} \circ (\lambda_1 \lambda_2)^* \circ \iota_3
\, ,   \]
where~$\iota_3 = \act{\lambda_1}{\iota_2} \circ
\iota_1$. Since~$\iota_3$ is an isomorphism~$L_n \to \act{\lambda_1
  \lambda_2}{L_n}$, we see that this automorphism represents~$ \gt_n
(\lambda_1 \lambda_2)$, so~$\gt_n (\lambda_1 \lambda_2) = \gt_n(\lambda_1)
\gt_n(\lambda_2)$, as requested.


Next we study the compatibility with the maps~$Out(H_{n+1}) \to
Out(H_n)$. The point is that~$L_n \subset L_{n+1}$, and that~$L_n$
corresponds to a {\em characteristic} subgroup of~$H_{n+1}$ in the
Galois correspondence (namely~$H_{n+1} ^{(n)}$). It follows that any
isomorphism~$L_{n+1} \to \act{\lambda } {L_{n+1}}$ must carry~$L_n$
onto~$\act{\lambda }{L_n}$. Together with the naturality
of~$\lambda^*$, this gives the desired compatibilities. 

Finally we must prove that~$\gt$ is injective. We have seen that the
action of~$\gal$ on dessins is faithful; so it suffices to shows
that whenever~$\gt (\lambda ) = 1$, the action of~$\lambda $ on
dessins is trivial.

To see this, pick any extension~$L$ of~$\qb(x)$, giving an object
in~$\etq$. It is contained in~$L_n$ for some~$n$, and corresponds to a
certain subgroup~$K$ of~$H_n$ in the Galois correspondence. By
lemma~\ref{lem-galois-correspondence}, $\act{\lambda }{L}$ corresponds
to~$\lambda ^*(K)$ as a subfield of~$\act{\lambda }{L_n}$. The
condition~$\gt(\lambda )= 1$ means that, if we identify~$\act{\lambda
}{L_n}$ with~$L_n$ by means of some choice of isomorphism~$\iota $
(which we may), the map~$\lambda^*$ becomes conjugation by a certain
element of~$H_n$. So~$\act{\lambda }{L}$ corresponds to a conjugate
of~$K$, and is thus isomorphic to~$L$ (this is part of the Galois
correspondence).
\end{proof}

\subsection{Action of~$Out(H_n)$ on dessins}

We seek to define a down-to-earth description of an action of~$\lim_n
Out(H_n)$ on (isomorphism classes of) dessins. In fact we only define
an action on {\em connected} dessins in what follows, and will not
recall that assumption. (It is trivial to extend the action to all
dessins if the reader wishes to do so.)


We work in~$\sets_{\sigma, \alpha , \phi}$, in which a typical
(connected) object is~$K \bs G$, where~$G$ is a finite group with two
generators~$\sigma $ and~$\alpha $ and~$K$ is a subgroup. Assume
that~$G$ has order~$\le n$. Then there is a surjective map~$p \colon
H_n \to G$, sending~$\sigma $ and~$\alpha $ to the elements bearing
the same name. We let~$N = \ker(p)$ and~$\bar K = p^{-1}(K)$.

Now suppose~$\gamma $ is an automorphism of~$H_n$. We can
consider~$\act{\gamma }{G} = H_n / \gamma (N)$, which we see as
possessing the distinguished generators~$\sigma $ and~$\alpha $, the
images under~$H_n \to H_n / \gamma (N)$ of the elements with the same
name. We certainly {\em do not} take~$\gamma (\sigma )$ and~$\gamma
(\alpha )$ as generators; on the other hand $\gamma $ induces an
isomorphism of groups~$G \to \act{\gamma }{G}$ which is {\em not}
compatible with the distinguished generators. Finally~$\act{\gamma
}{G}$ has the subgroup~$\act{\gamma }{K}$, the image of~$\gamma (\bar
K)$ under~$H_n \to H_n / \gamma (N)$. The object~$\act{\gamma }{K} \bs
\act{\gamma } {G}$ in~$\sets_{\sigma, \alpha, \phi}$ is the result of
applying~$\gamma $ to~$K \bs G$. Clearly this defines an action
of~$Out(H_n)$ on isomorphism classes of dessins whose cartographic
group has order~$\le n$.

\begin{lem}
Suppose~$\gamma \in Out(H_n)$ is of the form~$\gamma = \Gamma_n
(\lambda )$ for some~$\lambda \in \gal$. Then the action of~$\gamma $
on (isomorphism classes of) dessins agrees with that of~$\lambda $.
\end{lem}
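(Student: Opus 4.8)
The plan is to check that the two recipes yield isomorphic objects in $\sets_{\sigma, \alpha, \phi}$, by computing each of them explicitly as a coset space of $H_n$. I would start with a connected object $K \bs G$, where $G = \langle \sigma, \alpha \rangle$ has order $\le n$, together with the surjection $p \colon H_n \to G$ fixing the names of the generators, $N = \ker(p)$ and $\bar K = p^{-1}(K) \supseteq N$. Since $p$ is surjective, the map $h \mapsto K\,p(h)$ identifies $K \bs G$ with $\bar K \bs H_n$ as right $H_n$-sets, compatibly with the action of $\sigma$ and $\alpha$. Thus $K \bs G$ is nothing but the intermediate dessin of the regular dessin $L_n$ (whose automorphism group we have identified with $H_n = \Gal(L_n/\qb(x))$) attached to the subgroup $\bar K$ in the Galois correspondence.

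First I would compute $\act{\lambda}{(K \bs G)}$. By lemma~\ref{lem-galois-correspondence}, $\act{\lambda}{L_n}$ is regular and $\act{\lambda}{(\bar K \bs H_n)}$ is its intermediate dessin corresponding to $\lambda^*(\bar K) \subseteq \Gal(\act{\lambda}{L_n}/\qb(x))$. Now fix the isomorphism $\iota \colon L_n \to \act{\lambda}{L_n}$ used to define $\gt_n(\lambda)$, so that the representative $\gamma \in Aut(H_n)$ is $\gamma \colon h \mapsto \iota^{-1} \circ \lambda^*(h) \circ \iota$. Transporting the intermediate dessin back along $\iota$ turns it into an intermediate dessin of $L_n$ itself; tracking the Galois correspondence through $\iota$ shows that the subgroup $\lambda^*(\bar K)$ becomes $\iota^{-1}\, \lambda^*(\bar K)\, \iota = \gamma(\bar K)$. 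Hence $\act{\lambda}{(K \bs G)} \cong \gamma(\bar K) \bs H_n$ in $\sets_{\sigma, \alpha, \phi}$.

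On the other hand, unwinding the definition of the $Out(H_n)$-action with this same $\gamma$ gives $\act{\gamma}{G} = H_n / \gamma(N)$ and $\act{\gamma}{K}$ the image of $\gamma(\bar K)$ there; since $\gamma(N) \subseteq \gamma(\bar K)$, the quotient collapses and $\act{\gamma}{K} \bs \act{\gamma}{G} \cong \gamma(\bar K) \bs H_n$, again with $\sigma$ and $\alpha$ acting by right translation. The two objects coincide, which is the claim. Replacing $\iota$ by another isomorphism, or $\gamma$ by another representative of $\gt_n(\lambda) \in Out(H_n)$, conjugates $\gamma(\bar K)$ and so leaves the isomorphism class unchanged, confirming that everything descends to $Out(H_n)$. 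The main thing to be careful about --- and the only place where an error could creep in --- is the bookkeeping of conjugations: getting the direction of the Galois correspondence under $\iota$ right, and making sure throughout that the distinguished generators remain the images of $\sigma, \alpha$ (never $\gamma(\sigma), \gamma(\alpha)$), exactly as stipulated in the definition of the action.
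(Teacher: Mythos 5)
Your argument is correct and follows essentially the same route as the paper: identify $K \bs G$ with the intermediate dessin of the regular dessin $L_n$ corresponding to $\bar K \le H_n$, apply lemma~\ref{lem-galois-correspondence} to see that $\act{\lambda}{X}$ corresponds to $\lambda^*(\bar K)$, and transport along $\iota$ to obtain $\gamma(\bar K)\bs H_n$, which matches the definition of the $Out(H_n)$-action. Your extra care about the collapse of the quotient and the independence of the choice of $\iota$ is a welcome elaboration of what the paper leaves implicit.
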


\begin{proof}
We keep the notation introduced above, and write~$\cell$ for the
regular dessin defined by the finite group~$H_n$ with its two
canonical generators. The dessin~$X =K \bs G$ considered is the
intermediate dessin of~$\cell$ corresponding to the subgroup~$\bar K$
of~$Aut(\cell) = H_n$. Thus~$\act{\lambda } X$ corresponds to the
subgroup~$\lambda^*(\bar K)$ of~$Aut(\act{\lambda } \cell) =
\lambda^*(H_n)$. Picking an isomorphism~$\iota $ between~$\cell$
and~$\act{\lambda } {\cell}$ as before, we see that~$\act{\lambda } X$
is isomorphic~$\gamma  (\bar K) \bs H_n$ as requested.
\end{proof}

\begin{lem}
The actions defined above are compatible as~$n$ varies and can be
combined into a single action of~$\lim_n Out(H_n)$ on the isomorphism
classes of dessins.
\end{lem}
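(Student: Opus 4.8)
The plan is to reduce the statement to a single compatibility between two consecutive levels, and then assemble the inverse-limit action by a purely formal argument.

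First I would record the following. Any connected dessin $X$ whose cartographic group $G$ has order $\le n$ also has cartographic group of order $\le m$ for every $m\ge n$, so each group $Out(H_m)$ with $m\ge n$ acts on the isomorphism class of $X$. An element of $\lim_n Out(H_n)$ is a compatible sequence $(\gamma_n)_n$, meaning $\gamma_{n+1}\mapsto\gamma_n$ under $Out(H_{n+1})\to Out(H_n)$, and the natural definition is $\act{(\gamma_n)}{X}:=\act{\gamma_{n_0}}{X}$ for any $n_0\ge|G|$. Everything therefore comes down to the claim that, if $\gamma'\in Out(H_{n+1})$ maps to $\gamma\in Out(H_n)$ and $X$ has cartographic group of order $\le n$, then $\act{\gamma'}{X}\cong\act{\gamma}{X}$; granting this, induction shows that $\act{\gamma_m}{X}$ is independent of $m\ge n_0$ up to isomorphism.

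To prove the claim I would write $X=K\bs G$ with $|G|\le n$, and let $p_n\colon H_n\to G$, $p_{n+1}\colon H_{n+1}\to G$ and $q\colon H_{n+1}\to H_n$ be the surjections sending the distinguished generators to the same-named elements; since $\sigma,\alpha$ generate, $p_nq=p_{n+1}$. Choose a representative automorphism $\gamma'\in Aut(H_{n+1})$ (the action being well-defined on $Out$-classes, as already noted). Because $\ker q=H_{n+1}^{(n)}$ is characteristic, $\gamma'$ descends to $\gamma\in Aut(H_n)$ with $\gamma q=q\gamma'$, representing the image of $\gamma'$ in $Out(H_n)$. Now set $N_n=\ker p_n$ and $\bar K_n=p_n^{-1}(K)$, and likewise $N_{n+1},\bar K_{n+1}$; surjectivity of $q$ gives $N_{n+1}=q^{-1}(N_n)$ and $\bar K_{n+1}=q^{-1}(\bar K_n)$. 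The heart of the matter is the identity $\gamma'(N_{n+1})=q^{-1}(\gamma(N_n))$: indeed $q(\gamma'(N_{n+1}))=\gamma(q(N_{n+1}))=\gamma(N_n)$, while $\gamma'(N_{n+1})\supseteq\gamma'(\ker q)=\ker q$ because $\ker q$ is characteristic, and the two facts together force the equality. Consequently $q$ induces an isomorphism $\bar q\colon\act{\gamma'}{G}=H_{n+1}/\gamma'(N_{n+1})\to H_n/\gamma(N_n)=\act{\gamma}{G}$ carrying $\sigma$ to $\sigma$ and $\alpha$ to $\alpha$. The same computation with $\bar K$ gives $q(\gamma'(\bar K_{n+1}))=\gamma(\bar K_n)$, so $\bar q$ sends $\act{\gamma'}{K}$ onto $\act{\gamma}{K}$; hence $\bar q$ is an isomorphism $\act{\gamma'}{K}\bs\act{\gamma'}{G}\cong\act{\gamma}{K}\bs\act{\gamma}{G}$ in $\sets_{\sigma,\alpha,\phi}$, which is exactly $\act{\gamma'}{X}\cong\act{\gamma}{X}$.

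Finally I would assemble the action: with the claim established, $\act{(\gamma_n)}{X}$ is well-defined on isomorphism classes, and since each $Out(H_n)$ already acts and the structure maps of the inverse limit are homomorphisms, the assignment $(\gamma_n)\mapsto\bigl(X\mapsto\act{(\gamma_n)}{X}\bigr)$ respects composition and sends the identity sequence to the identity; thus $\lim_n Out(H_n)$ acts. The step I expect to be the only real obstacle is the bookkeeping identity $\gamma'(N_{n+1})=q^{-1}(\gamma(N_n))$ (and its twin for $\bar K$), where the hypotheses that $\ker q$ is characteristic and that $\gamma q=q\gamma'$ are both indispensable; the rest is formal diagram-chasing.
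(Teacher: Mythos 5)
Your proof is correct and follows essentially the same route as the paper: reduce to comparing two levels via the factorization $p_{n+s}=p_n\circ\pi_{n+s}$, then assemble the limit action formally. The only difference is that you spell out the bookkeeping identity $\gamma'(N_{n+1})=q^{-1}(\gamma(N_n))$ (using that $\ker q=H_{n+1}^{(n)}$ is characteristic) which the paper leaves as ``follows easily''; your verification of it is sound.
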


\begin{proof}
It suffices to prove that, for any integers~$n, s$, if we
pick~$\gamma_{n+s} \in Out(H_{n+s})$ and let~$\gamma_n$ be its image
under the projection~$Out(H_{n+s}) \to Out(H_n)$, then for any
dessin~$X$ whose cartographic group~$G$ has order~$\le n$ the
dessins~$\act{\gamma_{n+s}} X$ and~$\act{\gamma_n} X$ are
isomorphic. However this follows easily from the fact that the
projection~$p_{n+s} \colon H_{n+s} \to G$ factors as~$p_n \circ
\pi_{n+s}$, where we write~$\pi_{n+s} \colon H_{n+s} \to H_n$ for the
natural map.
\end{proof}

Now we seek to prove that the action of~$\lim_n Out(H_n)$ on dessins
is faithful. 

\begin{thm} \label{thm-action-out-faithful}
The group~$\lim_n Out(H_n) \cong Out(\hat F_2)$ acts faithfully on the
set of regular dessins. 
\end{thm}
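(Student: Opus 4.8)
The plan is to reduce the statement entirely to the theorem of Jarden~\cite{jarden} already invoked in the proof of Lemma~\ref{lem-out-f2-is-inv-lim-out-hn}. First I would make precise what the action looks like when restricted to \emph{regular} dessins. By Proposition~\ref{prop-iso-classes-regular}(2), the isomorphism classes of regular dessins are in bijection with the normal subgroups of finite index in $F_2 = \langle \sigma, \alpha \rangle$; and as recorded in the proof of Lemma~\ref{lem-out-f2-is-inv-lim-out-hn}, the closure operation $N \mapsto \bar N$ identifies these with the open normal subgroups of $\hat F_2$. Under the isomorphism $\lim_n Out(H_n) \cong Out(\hat F_2)$, I claim the action defined above sends the regular dessin attached to $\bar N$ to the one attached to $\tilde\gamma(\bar N)$, where $\tilde\gamma \in Aut_c(\hat F_2)$ is any lift of $\gamma \in Out(\hat F_2)$. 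This is well defined on $Out(\hat F_2)$ precisely because $\bar N$, being normal, satisfies $g\,\tilde\gamma(\bar N)\,g^{-1} = \tilde\gamma(\bar N)$, so the choice of lift is irrelevant.

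Next I would trace through the construction of the action to confirm this formula. A regular dessin corresponds to the case $K = \{1\}$, so $\bar K = N = \ker(p \colon H_n \to G)$, and the object $\act{\gamma}{K} \bs \act{\gamma}{G}$ is simply $H_n / \gamma(N)$, namely the regular dessin attached to $\gamma(N)$. Passing to the inverse limit over $n$, using the cofinality of the $F_2^{(n)}$ among finite-index subgroups noted in Lemma~\ref{lem-inverse-limit-Hn}, this becomes the assignment $\bar N \mapsto \tilde\gamma(\bar N)$ on open normal subgroups of $\hat F_2$. The one small point to verify is that two regular dessins are isomorphic if and only if their normal subgroups \emph{coincide}, not merely up to conjugacy; but this is immediate, since a normal subgroup is its own conjugacy class. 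Thus the set of regular dessins faithfully records the collection of open normal subgroups of $\hat F_2$, together with the tautological $Aut_c(\hat F_2)$-action on them.

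With this dictionary in hand the result is transparent. An element $\gamma \in Out(\hat F_2)$ lies in the kernel of the action on regular dessins exactly when some (equivalently every) lift $\tilde\gamma \in Aut_c(\hat F_2)$ satisfies $\tilde\gamma(\bar N) = \bar N$ for \emph{all} open normal subgroups $\bar N$ of $\hat F_2$. This is precisely the hypothesis of Jarden's theorem, which asserts that such a $\tilde\gamma$ must be inner. Hence $\gamma = 1$ in $Out(\hat F_2)$, and the action is faithful, as desired.

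The main obstacle is not really on our side: all the genuine difficulty is concentrated in Jarden's theorem, which we quote. What remains for us is essentially bookkeeping --- matching the concrete action of $\lim_n Out(H_n)$ on dessins with the tautological action of $Aut_c(\hat F_2)$ on open normal subgroups, and noting that the passage from $Aut_c$ to $Out$ loses nothing here because normal subgroups are conjugation-invariant. I expect the only place demanding genuine care is ensuring that restricting to regular dessins still \emph{sees every} open normal subgroup, so that a class fixing all regular dessins forces $\tilde\gamma$ to fix all of them; this again follows from the cofinality observed in Lemma~\ref{lem-inverse-limit-Hn}.
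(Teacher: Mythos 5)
Your proposal is correct and follows essentially the same route as the paper: both arguments reduce the statement to the observation that an element acting trivially on all regular dessins lifts to an automorphism of $\hat F_2$ fixing every open normal subgroup, and then quote Jarden's theorem to conclude that this lift is inner. Your version merely makes explicit the bookkeeping (regular dessins $\leftrightarrow$ open normal subgroups of $\hat F_2$, well-definedness on $Out$, isomorphism meaning equality rather than conjugacy of normal subgroups) that the paper's terser proof leaves implicit.
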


\begin{proof}
Let~$\beta \in Aut(\hat F_2)$ correspond to~$\gamma = (\gamma_n)_{n
  \ge 1} \in \lim_n Out(H_n)$. If the action of this element is
trivial on the set of all regular dessins, then the
automorphism~$\beta $ must fix all open, normal subgroups of~$\hat
F_2$. However the theorem of Jarden already used in the proof of
lemma~\ref{lem-out-f2-is-inv-lim-out-hn} implies then that~$\beta $ is
an inner automorphism of~$\hat F_2$. As a result, $\gamma_n = 1$ for
all~$n$.
\end{proof}

Here it was necessary to see~$\lim_n H_n$ as~$Out(\hat F_2)$ to
conduct the proof (or more precisely, to be able to apply Jarden's
theorem which is stated in terms of~$\hat F_2$).

\begin{coro}
The group~$\gal$ acts faithfully on the set of regular dessins.
\end{coro}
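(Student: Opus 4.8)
The plan is to obtain the corollary as a formal consequence of three facts already established: the injectivity of the homomorphism~$\gt \colon \gal \to \lim_n Out(H_n)$, the faithfulness of the action of~$\lim_n Out(H_n) \cong Out(\hat F_2)$ on regular dessins (Theorem~\ref{thm-action-out-faithful}), and the compatibility lemma asserting that for each~$\lambda \in \gal$ the element~$\gt(\lambda)$ acts on (isomorphism classes of) dessins exactly as~$\lambda$ does. All of the genuine difficulty — in particular the appeal to Jarden's theorem and the construction of the embedding~$\gt$ — is contained in those results, so what remains is simply to transport faithfulness along~$\gt$.

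First I would note that the~$\gal$-action does preserve regularity (a regular dessin has regular image, as recorded earlier), so it makes sense to speak of~$\gal$ acting on the set of regular dessins; likewise the action of~$\lim_n Out(H_n)$ restricts to that set. The compatibility lemma then applies verbatim: for every~$\lambda \in \gal$ and every regular dessin~$\cell$, the classes of~$\act{\lambda}{\cell}$ and of~$\gt(\lambda) \cdot \cell$ coincide.

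Now suppose~$\lambda \in \gal$ fixes the isomorphism class of every regular dessin. By the compatibility just recalled, $\gt(\lambda)$ then fixes the class of every regular dessin as well. Theorem~\ref{thm-action-out-faithful} says that~$\lim_n Out(H_n)$ acts faithfully on regular dessins, whence~$\gt(\lambda) = 1$; and since~$\gt$ is injective we conclude~$\lambda = 1$. This is precisely the faithfulness of the~$\gal$-action on the set of regular dessins.

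There is no serious obstacle left to overcome. The only point deserving care is the verification that ``$\lambda$ acts trivially'' and ``$\gt(\lambda)$ acts trivially'' are the same condition when both are tested on the set of regular dessins, and this is exactly the content of the compatibility lemma, applied class by class. In spirit the corollary records that a faithful action of the larger group~$\lim_n Out(H_n)$, pulled back along the injection~$\gt$, remains faithful when restricted to the regular objects that~$\gal$ stabilizes.
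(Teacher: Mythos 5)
Your argument is correct and is exactly the intended derivation: the paper leaves the corollary without proof precisely because it follows immediately from Theorem~\ref{thm-action-out-faithful}, the injectivity of~$\gt$, and the lemma identifying the action of~$\gt(\lambda)$ on dessins with that of~$\lambda$. Your attention to the preliminary point that regularity is preserved by the Galois action is also consistent with the paper, which records this as a separate lemma.
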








\begin{ex} \label{ex-explicit-action-aut}
Suppose~$\gamma $ is an automorphism of~$H_n$ for which you have an
explicit formula, say
\[ \gamma (\sigma ) = \alpha \phi \alpha^{-1} \, , \qquad \gamma
(\alpha ) = \alpha \, .   \]
What is the effect of~$\gamma $ on dessins, explicitly? Discussing
this for regular dessins for simplicity, say you have~$G$, a finite
group of order~$\le n$ with two distinguished generators written as
always~$\sigma $ and~$\alpha $. Can we compute the effect of~$\gamma $
on~$(G, \sigma , \alpha )$ immediately?

The answer is that some care is needed. Looking at the definitions, we
write~$G = H_n / N$ for some uniquely defined~$N$, and the new dessin
is~$(H_n/ \gamma (N), \sigma, \alpha )$. If we want to write this more
simply, according to the principle that ``applying~$\gamma $ gives the
same group with new generators'', we exploit the isomorphism of groups
\[ G = H_n / N \longrightarrow H_n / \gamma (N)  \]
which is induced by~$\gamma $. Transporting the canonical generators
of~$H_n/ \gamma (N)$ to~$G$ {\em via} this isomorphism gives is
fact~$(G, \gamma^{-1}(\sigma ), \gamma^{-1}(\alpha ))$ (note the
inverses!).

In our case we compute~$\gamma ^{-1}(\sigma ) = \phi$,
$\gamma^{-1}(\alpha ) = \alpha $. In short 
\[ \act{\gamma }{(G, \sigma , \alpha )} = (G, \phi, \alpha )  \]
with, as ever, $\phi = (\sigma \alpha )^{-1}$. Incidentally, if we
compare this with example~\ref{ex-duality}, we see that the action
of~$\gamma $ is to turn a dessin into its ``dual''. 
\end{ex}

\subsection{The coarse Grothendieck-Teichmüller group}

Let us give a list of conditions describing a subgroup of~$\lim
\farout(H_n)$ containing the image of~$\gal$. 

\begin{lem} \label{lem-canonical-form-gt}
Let~$\gamma = \gt_n(\lambda )\in \farout(H_n)$, for some~$\lambda \in
\gal$. Then~$\gamma $ can be represented by an element of~$Aut(H_n)$,
still written~$\gamma $ for simplicity, and enjoying the following
extra properties: there exists an integer~$k$ prime to the order
of~$H_n$, and an element~$f \in [H_n, H_n]$, the commutator subgroup,
such that
\[ \gamma (\sigma ) = \sigma ^k \quad\textnormal{and}\quad \gamma (\alpha ) = f^{-1} \alpha^k
f \, .  \]
Moreover~$\gamma (\sigma \alpha )$ is conjugated to~$(\sigma \alpha
)^k$.
\end{lem}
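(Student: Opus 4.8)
The plan is to manufacture a concrete representative of the outer class $\gamma = \gt_n(\lambda)$ in $Aut(H_n)$ by first determining its effect up to conjugacy through the branch cycle argument, and then spending the freedom of modifying a representative by inner automorphisms to reach the normal form. Throughout, write $N = |H_n|$ and recall that the regular dessin attached to $(H_n,\sigma,\alpha)$ corresponds to $L_n/\qb(x)$ with $\Gal(L_n/\qb(x)) = H_n$ of degree $N$, with $(\sigma,\alpha,\phi)$ a distinguished triple. Since $\lambda$ permutes the primitive $N$-th roots of unity there is an integer $k$, prime to $N$, with $\lambda(\zeta_N) = \zeta_N^k$, where $\zeta_N = e^{\frac{2i\pi}{N}}$; equivalently $\lambda^{-1}(\zeta_N) = \zeta_N^m$ with $m \equiv k^{-1} \pmod N$. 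By the construction of $\gt_n$, the class $\gamma$ is represented by $h \mapsto \iota^{-1}\lambda^*(h)\iota$ for an isomorphism $\iota \colon L_n \to \act{\lambda}{L_n}$ of extensions of $\qb(x)$. Such an $\iota$ is an orientation-preserving isomorphism of regular dessins, so by proposition~\ref{prop-regular-dessins-are-groups-with-generators} conjugation by $\iota$ carries the distinguished triple of $\act{\lambda}{L_n}$ to a conjugate of $(\sigma,\alpha,\phi)$.

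The next step is to show $\gamma(\sigma)$ is conjugate to $\sigma^k$, and likewise $\gamma(\alpha) \sim \alpha^k$ and $\gamma(\phi) \sim \phi^k$ in $H_n$. For this I would invoke the branch cycle argument (proposition~\ref{prop-branch-cycle-argument}): with the above $m$, the element $\lambda^*(\tilde\sigma^m)$ is conjugate to the distinguished generator $\tilde\sigma_\lambda$ of $\act{\lambda}{L_n}$. Applying conjugation by $\iota$ and using the previous paragraph turns this into $\gamma(\sigma)^m \sim \sigma$; since $m$ is prime to the order $d$ of $\sigma$, raising to the power $m^{-1} \bmod d$ (and using that $\gamma(\sigma)$ and $\sigma$ have the same order) yields $\gamma(\sigma) \sim \sigma^{m^{-1}} = \sigma^k$, and symmetrically for $\alpha$ and $\phi$. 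In particular $\gamma(\phi) \sim \phi^k$, which gives the ``moreover'' clause immediately, as $\gamma(\sigma\alpha) = \gamma(\phi^{-1}) = \gamma(\phi)^{-1}$ is then conjugate to $\phi^{-k} = (\sigma\alpha)^k$; this conclusion is invariant under conjugacy and so survives all subsequent modifications of the representative.

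Finally I would normalize. Writing $\gamma(\sigma) = g_1\sigma^k g_1^{-1}$, replace $\gamma$ by its conjugate $x \mapsto g_1^{-1}\gamma(x)g_1$ — legitimate, since this does not change the class in $Out(H_n)$ — to arrange $\gamma(\sigma) = \sigma^k$ exactly. Now $\gamma(\alpha) = w\alpha^k w^{-1}$ for some $w \in H_n$, and the goal is $w \in [H_n,H_n]$. Two independent moves preserve $\gamma(\sigma) = \sigma^k$: conjugating $\gamma$ by any $c$ in the centralizer $C(\sigma^k)$ replaces $w$ by $cw$, while re-expressing the \emph{same} automorphism $\gamma(\alpha)$ replaces $w$ by $wc'$ for any $c' \in C(\alpha^k)$. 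Passing to the abelianization $H_n^{ab} = H_n/[H_n,H_n]$, the images of $C(\sigma^k)$ and $C(\alpha^k)$ contain $\bar\sigma$ and $\bar\alpha$ respectively, and these generate $H_n^{ab}$; hence $c, c'$ can be chosen with $cwc' \in [H_n,H_n]$. Setting $f = (cwc')^{-1} \in [H_n,H_n]$ then gives $\gamma(\alpha) = f^{-1}\alpha^k f$ while keeping $\gamma(\sigma) = \sigma^k$, which is the desired form.

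The hard part will be the bookkeeping in the branch cycle step: the argument delivers $\gamma(\sigma)^m$ conjugate to $\sigma$ rather than $\gamma(\sigma)$ conjugate to $\sigma^m$, so one must invert the exponent and check that $k = m^{-1}$ is exactly the cyclotomic exponent with $\lambda(\zeta_N) = \zeta_N^k$. The second delicate point is the verification that the two centralizers $C(\sigma^k)$ and $C(\alpha^k)$ jointly surject onto $H_n^{ab}$ — this is precisely what forces $f$ into the commutator subgroup and is where the fact that $\sigma, \alpha$ generate $H_n$ (hence $\bar\sigma, \bar\alpha$ generate $H_n^{ab}$) is essential.
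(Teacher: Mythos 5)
Your proof is correct and follows essentially the same route as the paper: apply the branch cycle argument to $L_n$ to pin down the conjugacy classes of $\gamma(\sigma)$, $\gamma(\alpha)$, $\gamma(\sigma\alpha)$, then spend inner automorphisms and the freedom in the conjugator to force $f$ into $[H_n,H_n]$, using that $\bar\sigma,\bar\alpha$ generate the abelianization. Your explicit tracking of the inversion $k\equiv m^{-1}$ and your centralizer phrasing of the final normalization are minor reorganizations of the paper's argument (which instead decomposes $g=\alpha^j f\sigma^i$ directly), not a different method.
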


\begin{proof}
This follows from proposition~\ref{prop-branch-cycle-argument} (the
branch cycle argument) applied to~$L_n$. More precisely, let us
write~$\sigma $ for~$\tilde \sigma $ and~$\alpha $ for~$\tilde \alpha
$, etc. Then there is an isomorphism~$\iota $ between~$L_n$
and~$\act{\lambda }{L_n}$, under which~$\sigma_\lambda \in
Aut(\act{\lambda } L_n)$ is identified with~$\sigma \in H_n$, and
similarly for~$\alpha_\lambda $ and~$\phi_\lambda $ ; as for~$\lambda
^*$, it becomes~$\gt_n(\lambda )$ when viewed in~$Out(H_n)$. Thus a
simple translation of the notation shows that~$\gamma (\sigma )$ is
conjugated to~$\sigma^k$, where~$k$ is determined by the action
of~$\lambda $ on roots of unity, while~$\gamma (\alpha )$ is
conjugated to~$\alpha^k$ and~$\gamma (\sigma \alpha) $ is conjugated
to~$(\sigma \alpha )^k$. By composing with an inner automorphism, we
may thus assume that~$\gamma (\sigma ) = \sigma ^k$.

Let~$g \in H_n$ be such that~$\gamma (\alpha ) = g^{-1} \alpha^k
g$. Every element of the abelian group~$H_n / [H_n, H_n]$ can be
written~$ \alpha ^j \sigma^i$ for some integers~$i, j$, so let us
write~$g= \alpha^j \sigma^i c_1$ for some~$c_1 \in [H_n,
  H_n]$. Further put~$\sigma^i c_1 = c_2 c_1 \sigma^i$; here~$c_2$ is
a commutator, so that~$f= c_2 c_1 \in [H_n, H_n]$. Thus~$g= \alpha^j f
\sigma^i$ and
\[ \gamma (\alpha ) = g \alpha^k g^{-1} = (\sigma^{-i} f^{-1} \alpha^{-j}) \alpha^k (\alpha^{j} f
\sigma^{i}) = \sigma^{-i} (f \alpha^k  f^{-1}) \sigma^{i} \, .  \]
By composing~$\gamma $ with conjugation by~$\sigma^i$, we obtain a
representative which is of the desired form.
\end{proof}

For each~$n$ there is an automorphism~$\delta_n$ of~$H_n$
satisfying~$\delta_n (\sigma ) = \alpha \phi \alpha^{-1} = \sigma^{-1}
\alpha^{-1}$, $\delta_n (\alpha ) = \alpha$, $\delta_n (\phi) =
\sigma$. We write~$\delta = (\delta _n)_{n \ge 1}$ for the
corresponding element of~$\lim_n Out(H_n)$. The letter~$\delta $ is
for duality, as the next lemma explains.

\begin{lem}
\begin{enumerate}
\item The dessin~$\act{\delta }{\cell}$ resulting from the application
  of~$\delta $ to an arbitrary dessin~$\cell$ is its
  ``dual''. If~$\cell$ corresponds to the surface~$S$ endowed with the
  Belyi map~$F \colon S \to \p$, then~$\act{\delta }{\cell}$
  corresponds to~$S$ endowed with~$1 / F$.

\item If~$\gamma = \gt(\lambda )\in \lim_n \farout(H_n)$ for~$\lambda \in
  \gal$, then~$\gamma $ and~$\delta $ commute.
\end{enumerate}
\end{lem}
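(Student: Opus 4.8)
My plan is to derive both parts from properties of the action on dessins established earlier, avoiding any direct manipulation inside $Out(H_n)$ for as long as possible. For part (1) I would recognize the automorphism $\delta_n$ -- defined by $\delta_n(\sigma)=\alpha\phi\alpha^{-1}$, $\delta_n(\alpha)=\alpha$, $\delta_n(\phi)=\sigma$ -- as exactly the automorphism $\gamma$ analysed in Example~\ref{ex-explicit-action-aut}. There the effect on a regular object was found to be $\act{\gamma}{(G,\sigma,\alpha)}=(G,\gamma^{-1}(\sigma),\gamma^{-1}(\alpha))=(G,\phi,\alpha)$, and the identical computation applies to a general connected object $K\bs G$, since $\delta$ fixes the underlying group $G$ and the subgroup $K$ and merely replaces the distinguished pair $(\sigma,\alpha)$ by $(\phi,\alpha)$. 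Matching this against Example~\ref{ex-duality}, where the passage $F\mapsto 1/F$ was shown to carry $(D,\sigma,\alpha,\phi)$ to $(D,\phi,\alpha,\alpha^{-1}\sigma\alpha)$, I would conclude that $\act{\delta}{\cell}$ is the dual of $\cell$ and corresponds to the Belyi pair $(S,1/F)$. So part (1) reduces to aligning these two earlier examples.

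For part (2) the plan is to compare the two composites as operations on isomorphism classes of dessins and then invoke faithfulness. By the lemma (proved level by level just before Theorem~\ref{thm-action-out-faithful}) identifying the action of $\gt(\lambda)$ with the Galois action of $\lambda$, the element $\gamma=\gt(\lambda)$ sends a dessin with Belyi pair $(S,F)$ to $(\act{\lambda}{S},\act{\lambda}{F})$, where $\act{\lambda}{F}$ applies $\lambda$ to the coefficients of $F$; by part (1), $\delta$ sends $(S,F)$ to $(S,1/F)$. Writing $F=P/Q$ with $P,Q$ homogeneous of equal degree and coefficients in $\qb$, applying $\lambda$ to $1/F=Q/P$ gives $\act{\lambda}{Q}/\act{\lambda}{P}=1/\act{\lambda}{F}$, so both $\gamma\delta$ and $\delta\gamma$ carry $(S,F)$ to $(\act{\lambda}{S},1/\act{\lambda}{F})$. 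Thus $\gamma\delta$ and $\delta\gamma$ induce the same self-map of the isomorphism classes of regular dessins; since $\lim_n Out(H_n)$ acts faithfully on regular dessins by Theorem~\ref{thm-action-out-faithful}, the element $\gamma\delta\,(\delta\gamma)^{-1}$ must be the identity, i.e.\ $\gamma$ and $\delta$ commute.

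The step I expect to need the most care is ensuring the two descriptions feeding the comparison are valid for dessins of all genera, not only the genus-$0$ pictures: the Belyi-pair form of the Galois action is justified by its curve-theoretic description (apply $\lambda$ to the coefficients of the defining equations and of $F$), while the duality form of $\delta$ is Example~\ref{ex-duality}, stated for a general $p\colon S\to\p$, so the identity $\act{\lambda}{(1/F)}=1/\act{\lambda}{F}$ is legitimate. A purely algebraic alternative would bypass faithfulness and instead take the canonical representative of Lemma~\ref{lem-canonical-form-gt}, $\gamma(\sigma)=\sigma^k$ and $\gamma(\alpha)=f^{-1}\alpha^k f$ with $f\in[H_n,H_n]$, compute $\gamma\delta$ and $\delta\gamma$ on the generators, and exhibit an explicit element of $H_n$ conjugating one to the other; the awkward part there is tracking the commutator $f$ and the power $k$ through $\delta$, which is precisely the bookkeeping that the faithfulness route avoids.
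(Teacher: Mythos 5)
Your proposal is correct and follows essentially the same route as the paper: part (1) by combining Example~\ref{ex-duality} with Example~\ref{ex-explicit-action-aut}, and part (2) by observing that the Galois action applies $\lambda$ to the coefficients of the defining equations and of $F$, so that it commutes with $F\mapsto 1/F$ on isomorphism classes, and then invoking the faithfulness of the action of $\lim_n Out(H_n)$ to upgrade this to commutation in the group. The only cosmetic difference is that you spell out the identity $\act{\lambda}{(1/F)}=1/\act{\lambda}{F}$ via $F=P/Q$, which the paper leaves implicit.
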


Note that~$\delta $ squares to conjugation by~$\alpha $. Thus
in~$\farout(H_n)$, it is equal to its inverse, and
the letter~$\omega $ is often used in the literature
for~$\delta^{-1}$. 

\begin{proof}
(1) follows from the computations in example~\ref{ex-duality}
  and example~\ref{ex-explicit-action-aut}. 

Since the Galois action proceeds by the effect of~$\lambda \in \gal$
on the coefficients of the equations defining~$S$ as a curve, and the
coefficients of the rational fraction~$F$, the first point implies
that~$\act{\lambda \delta }{\cell} \cong \act{\delta \lambda }{\cell}$
for any dessin~$\cell$. Since the action of~$\lim_n Out(H_n)$ on
isomorphism classes of dessins is faithful, this implies~$\lambda
\delta = \delta \lambda $.
\end{proof}

Note that we have relied on the point of view of algebraic curves in
this argument.

Now we turn to the study of the automorphism of~$H_n$ usually
written~$\theta_n$ which satisfies~$\theta_n (\sigma ) = \alpha $
and~$\theta_n (\alpha ) = \sigma $. We write~$\theta = (\theta_n)_{n
  \ge 1}$ for the corresponding element of~$\lim_n Out(H_n)$.

\begin{lem}
\begin{enumerate}
\item The dessin~$\act{\theta}{\cell}$ resulting from the application
  of~$\theta $ to an arbitrary dessin~$\cell$ is simply obtained by
  changing the colours of all the vertices in~$\cell$. If~$\cell$
  corresponds to the surface~$S$ endowed with the Belyi map~$F \colon
  S \to \p$, then~$\act{\theta }{\cell}$ corresponds to~$S$ endowed
  with~$1 - F$.

\item If~$\gamma = \gt(\lambda ) \in \lim_n Out(H_n)$ for~$\lambda \in
  \gal$, then~$\gamma $ and~$\theta $ commute.
\end{enumerate}
\end{lem}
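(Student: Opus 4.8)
The plan is to follow verbatim the pattern of the preceding lemma concerning~$\delta$, replacing ``duality'' by ``change of colours'' throughout; the two statements are structurally identical, and the only inputs that change are the reference examples.

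For part~(1), I would first compute the effect of~$\theta$ on a regular dessin. By the transport-of-structure principle spelled out in example~\ref{ex-explicit-action-aut}, applying an automorphism~$\gamma$ of~$H_n$ to~$(G,\sigma,\alpha)$ produces~$(G,\gamma^{-1}(\sigma),\gamma^{-1}(\alpha))$, the same group with relabelled generators. Since~$\theta_n$ swaps~$\sigma$ and~$\alpha$ it is its own inverse on the generators (so~$\theta_n^2=\mathrm{id}$), whence~$\theta^{-1}(\sigma)=\alpha$ and~$\theta^{-1}(\alpha)=\sigma$; therefore~$\act{\theta}{(G,\sigma,\alpha)} = (G,\alpha,\sigma)$. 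The object~$(G,\alpha,\sigma)$ is exactly the one obtained by interchanging the two permutations, which example~\ref{ex-change-colours} identifies with the dessin coming from~$1-F$, i.e. with the colours of the vertices exchanged. For a general connected dessin~$X=K\bs G$ the same transport applies, leaving the underlying set and the subgroup~$K$ untouched while swapping~$\sigma$ and~$\alpha$; equivalently one may note that both the~$\theta$-action and the colour-swap respect the intermediate-dessin correspondence (as in lemma~\ref{lem-galois-correspondence}), so that checking on the regular closure suffices. This establishes~(1).

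For part~(2), I would argue exactly as for~$\delta$, by passing to the action and invoking faithfulness. Write~$\gamma = \gt(\lambda)$; by the (unlabelled) lemma identifying the action of~$\gt_n(\lambda)$ with that of~$\lambda$, the element~$\gamma$ acts on a Belyi pair~$(S,F)$ by applying~$\lambda$ to the coefficients of the equations defining~$S$ and of~$F$, producing~$(\act{\lambda}{S},\act{\lambda}{F})$, while by~(1) the element~$\theta$ fixes~$S$ and replaces~$F$ by~$1-F$. Composing in either order then yields the pair~$(\act{\lambda}{S},\,1-\act{\lambda}{F})$: the key point, just as in the~$\delta$ argument, is that the constant~$1$ is rational and hence fixed by~$\lambda$, so that applying~$\lambda$ to the coefficients of~$1-F$ gives~$1-\act{\lambda}{F}$, and~$\act{\lambda}{S}$ does not involve~$F$ at all. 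Hence~$\act{\gamma\theta}{\cell}\cong\act{\theta\gamma}{\cell}$ for every dessin~$\cell$. Since the action of~$\lim_n Out(H_n)$ on (regular) dessins is faithful by theorem~\ref{thm-action-out-faithful}, I conclude~$\gamma\theta = \theta\gamma$.

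The only genuinely delicate point, and the step I expect to demand the most care, is the verification underlying~(2) that the~$\theta$-action is defined over~$\q$, in the sense that it commutes with the Galois twist at the level of coefficients; everything else is bookkeeping of the relabelling of generators. This is precisely where the curve/Belyi-pair viewpoint is indispensable, exactly as flagged in the remark following the~$\delta$ lemma.
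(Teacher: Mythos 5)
Your proposal is correct and follows essentially the same route as the paper, whose proof of this lemma is literally ``as the previous proof, based on example~\ref{ex-change-colours}'': part (1) from the computations in examples~\ref{ex-change-colours} and~\ref{ex-explicit-action-aut}, and part (2) by observing that the Galois twist acts on coefficients, that $1$ is rational, and then invoking faithfulness of the action of~$\lim_n Out(H_n)$ on isomorphism classes of dessins. The extra detail you supply for non-regular dessins and the identity $\theta^{-1}=\theta$ on generators is consistent with what the paper leaves implicit.
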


\begin{proof}
As the previous proof, based on example~\ref{ex-change-colours}.
\end{proof}

We come to the definition of the {\em coarse Grothendieck-Teichmüller
  group}, to be denoted~$\GT$. In fact, we start by defining the
subgroup~$\GT (n)$ of~$Out(H_n)$ comprised of
all the elements~$\gamma $ such that:
\begin{enumerate}
\item[(GT0)] $\gamma$ has a representative in $Aut(H_n)$, say~$\tilde
  \gamma$, for which there exists an integer~$k_n$ prime to the
  order of~$H_n$, and an element~$f_n \in [H_n, H_n]$, such that
\[ \tilde \gamma (\sigma ) = \sigma ^{k_n} \quad\textnormal{and}\quad \tilde \gamma (\alpha ) = f_n^{-1} \alpha^{k_n}
f_n \, .  \]

\item[(GT1)] $\gamma $ commutes with~$\theta_n $.
\item[(GT2)] $\gamma $ commutes with~$\delta_n $,
\end{enumerate}

Remark that conditions (GT2) and (GT0) together imply that~$\tilde
\gamma (\sigma \alpha )$ is conjugated to~$(\sigma \alpha )^{k_n}$.

We let~$\GT = \lim_n \GT (n)$. The contents of this section may thus
be summarized as follows, throwing in the extra information we have
from proposition~\ref{prop-branch-cycle-argument}:

\begin{thm} \label{thm-main-gt}
There is an injective homomorphism 
\[ \gt \colon \gal \longrightarrow \GT \, .   \]
Moreover, for~$\gamma = \gt(\lambda )$, the integer~$k_n$ can be taken
to be any integer satisfying 
\[ \lambda (\zeta_N) = \zeta_N^{k_n} \, .   \]
Here~$N$ is the order of~$H_n$, and~$\zeta_N = e^{\frac{2 i \pi} {N}}$.
\end{thm}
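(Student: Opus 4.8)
The strategy is to assemble the theorem from the pieces already established, rather than to prove anything substantial from scratch. I would proceed in three movements: first verify that the map $\gt \colon \gal \to \lim_n \farout(H_n)$ constructed earlier actually lands in the subgroup $\GT$; second quote the injectivity already proved; and third extract the precise value of $k_n$ from the branch cycle argument.

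\emph{Landing in $\GT$.} Fix $\lambda \in \gal$ and $n \ge 1$, and let $\gamma = \gt_n(\lambda) \in \farout(H_n)$. I need to check the three defining conditions of $\GT(n)$. Condition (GT0) is precisely the content of lemma~\ref{lem-canonical-form-gt}: that lemma produces a representative $\tilde\gamma \in Aut(H_n)$, an integer $k_n$ prime to $|H_n|$, and $f_n \in [H_n,H_n]$ with $\tilde\gamma(\sigma) = \sigma^{k_n}$ and $\tilde\gamma(\alpha) = f_n^{-1}\alpha^{k_n}f_n$, so there is nothing further to do. For (GT1) and (GT2) I would invoke the two lemmas immediately preceding the definition of $\GT$: each states that $\gamma = \gt(\lambda)$ commutes with $\theta$, respectively with $\delta$, as elements of $\lim_n \farout(H_n)$. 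Reading these componentwise gives exactly that $\gamma$ commutes with $\theta_n$ and with $\delta_n$ in $\farout(H_n)$, which are conditions (GT1) and (GT2). Since the elements $\gt_n(\lambda)$ are compatible as $n$ varies (shown when $\gt$ was constructed), the collection $(\gt_n(\lambda))_n$ is a coherent element of $\lim_n \GT(n) = \GT$.

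\emph{Injectivity and the value of $k_n$.} Injectivity of $\gt$ was already proved in the theorem where $\gt$ was introduced, so it transfers verbatim: restricting the codomain to the subgroup $\GT$ does not affect injectivity. For the final assertion about $k_n$, I would return to proposition~\ref{prop-branch-cycle-argument}, whose integer $m$ satisfies $\lambda^{-1}(\zeta_N) = \zeta_N^m$ with $N$ the degree of the regular dessin $L_n$; since $H_n = \Gal(L_n/\qb(x))$ has order $N$, the relevant root of unity is $\zeta_N = e^{2i\pi/N}$. The exponent appearing in lemma~\ref{lem-canonical-form-gt} is this same $m$ (the lemma says $\gamma(\sigma)$ is conjugate to $\sigma^k$ where $k$ is ``determined by the action of $\lambda$ on roots of unity''). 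The only bookkeeping point is the passage between $\lambda$ and $\lambda^{-1}$: the branch cycle statement is phrased with $\lambda^{-1}(\zeta_N) = \zeta_N^m$, whereas the theorem asserts $\lambda(\zeta_N) = \zeta_N^{k_n}$. Writing $k_n$ for the exponent satisfying $\lambda(\zeta_N) = \zeta_N^{k_n}$ and tracing which direction of $\lambda^*$ enters the diagram defining $\gt_n$ resolves this; one checks that the exponent governing $\gamma(\sigma) = \sigma^{k_n}$ is the one attached to $\lambda$ rather than $\lambda^{-1}$.

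\emph{Main obstacle.} There is no deep obstacle here, as the real work was done in lemma~\ref{lem-canonical-form-gt} and the two commutation lemmas; this theorem is a summary. The one genuinely delicate point is the direction-of-inverse bookkeeping for $k_n$ in the last step: the branch cycle argument, the definition of $\gt_n$ via $\iota^{-1}\circ\lambda^*(h)\circ\iota$, and the statement of the theorem each carry an implicit convention about whether $\lambda$ or $\lambda^{-1}$ acts, and making the signs of the exponents agree requires careful attention rather than cleverness. Everything else is assembly.
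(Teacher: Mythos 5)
Your proposal is correct and matches the paper exactly: the paper gives no separate proof of this theorem, presenting it explicitly as a summary of the preceding material, namely the injectivity theorem for $\gt$, lemma~\ref{lem-canonical-form-gt} for (GT0), the two commutation lemmas for (GT1)--(GT2), and proposition~\ref{prop-branch-cycle-argument} for the value of $k_n$. Your resolution of the $\lambda$ versus $\lambda^{-1}$ bookkeeping (the exponent in $\gamma(\sigma)=\sigma^{k_n}$ is attached to $\lambda$, inverting the $m$ of the branch cycle argument) is the right one.
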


\subsection{Variants} \label{subsec-variants}

It should be clear that the groups~$H_n$ are not the only ones we
could have worked with. In fact, let~$\N$ be a collection of subgroups
of~$F_2$ with the following properties: \begin{itemize}
\item[(i)] each~$N \in \N$ has finite index in~$F_2$ ,
\item[(ii)] each~$N \in \N$ is characteristic (and in particular normal),
\item[(iii)] for any normal subgroup~$K$ in~$F_2$, there exists~$N \in
  \N$ such that~$N \subset K$.
\item[(iv)] for each~$N \in \N$, the group~$G= F_2 / N$ has the following property: given two pairs of generators~$(g_1, g_2)$ and~$(h_1, h_2)$ for~$G$, there exists an automorphism of~$G$ taking~$g_i$ to~$h_i$, for~$i= 1, 2$.
\end{itemize} 

\noindent So far we have worked with~$\N=$ the collection of all subgroups~$F_2
^{(n)}$ (for~$n \ge 1$). Other choices include: \begin{itemize}
\item For~$n \ge 1$, let~$F_2^{[n]}=$ the intersection of all normal
  subgroups of~$F_2$ of order {\em dividing}~$n$. Then take $\N=$ the
  collection, for all~$n \ge 1$, of all the groups~$F ^{[n]}$.

\item For~$G$ a finite group, let~$N_G=$ the intersection of all the
  normal subgroups~$N$ of~$F_2$ such that~$F_2 / N$ is isomorphic
  to~$G$ (the group~$G$ {\em not} having distinguished
  generators). Then take~$\N=$ the collection of all~$N_G$, where~$G$
  runs through representatives for the isomorphism classes of finite
  groups which can be generated by two elements. 
\end{itemize}

\noindent To establish condition (iv) in each case, one proves  a more ``universal'' property analogous to (2) of proposition~\ref{prop-props-of-Hn} for~$H_n$.

The reader will check that all the preceding material is based only on these four conditions, and the results below follow {\em mutatis mutandis}. First, as in~$\S\ref{subsec-Hn}$ we have
\[ \hat F_2 \cong \lim_{N \in \N} F_2 / N \, ,   \]
and 
\[ Out(\hat F_2) \cong \lim_{N \in \N} Out( F_2 / N ) \, .   \]
In particular we have maps~$\gal \to Out(F_2 / N)$ for~$N$ running
through~$\N$, and any non-trivial element of~$\gal$ has non-trivial
image in some~$Out(F_2 / N)$. 

Let us introduce the notation~$\GT(K)$, for any characteristic subgroup~$K$ of finite index in~$F_2$, to mean the subgroup of~$Out(F_2/K)$ of those elements satisfying (GT0) - (GT1) - (GT2). Note that~$N$ being characteristic, it makes sense to speak of~$\delta $ and~$\theta $ as elements of~$Out(F_2 / N)$. In the same fashion we define~$\GT(K)$, as a subgroup of~$Out(\hat F_2/K)$, when $K$ is open and characteristic in~$\hat F_2$.

With this terminology, one proves that the elements of~$Out(F_2 / N)$
coming from elements of~$\gal$ must in fact lie in~$\GT(N)$. If we let~$\GT(\N)$ denote the inverse limit of the groups~$\GT(N)$ for~$N \in \N$, then it is isomorphic to a subgroup of~$Out(\hat F_2)$ and we have an injection of~$\gal$ into~$\GT(\N)$.

The next lemma then proves that~$\GT(\N)$ is independent of~$\N$:

\begin{lem}
Let~$\beta \in Out(\hat F_2)$. Then~$\beta $ lies in~$\GT (\N)$ if and
only if for each open, characteristic subgroup~$K$ of~$\hat F_2$, the
induced element of~$Out(\hat F_2 / K)$ is in~$\GT( K)$.

In particular, the group~$\GT(\N)$, as a subgroup of~$Out(\hat F_2)$
is independent of the choice of~$\N$.
\end{lem}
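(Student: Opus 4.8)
The plan is to prove the single clean characterization of membership in $\GT(\N)$ stated in the lemma, and to read off independence from $\N$ as an immediate corollary: since the right-hand condition refers only to $\hat{F}_2$ and its characteristic quotients, a group characterized by it cannot depend on the choice of $\N$. Throughout I would lean on the correspondence recalled in the proof of lemma~\ref{lem-out-f2-is-inv-lim-out-hn}: the open characteristic subgroups $K$ of $\hat{F}_2$ are exactly the closures $\bar M$ of characteristic finite-index subgroups $M$ of $F_2$, with $\hat{F}_2/K \cong F_2/M$; moreover a continuous automorphism $\tilde\beta$ representing $\beta$ stabilizes every such $K$ and therefore descends to a well-defined class $\beta_K \in Out(\hat{F}_2/K)$. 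Note also that, $M$ being characteristic in $F_2$, the swap and duality automorphisms descend, so $\theta_K$ and $\delta_K$ genuinely make sense in $Out(F_2/M)$.

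First I would dispatch the easy implication. Each $N \in \N$ is characteristic of finite index, so $\bar N$ is open characteristic in $\hat{F}_2$ and by definition $\GT(\bar N) = \GT(N)$. Hence if $\beta_K \in \GT(K)$ holds for every open characteristic $K$, then specializing to $K = \bar N$ gives $\beta_{\bar N} \in \GT(N)$ for all $N \in \N$, which is precisely the statement $\beta \in \GT(\N)$.

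The substance is the converse. Given $\beta \in \GT(\N)$ and an arbitrary open characteristic $K = \bar M$, I would invoke condition (iii) to pick $N \in \N$ with $N \subseteq M$, yielding a canonical surjection $q \colon F_2/N \to F_2/M$ that carries the distinguished generators $\sigma,\alpha$ to the distinguished generators, commutators to commutators, and intertwines $\theta$ and $\delta$ at the two levels (all of these being defined by the same words in $\sigma,\alpha$). The key move is to transport representatives rather than to hope for a projection on $Out$-groups. Writing $\phi_N,\phi_K$ for the descents of a fixed $\tilde\beta$, one has $q\phi_N = \phi_K q$; and if $\tilde\gamma_N = \mathrm{inn}_g \circ \phi_N$ is the representative of $\beta_N$ supplied by (GT0), then $\tilde\gamma_K := \mathrm{inn}_{q(g)} \circ \phi_K$ represents $\beta_K$ and satisfies $q\tilde\gamma_N = \tilde\gamma_K q$. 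Evaluating on generators gives $\tilde\gamma_K(\bar\sigma) = \bar\sigma^{k}$ and $\tilde\gamma_K(\bar\alpha) = q(f)^{-1}\bar\alpha^{k}q(f)$ with $q(f) \in [F_2/M, F_2/M]$ and $k$ still prime to $|F_2/M|$ (as $|F_2/M|$ divides $|F_2/N|$), so (GT0) descends.

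For (GT1) and (GT2) the point is that commutation in $Out$ survives $q$ despite its non-injectivity. From $q\tilde\gamma_N = \tilde\gamma_K q$ and $q\theta_N = \theta_K q$ one first derives $q\tilde\gamma_N^{-1} = \tilde\gamma_K^{-1}q$ and $q\theta_N^{-1} = \theta_K^{-1}q$; pushing the relation $\tilde\gamma_N\theta_N\tilde\gamma_N^{-1}\theta_N^{-1} = \mathrm{inn}_h$ through $q$ and cancelling $q$ on the right by surjectivity gives $\tilde\gamma_K\theta_K\tilde\gamma_K^{-1}\theta_K^{-1} = \mathrm{inn}_{q(h)}$, i.e. $[\tilde\gamma_K]$ commutes with $[\theta_K]$; the identical computation with $\delta$ yields (GT2). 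Thus $\beta_K \in \GT(K)$, which finishes the converse and hence the independence statement. The main obstacle I anticipate is exactly this descent of the $Out$-level conditions along $q$: one must resist invoking a map $Out(F_2/N) \to Out(F_2/M)$ (which would require $M/N$ to be characteristic in $F_2/N$, something not granted by the hypotheses), and instead argue concretely by transporting chosen representatives together with their correcting inner automorphisms, checking that $q$ sends inner to inner and that its surjectivity permits cancellation on the right.
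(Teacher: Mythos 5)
Your proof is correct and follows the same route as the paper: sufficiency by specializing $K$ to the closures of the elements of $\N$, and necessity by choosing $N \in \N$ with $N \subseteq M$ and descending along the quotient $F_2/N \to F_2/M$. The paper simply asserts that the induced element "must lie in $\GT(K)$" at this point; your transport of representatives together with their correcting inner automorphisms is a careful (and correct) filling-in of exactly that step.
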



\begin{proof}
The condition is clearly sufficient, as we see by letting~$K$ run
through the closures of the elements of~$\N$. 

To see that it is necessary, we only need to observe that~$K$ contains
the closure of an element~$N\in\N$, so~$\hat F_2 /K$ is a quotient
of~$F_2 / N$ and the automorphism induced by~$\beta $ on~$\hat F_2 /K$ is
also induced by an element of~$\GT( N)$; thus it must lie
in~$\GT(K)$.

This characterization of elements of~$\GT(\N)$ visibly does not make
any reference to~$\N$.
\end{proof}

In theory, all choices for~$\N$ are equally valid, and in fact no mention of {\em any} choice is necessary: one may state all the results of this section in terms of~$Out(\hat F_2)$, for example defining~$\GT$ by the characteristic property given in the lemma. In practice however, choosing a collection~$\N$ allows us to compute~$\GT(N)$ explicitly for some groups~$N \in \N$, and that is at least a baby step towards a description of~$\gal$. The difficulty of the computations will depend greatly on the choices we make. For example, with the groups~$F ^{(n)}$, the order of~$H_n$ increases very rapidly with~$n$, but the indexing set is very simple; with~$F ^{[n]}$, the order of~$F_2 / F_2 ^{[n]}$ is much less than the order of~$H_n$, but the inverse limits are more involved. In a subsequent publication, computations with the family~$\N$ of all the groups of the form~$N_G$ will be presented.

We conclude with yet another definition of~$\GT$ which does involve choosing a collection~$\N$. This is the traditional definition.

\subsection{Taking coordinates; the group~$\rGT_0$}

We start with a couple of observations about~$H_n$.

\begin{lem}
If~$k_1$ and~$k_2$ are integers such that~$\sigma ^{k_1}$
and~$\sigma^{k_2} $ are conjugate in~$H_n$, then~$k_1 \equiv k_2$
mod~$n$. Similarly for~$\alpha $.
\end{lem}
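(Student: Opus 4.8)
The plan is to detect the exponent of a power of $\sigma$ by pushing $H_n$ onto a cyclic group in which $\sigma$ maps to an element of order exactly $n$, and then to exploit the fact that conjugate elements of an abelian group must coincide.

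First I would apply proposition~\ref{prop-props-of-Hn}(1) to the cyclic group $G = \z/n\z$, which has order $n \le n$, taking $g_1 = 1$ (a generator) and $g_2 = 0$. This furnishes a homomorphism $\psi \colon H_n \to \z/n\z$ with $\psi(\sigma) = 1$ and $\psi(\alpha) = 0$. The key feature to record is that $\psi(\sigma)$ is a generator of $\z/n\z$, so that the residue mod~$n$ of the exponent of any power of $\sigma$ is faithfully retained by $\psi$; nothing about the second generator is needed here, since property (1) allows $g_2$ to be arbitrary.

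Next, assuming $\sigma^{k_1}$ and $\sigma^{k_2}$ are conjugate in $H_n$, say $\sigma^{k_2} = g\,\sigma^{k_1} g^{-1}$, I would apply $\psi$ to obtain $\psi(\sigma)^{k_2} = \psi(g)\,\psi(\sigma)^{k_1}\,\psi(g)^{-1}$ in $\z/n\z$. Because $\z/n\z$ is abelian, the conjugation is trivial, whence $k_2 \cdot 1 = k_1 \cdot 1$ in $\z/n\z$; that is, $k_1 \equiv k_2 \pmod n$, which is exactly the assertion.

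Finally, the claim for $\alpha$ follows by the symmetric argument: invoke proposition~\ref{prop-props-of-Hn}(1) with $g_1 = 0$ and $g_2 = 1$ to get a homomorphism sending $\alpha$ to a generator of $\z/n\z$, and repeat the reasoning. (Alternatively, one can transport the $\sigma$-case across the automorphism of $H_n$ exchanging $\sigma$ and $\alpha$, supplied by proposition~\ref{prop-props-of-Hn}(3).) I do not anticipate any genuine obstacle; the only points demanding care are that $\z/n\z$ has order exactly $n$, so that property (1) is applicable, and that $\psi(\sigma)$ really is a generator, so that no information modulo~$n$ is lost in the passage to the quotient.
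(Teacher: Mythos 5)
Your proof is correct and follows essentially the same route as the paper: project $H_n$ onto a cyclic group of order $n$ in which $\sigma$ (resp.\ $\alpha$) maps to a generator, and use that conjugate elements of an abelian group are equal. The only cosmetic difference is that the paper uses a single map sending both $\sigma$ and $\alpha$ to the generator of $C_n$, handling both cases at once, whereas you use two separate maps.
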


\begin{proof}
We use the map~$H_n \to C_n = \langle x \rangle$, where~$C_n$ is the
cyclic group of order~$n$, sending both~$\sigma $ and~$\alpha $
to~$x$. The image of~$\sigma ^{k_i}$ is~$x^{k_i}$ (for~$i= 1, 2$), and
conjugate elements of~$C_n$ are equal, so~$k_1 \equiv k_2$ mod~$n$.
\end{proof}

\begin{coro}
Let~$\gamma \in \GT (n)$. For~$i= 1, 2$, let~$\tilde \gamma_i$ be a 
representative for~$\gamma $ in~$Aut(H_n)$ such that~$\tilde \gamma_i
(\sigma )$ is conjugate to~$\sigma^{k_i}$. Then~$k_1 \equiv k_2$
mod~$n$. This defines a homomorphism 
\[ \GT (n) \longrightarrow (\z/n)^\times \, ,   \]
which we write~$\gamma \mapsto k(\gamma )$ (or sometimes~$k_n(\gamma
)$ for emphasis).

Letting~$n$ vary, we obtain a homomorphism 
\[ k \colon \GT \longrightarrow \hat\z^\times \, .   \]
\end{coro}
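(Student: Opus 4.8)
The plan is to prove the corollary in three stages: well-definedness of $k(\gamma)$ as an element of $(\z/n)^\times$ for a fixed $\gamma \in \GT(n)$; the homomorphism property at each level $n$; and compatibility across the two inverse systems, so that the level-$n$ maps assemble into $k \colon \GT \to \hat\z^\times$.

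First I would settle well-definedness. Membership of $\gamma$ in $\GT(n)$ carries with it condition (GT0), which already supplies one representative $\tilde\gamma \in Aut(H_n)$ with $\tilde\gamma(\sigma) = \sigma^{k}$ and $k$ prime to the order of $H_n$; since the quotient $H_n \to C_n$ used in the preceding lemma exhibits $C_n$ as a quotient of $H_n$, we have $n \mid |H_n|$, so $k$ is a unit modulo $n$. Now given two representatives $\tilde\gamma_1, \tilde\gamma_2$ of the same $\gamma$ with $\tilde\gamma_i(\sigma)$ conjugate to $\sigma^{k_i}$, they differ by an inner automorphism, whence $\tilde\gamma_2(\sigma) = g\,\tilde\gamma_1(\sigma)\,g^{-1}$ is conjugate to $\tilde\gamma_1(\sigma)$ in $H_n$. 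Chaining conjugacies makes $\sigma^{k_1}$ and $\sigma^{k_2}$ conjugate, and the lemma just proved forces $k_1 \equiv k_2 \pmod n$. Thus $k(\gamma)$ is well defined and lies in $(\z/n)^\times$.

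For the homomorphism property I would take (GT0)-representatives with $\tilde\gamma(\sigma) = \sigma^{k}$ and $\tilde\gamma'(\sigma) = \sigma^{k'}$; their composite represents the product of $\gamma,\gamma'$ in $Out(H_n)$ (and lies in $\GT(n)$, a subgroup), and satisfies $(\tilde\gamma\tilde\gamma')(\sigma) = \tilde\gamma(\sigma^{k'}) = \sigma^{kk'}$, since automorphisms respect powers; as $(\z/n)^\times$ is abelian the composition order is immaterial, so $k(\gamma\gamma') = k(\gamma)k(\gamma')$. To pass to the limit I would use the surjections $H_m \to H_n$ for $m \ge n$ (compositions of the established maps $H_{i+1}\to H_i$), whose kernels are characteristic: a representative of $\gamma_m$ sending $\sigma \mapsto \sigma^{k}$ descends to a representative of its image $\gamma_n$ with the same effect, giving $k_n(\gamma_n) \equiv k_m(\gamma_m) \pmod n$. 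This is exactly the coherence condition defining $\hat\z^\times = \lim_d (\z/d)^\times$ under reduction, so an element $\gamma = (\gamma_n) \in \GT = \lim_n \GT(n)$ produces a compatible family $(k_n(\gamma_n))_n \in \hat\z^\times$; since each level map is a homomorphism, so is the limit $k$.

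The one step demanding genuine care is the well-definedness: the hypothesis that $\tilde\gamma_i(\sigma)$ is merely \emph{conjugate} to $\sigma^{k_i}$ pins $k_i$ down a priori only modulo the order of $\sigma$, not modulo $n$, and it is precisely the auxiliary quotient $H_n \to C_n$ of the preceding lemma that upgrades this to a congruence mod $n$. Once that is in hand, the remaining stages are formal bookkeeping with representatives and inverse limits.
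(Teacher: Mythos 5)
Your proof is correct and follows essentially the route the paper intends: the corollary is stated without a separate proof precisely because it follows from the preceding lemma in the way you describe (inner automorphisms preserve conjugacy classes, so the lemma pins down $k$ mod $n$; the quotient $H_n \to C_n$ shows $n$ divides $|H_n|$ so $k$ is a unit; composition of (GT0)-representatives gives multiplicativity; and the characteristic kernels of $H_{m}\to H_n$ give the coherence needed for the limit). Your write-up simply makes explicit the bookkeeping the paper leaves to the reader.
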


Here~$\hat \z = \lim_n \z / n \z$ is the profinite completion of the
ring~$\z$. 

\begin{prop} \label{prop-lift-a-la-drinfeld}
Let~$\gamma \in \GT$. Then~$\gamma $ has a lift~$\beta \in Aut(\hat
F_2)$ satisfying 
\[ \beta (\sigma ) = \sigma ^{k(\gamma )} \, , \qquad \beta (\alpha )
= f^{-1} \alpha ^{k(\gamma )} f \, ,   \]
for some~$f \in [\hat F_2, \hat F_2]$, the commutator subgroup. The
element~$f$ is unique, and as a result, so is~$\beta $. 
\end{prop}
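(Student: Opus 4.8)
The plan is to work throughout with the identifications $Aut_c(\hat F_2) \cong \lim_n Aut(H_n)$ and $Out(\hat F_2) \cong \lim_n Out(H_n)$ from Lemma~\ref{lem-out-f2-is-inv-lim-out-hn}, under which $\gamma = (\gamma_n)_n$ with each $\gamma_n \in \GT(n)$, and to exploit that the projection $\pi$ has kernel $Inn(\hat F_2)$. I write $k = k(\gamma) \in \hat\z^\times$ for the element produced by the corollary above, so that $\sigma^{k}$ is a well-defined profinite power inside the procyclic closure $\overline{\langle \sigma\rangle} \cong \hat\z$, and I use that the closed commutator subgroup is $[\hat F_2, \hat F_2] = \ker(\hat F_2 \to \hat F_2^{ab}) = \lim_n [H_n, H_n]$, where $\hat F_2^{ab} \cong \hat\z^2$.

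For existence I would first assemble a compatible system of canonical representatives. For each $n$ let $P_n \subseteq Aut(H_n)$ be the set of automorphisms representing $\gamma_n$ and having the (GT0) shape $\sigma \mapsto \sigma^{k_n}$, $\alpha \mapsto f_n^{-1}\alpha^{k_n}f_n$ with $f_n \in [H_n,H_n]$. Each $P_n$ is nonempty (that is exactly (GT0)) and finite, and since $H_{n+1}\to H_n$ has characteristic kernel and carries $[H_{n+1},H_{n+1}]$ into $[H_n,H_n]$, the restriction maps $Aut(H_{n+1})\to Aut(H_n)$ send $P_{n+1}$ into $P_n$. Thus $(P_n)$ is an inverse system of nonempty finite sets, so $\lim_n P_n \neq \emptyset$; any element is a compatible system $(\tilde\gamma_n)$ defining a lift $\beta \in Aut_c(\hat F_2)$ of $\gamma$. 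Since each $\tilde\gamma_n(\sigma)=\sigma^{k_n}$ is a power of $\sigma$, the element $\beta(\sigma)$ lies in $\overline{\langle\sigma\rangle}$ and equals $\sigma^{t}$ for a unique $t\in\hat\z$; because $k_n \equiv k(\gamma) \pmod n$ by the corollary, the reductions of $t$ force $t = k(\gamma)$, so $\beta(\sigma)=\sigma^{k}$.

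To recover $f$ I would \emph{not} patch the $f_n$ directly, as they need not be compatible; instead I run a second inverse-limit argument on the nonempty finite sets $C_n = \{ h \in [H_n,H_n] : h^{-1}\alpha^{k}h = \beta(\alpha) \text{ in } H_n\}$ (nonempty because $f_n \in C_n$), which yields $f \in [\hat F_2,\hat F_2]$ with $\beta(\alpha) = f^{-1}\alpha^{k}f$. This gives a lift of the required form. For uniqueness, suppose $\beta,\beta'$ are two such lifts, with commutator elements $f,f'$. Both represent $\gamma$, so they differ by an inner automorphism $\beta' = c_g\circ\beta$, $c_g(x)=gxg^{-1}$. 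Comparing images of $\sigma$ gives $g\sigma^{k}g^{-1}=\sigma^{k}$, so $g$ centralizes $\sigma^{k}$ and hence $\sigma$; comparing images of $\alpha$ shows $w:=f'gf^{-1}$ centralizes $\alpha$. Here I invoke the standard fact that the centralizer of a free generator in $\hat F_2$ is its procyclic closure, giving $g\in\overline{\langle\sigma\rangle}$ and $w\in\overline{\langle\alpha\rangle}$. In $\hat F_2^{ab}$, using $f,f'\in[\hat F_2,\hat F_2]$, the equal images $\bar g = \bar w$ are supported on the $\sigma$- and $\alpha$-axes respectively, so $\bar g=0$; as $\overline{\langle\sigma\rangle}$ injects into $\hat F_2^{ab}$ this forces $g=1$, whence $\beta=\beta'$ and then $f=f'$ follows by the same centralizer argument.

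I expect the main obstacle to be the existence half: one must see that the per-level exponents and conjugators assemble into genuine profinite data realizing $\beta(\sigma)$ as exactly $\sigma^{k(\gamma)}$, rather than merely as a conjugate of some naive power. The device that makes this work is keeping the automorphisms whole through $\lim_n P_n$ instead of fixing a single lift first and trying to conjugate it level by level. The only external input is the procyclicity of centralizers of generators in free profinite groups, which enters solely in the uniqueness step.
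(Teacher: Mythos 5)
Your proof is correct. The uniqueness half is essentially the paper's own argument: both of you reduce to the fact that the centralizer of a free generator of $\hat F_2$ is its procyclic closure, and then eliminate the residual conjugators by passing to the abelianization $\hat \z^2$, where a profinite power of $\sigma$ can equal a profinite power of $\alpha$ only if both are trivial. The existence half is where you genuinely diverge. The paper starts from an arbitrary lift $\beta_0$ of $\gamma$ (the surjectivity of $Aut_c(\hat F_2) \to \lim_n Out(H_n)$ having been established in lemma~\ref{lem-out-f2-is-inv-lim-out-hn}), shows that $\beta_0(\sigma)$ is honestly conjugate to $\sigma^{k(\gamma)}$ in $\hat F_2$ by remarking that the conjugacy class of $\sigma^{k(\gamma)}$ is closed (it is the continuous image of the compact group $\hat F_2$) and contains $\beta_0(\sigma)$ in its closure, and then renormalizes by inner automorphisms exactly as in lemma~\ref{lem-canonical-form-gt}, writing the conjugator for $\alpha$ as $\alpha^j f \sigma^i$ with $f$ a commutator. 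You never fix a lift first: you take an inverse limit of the nonempty finite sets $P_n$ of (GT0)-shaped representatives, and then a second inverse limit to extract $f$. Both routes work; yours trades the closed-conjugacy-class observation and the $\alpha^j f \sigma^i$ decomposition for two applications of the compactness of inverse limits of nonempty finite sets, and has the advantage of producing $\beta(\sigma)=\sigma^{k(\gamma)}$ on the nose with no renormalization step. One detail you should make explicit: the claim $f_n \in C_n$ requires $\alpha^{k(\gamma)} = \alpha^{k_n}$ in $H_n$, whereas your first step only gives $k(\gamma)\equiv k_n$ modulo the order of $\sigma$ in $H_n$; this is repaired by noting that $\sigma$ and $\alpha$ have the same order in $H_n$, since by (3) of proposition~\ref{prop-props-of-Hn} there is an automorphism of $H_n$ exchanging them.
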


\begin{proof}
Start with any lift~$\beta_0$. The elements~$\beta_0(\sigma )$
and~$\sigma ^{k(\gamma   )}$ are conjugate in every group~$H_n$,
so~$\beta_0( \sigma )$ is in the closure of the conjugacy class
of~$\sigma^{k(\gamma )}$. However this class is closed (the map~$x
\mapsto x \sigma^{k(\gamma )} x^{-1}$ is continuous and its image must
be closed since its source~$\hat F_2$ is compact). So~$\beta_0(\sigma
)$ is conjugated to~$\sigma^{k(\gamma )}$, and
likewise~$\beta_0(\alpha )$ is conjugated to~$\alpha^{k(\gamma
  )}$. Now, argue as in lemma~\ref{lem-canonical-form-gt} to obtain
the existence of a representative~$\beta $ as stated. 

We turn to the uniqueness. If~$f'$ can replace~$f$, then~$f= c_1 f'
c_2$ where~$c_2$ centralizes~$\sigma $ and~$c_1$ centralizes~$\alpha
$. However the centralizer of~$\sigma $ in~$\hat F_2$ is the (closed)
subgroup generated by~$\sigma $ and likewise for~$\alpha $. Since~$f$
and~$f'$ are assumed to be both commutators, we can reduce mod~$[\hat
  F_2, \hat F_2]$ and obtain a relation~$c_1 c_2 = 1$; the latter must
then hold true in any finite, abelian group on two generators~$\sigma
$ and~$\alpha $, and this is clearly only possible if~$c_1 = c_2 = 1$
in~$\hat F_2$.
\end{proof}

We observe at once:

\begin{coro}
The injection~$\gt\colon\gal \to Out(\hat F_2)$ lifts to an injection
$\tilde\gt\colon\gal \to Aut(\hat F_2)$. In particular, an element
of~$\gal$ can be entirely described by a pair~$(k, f) \in
\hat\z^\times \times [\hat F_2, \hat F_2]$.
\end{coro}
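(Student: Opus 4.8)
The plan is to build $\tilde\gt$ directly out of the canonical lift produced by Proposition~\ref{prop-lift-a-la-drinfeld}, and then to upgrade that lift to a genuine group homomorphism by exploiting the \emph{uniqueness} clause of the same proposition. First I would record that Proposition~\ref{prop-lift-a-la-drinfeld} furnishes a well-defined map $\ell \colon \GT \to Aut(\hat F_2)$, sending $\gamma \in \GT$ to the unique $\beta = \ell(\gamma)$ with $\beta(\sigma) = \sigma^{k(\gamma)}$ and $\beta(\alpha) = f^{-1}\alpha^{k(\gamma)} f$ for some $f \in [\hat F_2, \hat F_2]$; by construction $\ell$ is a set-theoretic section, over $\GT$, of the projection $Aut(\hat F_2) \to Out(\hat F_2)$. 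I would then set $\tilde\gt := \ell \circ \gt$, where $\gt \colon \gal \to \GT$ is the injection of Theorem~\ref{thm-main-gt}. Composing $\tilde\gt$ with $Aut(\hat F_2) \to Out(\hat F_2)$ returns $\gt$, since $\ell(\gamma)$ represents $\gamma$; so $\tilde\gt$ is a lift as required, \emph{provided} it is a homomorphism.

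The heart of the matter is thus to show that $\ell$ is a group homomorphism. Given $\gamma_1, \gamma_2 \in \GT$ with lifts $\beta_i = \ell(\gamma_i)$ in Drinfeld normal form with data $(k_i, f_i)$, I would simply compute the composite on the two generators:
\[ \beta_1\beta_2(\sigma) = \sigma^{k_1 k_2}, \qquad \beta_1\beta_2(\alpha) = \bigl(f_1\,\beta_1(f_2)\bigr)^{-1} \alpha^{k_1 k_2} \bigl(f_1\,\beta_1(f_2)\bigr). \]
Here $k_1 k_2 = k(\gamma_1 \gamma_2)$ because the map $k \colon \GT \to \hat\z^\times$ constructed just above is a homomorphism, and $f_1\,\beta_1(f_2)$ lies in $[\hat F_2, \hat F_2]$ because $f_1, f_2$ do and $\beta_1$ preserves the (closed, characteristic) commutator subgroup. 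Hence $\beta_1\beta_2$ is again in normal form, so by the uniqueness in Proposition~\ref{prop-lift-a-la-drinfeld} it must coincide with $\ell(\gamma_1\gamma_2)$. This proves $\ell$, and therefore $\tilde\gt = \ell\circ\gt$, is a homomorphism.

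Injectivity and the final remark are then immediate. Since the composite of $\tilde\gt$ with $Aut(\hat F_2) \to Out(\hat F_2)$ equals $\gt$, which is injective by Theorem~\ref{thm-main-gt}, the map $\tilde\gt$ is injective. Finally, the automorphism $\tilde\gt(\lambda)$ is completely determined by the pair $(k, f)$ with $k = k(\gt(\lambda)) \in \hat\z^\times$ and $f \in [\hat F_2, \hat F_2]$, because $\sigma$ and $\alpha$ topologically generate $\hat F_2$, so the prescriptions $\beta(\sigma) = \sigma^k$ and $\beta(\alpha) = f^{-1}\alpha^k f$ fix $\beta$ on a dense subgroup and hence everywhere by continuity. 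Combined with the injectivity of $\tilde\gt$, this shows that $\lambda$ itself is entirely encoded by $(k, f)$, as claimed.

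I expect the only delicate step to be the homomorphism property, and within it the verification that the composite stays in Drinfeld normal form: one must check $\beta_1(f_2) \in [\hat F_2, \hat F_2]$ (using characteristicity of the commutator subgroup) and that the exponents multiply to $k(\gamma_1\gamma_2)$ via the homomorphism $k$. Once the uniqueness clause of Proposition~\ref{prop-lift-a-la-drinfeld} is in hand, everything else is formal.
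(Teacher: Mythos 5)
Your proposal is correct and follows essentially the same route as the paper: the paper's proof likewise takes the canonical lift from Proposition~\ref{prop-lift-a-la-drinfeld}, observes that the composite of two automorphisms in this normal form is again in normal form, and concludes by uniqueness that the lift is multiplicative. You have merely spelled out the computation of $\beta_1\beta_2$ on the generators and the role of the homomorphism $k$, which the paper leaves implicit.
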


\begin{proof}
Let~$\tilde \gt(\lambda )$ be the lift of~$\gt(\lambda )$ described in
the proposition. The composition of two automorphisms of~$\hat F_2$ of
this form is again of this form, so~$\tilde \gt(\lambda ) \tilde
\gt (\mu )$ must be the lift of~$\gt(\lambda ) \gt(\mu ) =
\gt(\lambda \mu )$, that is, it must be equal to~$\tilde \gt(\lambda
\mu )$.
\end{proof}

We want to describe a group analogous to~$\GT$ in terms of the
pairs~$(k, f)$. There is a subtlety here, in that if we pick~$k \in
\hat\z^\times $ and~$f \in [\hat F_2, \hat F_2]$ arbitrarily, the
self-homomorphism~$\beta $ of~$\hat F_2$ satisfying
\[ \beta (\sigma ) = \sigma ^{k} \, , \qquad \beta (\alpha )
= f^{-1} \alpha ^{k} f  \tag{*}  \]
may not be an automorphism. Keeping this in mind, we define a
group~$\rGT_0$ now -- the notation is standard, and the index ``0'' is
not to be confused with our writing~$\GT (n)$ for~$n=0$; moreover the
notation does not refer to a profinite completion of some underlying
group~$\GT_0$. So let~$\rGT_0$ be the group of all
pairs~$(k, f) \in \hat\z^\times \times [\hat F_2, \hat F_2]$ such
that :
\begin{itemize}
\item Let $\beta $ be the self-homomorphism defined by (*);
  then~$\beta $ is an automorphism.
\item $\beta $ commutes with~$\delta $ in~$Out(\hat F_2)$.
\item $\beta $ commutes with~$\theta $ in~$Out(\hat F_2)$.
\end{itemize}
The composition law on~$\rGT_0$ is defined {\em via} the composition
of the corresponding automorphisms of~$\hat F_2$; one may recover~$k$
and~$f$ from~$\beta $, and indeed~$\rGT_0$ could have been defined as
a subgroup of~$Aut(\hat F_2)$, though that is not what has been
traditionally done in the literature. 

The definition of~$\rGT_0$ was given by Drinfeld
in~\cite{drinfeld}. The reader who is familiar with {\em loc.\ cit.}
may not recognize~$\rGT_0$ immediately behind our three conditions, so
let us add:

\begin{lem}
This definition of~$\rGT_0$ agrees with Drinfeld's. 
\end{lem}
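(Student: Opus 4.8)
The plan is to unwind both descriptions and match them condition by condition. The invertibility requirement — that the self-homomorphism $\beta$ determined by $\beta(\sigma) = \sigma^{k}$ and $\beta(\alpha) = f^{-1}\alpha^{k}f$ actually be an automorphism of $\hat F_2$ — amounts to the same clause in both formulations, so nothing is to be done there. It then remains to show that the requirement ``$\beta$ commutes with $\theta$ in $Out(\hat F_2)$'' is equivalent to Drinfeld's relation $f(\sigma,\alpha)\,f(\alpha,\sigma) = 1$, and that ``$\beta$ commutes with $\delta$'' is equivalent to his relation $f(\phi,\sigma)\,\phi^{m}\,f(\alpha,\phi)\,\alpha^{m}\,f(\sigma,\alpha)\,\sigma^{m} = 1$, where $\sigma\alpha\phi = 1$ and $m = (k-1)/2$. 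Throughout I write $f(\sigma,\alpha)$ for $f$ and $f(u,v)$ for its image under the endomorphism $\sigma\mapsto u$, $\alpha\mapsto v$; that $m$ makes sense in $\hat\z$ follows from $k\in\hat\z^\times$ being a unit at $2$, hence $k-1$ divisible by $2$ there while $2$ is invertible at the odd primes.

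First I would treat $\theta$. Using the normal form of $\beta$ provided by proposition~\ref{prop-lift-a-la-drinfeld}, one computes $\theta\beta\theta^{-1}$ directly: since $\theta$ swaps $\sigma$ and $\alpha$, this automorphism sends $\alpha\mapsto\alpha^{k}$ and $\sigma\mapsto f(\alpha,\sigma)^{-1}\sigma^{k}f(\alpha,\sigma)$. Saying that $\beta$ and $\theta\beta\theta^{-1}$ agree in $Out(\hat F_2)$ means they differ by an inner automorphism; comparing the two on $\sigma$ and $\alpha$ and invoking the uniqueness clause of proposition~\ref{prop-lift-a-la-drinfeld} — together with the fact, already used there, that the centraliser of $\sigma$, resp.\ $\alpha$, in $\hat F_2$ is the closed cyclic subgroup it generates — pins the inner ambiguity down and forces $f(\sigma,\alpha)\,f(\alpha,\sigma)=1$. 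The computation is reversible, which gives the equivalence with relation (I).

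The argument for $\delta$ runs along the same lines but is the delicate one. Here $\delta$ realises the transposition exchanging $\sigma$ and $\phi$ while fixing $\alpha$ (recall $\delta(\sigma)=\alpha\phi\alpha^{-1}$, $\delta(\phi)=\sigma$), so together with $\theta$ it generates the copy of $S_3$ permuting the conjugacy classes of $\sigma$, $\alpha$, $\phi$; once (I) is in hand, commuting with $\delta$ is exactly the three-cycle condition of Drinfeld, and commuting with both $\theta$ and $\delta$ is the same as commuting with this whole $S_3$. I would again conjugate the normal form of $\beta$ by $\delta$, rewrite everything in terms of $\sigma,\alpha,\phi$ using $\sigma\alpha\phi=1$, and read off the relation up to inner automorphism. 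The exponent $m=(k-1)/2$ enters because $\delta^2$ is conjugation by $\alpha$ (the paper's remark that $\delta$ equals its inverse in $Out$), and its precise value is fixed by the branch–cycle normalisation of proposition~\ref{prop-branch-cycle-argument}.

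The main obstacle, I expect, will be the bookkeeping between $Out$ and $Aut$: ``commutes in $Out$'' determines the relevant automorphisms only up to an a priori unknown inner automorphism, and the whole content of the matching is to show that this inner ambiguity is forced to be the one specific conjugation turning the $Out$-level identity into Drinfeld's precise word equation in $f$. The uniqueness statement of proposition~\ref{prop-lift-a-la-drinfeld} is the tool that removes the ambiguity, and the remaining care lies in checking that my conventions for $\theta$, $\delta$ and the sign of $m$ line up with those of~\cite{drinfeld} rather than with their inverses.
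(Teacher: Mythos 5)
Your proposal is correct in approach, but it is a genuinely different route from the paper's: the paper disposes of this lemma in one sentence, by citing the last theorem of \S 1.2 of the survey~\cite{survey}, which states precisely that Drinfeld's conditions (I) and (II) are equivalent to commutation with~$\theta$ and with~$\delta$ (there called~$\omega$) in~$Out(\hat F_2)$. You instead propose to prove that equivalence from scratch. Your treatment of~$\theta$ is essentially a complete and correct argument: conjugating the normal form of~$\beta$ by~$\theta$, renormalizing by the inner automorphism given by~$f(\alpha,\sigma)$, and invoking the uniqueness clause of proposition~\ref{prop-lift-a-la-drinfeld} (which rests exactly on the centralizer facts you cite) does force~$f(\sigma,\alpha)f(\alpha,\sigma)=1$, and the computation reverses. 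Your structural remarks are also right: $\theta$ and~$\delta$ generate an~$S_3$ permuting the conjugacy classes of~$\sigma,\alpha,\phi$, so modulo condition (I) the commutation with~$\delta$ is interchangeable with commutation with the $3$-cycle, which is how Drinfeld's relation (II) is usually packaged; and~$m=(k-1)/2$ does make sense in~$\hat\z$ for~$k\in\hat\z^\times$. What your write-up buys is a self-contained verification that the paper deliberately outsources; what it still owes is the actual condition-(II) computation, which you only sketch (``conjugate, rewrite using~$\sigma\alpha\phi=1$, read off the relation''). That step is where all the bookkeeping lives --- the appearance of the exponent~$m$, the cyclic ordering of the three factors~$f(\cdot,\cdot)$, and the normalization of~$\delta$ versus its inverse --- and it is noticeably longer than the~$\theta$ case, so as it stands your argument is a correct plan rather than a finished proof of the second half of the equivalence.
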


\begin{proof}
This follows from~\cite{survey}, \S1.2, last theorem, stating that
``conditions (I) and (II)'' are equivalent with the commutativity
conditions with~$\theta $ and~$\delta $ respectively (the author using
the notation~$\omega $ for an inverse of~$\delta $ in~$Out(\hat
F_2)$).
\end{proof}

The natural map~$Aut(\hat F_2) \to Out(\hat F_2)$ induces a
map~$\rGT_0 \to \GT$. The existence and uniquess statements in
proposition~\ref{prop-lift-a-la-drinfeld} imply the surjectivity and
injectivity of this map, respectively, hence:

\begin{prop}
$\rGT_0$ and~$\GT$ are isomorphic.
\end{prop}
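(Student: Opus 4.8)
The plan is to show that the homomorphism $\rGT_0 \to \GT$ induced by the projection $Aut(\hat F_2) \to Out(\hat F_2)$ is bijective. Since $\rGT_0$ is realized as a subgroup of $Aut(\hat F_2)$ whose group law is, by definition, composition of automorphisms, this map is automatically a homomorphism, so only bijectivity remains to be checked — but first I would verify that the map is even well-defined, i.e. that a pair $(k, f) \in \rGT_0$ with associated automorphism $\beta$ really has $[\beta] \in \GT$. For this, recall that under the identification $Out(\hat F_2) \cong \lim_n Out(H_n)$ of lemma~\ref{lem-out-f2-is-inv-lim-out-hn}, an element of $Out(\hat F_2)$ lies in $\GT = \lim_n \GT(n)$ exactly when its image in each $Out(H_n)$ lies in $\GT(n)$. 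Projecting $\beta$ to $Aut(H_n)$ (legitimate, since the kernel of $\hat F_2 \to H_n$ is characteristic) yields a representative $\tilde\gamma_n$ with $\tilde\gamma_n(\sigma) = \sigma^{k_n}$ and $\tilde\gamma_n(\alpha) = f_n^{-1}\alpha^{k_n} f_n$, where $k_n$ is the image of $k \in \hat\z^\times$ in $(\z/N)^\times$ (hence prime to the order $N$ of $H_n$) and $f_n$ is the image of $f$ in $[H_n, H_n]$; this is precisely condition (GT0). Commutativity of $\beta$ with $\delta$ and $\theta$ in $Out(\hat F_2)$ projects to commutativity of $\gamma_n$ with $\delta_n$ and $\theta_n$ in $Out(H_n)$, which are (GT1) and (GT2). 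Hence $\gamma_n \in \GT(n)$ for all $n$ and $[\beta] \in \GT$.

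For surjectivity I would invoke the existence half of proposition~\ref{prop-lift-a-la-drinfeld}. Given $\gamma \in \GT$, that proposition produces a lift $\beta \in Aut(\hat F_2)$ with $\beta(\sigma) = \sigma^{k(\gamma)}$ and $\beta(\alpha) = f^{-1}\alpha^{k(\gamma)} f$ for some $f \in [\hat F_2, \hat F_2]$. It then suffices to see that $(k(\gamma), f)$ meets the three defining conditions of $\rGT_0$: the map $\beta$ is an automorphism by construction, and $[\beta] = \gamma$ commutes with $\delta$ and $\theta$ in $Out(\hat F_2)$ because it does so at each finite level (by (GT1) and (GT2) in the definition of $\GT(n)$) and because $Out(\hat F_2)$ is the inverse limit of the $Out(H_n)$. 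Thus $(k(\gamma), f) \in \rGT_0$ maps to $\gamma$.

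For injectivity I would use the uniqueness half of proposition~\ref{prop-lift-a-la-drinfeld}. Suppose $(k, f)$ and $(k', f')$ both map to the same $\gamma \in \GT$. The profinite parameter is forced: the homomorphism $k \colon \GT \to \hat\z^\times$ reads $k(\gamma)$ off the conjugacy class of $\tilde\gamma_n(\sigma)$ at each level, and for both pairs that class is respectively that of $\sigma^k$ and $\sigma^{k'}$, so $k \equiv k(\gamma) \equiv k'$ modulo every $n$ and hence $k = k(\gamma) = k'$ in $\hat\z^\times$. The corresponding automorphisms $\beta$ and $\beta'$ are then both lifts of $\gamma$ of the normalized shape of the proposition, so its uniqueness statement yields $f = f'$ (equivalently $\beta = \beta'$), and the two pairs coincide.

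The substantive work is entirely contained in proposition~\ref{prop-lift-a-la-drinfeld}, whose delicate point was the uniqueness of the commutator $f$ via the computation of the centralizers of $\sigma$ and $\alpha$ in $\hat F_2$; granting that proposition, the present statement is formal. The only thing demanding a little care here is the bookkeeping identifying the two presentations of the conditions cutting out $\rGT_0$ and $\GT$ — in particular that commutation with $\delta$ and $\theta$ in $Out(\hat F_2)$ is the same as level-by-level commutation (immediate from $Out(\hat F_2) \cong \lim_n Out(H_n)$), and that the parameter $k$ of a pair in $\rGT_0$ coincides with $k(\gamma)$. I expect no genuine obstacle beyond this matching of definitions.
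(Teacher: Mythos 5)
Your proposal is correct and follows the same route as the paper: the paper's proof consists precisely of observing that $Aut(\hat F_2) \to Out(\hat F_2)$ induces a map $\rGT_0 \to \GT$ and that the existence and uniqueness halves of proposition~\ref{prop-lift-a-la-drinfeld} give surjectivity and injectivity respectively. You have merely spelled out the well-definedness and the matching of the parameter $k$ with $k(\gamma)$, which the paper leaves implicit.
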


One may rewrite the main theorem of this section,
theorem~\ref{thm-main-gt}, as follows:

\begin{thm}
There is an injective homomorphism of groups 
\[ \gal \longrightarrow \rGT_0 \, .   \]
Composing this homomorphism with the projection~$\rGT_0 \to \hat
\z^\times$ gives the cyclotomic character of~$\gal$.
\end{thm}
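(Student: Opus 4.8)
The plan is to assemble the pieces already in place, since the statement is essentially a repackaging of Theorem~\ref{thm-main-gt} transported across the identification $\rGT_0 \cong \GT$. We have the injective homomorphism $\gt \colon \gal \to \GT$ of Theorem~\ref{thm-main-gt}, together with the isomorphism $\rGT_0 \cong \GT$ induced by the natural projection $Aut(\hat F_2) \to Out(\hat F_2)$. Composing $\gt$ with the inverse of this isomorphism already yields a homomorphism $\gal \to \rGT_0$, injective as a composite of injections. Rather than argue through the isomorphism abstractly, I would exhibit the map concretely via the lift $\tilde\gt \colon \gal \to Aut(\hat F_2)$ constructed in the corollary to Proposition~\ref{prop-lift-a-la-drinfeld}: for $\lambda \in \gal$ that proposition gives $\tilde\gt(\lambda) = \beta$ with $\beta(\sigma) = \sigma^{k}$ and $\beta(\alpha) = f^{-1}\alpha^{k}f$ for a unique $f \in [\hat F_2, \hat F_2]$, so $\beta$ is encoded by a pair $(k,f)$.

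To see that $(k,f) \in \rGT_0$ I would simply check the three defining conditions: $\beta$ is an automorphism by construction; and $\beta$ commutes with $\delta$ and with $\theta$ in $Out(\hat F_2)$, by the two lemmas establishing that $\gamma = \gt(\lambda)$ commutes with $\delta$ and with $\theta$. Thus $\tilde\gt$ factors as an injective homomorphism $\gal \to \rGT_0$, $\lambda \mapsto (k(\gt(\lambda)), f)$, the injectivity coming from that of $\tilde\gt$. This settles the first assertion. For the second, the projection $\rGT_0 \to \hat\z^\times$ is $(k,f) \mapsto k$, which corresponds under $\rGT_0 \cong \GT$ to the homomorphism $k \colon \GT \to \hat\z^\times$ of the earlier corollary, so the composite $\gal \to \rGT_0 \to \hat\z^\times$ is $\lambda \mapsto k(\gt(\lambda))$, and it remains to identify this with the cyclotomic character $\chi$.

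By Theorem~\ref{thm-main-gt}, the $n$-th component $k_n$ of $k(\gt(\lambda))$ satisfies $\lambda(\zeta_N) = \zeta_N^{k_n}$ with $N = |H_n|$, which is precisely the congruence defining $\chi(\lambda)$ modulo $N$. The only point needing genuine care -- and the main (modest) obstacle -- is that these congruences, ranging over all $n$, determine $\chi(\lambda)$ as a full element of $\hat\z^\times$ and not merely some of its residues. For this I would note that $H_n$ surjects onto the cyclic group $C_m$ for every $m \le n$, since $C_m$ is a quotient of $F_2$ of order $\le n$ and hence a quotient of $H_n$; therefore $\mathrm{lcm}(1, \ldots, n)$ divides $N = |H_n|$, so the orders $N$ are cofinal among the positive integers under divisibility. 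Consequently the compatible system $(k_n)$ recovers all of $\chi(\lambda)$, and the composite equals the cyclotomic character. Everything else being a transcription of results already proved, I expect no difficulty beyond this cofinality verification.
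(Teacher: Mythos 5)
Your proposal is correct and follows essentially the same route as the paper, which presents this theorem as a direct rewriting of Theorem~\ref{thm-main-gt} through the isomorphism $\rGT_0 \cong \GT$ and the lift $\tilde\gt$ of Proposition~\ref{prop-lift-a-la-drinfeld}. The cofinality check you add (that $\mathrm{lcm}(1,\ldots,n)$ divides $|H_n|$, so the congruences $\lambda(\zeta_N)=\zeta_N^{k_n}$ pin down the full cyclotomic character in $\hat\z^\times$) is a detail the paper leaves implicit, and it is correctly handled.
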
 

We conclude with a few remarks about the (real) {\em
  Grothendieck-Teichmüller group}. This is a certain subgroup
of~$\rGT_0$, denoted~$\rGT$, also defined by Drinfeld
in~\cite{drinfeld}. It consists of all the elements of~$\rGT_0$
satisfying the so-called ``pentagon equation'' (or ``condition
(III)''). 

Ihara in~\cite{ihara} was the first to prove the existence of an
injection of~$\gal$ into~$\rGT$. His method is quite different from
ours, and indeed proving the pentagon equation following our
elementary approach would require quite a bit of extra work.

Another noteworthy feature of Ihara's proof (beside the fact that it
refines ours by dealing with~$\rGT$ rather than~$\rGT_0$) is that it
does not, or at least not explicitly, refer to dessins d'enfants. It
is pretty clear that the original ideas stem from the material in the
{\em esquisse}~\cite{esquisse} on dessins, but the children's drawings
have disappeared from the formal argument. We hope to have
demonstrated that the elementary methods could be pushed quite a long
way.

\bibliography{myrefs}
\bibliographystyle{amsalpha}

\end{document}